\theoremstyle{definition}
\newtheorem{thm}{Theorem}[section]
\newtheorem{lem}[thm]{Lemma}
\newtheorem{cor}[thm]{Corollary}
\newtheorem{prop}[thm]{Proposition}
\theoremstyle{definition}
\newtheorem{rem}[thm]{Remark}
\newtheorem{claim}[thm]{Claim}
\newtheorem{defn}[thm]{Definition}
\newtheorem{ex}[thm]{Example}
\def\R{{\mathbb R}}
\def\Z{{\mathbb Z}}
\def\O{{\mathcal O}}
\def\Stab{\mathop{\mathrm{Stab}}\nolimits}
\def\Aut{\mathop{\mathrm{Aut}}\nolimits}
\def\Gal{\mathop{\mathrm{Gal}}\nolimits}
\def\Spa{\mathop{\rm Spa}}
\def\Spec{\mathop{\rm Spec}}
\def\Spf{\mathop{\rm Spf}}
\def\Supp{\mathop{\rm Supp}}
\def\sep{\text{\rm sep}}
\def\B{\mathbb{B}}
\def\D{\mathbb{D}}
\newcommand{\plim}[1][]{\mathop{\varprojlim}\limits_{#1}}
\newcommand{\ilim}[1][]{\mathop{\varinjlim}\limits_{#1}}
\renewcommand{\labelenumi}{(\arabic{enumi})}
\newcommand{\et}{\mathrm{\acute{e}t}}
\newcommand{\ad}{\mathrm{ad}}
\def\cosq{\mathop{\mathrm{cosq}}\nolimits}
\def\Spa{\mathop{\mathrm{Spa}}\nolimits}
\def\rig{\mathop{\mathrm{rig}}\nolimits}
\numberwithin{equation}{section}
\begin{document}

\title[Local constancy of \'etale cohomology of rigid analytic varieties]{Uniform local constancy of \'etale cohomology of rigid analytic varieties}

\author{Kazuhiro Ito}
\address{Department of Mathematics, Faculty of Science, Kyoto University, Kyoto 606-8502, Japan}
\email{kito@math.kyoto-u.ac.jp}


\subjclass[2010]{Primary 14F20; Secondary 14G22, 14E22}
\keywords{Adic spaces, \'Etale cohomology, Tubular neighborhoods, Nearby cycles over general bases}


\maketitle

\begin{abstract}
We prove some $\ell$-independence results on local constancy of \'etale cohomology of rigid analytic varieties.
As a result,
we show that
a closed subscheme of a proper scheme over
an algebraically closed complete non-archimedean field
has a small open neighborhood in the analytic topology
such that,
for every prime number $\ell$ different from the residue characteristic,
the closed subscheme and the open neighborhood have the same \'etale cohomology with $\Z/\ell\Z$-coefficients.
The existence of such an open neighborhood for each $\ell$ was proved by Huber.
A key ingredient in the proof is
a uniform refinement of a theorem of Orgogozo on the
compatibility of the nearby cycles over general bases with base change.
\end{abstract}

\setcounter{tocdepth}{1}
\tableofcontents

\section{Introduction}\label{Section:Introduction}

Let $K$ be an algebraically closed complete non-archimedean field
whose topology is given by a valuation
$ \vert \cdot \vert \colon K \to \R_{\geq 0}$
of rank $1$.
Let $\O=K^\circ$ be the ring of integers of $K$.
In this paper,
we study local constancy of \'etale cohomology of rigid analytic varieties over $K$, or more precisely,
of adic spaces of finite type over $\Spa(K, \O)$.

\subsection{A main result}\label{Subsection:A main result}
The theory of \'etale cohomology for adic spaces was developed by Huber; see \cite{Huber96}.
Huber obtained several finiteness results on \'etale cohomology of adic spaces in a series of papers \cite{Huber98a, Huber98b, Huber07}.
Let us recall one of the main results of \cite{Huber98b}; see \cite[Theorem 3.6]{Huber98b} for a more precise statement.

\begin{thm}[{Huber \cite[Theorem 3.6]{Huber98b}}]\label{Theorem:Huber's theorem}
We assume that $K$ is of characteristic zero.
Let $X$ be a separated adic space of finite type over $\Spa(K, \O)$
and $Z$ a closed adic subspace of $X$.
Let $n$ be a positive integer invertible in $\O$.
Then there exists an open subset $V$ of $X$ containing $Z$ such that
the restriction map
\[
H^i(V, \Z/n\Z) \to H^i(Z, \Z/n\Z)
\]
on \'etale cohomology groups is an isomorphism for every integer $i$.
Moreover, we can assume that $V$ is quasi-compact.
\end{thm}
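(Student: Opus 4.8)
Since $X$ is of finite type over $\Spa(K,\O)$ it is quasi-compact, hence so is the closed subspace $Z$, and asking for a quasi-compact $V$ is legitimate; the plan is to realize $V$ inside a formal model of $X$ and to control its \'etale cohomology through nearby cycles on the special fibre. By Raynaud's theory in the form available for adic spaces (Huber; admissible formal schemes topologically of finite type over $\Spf\O$), choose a formal model $\X$ of $X$, and after an admissible blow-up flattening the schematic closure of $Z$ assume that $Z=\ZZ_\eta$ for a closed formal subscheme $\ZZ\subseteq\X$ flat over $\O$. Let $\mathrm{sp}\colon X\to\X_s$ be the specialization map. For a quasi-compact open $W\subseteq\X_s$ containing $\ZZ_s$ the set $\mathrm{sp}^{-1}(W)$ is a quasi-compact open of $X$ containing $Z$; letting $\X$ range over admissible blow-ups and $W$ shrink towards $\ZZ_s$, these sets $\mathrm{sp}^{-1}(W)$ form a cofinal system of quasi-compact open neighbourhoods of $Z$ with $Z=\varprojlim\mathrm{sp}^{-1}(W)$ as adic spaces. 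It thus suffices to produce one $V=\mathrm{sp}^{-1}(W)$ for which $H^i(V,\Z/n\Z)\to H^i(Z,\Z/n\Z)$ is an isomorphism for every $i$.

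By Huber's finiteness theorem all the cohomology groups appearing here are finite, and by his bound on the cohomological dimension they vanish for $i$ outside a fixed finite range, so only finitely many $i$ matter. Continuity of \'etale cohomology along cofiltered limits of quasi-compact quasi-separated adic spaces gives $H^i(Z,\Z/n\Z)=\varinjlim_V H^i(V,\Z/n\Z)$, whence for $V$ small enough the map $H^i(V,\Z/n\Z)\to H^i(Z,\Z/n\Z)$ is surjective (lift the finitely many generators). Injectivity is not formal: it amounts to the transition maps between the $H^i(V,\Z/n\Z)$ becoming isomorphisms for $V$ small, and for that one needs a genuine stabilization, which nearby cycles provide. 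By Huber's comparison theorem generalizing Berkovich's, for every quasi-compact open $W\subseteq\X_s$ one has $R\Gamma(\mathrm{sp}^{-1}(W),\Z/n\Z)\simeq R\Gamma(W,(R\mathrm{sp}_*\Z/n\Z)|_W)$, where $R\mathrm{sp}_*\Z/n\Z$ is the complex of nearby cycles of the constant sheaf for $\X$ relative to $\Spec\O$. One then invokes: (a) this complex is constructible on $\X_s$, so that $R\Gamma(W,\cdot)$ stabilizes to $R\Gamma(\ZZ_s,\cdot|_{\ZZ_s})$ as $W$ shrinks towards $\ZZ_s$; and (b) after replacing $\X$ by a sufficiently deep admissible blow-up, the restriction to $\ZZ_s$ of the nearby cycles of $\X$ agrees with the nearby cycles of $\ZZ$ itself, so that $R\Gamma(\ZZ_s,\cdot|_{\ZZ_s})\simeq R\Gamma(Z,\Z/n\Z)$. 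Combining (a) and (b) with the reduction above yields a quasi-compact $V$ with $H^i(V,\Z/n\Z)\to H^i(Z,\Z/n\Z)$ an isomorphism for all $i$.

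The hard part is statements (a) and (b): together they assert that the nearby cycles over the base $\Spec\O$ are constructible and that their formation commutes with the relevant base changes after a suitable modification of the base --- the modifications being precisely the admissible formal blow-ups --- which is Orgogozo's theorem on nearby cycles over general bases. For a discretely valued base this is classical (SGA~7); over the general rank-$1$ valuation ring $\O=K^\circ$ one must genuinely work in the general-base framework, approximating $\O$ by finite-type $\Z$-subalgebras and using the results of Gabber and Orgogozo. Furthermore, to pin down a \emph{single} $V$ working simultaneously for all $i$ one needs a \emph{uniform} version of Orgogozo's compatibility, bounding the depth of the required blow-up in terms of the input data alone; supplying this uniform refinement is the technical heart of the matter. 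The characteristic-zero hypothesis is inherited from the comparison and finiteness theorems of Huber invoked above.
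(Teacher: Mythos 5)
Your proposal takes a genuinely different route from Huber's proof (which the paper sketches in §1.2): Huber blows up to make $Z$ cut out by a single function $f$, maps to the disc $\B(1)$ via $f$, and then analyzes $R^jf^{\rig}_*\Z/n\Z$ on a punctured disc using oc-quasi-constructibility and L\"utkebohmert's Riemann existence theorem. You instead stay on the full formal model over $\Spec\O$ and push the work into the nearby-cycles complex $R\mathrm{sp}_*\Z/n\Z$ on the special fibre. The framework (cofinal system of opens $\mathrm{sp}^{-1}(W)$, Huber's comparison isomorphism $R\Gamma(\mathrm{sp}^{-1}(W),\Z/n\Z)\simeq R\Gamma(W,R\mathrm{sp}_*\Z/n\Z)$, colimit formula for $H^i(Z,\cdot)$) is sound and is in fact how the paper sets up its own refinement. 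But there are two genuine gaps in the steps you label (a) and (b), and they are exactly where the dimensional reduction in Huber's proof is doing the real work.

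Step (a) is false as stated: constructibility of a complex $\mathcal{K}$ on $\X_s$ does \emph{not} imply that $R\Gamma(W,\mathcal{K})\to R\Gamma(\ZZ_s,\mathcal{K}|_{\ZZ_s})$ becomes an isomorphism for $W$ a small Zariski open. Take $\X_s=\mathbb{P}^1_{\bar{\F}_p}$, $\ZZ_s=\{0\}$, $\mathcal{K}=\Z/\ell\Z$: every proper Zariski open $W\ni 0$ is an affine curve with large $H^1$, while $H^1(\{0\},\Z/\ell)=0$. What is actually needed is a vanishing of $H^\bullet_c$ on the punctured/annular region $W\setminus\ZZ_s$ for the relevant sheaf — and this is not a formal consequence of constructibility; in the paper it comes from Lemma \ref{Lemma:calculation of cohomology}, which relies on a very specific feature of annuli in the disc, accessible only after the reduction to the one-dimensional base via $f$ and after trivializing the sheaves by Kummer coverings. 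Allowing admissible blow-ups cures this for one carefully chosen $(\X',W)$, but which pair works is precisely the non-formal content; it cannot be extracted from "constructible, so it stabilizes."

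Step (b) is not an instance of Orgogozo's base-change theorem. Orgogozo controls pullback of $R\Psi$ along morphisms of the \emph{base} $S$, whereas your $c^*_s R\Psi_{\X}\overset{?}{\simeq}R\Psi_{\ZZ}$ is a base-change along the closed immersion $\ZZ\hookrightarrow\X$ of the \emph{source}, i.e.\ commuting $Rj_*$ with $c^*$ for the non-proper open immersion $j$ — a statement of a different nature that does not follow from the compatibility-with-base-change theorem you invoke. Finally, two smaller inaccuracies: the characteristic-zero hypothesis is used not for the comparison or finiteness theorems but in the Riemann existence step (tameness/Kummer coverings generate all finite \'etale covers of a small punctured disc only in residue characteristic zero; in positive characteristic wild covers appear); and the claim that a \emph{uniform} version of Orgogozo is needed here is a confusion — Theorem \ref{Theorem:Huber's theorem} is for a \emph{fixed} $n$, and uniformity in $n$ is the new difficulty addressed by Theorem \ref{Theorem:main result adic space}, not by Huber's result.
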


It is a natural question to ask whether we can take an open subset $V$ as in Theorem \ref{Theorem:Huber's theorem}
independent of $n$.
In the present paper,
we answer this question in the affirmative for
adic spaces which arise from schemes of finite type over $\O$.

More precisely, we will prove the following theorem.
For a scheme $\mathcal{X}$ of finite type over $\O$,
let $\widehat{\mathcal{X}}$ denote the $\varpi$-adic formal completion of $\mathcal{X}$, where $\varpi \in K^\times$ is an element with
$\vert \varpi \vert<1$.
The Raynaud generic fiber of $\widehat{\mathcal{X}}$
is denoted by $(\widehat{\mathcal{X}})^{\rig}$ in this section,
which is an adic space of finite type over $\Spa(K, \O)$.
(It is denoted by $d(\widehat{\mathcal{X}})$ in \cite{Huber96} and in the main body of this paper.)

\begin{thm}[Theorem \ref{Theorem:tubular neighborhood direct image}]\label{Theorem:main result adic space}
Let
$\mathcal{Z} \hookrightarrow \mathcal{X}$
be a closed immersion of separated schemes of finite type over $\O$.
We have a closed embedding
$(\widehat{\mathcal{Z}})^{\rig} \hookrightarrow (\widehat{\mathcal{X}})^{\rig}$.
Then there exists an open subset $V$ of $(\widehat{\mathcal{X}})^{\rig}$ containing $(\widehat{\mathcal{Z}})^{\rig}$ such that,
for every positive integer $n$ invertible in $\O$,
the restriction map
\[
H^i(V, \Z/n\Z) \to H^i((\widehat{\mathcal{Z}})^{\rig}, \Z/n\Z)
\]
on \'etale cohomology groups is an isomorphism for every integer $i$.
Moreover, we can assume that $V$ is quasi-compact.
\end{thm}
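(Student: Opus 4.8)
The plan is to reduce the theorem to the statement that a filtered colimit of \'etale cohomology groups stabilizes at a bound \emph{independent of $n$}, and then to control that bound by a version of Orgogozo's theorem on nearby cycles over general bases that is uniform in the coefficients.

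First I would reduce to the proper case. By Nagata compactification over $\O$, embed $\mathcal{X}$ as a dense open subscheme of a proper $\O$-scheme $\overline{\mathcal{X}}$ and let $\overline{\mathcal{Z}}$ be the schematic closure of $\mathcal{Z}$. Since $\varpi$-adic completion, the functor $(-)^{\rig}$, and the specialization map are compatible with open and closed immersions, $(\widehat{\mathcal{X}})^{\rig}$ is a quasi-compact open subspace of $(\widehat{\overline{\mathcal{X}}})^{\rig}=(\overline{\mathcal{X}}\otimes_{\O}K)^{\ad}$, and $(\widehat{\mathcal{Z}})^{\rig}=(\overline{\mathcal{Z}}\otimes_{\O}K)^{\ad}\cap(\widehat{\mathcal{X}})^{\rig}$. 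It thus suffices to produce $V$ as in the statement when $\mathcal{X}$ and $\mathcal{Z}$ are proper over $\O$; for clarity I describe the argument assuming in addition $(\widehat{\mathcal{X}})^{\rig}=\mathcal{X}_\eta^{\ad}$, so that $(\widehat{\mathcal{Z}})^{\rig}=\mathcal{Z}_\eta^{\ad}$ is a closed analytic subspace of $\mathcal{X}_\eta^{\ad}$, the general case differing only by routine bookkeeping relative to the ambient quasi-compact open $\mathcal{U}:=(\widehat{\mathcal{X}})^{\rig}$.

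By the continuity of \'etale cohomology of adic spaces (Huber), for $\Lambda=\Z/n\Z$ one has $H^i(\mathcal{Z}_\eta^{\ad},\Lambda)=\varinjlim_{V}H^i(V,\Lambda)$, the colimit taken over quasi-compact open neighbourhoods $V$ of $\mathcal{Z}_\eta^{\ad}$ in $\mathcal{X}_\eta^{\ad}$; the tubular neighbourhoods $V_\lambda$, locally of the form $\{\,\lvert f_j\rvert\le\lambda\,\}_j$ for local equations $f_j$ of $\mathcal{Z}$, are cofinal among these. As the groups in question are finite and vanish for $i$ outside a range independent of $n$, it suffices to prove that for each $i$ there is an index $\lambda_0$, \emph{independent of $n$}, beyond which the transition maps $H^i(V_\lambda,\Lambda)\to H^i(V_{\lambda'},\Lambda)$ are isomorphisms; then $V=V_{\lambda_0}$ works for all $n$ once $\lambda_0$ is chosen to serve the finitely many relevant $i$ at once. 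To control this, pass to nearby cycles: after a suitable admissible formal blow-up of the model (which leaves the generic fibre unchanged), $H^i(V_\lambda,\Lambda)$ is identified, by proper base change for the proper formal model and by the comparison between Huber's adic nearby cycles and the nearby-cycles-over-general-bases formalism, with
\[
H^i\!\bigl(W_\lambda,\,(R\Psi\,\Lambda)\vert_{W_\lambda}\bigr),
\]
where $W_\lambda$ runs over a cofinal system of open neighbourhoods of the special fibre $\mathcal{Z}_s$. The problem is thereby reduced to a uniform constructibility statement for the complexes $R\Psi\,\Lambda$: one needs a modification of $\mathcal{X}$ over $\O$ after which, for \emph{every} prime $\ell$ invertible in $\O$, $R\Psi\,\Z/\ell\Z$ is constructible, compatible with base change, and locally constant along a stratification of the special fibre that does not depend on $\ell$. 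Granting this --- and reducing to $\Lambda=\Z/\ell\Z$ by d\'evissage and to prime-power $n$ by the Chinese remainder theorem --- one chooses the $W_\lambda$ refining that stratification, so that $H^i(W_\lambda,(R\Psi\,\Z/\ell\Z)\vert_{W_\lambda})$ stabilizes at an index depending only on the stratification, hence not on $\ell$. This uniform constructibility is itself extracted from a \emph{relative} version of the theorem --- with base a general $\O$-scheme $\mathcal{Y}$, which is exactly why nearby cycles over general bases are needed --- proved by induction on the relative dimension, the inductive step resting on a uniform refinement of Orgogozo's base-change theorem (its modification of $\mathcal{Y}$ being chosen uniformly in $\ell$); the quasi-compact open $V$ sought in the statement is then $V_{\lambda_0}$, intersected back with $\mathcal{U}$ in the non-proper case.

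I expect the main obstacle to be this uniform refinement of Orgogozo's theorem. Orgogozo produces his modification non-constructively --- through a Noetherian induction combined with a generic-constructibility and alteration argument --- so the strata and auxiliary modifications it introduces a priori depend on the coefficient ring; one must revisit the proof, isolate the finitely many geometric constructions it rests on, amalgamate them into a single $\ell$-independent modification, and verify by explicit computation of tame nearby cycles on the resulting model that the locus over which $R\Psi$ is already locally constant can be chosen independent of $\ell$. A more technical but still substantive point is to set up cleanly the dictionary between Huber's theory of nearby cycles for adic spaces and Orgogozo's vanishing-topos nearby cycles, and to match the cofinal system of tubular neighbourhoods in the adic generic fibre with an $\ell$-independent cofinal system of neighbourhoods of $\mathcal{Z}_s$ in the special fibre of the blow-up.
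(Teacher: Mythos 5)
Your high-level plan correctly identifies the crux: express \'etale cohomology of the closed subspace as a colimit over shrinking tubular neighborhoods, relate the transition maps to a nearby-cycles complex, and seek a uniform-in-$n$ refinement of Orgogozo's theorem on nearby cycles over general bases. That last point is indeed the paper's main new input (Theorem~\ref{Theorem:uniform base change} / Corollary~\ref{Corollary:uniform base change for constant sheaves}), and you have located it correctly.

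The specific mechanism you propose for matching tubular neighborhoods in the generic fibre to cohomology on the special fibre has a real gap, however. After ``a suitable admissible formal blow-up of the model'' there is in general \emph{no single model} over which all the tubular neighborhoods $V_\lambda = \{\,\lvert f_j\rvert \le \lambda\,\}$ become specialization-preimages of Zariski-open neighborhoods $W_\lambda$ of $\mathcal{Z}_s$: shrinking $\lambda$ corresponds to passing to finer and finer admissible blow-ups, not to shrinking opens in a fixed special fibre. Moreover, even granting uniform constructibility of $R\Psi\,\Z/\ell\Z$ along a fixed stratification, this alone does not force $H^i(W_\lambda,R\Psi|_{W_\lambda})$ to stabilize at an index depending only on the stratification; in the scheme-theoretic setting there is no deformation retraction onto $\mathcal{Z}_s$, and one also needs monodromy control near the boundary strata (unipotence/tameness of $R\Psi$) --- a separate ingredient your sketch omits. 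The paper avoids both difficulties by a different decomposition: it first reduces, via blow-up and proper cohomological descent, to the case $\mathcal{Z}=V(f)$, so the tubular neighborhoods become inverse images of discs $\B(\epsilon)$ under $d(f)\colon d(\widehat{\mathcal{X}})\to\B(1)$; it then reduces via de Jong alterations to the case where $d(f)$ is smooth over a punctured disc, at which point the Leray spectral sequence reduces everything to uniform local constancy and tame trivialization of $R^jd(f)_!\Z/n\Z$ on small punctured discs (Theorem~\ref{Theorem:uniform local constancy}). That step is proved by combining Huber's comparison theorem at rank-$2$ points of $\B(1)$, the uniform Orgogozo theorem (for \emph{both} base change and unipotence), a uniform $p$-adic Riemann existence theorem with discriminant-function analysis to trivialize tame local systems by Kummer coverings, and Poincar\'e duality to pass from $Rd(f)_!$ to $Rd(f)_*$. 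The factorization through $\B(1)$, the unipotence statement, the Kummer trivialization, and the duality step are all essential and have no counterparts in your sketch.
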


A more precise statement is given in Theorem \ref{Theorem:tubular neighborhood direct image}.
In this paper, we will use de Jong's alterations in several ways.
This is the main reason why we restrict ourselves to the case where adic spaces arise from schemes of finite type over $\O$.
We remark that, in our case, we need not impose any conditions on the characteristic of $K$.
We will also prove an analogous statement for \'etale cohomology with compact support; see Theorem \ref{Theorem:tubular neighborhood compact support}.

\begin{rem}\label{Remark:motivation}
In \cite{Scholze},
Scholze proved
the weight-monodromy conjecture
for
a projective smooth variety $X$ over a non-archimedean local field $L$ of mixed characteristic $(0, p)$
which is a set-theoretic complete intersection in a projective smooth toric variety, by reduction to the function field case proved by Deligne.
In the proof,
Scholze used Theorem \ref{Theorem:Huber's theorem}
to construct,
for a fixed prime number $\ell \neq p$,
a projective smooth variety $Y$ over a function field of characteristic $p$
and
an appropriate mapping from
\'etale cohomology with $\Z/\ell\Z$-coefficients of $X$ to that of $Y$.
The initial motivation for the present study is,
following the method of Scholze,
to prove that
an analogue of the weight-monodromy conjecture
holds for
\'etale cohomology with $\Z/\ell\Z$-coefficients
of such a variety $X$
for all but finitely many $\ell \neq p$
by reduction to an ultraproduct variant of Weil II established by Cadoret \cite{Cadoret}.
For this, we shall use Theorem \ref{Theorem:main result adic space} instead of Theorem \ref{Theorem:Huber's theorem}.
The details are given in \cite{Ito}.
\end{rem}

\subsection{Local constancy of higher direct images with proper support}\label{Subsection:Local constancy of higher direct images with proper support introduction}

For the proof of Theorem \ref{Theorem:main result adic space},
we need to investigate local constancy of higher direct images with proper support for morphisms of adic spaces.
Before stating our results on higher direct images with proper support,
let us give an outline of the proof of Theorem \ref{Theorem:Huber's theorem}.

\textit{Sketch of the proof of Theorem \ref{Theorem:Huber's theorem}.}
We assume that $K$ is of characteristic zero.
For simplicity,
we assume that the closed embedding
$Z \hookrightarrow X$
is of the form
$(\widehat{\mathcal{Z}})^{\rig} \hookrightarrow (\widehat{\mathcal{X}})^{\rig}$
for a closed immersion of finite presentation
$\mathcal{Z} \hookrightarrow \mathcal{X}$
of separated schemes of finite type over $\O$.
By considering the blow-up of
$\mathcal{X}$ along $\mathcal{Z}$,
we may assume further that
the closed subscheme $\mathcal{Z}$ is defined by one global function
$f \in \O_{\mathcal{X}}(\mathcal{X})$.
Let
\[
f \colon \mathcal{X} \to \Spec \O[T]
\]
be the morphism defined by $T \mapsto f$.
The Raynaud generic fiber of the $\varpi$-adic formal completion of
$\Spec \O[T]$ is the unit disc
$
\B(1) := \Spa (K\langle T \rangle, \O \langle T \rangle).
$
The set of $K$-rational points of $\B(1)$ is identified with the set
\[
\B(1)(K)=\{ x \in K \, \vert \, \vert x \vert \leq 1 \}.
\]
The morphism $f$ induces the following morphism of adic spaces:
\[
f^{\rig} \colon (\widehat{\mathcal{X}})^{\rig} \to \B(1).
\]
The inverse image $(f^{\rig})^{-1}(0)$ of the origin $0 \in \B(1)$ is the closed subspace $(\widehat{\mathcal{Z}})^{\rig}$.

We fix a positive integer $n$ invertible in $\O$.
We want to take an open subset $V$ in Theorem \ref{Theorem:Huber's theorem} as the inverse image
\[
V=(f^{\rig})^{-1}(\B(\epsilon))
\]
of the disc $\B(\epsilon) \subset \B(1)$ of radius $\epsilon$ centered at $0$
for a small $\epsilon \in \vert K^\times \vert$.
Such a subset is called a
\textit{tubular neighborhood}
of $(\widehat{\mathcal{Z}})^{\rig}$.
For this, we have to compute \'etale cohomology with $\Z/n\Z$-coefficients
of $(f^{\rig})^{-1}(\B(\epsilon))$ for
a small $\epsilon \in \vert K^\times \vert$.
By the Leray spectral sequence for $f^{\rig}$,
it suffices to compute the cohomology group
\[
H^i(\B(\epsilon), R^jf^{\rig}_*\Z/n\Z)
\]
for all $i, j$.
The key steps are as follows.
\begin{itemize}
    \item By \cite[Theorem 2.1]{Huber98b},
    the \'etale sheaf
    $
    R^jf^{\rig}_*\Z/n\Z
    $
    is an oc-quasi-constructible \'etale sheaf of $\Z/n\Z$-modules in the sense of \cite[Definition 1.4]{Huber98b}.
    It follows that there exists an element $\epsilon_1 \in \vert K^\times \vert$ such that
    the restriction
    $(R^jf^{\rig}_*\Z/n\Z)\vert_{\B(\epsilon_1) \backslash \{ 0 \}}$
    is a locally constant $\Z/n\Z$-sheaf of finite type.
    \item By the $p$-adic Riemann existence theorem of L\"utkebohmert
    \cite[Theorem 2.2]{Lutkebohmert93},
    there exists an element $\epsilon_0 \in \vert K^\times \vert$ with
    $\epsilon_0 \leq \epsilon_1$
    such that
    $(R^jf^{\rig}_*\Z/n\Z)\vert_{\B(\epsilon_0) \backslash \{ 0 \}}$
    is trivialized by a Kummer covering $\varphi_m \colon \B(\epsilon^{1/m}_0) \backslash \{ 0 \} \to \B(\epsilon_0) \backslash \{ 0 \}$ defined by $T \mapsto T^m$.
\end{itemize}
Then the desired result can be obtained by explicit calculations.
\qed

In our case, the problem is to show
that
$\epsilon_0$ and $\epsilon_1$ in the above argument
can be taken independent of $n$.
To overcome this problem,
by de Jong's alterations and by cohomological descent,
we reduce to the case where
there exists an element $\epsilon \in \vert K^\times \vert$
with $\epsilon \leq 1$
such that the restriction
\[
(f^{\rig})^{-1}(\B(\epsilon) \backslash \{ 0 \}) \to \B(\epsilon) \backslash \{ 0 \}
\]
of $f^{\rig}$ is \textit{smooth}.
In this case, we will analyze
the higher direct image sheaf with proper support
\[
R^jf^{\rig}_!\Z/n\Z
\]
on $\B(1)$, which is defined in \cite[Definition 5.4.4]{Huber96}.
An important fact is that,
since $f^{\rig}$ is smooth over $\B(\epsilon) \backslash \{ 0 \}$,
the restriction
$
(R^jf^{\rig}_!\Z/n\Z)\vert_{\B(\epsilon) \backslash \{ 0 \}}
$
is a constructible \'etale sheaf of $\Z/n\Z$-modules
(in the sense of \cite[Definition 2.7.2]{Huber96})
for every positive integer $n$ invertible in $\O$ by \cite[Theorem 6.2.2]{Huber96}.

Our main result on local constancy of higher direct images with proper support is as follows.
We do not suppose that $K$ is of characteristic zero.
For elements $a, b \in \vert K^{\times} \vert$ with
$a < b \leq 1$,
let
$
\B(a, b) \subset \B(1)
$
be the annulus
with inner radius $a$ and outer radius $b$ centered at $0$.

\begin{thm}[Proposition \ref{Proposition:constant sheaves adapted} and Theorem \ref{Theorem:uniform local constancy}]\label{Theorem:uniform local constancy introduction}
Let $f \colon \mathcal{X} \to \Spec \O[T]$
be a separated morphism of finite presentation.
We assume that
there exists an element $\epsilon \in \vert K^\times \vert$
with $\epsilon \leq 1$
such that the induced morphism
\[
f^{\rig} \colon (\widehat{\mathcal{X}})^{\rig} \to \B(1)
\]
is smooth over $\B(\epsilon) \backslash \{ 0 \}$.
Then there exists an element
$\epsilon_0 \in \vert K^{\times} \vert$
with $\epsilon_0 \leq \epsilon$
such that,
for every 
positive integer $n$ invertible in $\O$,
the following two assertions hold:
\begin{enumerate}
    \item The restriction
    $
    (R^if^{\rig}_!\Z/n\Z)\vert_{\B(\epsilon_0) \backslash \{ 0 \}}
    $
    is a locally constant $\Z/n\Z$-sheaf of finite type
    for every $i$.
    \item 
    For elements $a, b \in \vert K^{\times} \vert$ with $a < b \leq \epsilon_0$,
    there exists a composition
    \[
    h \colon \B(c^{1/m}, d^{1/m}) \overset{\varphi_m}{\longrightarrow} \B(c, d) \overset{g}{\longrightarrow} \B(a, b)
    \]
    of
    a Kummer covering
    $
    \varphi_m
    $
    of degree $m$, where $m$ is invertible in $\O$,
    with
    a finite Galois \'etale morphism
    $g$,
    such that
    $(R^if^{\rig}_!\Z/n\Z)\vert_{\B(a, b)}$
    is trivialized by $h$ for every $i$.
    If $K$ is of characteristic zero, then we can take $g$ as a Kummer covering.
    (The morphism $g$ can be taken independent of $n$ although the integer $m$ possibly depends on $n$.)
\end{enumerate}
\end{thm}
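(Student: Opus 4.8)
The plan is to translate the statement, which concerns the adic space $(\widehat{\mathcal{X}})^{\rig}$ over the disc $\B(1)$, into a statement about nearby cycles of the morphism of schemes $f \colon \mathcal{X} \to \Spec \O[T]$ along the divisor $\{T=0\}$, and then to feed in the uniform refinement of Orgogozo's theorem. First I would choose, by Nagata compactification over $\Spec\O[T]$, a proper $\bar f \colon \bar{\mathcal{X}} \to \Spec\O[T]$ extending $f$, with open immersion $j \colon \mathcal{X} \hookrightarrow \bar{\mathcal{X}}$, so that $R^i f^{\rig}_!\Z/n\Z$ is computed from $R\bar f^{\rig}_*(j^{\rig}_!\Z/n\Z)$; this reduces everything to a proper morphism at the cost of carrying along the boundary sheaf $j_!\Z/n\Z$. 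Using Huber's comparison results for Raynaud generic fibres together with his proper base change theorem, the stalk of $R^i f^{\rig}_!\Z/n\Z$ at a classical point of $\B(\epsilon)\setminus\{0\}$ lying over $t$ is the compactly supported \'etale cohomology of the corresponding geometric fibre, and its variation as $t$ approaches $0$ is governed precisely by the nearby cycle complex $R\Psi_{\bar f}(j_!\Z/n\Z)$ over the general base $\Spec\O[T]$, stratified by the vanishing loci of $T$ and of $\varpi$, in Orgogozo's sense. This replaces, in a uniform way, the two non-uniform ingredients in Huber's argument (his quasi-constructibility theorem, which produces $\epsilon_1$, and L\"utkebohmert's $p$-adic Riemann existence theorem, which produces $\epsilon_0$).

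Next, apply the uniform refinement of Orgogozo's theorem: there is a modification $\pi \colon S' \to \Spec\O[T]$, \emph{independent of $n$}, together with a modification of $\bar{\mathcal{X}}$ above it, such that for every $n$ invertible in $\O$ the corresponding nearby cycle complex is locally constant along the strata of $S'$ and its formation commutes with all base change. Since $f^{\rig}$ is smooth over $\B(\epsilon)\setminus\{0\}$, there is nothing to resolve over the open stratum $\{T\neq0\}$, so $\pi$ may be taken to be an isomorphism away from the fibre over $T=0$; near $T=0$ it is, after normalization and restriction to the annular region relevant to the disc, a composition of a blow-up at the closed point and a base change in the variable $T$. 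Choosing $\epsilon_0 \in |K^\times|$ small enough that $\B(\epsilon_0)\setminus\{0\}$ lies over the locus where $\pi$ has already done its work — this is where uniformity of $\pi$ in $n$ gives uniformity of $\epsilon_0$ in $n$ — part $(1)$ follows: combining the conclusion that $(R^i f^{\rig}_!\Z/n\Z)|_{\B(\epsilon_0)\setminus\{0\}}$ is adapted, in the sense of Proposition \ref{Proposition:constant sheaves adapted}, to the trivial stratification of the punctured disc, with Huber's constructibility theorem \cite[Theorem 6.2.2]{Huber96}, we get that it is a locally constant $\Z/n\Z$-sheaf of finite type.

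For part $(2)$, fix $a<b\leq\epsilon_0$ and consider the annulus $\B(a,b)$. The portion of $\pi$ lying over this region is, after normalization and passing to a connected component, the composition of a Kummer covering in the variable $T$ — the base change needed to semistabilise the family, which depends only on $\bar f$ and is provided by de Jong's alterations — with, when $\chara K = p > 0$, a possibly wildly ramified finite Galois \'etale covering of annuli; this composition is the morphism $g \colon \B(c,d) \to \B(a,b)$, which is itself a Kummer covering when $\chara K = 0$, a composition of Kummer coverings of annuli being again one. After pulling back by $g$ the family has semistable (or log smooth) reduction over the punctured disc, so the nearby cycle complex admits the explicit Rapoport--Zink type description and its monodromy is unipotent; hence the monodromy of $(g^*R^i f^{\rig}_!\Z/n\Z)$ on $\B(c,d)$ is a unipotent, in particular finite-order, element of $\GL_r(\Z/n\Z)$, and so is killed by a Kummer covering $\varphi_m$ whose degree $m$ is a suitable multiple of that order. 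Since the unipotency index is bounded in terms of the relative dimension, a single $m$ works for all $i$ at once; $m$ depends on $n$ but $g$ does not, giving the composition $h = g\circ\varphi_m$ trivialising $(R^i f^{\rig}_!\Z/n\Z)|_{\B(a,b)}$ for every $i$.

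The main obstacle is the uniform refinement of Orgogozo's theorem itself: producing one modification $S' \to \Spec\O[T]$ that works simultaneously for all torsion coefficients $n$. Orgogozo's original proof proceeds by Noetherian induction and is intrinsically for a fixed $n$; the uniform version requires either a spreading-out/ultraproduct argument isolating the behaviour at a "generic" prime $\ell$, or — the route suggested by the systematic use of de Jong's alterations announced in the introduction — reducing by alterations to a situation with semistable or log smooth reduction along $\{T=0\}$, where the nearby cycle complex is given by a manifestly $\ell$-independent formula, so that the relevant stratification and coverings are visibly independent of $n$. A secondary difficulty is propagating the control from the generic point of the divisor $\{T=0\}$, where de Jong's theorem applies directly, to a neighbourhood of all of $\{T=0\}$, while keeping everything compatible with the shrinking of the disc and with the non-properness encoded in $j_!$.
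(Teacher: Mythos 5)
Your overall strategy --- reduce by Nagata compactification to a proper situation, apply a uniform refinement of Orgogozo's theorem to the scheme morphism over $\Spec\O[T]$, and translate across Huber's comparison between $R^i d(f)_!$ and compactly supported cohomology of nearby cycles --- is the paper's. You also correctly identify the uniform refinement of Orgogozo as the central new input and correctly observe that de Jong's alterations let one produce a covering $g$ independent of $n$ while only the Kummer degree $m$ depends on $n$. But the mechanism you propose for part (2) is not the paper's, and as sketched it has a real gap.

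You assert that after pulling back by $g$ ``the family has semistable (or log smooth) reduction over the punctured disc, so the nearby cycle complex admits the explicit Rapoport--Zink type description and its monodromy is unipotent.'' The paper never establishes semistable reduction of the \emph{adic} family over the punctured disc, and the tools you invoke do not yield it: de Jong's Theorem~\ref{Theorem:alteration} is applied inside the proof of Theorem~\ref{Theorem:uniform base change} to the scheme morphism over the Noetherian base, producing a pluri nodal curve structure for the \emph{algebraic} family, not a semistable model of $(\widehat{\mathcal X})^{\rig}$ over the annulus. What the uniform Orgogozo theorem delivers is unipotency of the sliced nearby cycle complex in the scheme-theoretic vanishing-topos sense (Corollary~\ref{Corollary:uniform base change for constant sheaves}); the paper then pushes this across Huber's comparison isomorphism to prove \emph{tameness} of $d(\pi)^*R^id(f)_!\mathcal{F}^a$ at every rank-two point of $d(\widehat{\mathcal Z})$ (Proposition~\ref{Proposition:overconvergent and unipotent}~(1), via Definition~\ref{Definition:tame sheaf}), and separately proves overconvergence on a specialization fibre from compatibility with base change (Proposition~\ref{Proposition:overconvergent and unipotent}~(2), resting on Proposition~\ref{Proposition:overconvergent}). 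The passage from tameness at rank-two points to Kummer trivialization is then a genuinely analytic statement about finite \'etale coverings of annuli --- the discriminant function of Ramero and L\"utkebohmert--Schmechta, packaged as Theorem~\ref{Theorem:split into annuli} and Theorem~\ref{Theorem:trivialization of tame sheaf} and proved in Appendix~\ref{Appendix:finite etale coverings of annuli}. Your sketch replaces this entire mechanism by an appeal to the Rapoport--Zink description that, over an annulus carrying rank-two points, is not off the shelf; in positive characteristic the fundamental group of the annulus has wild quotients and you would need to explain independently why wild inertia acts trivially, which is exactly what the tameness statement does.

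Two secondary imprecisions. The covering over $\Spec\O[T]$ furnished by Corollary~\ref{Corollary:uniform base change for constant sheaves} is an \emph{alteration}, not a modification, and its nontrivial generic fibre is precisely what becomes the finite Galois \'etale covering $g$ of annuli after normalization; your phrase that ``$\pi$ may be taken to be an isomorphism away from the fibre over $T=0$'' is therefore misleading, since a modification would force $g$ to be trivial and could not account for unipotency of nearby cycles. Secondly, in your justification of part (1) the phrase ``adapted, in the sense of Proposition~\ref{Proposition:constant sheaves adapted}, to the trivial stratification of the punctured disc'' misattributes the notion: the adaptedness in Definition~\ref{Definition:uniform finiteness adapted} concerns the sheaf $\mathcal{F}$ on the scheme $\mathcal{X}$ relative to the pair $(f,\pi)$, not the adic sheaf $R^if^{\rig}_!\Z/n\Z$ on the disc. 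The correct deduction of (1) in the paper is: overconvergence (from compatibility with base change and the specialization map) plus constructibility (from smoothness via Theorem~\ref{Theorem:constructible theorem}) imply local constancy via \cite[Lemma 2.7.11]{Huber96}; alternatively, as the paper notes, (1) is a formal consequence of (2).
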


\begin{rem}
We will prove Theorem \ref{Theorem:uniform local constancy introduction}
in a slightly more general setting involving certain sheaves on $\mathcal{X}$ which are not necessary constant; see Section \ref{Section:Local constancy of higher direct images with proper support for generically smooth morphisms} for details.
\end{rem}

Under the assumptions of Theorem \ref{Theorem:uniform local constancy introduction},
the same results hold for
the higher direct image sheaf $R^if^{\rig}_*\Z/n\Z$
by Poincar\'e duality \cite[Corollary 7.5.5]{Huber96},
which will imply Theorem \ref{Theorem:main result adic space}.

A key ingredient in the proof of Theorem \ref{Theorem:uniform local constancy introduction}
is the following uniform refinement of
a theorem of Orgogozo \cite[Th\'eor\`eme 2.1]{Orgogozo06} on the compatibility of
\textit{the sliced nearby cycles functors} with base change.
We also obtain a result on uniform unipotency of the sliced nearby cycles functors.
See Section \ref{Subsection:Sliced nearby cycles functor} for the definition of the sliced nearby cycles functors
and
see Definition \ref{Definition:base change, unipotent} for the terminology used in the following theorem.

\begin{thm}[Corollary \ref{Corollary:uniform base change for constant sheaves}]\label{Theorem:uniform base change for constant sheaves intro}
Let $S$ be a Noetherian excellent scheme and
$g \colon Y \to S$ a separated morphism of finite type.
There exists an alteration $S' \to S$ such that,
for every positive integer $n$ invertible on $S$,
the following assertions hold:
\begin{enumerate}
    \item The sliced nearby cycles complexes for
the base change $g_{S'} \colon Y_{S'} \to S'$ of $g$ and the constant sheaf $\Z/n\Z$ are \textit{compatible with any base change}.
    \item The sliced nearby cycles complexes for
$g_{S'} \colon Y_{S'} \to S'$ and the constant sheaf $\Z/n\Z$ are \textit{unipotent}.
\end{enumerate}
\end{thm}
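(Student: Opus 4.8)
The statement is a uniform-in-$n$ strengthening of Orgogozo's theorem, and the difficulty is precisely that Orgogozo's construction a priori produces an alteration $S'_n \to S$ depending on $n$, while one cannot form the fibre product over the infinitely many $n$. The idea is to make the alteration manifestly $n$-independent by reducing, via de Jong's alterations (which are purely geometric), to a toroidal/semistable situation in which the sliced nearby cycles of $\Z/n\Z$ admit an explicit combinatorial description valid for all $n$ at once, and then to descend. I would first reduce to a generic situation by Noetherian induction on $(S,Y)$: an alteration of a reduced scheme may be assembled from alterations of its irreducible components and the sliced nearby cycles over $S$ can be analysed one component of $S$ at a time, so we may assume $S$ integral; it then suffices to produce the desired alteration over a dense open $U\subseteq S$ and to patch it with the alteration furnished by the inductive hypothesis over a modification dominating $S\setminus U$ (patching modifications along a stratification into a single global modification is routine by a further Noetherian induction). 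In particular we may from the start replace $S$ by any alteration, hence assume $S$ regular with a strict normal crossings divisor $D\subset S$ over whose complement $g$ is smooth; and since the sliced nearby cycles functor is detected on generic traits through points of $S$, it is enough to control the situation in codimension one along $D$.

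Next I would apply de Jong's alteration theorem to the pair $(Y, Y_{\mathrm{sing}}\cup g^{-1}(D))$: after a further harmless alteration of $S$ we obtain a proper surjective generically finite $a\colon Y'\to Y_{S'}$ with $Y'$ regular for which the composite $g'\colon Y'\to S'$ is, \'etale-locally on $Y'$ and over the generic point of each branch of $D$, strictly semistable, of the standard shape $\Spec A[t_1,\dots,t_k]/(t_1\cdots t_j-\pi)$ with $\pi$ a local equation of that branch. In this relative-semistable situation the sliced nearby cycles $\Psi_{g'}(\Z/n\Z)$ are computed by a Rapoport--Zink type formula: their cohomology sheaves are, up to Tate twists, exterior powers of the $\Z/n\Z$-local systems attached to the components of the special fibres and their intersections, i.e.\ objects built purely from the combinatorics of the relevant divisors. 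From this one reads off, simultaneously for every $n$ invertible on $S$, that $\Psi_{g'}(\Z/n\Z)$ is a perfect complex of locally constant constructible sheaves compatible with any base change, and that its monodromy action is unipotent (as in the classical semistable case, a topological generator of tame inertia acts by $1+N$ with $N$ nilpotent). The essential point is that this description is ``the same'' for all $n$: it is literally the integral formula reduced modulo $n$.

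Finally I would descend along $a$. The \v{C}ech nerve $Y'_\bullet\to Y_{S'}$ of the proper surjection $a$ is a proper hypercover, and cohomological descent holds in the vanishing topos $Y_{S'}\overleftarrow{\times}_{S'}S'$ compatibly with $\Psi$ (this compatibility, namely that $\Psi$ commutes with proper pushforward, is part of the Orgogozo/Gabber formalism). Since $a^{*}\Z/n\Z=\Z/n\Z$, a descent spectral sequence expresses $\Psi_{g_{S'}}(\Z/n\Z)$ in terms of the $\Psi_{g'_p}(\Z/n\Z)$, which are base-change compatible and unipotent by the previous step; as base-change compatibility over the alteration $S'$ is preserved by the finite limits and colimits occurring in the spectral sequence, and a monodromy-equivariant extension of unipotent constructible objects is again unipotent, the complex $\Psi_{g_{S'}}(\Z/n\Z)$ inherits both properties for every $n$. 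Thus $S'\to S$ is the required alteration; running the same argument for a family of auxiliary sheaves on $Y$ gives the more general version alluded to in the paper.

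The hard part is the geometric heart of the second step together with the propagation in the third: arranging, by $n$-independent alterations over a higher-dimensional base, a model to which a uniform combinatorial computation of $\Psi(\Z/n\Z)$ applies, and then carrying base-change compatibility and, especially, unipotency through the alteration hypercover in a monodromy-equivariant way. The surrounding reductions — Noetherian induction, patching of modifications along a stratification, and reduction to codimension one via generic traits — are technically delicate but follow the pattern already established by Orgogozo.
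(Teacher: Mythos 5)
Your proposal follows a genuinely different route from the paper and, more importantly, it contains two substantive gaps.

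First, the geometric heart of your argument invokes a kind of semistable reduction theorem over a higher-dimensional base: after alteration of $S$ you want a model $Y'\to S'$ that is (\'etale-locally) of the shape $\Spec A[t_1,\dots,t_k]/(t_1\cdots t_j-\pi)$, to which a Rapoport--Zink computation applies uniformly in $n$. No such result is available in general: de Jong's alteration theorem over a base of dimension $\geq 2$ does not produce strictly semistable families, and arranging semistability after alteration over higher-dimensional bases is a hard open problem. What de Jong's \cite[Theorem 5.9]{deJong97} does provide, and what the paper actually uses (Theorem \ref{Theorem:alteration}), is a \emph{pluri-nodal fibration}: a tower $Y=X_d\to X_{d-1}\to\cdots\to X_0=S$ of relative nodal curves. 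The Rapoport--Zink formula does not apply to such a tower; the paper's substitute is Orgogozo's Proposition \ref{Proposition:nodal curve case}, which says that pushing a sheaf $u_!\mathcal{L}$ that is locally unipotent along a stratification down a single nodal curve yields sheaves locally unipotent along the induced stratification of the base. The whole point of the ``locally unipotent along $\mathfrak{X}$'' formalism is to play the role you assign to the RZ formula, one nodal curve at a time, without ever needing a semistable model of $Y$ itself.

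Second, your reduction ``since the sliced nearby cycles functor is detected on generic traits through points of $S$, it is enough to control the situation in codimension one along $D$'' is not correct, and this is precisely the central difficulty of nearby cycles over general bases. Base-change compatibility (Definition \ref{Definition:base change, unipotent} (1)) is a statement about arbitrary local morphisms $V\to U$ of strictly local schemes over $S$ and arbitrary algebraic geometric points $\overline{\xi}\to V$; these involve valuation rings of arbitrary rank and closed points of arbitrary codimension in $S$, and relating the sliced nearby cycles along different such $U$'s is exactly what fails generically and what the modification/alteration is supposed to repair. Controlling $\Psi$ over codimension-one traits of $S$ gives no a priori control of the base-change maps between deeper points, and there is no formal mechanism that propagates it. This is why the paper does not reduce to codimension one; instead it runs a careful triple induction $\mathbf{P}(S,\rho,d)$ over (bases $\prec$ $S$, truncation level $\rho$, generic fiber dimension $d$), in which the pluri-nodal fibration lowers $d$, cohomological descent (Lemma \ref{Lemma:proper descent}) lowers $\rho$ one degree per simplicial level (this is the truncation bookkeeping your descent step would also need but does not address), and Gabber-type arguments (Lemma \ref{Lemma:induction:Gabber's lemma}) handle the $d=-1$ boundary.

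In short: your plan (reduce to a combinatorially transparent model, compute $\Psi(\Z/n\Z)$ uniformly, descend) is the right moral shape, but the two key enabling steps---existence of a semistable model over a higher-dimensional base, and the reduction to codimension one---are unavailable. The paper replaces them by the pluri-nodal curve theorem of de Jong together with the ``locally unipotent along a stratification'' formalism of Orgogozo, organised by a double induction on $(\rho,d)$ and a Noetherian induction on the base.
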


Theorem \ref{Theorem:uniform base change for constant sheaves intro}
is a corollary of a more general result
(Theorem \ref{Theorem:uniform base change}),
which may be of independent interest.
The proof of
Theorem \ref{Theorem:uniform base change for constant sheaves intro} is
quite similar to that of \cite[Th\'eor\`eme 3.1.1]{Orgogozo19}.
In the proof, we need de Jong's alteration.

By using a comparison theorem of Huber \cite[Theorem 5.7.8]{Huber96},
we will deduce Theorem \ref{Theorem:uniform local constancy introduction} from Theorem \ref{Theorem:uniform base change for constant sheaves intro}.
Roughly speaking, Theorem \ref{Theorem:uniform local constancy introduction} (1) can be deduced from Theorem \ref{Theorem:uniform base change for constant sheaves intro} (1) by considering a specialization map from an adic space of finite type over $\Spa(K, \O)$ to its reduction;
see Section  \ref{Subsection:Specialization maps for higher direct image sheaves with proper support} and Section \ref{Subsection:Local constancy of higher direct images with proper support} for details.
Theorem \ref{Theorem:uniform local constancy introduction} (2)
can be deduced from
Theorem \ref{Theorem:uniform base change for constant sheaves intro} (2)
and
some properties of the \textit{discriminant function}
\[
\delta_h \colon [0, \infty) \to \R_{\geq 0}
\]
associated with a finite Galois \'etale covering
$h \colon Y \to \B(1) \backslash \{ 0 \}$ defined in \cite{Lutkebohmert93, Ramero05, LS05}.
See
Section \ref{Subsection:Tame sheaves on annuli} and
Appendix \ref{Appendix:finite etale coverings of annuli} for details.
In the proofs of both parts of Theorem \ref{Theorem:uniform local constancy introduction},
points of rank $2$ of (finite Galois \'etale coverings of) $\B(1)$ play important roles.

\subsection{The organization of this paper}\label{Subsection:The organization of this paper}
This paper is organized as follows.
In Section \ref{Section:Nearby cycles over general bases},
we first recall the definition of the sliced nearby cycles functors.
Then we formulate our main result
(Theorem \ref{Theorem:uniform base change})
on the sliced nearby cycles functors.
In Section \ref{Section:Proof of theorem uniform base change},
we prove Theorem \ref{Theorem:uniform base change}.

In Section \ref{Section:Tubular neighborhoods and main results},
we recall the definition of
tubular neighborhoods,
and then we state our main results
(Theorem \ref{Theorem:tubular neighborhood compact support} and
Theorem \ref{Theorem:tubular neighborhood direct image})
on \'etale cohomology of tubular neighborhoods.
In Section \ref{Section:Etale cohomology with compact support of adic spaces and nearby cycles},
we recall a comparison theorem of Huber and use it
to study the relation between
higher direct images with proper support for morphisms of adic spaces
and the sliced nearby cycles functors.
In Section \ref{Section:Local constancy of higher direct images with proper support for generically smooth morphisms},
we prove Theorem \ref{Theorem:uniform local constancy introduction} in a slightly more general setting.
In Section \ref{Section:proofs of main theorems},
we prove
Theorem \ref{Theorem:tubular neighborhood compact support} and
Theorem \ref{Theorem:tubular neighborhood direct image}
(and hence Theorem \ref{Theorem:main result adic space})
by using Theorem \ref{Theorem:uniform local constancy introduction}.

Finally,
in Appendix \ref{Appendix:finite etale coverings of annuli},
we prove
two theorems
(Theorem \ref{Theorem:split into annuli} and Theorem \ref{Theorem:trivialization of tame sheaf})
on finite \'etale coverings of annuli in the unit disc,
which are essentially proved in \cite{Lutkebohmert93, Ramero05, LS05}.

\section{Nearby cycles over general bases}\label{Section:Nearby cycles over general bases}

In this section, we formulate our main results on nearby cycles over general bases.
We will use the following notation.
Let $f \colon X \to S$ be a morphism of schemes.
For a morphism $T \to S$ of schemes,
the base change
$X \times_S T$
of $X$ is denoted by $X_T$
and
the base change of $f$
is denoted by
$f_T \colon X_T \to T$.
For a commutative ring $\Lambda$,
let
$D^+(X, \Lambda)$
be the derived category of bounded below complexes of
\'etale sheaves of $\Lambda$-modules on $X$.
For a complex $\mathcal{K} \in D^+(X, \Lambda)$,
the pull-back of $\mathcal{K}$ to $X_T$ is denoted by $\mathcal{K}_T$.
We often call an \'etale sheaf on $X$ simply a sheaf on $X$.

\subsection{Sliced nearby cycles functor}\label{Subsection:Sliced nearby cycles functor}

In this paper,
a scheme is called
a \textit{strictly local scheme}
if it is isomorphic to
an affine scheme
$\Spec R$ where $R$ is a strictly Henselian local ring.
Let $f \colon X \to S$
be a morphism of schemes.
Let $q \colon U \to S$ be a morphism
from
a strictly local scheme $U$.
The closed point of $U$ is denoted by $u$.
Let $\eta \in U$ be a point.
Let $\overline{\eta} \to U$ be an algebraic geometric point lying above $\eta$,
i.e.\
it is a geometric point lying above $\eta$ such that the residue field $\kappa(\overline{\eta})$ is a separable closure of the residue field $\kappa(\eta)$ of $\eta$.
The strict localization of $U$ at $\overline{\eta} \to U$
is denoted by $U_{(\overline{\eta})}$.
We have the following commutative diagram:
\[
\xymatrix{ X_{U_{(\overline{\eta})}} \ar[r]^-{j} \ar[d]^-{} & X_U \ar[d]^-{f_U}  & \ar[l]_-{i} X_u \ar[d]^-{} \\
U_{(\overline{\eta})} \ar[r]_-{} & U & \ar[l]^-{} u.
}
\]
Let $\Lambda$ be a commutative ring.
We have the following functor:
\[
R\Psi_{f_U, \overline{\eta}}:=i^{*}Rj_*j^* \colon D^{+}(X_U, \Lambda) \to D^{+}(X_u, \Lambda).
\]
This functor is called the \textit{sliced nearby cycles functor} in \cite{Illusie17}.
For a complex
$\mathcal{K}  \in D^{+}(X_U, \Lambda)$,
we have an action of
the absolute Galois group
$\Gal(\kappa(\overline{\eta})/\kappa(\eta))$ on
$R\Psi_{f_U, \overline{\eta}}(\mathcal{K})$
via the canonical isomorphism
\[
\Aut(U_{(\overline{\eta})}/\Spec (\O_{U, \eta})) \cong \Gal(\kappa(\overline{\eta})/\kappa(\eta)).
\]

Let
$q \colon V \to U$
be a local morphism
of strictly local schemes over $S$,
i.e.\ a morphism over $S$ which sends the closed point
$v$ of $V$ to
the closed point $u$ of $U$.
Let $\xi \in V$ be a point with image $\eta=q(\xi) \in U$.
For an algebraic geometric point
$\overline{\xi} \to V$
lying above $\xi$,
we have an algebraic geometric point
$\overline{\eta} \to U$
lying above $\eta$ by taking
the separable closure of $\kappa(\eta)$ in $\kappa(\overline{\xi})$.
We call $\overline{\eta} \to U$ the image of $\overline{\xi} \to V$
under the morphism $q$.
We have the following commutative diagram:
\[
\xymatrix{ X_{V_{(\overline{\xi})}} \ar[r]^-{} \ar[d]^-{q} & X_V \ar[d]^-{q}  & \ar[l]_-{} X_{v} \ar[d]^-{q} \\
X_{U_{(\overline{\eta})}} \ar[r]^-{} & X_U & \ar[l]_-{} X_u,
}
\]
where the vertical morphisms are induced by $q$.
For a complex
$\mathcal{K} \in D^{+}(X_U, \Lambda)$,
we have the following base change map:
\[
q^*R\Psi_{f_U, \overline{\eta}}(\mathcal{K}) \to R\Psi_{f_V, \overline{\xi}}(\mathcal{K}_V).
\]

We will use the following terminology.

\begin{defn}\label{Definition:G-unipotent}
Let $G$ be a group and $X$ a scheme.
We say that a sheaf $\mathcal{F}$ of $\Lambda$-modules on $X$ with a $G$-action is
\textit{$G$-unipotent}
if $\mathcal{F}$ has a finite filtration which is stable by the action of $G$ such that the action of $G$ on each successive quotient is trivial.
We say that a complex
$\mathcal{K} \in D^+(X, \Lambda)$
with a $G$-action is \textit{$G$-unipotent} if its cohomology sheaves are $G$-unipotent.
\end{defn}

\begin{defn}\label{Definition:base change, unipotent}
Let $f \colon X \to S$ be a morphism of schemes.
Let $\Lambda$ be a commutative ring and $\mathcal{K} \in D^{+}(X, \Lambda)$ a complex.
\begin{enumerate}
    \item We say that
    \textit{the sliced nearby cycles complexes for $f$ and $\mathcal{K}$ are compatible with any base change}
    (or simply that \textit{the nearby cycles for $f$ and $\mathcal{K}$ are compatible with any base change})
    if
    for every local morphism $q \colon V \to U$
    of strictly local schemes over $S$ and every algebraic geometric point
    $\overline{\xi} \to V$ with image $\overline{\eta} \to U$,
    the base change map
    \[
    q^*R\Psi_{f_U, \overline{\eta}}(\mathcal{K}_U) \to R\Psi_{f_V, \overline{\xi}}(\mathcal{K}_V).
    \]
    is an isomorphism.
    \item We say that
    \textit{the sliced nearby cycles complexes for $f$ and $\mathcal{K}$ are unipotent}
    (or simply that \textit{the nearby cycles for $f$ and $\mathcal{K}$ are unipotent})
    if for every morphism
    $q \colon U \to S$ from a strictly local scheme $U$, a point $\eta \in U$, and an algebraic geometric point $\overline{\eta} \to U$ lying above $\eta$,
    the complex
    $R\Psi_{f_{U}, \overline{\eta}}(\mathcal{K}_U)$
    is
    $\Gal(\kappa(\overline{\eta})/\kappa(\eta))$-unipotent.
\end{enumerate}
\end{defn}

\begin{rem}\label{Remark:vanishing topos}
We can restate Definition \ref{Definition:base change, unipotent} (1) in terms of vanishing topoi as follows.
Let $f \colon X \to S$ be a morphism of schemes.
Let
\[
X \overset{\leftarrow}{\times}_S S
\]
be the vanishing topos,
where the \'etale topos of a scheme $X$ is also denoted by $X$ by abuse of notation.
See \cite[Expos\'e XI]{STG} and \cite{Illusie17} for the definition and basic properties of the vanishing topos
$X \overset{\leftarrow}{\times}_S S$.
Let $\Lambda$ be a commutative ring.
We have a morphism of topoi
$\Psi_f \colon X \to X \overset{\leftarrow}{\times}_S S$.
The direct image functor  
\[
R\Psi_f \colon D^{+}(X, \Lambda) \to D^{+}(X \overset{\leftarrow}{\times}_S S, \Lambda)
\]
defined by $\Psi_f$ is called the \textit{nearby cycles functor}.
For a morphism $q \colon T \to S$ of schemes, we have a morphism of topoi
$
{\overset{\leftarrow}{q}} \colon X_T \overset{\leftarrow}{\times}_T T \to X \overset{\leftarrow}{\times}_S S
$
and a $2$-commutative diagram
\[
\xymatrix{ X_T \ar[r]^-{} \ar[d]^-{R\Psi_{f_T}} & X \ar[d]^-{R\Psi_f} \\
X_T \overset{\leftarrow}{\times}_T T \ar[r]^-{{\overset{\leftarrow}{q}}} & X \overset{\leftarrow}{\times}_S S,
}
\]
where $X_T \to X$ is the projection.
For a complex $\mathcal{K} \in D^{+}(X, \Lambda)$, 
we have the base change map
\[
c_{f, q}(\mathcal{K}) \colon ({\overset{\leftarrow}{q}})^*R\Psi_{f}(\mathcal{K}) \to R\Psi_{f_T}(\mathcal{K}_T).
\]

For a morphism
$f \colon X \to S$
of schemes and a complex
$\mathcal{K} \in D^{+}(X, \Lambda)$,
the sliced nearby cycles complexes for $f$ and $\mathcal{K}$ are compatible with any base change in the sense of Definition \ref{Definition:base change, unipotent} (1)
if and only if,
for every morphism
$q \colon T \to S$ of schemes, 
the base change map
$
c_{f, q}(\mathcal{K})
$
is an isomorphism.
This follows from the following descriptions of the stalks of
the nearby cycles functor and the sliced nearby cycles functors.

Let $x \to X$ be a geometric point of $X$
and let $s \to S$ denote the composition $x \to X \to S$.
Let $t \to S$ be a geometric point with a specialization map
$\alpha \colon t \to s$, i.e.\ an $S$-morphism
$\alpha \colon S_{(t)} \to S_{(s)}$,
where $S_{(s)}$ (resp.\ $S_{(t)}$) is the strict localization of $S$ at $s \to S$ (resp.\ $t \to S$).
The triple $(x, t, \alpha)$ defines a point of the vanishing topos
$X \overset{\leftarrow}{\times}_S S$
and every point of
$X \overset{\leftarrow}{\times}_S S$
is of this form (up to equivalence).
The topos
$X \overset{\leftarrow}{\times}_S S$
has enough points.
For the stalk
$R\Psi_f(\mathcal{K})_{(x, t, \alpha)}$ of $R\Psi_f(\mathcal{K})$ at $(x, t, \alpha)$,
we have an isomorphism
\[
R\Psi_f(\mathcal{K})_{(x, t, \alpha)} \cong R\Gamma(X_{(x)}\times_{S_{(s)}} S_{(t)}, \mathcal{K});
\]
see \cite[(1.3.2)]{Illusie17}.
Here the pull-back of $\mathcal{K}$ to $X_{(x)}\times_{S_{(s)}} S_{(t)}$ is also denoted by $\mathcal{K}$ and
we will use this notation in this paper when there is no possibility of confusion.

We have a similar description of the stalks of the sliced nearby cycles functors.
More precisely,
let $q \colon U \to S$
be a morphism
from a strictly local scheme $U$
and
$\overline{\eta} \to U$
an algebraic geometric point.
Let $x \to X_u$
be a geometric point of the special fiber $X_u$ of $X_U$.
Then,
since the morphism
$X_{U_{(\overline{\eta})}} \to X_U$ is quasi-compact and quasi-separated,
we have
\begin{equation}\label{equation:vanishing cycle stalk}
    R\Psi_{f_U, \overline{\eta}}(\mathcal{K}_U)_x \cong
R\Gamma((X_U)_{(x)} \times_{U} U_{(\overline{\eta})}, \mathcal{K}_U).
\end{equation}
\end{rem}

\subsection{Main results on nearby cycles over general bases}\label{Subsection:Main results on nearby cycles over general bases}

A proper surjective morphism
$f \colon X \to Y$
of Noetherian schemes is called an
\textit{alteration}
if it sends every generic point of $X$ to a generic point of $Y$ and it is generically finite, i.e.\
there exists a dense open subset $U \subset Y$ such that
the restriction
$f^{-1}(U) \to U$
is a finite morphism.
If furthermore $X$ and $Y$ are integral schemes,
then $f$ is called an integral alteration.
An alteration
$f \colon X \to Y$
is called a \textit{modification}
if there exists a dense open subset $U \subset Y$ such that
the restriction
$f^{-1}(U) \to U$
is an isomorphism.

Let $f \colon X \to S$ be a morphism of finite type of
Noetherian excellent schemes.
In \cite{Orgogozo06},
Orgogozo proved the following result:

\begin{thm}[{Orgogozo \cite[Th\'eor\`eme 2.1]{Orgogozo06}}]\label{Theorem:Orgogozo}
For a positive integer $n$ invertible on $S$
and for a constructible sheaf $\mathcal{F}$ of $\Z/n\Z$-modules on $X$,
there exists a modification
$S' \to S$
such that
the sliced nearby cycles complexes for
$f_{S'}$ and $\mathcal{F}_{S'}$
are compatible with any base change in the sense of Definition \ref{Definition:base change, unipotent} (1).
\end{thm}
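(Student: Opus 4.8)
I would prove this by a double induction — the outer one on $\dim X$, the inner one Noetherian induction on $S$ — with de Jong's alterations as the geometric engine. Since the assertion is local on $S$ for the Zariski topology, assume $S$ affine; being Noetherian excellent it admits Noetherian induction, and, splitting into irreducible components and gluing the resulting modifications at the end, we may assume $S$ integral with generic point $\eta$. The plan is to find a dense open $U \subseteq S$ over which, after a modification, the nearby cycles for $(f, \mathcal{F})$ become compatible with any base change, and then to propagate this over all of $S$ by applying the Noetherian induction to the complement of $U$.

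\emph{D\'evissage.} First reduce to the case $f$ proper: choosing a compactification $j \colon X \hookrightarrow \overline{X}$ over $S$ with $\overline{f} \colon \overline{X} \to S$ proper, one has $R\Psi_{f}(\mathcal{F}) \cong R\Psi_{\overline{f}}(j_{!}\mathcal{F})|_{X}$ because the sliced nearby cycles functor is local on the total space, so it suffices to treat $(\overline{f}, j_{!}\mathcal{F})$. Next reduce to the constant sheaf: stratify $X$ so that $\mathcal{F}$ is lisse on each stratum, build via de Jong's theorem a proper hypercover $a_{\bullet} \colon X_{\bullet} \to X$ with each $X_{m}$ regular and the pulled-back sheaf trivialized by finite \'etale covers, and express $\mathcal{F}$ through the complexes $Ra_{m*}(\Z/n\Z)$ using proper cohomological descent with $\Z/n\Z$-coefficients. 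Each $a_{m}$ is proper, so $R\Psi_{f} \circ Ra_{m*} \cong Ra_{m*} \circ R\Psi_{f a_{m}}$; since ``compatible with any base change'' is stable under cones, retracts and proper pushforward, and since (as $n$ is invertible) the constant sheaf is a retract of its pushforward along a finite \'etale trivializing cover, we reduce to $\mathcal{F} = \Z/n\Z$ with $X$ regular. All of this preserves $\dim X$.

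\emph{The geometric core.} Argue by induction on $\dim X$ for arbitrary constructible $\mathcal{F}$ and finite-type $f$, the case $\dim X = 0$ being immediate. De Jong's theorem over $\eta$, spread out, gives a dense open $U \subseteq S$ and an alteration $p \colon X' \to X_{U}$ with $X' \to U$ smooth (up to a universal homeomorphism of the base, which is harmless for $\Z/n\Z$-coefficients). For a smooth morphism the constant sheaf is universally locally acyclic, so the nearby cycles for $(X' \to U, \Z/n\Z)$ are compatible with any base change. A distinguished triangle relates $(\Z/n\Z)_{X_{U}}$ to $Rp_{*}(\Z/n\Z)_{X'}$, with remaining term supported on a closed subscheme $D \subsetneq X_{U}$ of dimension $< \dim X$ (the locus where $p$ is not finite \'etale, after discarding the purely inseparable part); applying $R\Psi$ and using the induction hypothesis on $(D \to U, \cdot)$ — which furnishes a modification — then running the d\'evissage backwards, one obtains a modification of a dense open of $S$ over which the nearby cycles for the original $(f, \mathcal{F})$ are compatible with any base change.

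\emph{Propagation, and the main obstacle.} The complement of this dense open in $S$ is a proper closed subscheme, to which the Noetherian induction on $S$ applies; a flattening argument of Raynaud--Gruson type then glues the modification over the open part with the one over the complement into a single modification $S' \to S$ with the required property, the verification being carried out one strictly local base at a time via the stalk formula for the sliced nearby cycles functor. I expect the genuine difficulties to be concentrated in this last gluing of modifications and in the bookkeeping of proper cohomological descent with $\Z/n\Z$-coefficients in the d\'evissage; the remaining ingredients — de Jong's alteration theorem, the compatibility of $R\Psi$ with proper pushforward, and the universal local acyclicity of smooth morphisms for the constant sheaf — enter rather formally. This is, in outline, Orgogozo's own argument.
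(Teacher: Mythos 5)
The paper does not actually prove Theorem~\ref{Theorem:Orgogozo}: it is cited directly from Orgogozo \cite[Th\'eor\`eme 2.1]{Orgogozo06}, and the proof environment in the paper consists of that citation alone. So what you have written is not competing with an argument in the paper, but with Orgogozo's own argument in its source. The overall architecture you describe --- Zariski-localize on an integral base, reduce to $f$ proper by compactification, use de Jong over the generic point to produce a dense open $U\subset S$ and a smooth model (perhaps after a universal homeomorphism), exploit universal local acyclicity of smooth morphisms for the constant sheaf, and then propagate over $S$ by Noetherian induction and Raynaud--Gruson flattening on the complement --- is indeed the skeleton of Orgogozo's proof.

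There is, however, a genuine gap at what you call \emph{the geometric core}. You assert that a distinguished triangle relates $(\Z/n\Z)_{X_U}$ to $Rp_*(\Z/n\Z)_{X'}$ with the third vertex supported on a closed subscheme $D\subsetneq X_U$ of smaller dimension. This is false as stated unless $p$ is generically an isomorphism. If $p$ is merely an alteration of degree $d>1$ --- which is the generic situation given by de Jong in positive characteristic, where no resolution of singularities is available --- then over $X_U\setminus D$ the map $p$ is finite \'etale of degree $d$, and the cone of $\Z/n\Z\to Rp_*\Z/n\Z$ is a rank-$(d-1)$ local system there, not something supported on $D$. Nor can one always arrange $d$ to be invertible modulo $n$ so as to split off the constant sheaf by a trace. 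The correct device here is proper cohomological descent along the \v{C}ech hypercover $\cosq_0(X'/X_U)$ (Orgogozo's Lemme~4.1, reproduced in this paper as Lemma~\ref{Lemma:proper descent}), which replaces your distinguished triangle and is what makes the argument go through uniformly in all characteristics. You do invoke cohomological descent in your d\'evissage from $\mathcal{F}$ to the constant sheaf, which is right; the same mechanism is what is needed, and what Orgogozo actually uses, in the passage from $X'$ back to $X_U$. For comparison, the paper's own uniform refinement (Theorem~\ref{Theorem:uniform base change}) reruns this circle of ideas with substantially heavier d\'evissage --- pluri-nodal curves from de Jong's \cite[Theorem 5.9]{deJong97}, the notion of locally unipotent sheaves, and a triple induction on $(S,\rho,d)$ --- precisely because the sheaf-by-sheaf retract trick is unavailable and everything must be made to work for all $n$ at once.
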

\begin{proof}
See \cite[Th\'eor\`eme 2.1]{Orgogozo06} for the proof and for a more general result.
(Actually, Orgogozo formulated his results in terms of vanishing topoi.
See Remark \ref{Remark:vanishing topos}.)
\end{proof}

To prove Theorem \ref{Theorem:main result adic space},
we need a uniform refinement of Theorem \ref{Theorem:Orgogozo}.
More precisely,
we need
a modification (or an alteration)
$S' \to S$
such that,
for every positive integer $n$ invertible on $S$,
the sliced nearby cycles complexes for
$f_{S'}$ and the constant sheaf $\Z/n\Z$
are compatible with any base change.

In order to prove the existence of such a modification, we will use the methods developed in a recent paper \cite{Orgogozo19} of Orgogozo.
In fact,
by the same methods,
we can also prove
that 
there exists
an \textit{alteration}
$S' \to S$
such that,
for every positive integer $n$ invertible on $S$,
the sliced nearby cycles complexes for
$f_{S'}$ and the constant sheaf $\Z/n\Z$ are unipotent in the sense of Definition \ref{Definition:base change, unipotent} (2).
Such an alteration is also needed in the proof of Theorem \ref{Theorem:main result adic space}.

To formulate our results,
we recall the definition of
a \textit{locally unipotent sheaf} on a Noetherian scheme from \cite{Orgogozo19}.
Let $X$ be a Noetherian scheme.
In this paper,
we call
a finite set
$\mathfrak{X}= \{ X_\alpha \}_\alpha$
of locally closed subsets
of $X$
a \textit{stratification}
if
we have
$X=\coprod_\alpha X_\alpha$
(set-theoretically).
We endow each $X_\alpha$ with the reduced subscheme structure.

\begin{defn}[{Orgogozo \cite[1.2.1]{Orgogozo19}}]\label{Definition:locally unipotent}
Let $X$ be a Noetherian scheme and $\mathfrak{X}$ a stratification of $X$.
We say that an abelian sheaf $\mathcal{F}$ on $X$ is
\textit{locally unipotent along} $\mathfrak{X}$
if, for every morphism $U \to X$ from a strictly local scheme $U$ and every $X_\alpha \in \mathfrak{X}$,
the pull-back of $\mathcal{F}$ to $U \times_X X_\alpha$ has a finite filtration whose successive quotients are constant sheaves.
\end{defn}

\begin{rem}\label{Remark:locally unipotent is constructible along a stratification}
If a constructible abelian sheaf $\mathcal{F}$ on
a Noetherian scheme $X$ is locally unipotent along a stratification $\mathfrak{X}$, then it is constructible along $\mathfrak{X}$, i.e.\ for every $X_\alpha \in \mathfrak{X}$, the pull-back of $\mathcal{F}$ to $X_\alpha$ is locally constant.
(See \cite[1.2.2]{Orgogozo19}.)
\end{rem}

The main result on nearby cycles over general bases is as follows.

\begin{thm}\label{Theorem:uniform base change}
Let $S$ be a Noetherian excellent scheme.
Let $f \colon X \to S$ be a proper morphism.
Let $\mathfrak{X}$ be a stratification of $X$.
Then there exists an alteration $S' \to S$ such that,
for every positive integer $n$ invertible on $S$ and
every complex
$\mathcal{K} \in D^{+}(X, \Z/n\Z)$
whose cohomology sheaves are constructible sheaves of $\Z/n\Z$-modules
and are locally unipotent along $\mathfrak{X}$,
the following two assertions hold.
\begin{enumerate}
    \item The sliced nearby cycles complexes for
    $f_{S'} \colon X_{S'} \to S'$ and $\mathcal{K}_{S'}$ are compatible with any base change.
    \item The sliced nearby cycles complexes for $f_{S'} \colon X_{S'} \to S'$ and $\mathcal{K}_{S'}$ are unipotent.
\end{enumerate}
\end{thm}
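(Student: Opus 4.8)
We sketch the plan of the proof; it follows \cite[Th\'eor\`eme 3.1.1]{Orgogozo19} very closely, the essential new feature being that the alteration $S' \to S$ must be produced independently of $n$ (and, once $\mathfrak{X}$ is fixed, of $\mathcal{K}$). I would first make some harmless reductions. By a standard d\'evissage on $\mathcal{K}$, using the triangles relating a complex to its cohomology sheaves and the fact that the two properties in Definition \ref{Definition:base change, unipotent} are stable under extensions and shifts, one reduces to the case where $\mathcal{K}$ is a single constructible sheaf, locally unipotent along $\mathfrak{X}$, placed in degree $0$. Then I would argue by Noetherian induction on $\dim S$. Replacing $S$ by its irreducible components is harmless, since sliced nearby cycles are insensitive to this and an alteration of $S$ can be glued from alterations of the components, so one may assume $S$ integral. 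As in the Noetherian-induction arguments of \cite{Orgogozo06, Orgogozo19}, the locus of the base over which the two properties fail is constructible; hence it suffices to produce an alteration $S' \to S$ together with a dense open $V \subseteq S'$ over which both properties hold for all $n$ and all admissible $\mathcal{K}$, and then to replace $S'$ by the gluing of $V$ with an alteration of $S' \setminus V$ supplied by the inductive hypothesis (which applies since $\dim(S' \setminus V) < \dim S$).

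The geometric heart is de Jong's alteration theorem. After replacing $S$ by an alteration and shrinking to a dense open $V \subseteq S$, I would arrange a proper hypercovering $X'_\bullet \to X_V$ such that each term $X'_m$, equipped with a normal crossings divisor refining the preimage of the stratification $\mathfrak{X}$, is strictly semistable over $V$ in the relative (toroidal) sense — i.e.\ of the type for which the sliced nearby cycles are explicitly computed by the slicing formalism of \cite{Illusie17}. For such a model, any complex whose cohomology sheaves are constructible and locally unipotent along the ambient normal crossings stratification — in particular the pull-back of any $\mathcal{K}$ as in the theorem — has its sliced nearby cycles described by the explicit log nearby cycles formula (Kato--Nakayama; see \cite{Illusie17}): a complex assembled functorially from the combinatorics of the stratification, the restrictions of the complex to the strata, and Tate twists $\Z/n(q)$ supported on the closed strata. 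Since $n$ is invertible on $S$, the residue fields of the strictly local schemes occurring in Definition \ref{Definition:base change, unipotent} already contain $\mu_n$, so these twists carry trivial Galois action; hence the description is manifestly compatible with any base change and manifestly unipotent. Most importantly it is the same for all $n$, which is exactly the uniformity sought. This settles both assertions for each $X'_m \to V$.

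It then remains to descend the two properties from the hypercovering $X'_\bullet$ to $X_V$. Here I would use that the sliced nearby cycles functors commute with proper direct image, so that the nearby cycles of $Ra_{m,*}$ of a complex on $X'_m$ (with $a_m \colon X'_m \to X_V$ the augmentation) are obtained from those on $X'_m$ by the induced pushforward on vanishing topoi; this pushforward preserves base-change compatibility (by proper base change for topoi) and unipotency (a finite, Galois-stable filtration with constant graded pieces pushes forward to one of the same kind, using finiteness of $Ra_{m,*}$ on constructible complexes). Proper cohomological descent along $X'_\bullet \to X_V$ then presents $R\Psi_{f_V}(\mathcal{K}_V)$, via a finite hence convergent spectral sequence, in terms of these pushforwards, and both properties pass through. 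Together with the gluing over $S \setminus V$ described above, this completes the argument.

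The step I expect to be the main obstacle is precisely the uniformity in $n$. In Orgogozo's original theorem, $n$ enters only through invocations of constructibility and finiteness of nearby cycles (ultimately via de Jong) and through the choice of the semistable model — neither of which in fact depends on $n$ — so the real work is to reorganize the argument so that one fixed alteration $S' \to S$ and one fixed family of proper hypercoverings serve all $n$ at once, and to verify carefully that the log nearby cycles description, including the triviality of the Tate twists over strict localizations, is genuinely coefficient-free. The remaining steps are formal consequences of the properties of the sliced nearby cycles functors recalled in Section \ref{Section:Nearby cycles over general bases}.
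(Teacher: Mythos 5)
Your proposal and the paper's proof share the broad Orgogozo-style skeleton (reduce to a single constructible sheaf, alter the base via de Jong, use proper cohomological descent), but the central mechanism you propose---an explicit log nearby cycles description of $R\Psi$ for strictly semistable models---is not what the paper uses, and there are two places where I think your route genuinely breaks down.

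First, the input from de Jong. The paper (following \cite{Orgogozo19}) invokes \cite[Theorem 5.9]{deJong97}, which produces, after an integral alteration of source and base, a factorization of $f$ as a tower of \emph{nodal curve fibrations} (``pluri nodal curves'')---a fibration structure, not a global strictly semistable or toroidal model of $X'_m$ over $V$. Strict semistable reduction over a general (higher-dimensional, excellent) base in the sense you need is not a theorem of de Jong and is not available in the generality required here; arranging it \emph{compatibly for every term of a proper hypercovering} is a further escalation. So the geometric reduction you describe is not the one actually supplied by the references you cite.

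Second, and more seriously, the claim that sliced nearby cycles of a strictly semistable (or log smooth) family over a general base are given by an ``explicit log nearby cycles formula (Kato--Nakayama)'' that is ``manifestly compatible with any base change and manifestly unipotent'' is not established in this generality. The Rapoport--Zink/Kato--Nakayama descriptions in \cite{Illusie17} and the surrounding literature are over a trait (or, with care, a one-dimensional base); over a general base, compatibility of $R\Psi_f$ with base change for log smooth $f$ is precisely the kind of statement that \emph{must be proved}, and indeed in Orgogozo's framework it is a theorem, not a formula one reads off. The paper avoids the issue entirely: it never computes $R\Psi$ on the nodal-curve models. Instead it exploits Orgogozo's \cite[Proposition 2.3.1]{Orgogozo19}, which says that for a nodal curve $h \colon Y \to X$ adapted to a pair of opens, the classical higher direct image $R^i h_*(u_! \mathcal{L})$ of a locally unipotent sheaf stays locally unipotent along the image stratification. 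This lets one d\'evisser via the tower and pass the two properties down fibration by fibration, without ever needing an explicit description of the nearby cycles complex. Your sub-claim that the cyclotomic twists are trivial over a strictly local $U$ is correct (since $U$ strictly Henselian with $n$ invertible forces $\mu_n \subset \O_U$), but that observation is downstream of the missing formula, so it doesn't save the argument.

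A smaller but real issue: your induction is on $\dim S$ alone, relying on cohomological descent to pass from the hypercovering to $X_V$. In the paper's Lemma \ref{Lemma:proper descent}, descent \emph{degrades the truncation degree}: to get $\rho$-compatibility or $\rho$-unipotence for $f$ one needs $(\rho - m)$-versions for $f_m$ on the $m$-th term. This is why the paper's induction is on the triple $(S, \rho, d)$ (with $\rho$ a truncation parameter and $d$ a fiber dimension, ordered appropriately) rather than just on $\dim S$. If you don't carry a truncation index through the induction, the descent step does not close, because the statement you want to prove on the hypercovering terms is not literally the statement you started from.

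So: the high-level strategy (de Jong + cohomological descent + locally unipotent sheaves) is aligned with the paper, but the substitution of ``strictly semistable models + log nearby cycles formula'' for ``pluri nodal curves + Orgogozo's Proposition 2.3.1'' introduces unproved inputs, and the plain dimension induction misses the truncation bookkeeping that the paper needs to make descent work.
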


In fact, as in \cite{Orgogozo06},
we can show a more precise result
for the compatibility of the sliced nearby cycles functors with base change as a corollary of Theorem \ref{Theorem:uniform base change}:

\begin{cor}\label{Corollary:modification}
Under the assumptions of
Theorem \ref{Theorem:uniform base change},
there exists a
\textit{modification} $S' \to S$
such that,
for every positive integer $n$ invertible on $S$ and
every complex
$\mathcal{K} \in D^{+}(X, \Z/n\Z)$
whose cohomology sheaves are constructible sheaves of $\Z/n\Z$-modules
and are locally unipotent along $\mathfrak{X}$,
the sliced nearby cycles complexes for
$f_{S'} \colon X_{S'} \to S'$ and $\mathcal{K}_{S'}$ are compatible with any base change.
\end{cor}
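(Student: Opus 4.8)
The strategy is to bootstrap from Theorem \ref{Theorem:uniform base change}, which already furnishes an alteration $S'' \to S$ over which the sliced nearby cycles for $f_{S''}$ and any constructible, $\mathfrak{X}$-locally-unipotent complex (with $\Z/n\Z$-coefficients, $n$ invertible on $S$) are compatible with all base changes. What remains is to \emph{improve} the alteration to a modification, following the device Orgogozo uses in \cite[Th\'eor\`eme 2.1]{Orgogozo06}. First I would reduce to the case where $S$ is integral: a Noetherian scheme has finitely many irreducible components, and an alteration/modification can be built component by component after passing to the reductions of the irreducible components (here one uses that the nearby cycles formation and the notion of being locally unipotent are insensitive to the reduced structure, and that the relevant statements are \'etale-local on $S$ hence can be checked on the disjoint union of the components). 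So assume $S$ integral with generic point $\eta_S$.

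\medskip
Next, by Theorem \ref{Theorem:uniform base change} choose an alteration $a \colon S'' \to S$, which after replacing $S''$ by an irreducible component dominating $S$ we may assume integral; let $L/\kappa(\eta_S)$ be the (finite) extension of function fields induced by $a$ at the generic point. The key classical input is the following descent fact: given the finite separable part $L_0$ of $L$ and the normalization $\widetilde{S}$ of $S$ in $L_0$, the sliced nearby cycles over a modification of $S$ can be controlled by those over a modification of $\widetilde S$, because $\widetilde S \to S$ is finite and generically \'etale, and finite generically \'etale base change both preserves the "compatible with any base change" property (via the projection/base-change formalism for the vanishing topos, already recalled in Remark \ref{Remark:vanishing topos}) and is itself a modification after removing the branch locus — more precisely, one blows up $S$ so that $\widetilde S \to S$ becomes finite flat, then further so that it becomes an iterated composite of closed immersions and radicial or \'etale pieces that do not affect the property in question. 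Concretely, I would: (i) spread out to get an open dense $U \subset S$ over which $a^{-1}(U) \to U$ is finite; (ii) over $U$, use that a finite surjective morphism of integral schemes with purely inseparable or separable generic fibre does not disturb compatibility with base change (purely inseparable: universal homeomorphism, so the \'etale sites agree; separable finite \'etale: a direct summand argument / trace map); (iii) over the complement $S \setminus U$, induct on $\dim S$, since $\dim(S \setminus U) < \dim S$, applying the already-constructed alteration-to-modification reduction there and gluing via the standard "modifications glue over a dense open and its closed complement" patching (as in Orgogozo, this uses that a sheaf compatible with base change over $S'$ and over the preimage of a closed subset, compatibly, is compatible over the blow-up realizing both).

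\medskip
The inductive step over $S \setminus U$ needs care: one obtains a modification $T \to \overline{S \setminus U}$ (with reduced structure) handling the restriction of $f$ there, then takes a common blow-up $S' \to S$ dominating both $S''$ over $U$ and $T$ over $S \setminus U$ (possible by flattening/Gruson–Raynaud for the coherent ideal sheaves involved), and checks that over $S'$ the nearby cycles are still compatible with base change — this last verification is where one must know that compatibility with base change, for a \emph{fixed} $f$ and a complex $\mathcal K$, descends through a modification $S' \to S_1$ when it holds after the modification and on the exceptional/closed part, which is exactly the local computation \eqref{equation:vanishing cycle stalk} combined with proper base change and the fact that strict localizations of $S'$ map to strict localizations of $S_1$ (Orgogozo's lemma on "modification suffices"). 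Throughout, the uniformity in $n$ comes for free: every reduction above is performed on the level of $f$, $\mathfrak{X}$, and $S$ alone, never referring to $n$ or to a specific $\mathcal K$, so the single modification $S' \to S$ works simultaneously for all $n$ invertible on $S$ and all constructible $\mathfrak{X}$-locally-unipotent $\mathcal K$, precisely as in the statement.

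\medskip
The main obstacle I expect is step (iii): making the gluing of a modification over a dense open with a modification over the closed complement produce an \emph{honest} modification of $S$ (not merely an alteration) over which compatibility with base change genuinely holds — one must be careful that the flattening blow-ups introduced to reconcile the two pieces do not reintroduce a non-birational component, and that the "compatible with any base change" property, being a statement about all strict localizations, really is preserved by the final blow-up. This is the technical heart of Orgogozo's argument, and here it must be checked to run uniformly in $(n, \mathcal K)$ given Theorem \ref{Theorem:uniform base change}; fortunately, since Theorem \ref{Theorem:uniform base change} already encapsulates the hard geometric input (de Jong alterations, absolute purity, the uniform trivialization), this corollary is a purely formal descent, and the remaining work is bookkeeping with blow-ups and the vanishing-topos base-change maps $c_{f,q}(\mathcal K)$ of Remark \ref{Remark:vanishing topos}.
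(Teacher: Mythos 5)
Your proposal gets the broad shape right --- take the alteration supplied by Theorem~\ref{Theorem:uniform base change}, reduce to $S$ integral, flatten via Raynaud--Gruson, and descend ``compatibility with base change'' through the resulting finite surjective piece --- but the paper's actual proof is a one--line citation to \cite[Lemme~3.2 and Lemme~3.3]{Orgogozo06}: Lemme~3.2 is the descent of compatibility under finite surjective morphisms, and Lemme~3.3 uses flattening plus Lemme~3.2 to trade an alteration for a modification. That reduction is global and needs no induction on $\dim S$ and no gluing over an open-closed pair. The uniformity in $(n,\mathcal K)$ is automatic because the modification produced is the same one Orgogozo's lemmas produce from a \emph{fixed} alteration, and Theorem~\ref{Theorem:uniform base change} already gives a single alteration that works for all such $(n,\mathcal K)$.

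Two concrete problems with your reconstruction. First, your step~(iii): ``gluing'' a modification $T$ of the closed set $\overline{S\setminus U}$ with a modification of $S$ over $U$ is not a well-defined operation --- a modification of a proper closed subscheme does not extend to or sit inside a modification of $S$, and the phrase ``common blow-up dominating $S''$ over $U$ and $T$ over $S\setminus U$'' has no rigorous meaning. You then appeal, inside the verification, to ``Orgogozo's lemma on `modification suffices','' which is exactly the thing you set out to prove, so the argument becomes circular at the crucial point. Second, your descent step for the separable finite \'etale piece via the trace map splits $\Z/n\Z$ off $\pi_*\Z/n\Z$ only when $\deg\pi$ is invertible in $\Z/n\Z$; since $n$ ranges over all integers invertible on $S$, some $n$ will share factors with $\deg\pi$, so the trace argument does not give the uniformity in $n$ that the corollary requires. (A correct version of the \'etale-descent step is that over a strictly Henselian base any finite \'etale cover splits completely, so one compares stalks via a section; that is closer in spirit to what Orgogozo's Lemme~3.2 does, and works without any prime-to-$n$ hypothesis.) The clean route is to follow the paper: apply Theorem~\ref{Theorem:uniform base change} to get one alteration good for all $(n,\mathcal K)$, then invoke \cite[Lemme~3.2 and Lemme~3.3]{Orgogozo06} once, noting that both lemmas are purely geometric reductions that do not depend on the coefficient ring.
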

\begin{proof}
This follows from Theorem \ref{Theorem:uniform base change}
together with \cite[Lemme 3.2 and Lemme 3.3]{Orgogozo06}.
\end{proof}

\section{Proof of Theorem \ref{Theorem:uniform base change}}\label{Section:Proof of theorem uniform base change}

\subsection{Nodal curves}\label{Subsection:Nodal curves}

In this subsection, we recall some results on nodal curves from \cite{deJong97, Orgogozo19}.
Let $f \colon X \to S$ be a morphism of Noetherian schemes.
We say that $f$ is a \textit{nodal curve}
if it is a flat projective morphism such that every geometric fiber of $f$ is a connected reduced curve having at most ordinary double points as singularities.
We say that $f$ is a
\textit{nodal curve adapted to a pair} $(X^{\circ}, S^{\circ})$ of dense open subsets
$X^{\circ}$ and $S^{\circ}$ of $X$ and $S$, respectively, if the following conditions are satisfied:
\begin{itemize}
    \item $f$ is a nodal curve which is smooth over $S^{\circ}$.
    \item There is a closed subscheme $D$ of $X$ which is \'etale over $S$
    and is contained in the smooth locus of $f$.
    Moreover we have $f^{-1}(S^{\circ}) \cap (X \backslash D)=X^{\circ}$.
\end{itemize}

The following proposition will be used in the proof of Theorem \ref{Theorem:uniform base change}, which is one of the main reasons why we introduce the notion of locally unipotent sheaves.

\begin{prop}[{Orgogozo \cite[Proposition 2.3.1]{Orgogozo19}}]\label{Proposition:nodal curve case}
Let $S$ be a Noetherian scheme and $f \colon X \to S$ a nodal curve
adapted to a pair $(X^{\circ}, S^{\circ})$ of dense open subsets
$X^{\circ}$ and $S^{\circ}$ of $X$ and $S$, respectively.
Let $u \colon X^{\circ} \hookrightarrow X$ denote the open immersion.
Assume that $S^{\circ}$ is normal.
Then,
for every positive integer $n$ invertible on $S$ and
every locally constant constructible sheaf $\mathcal{L}$ of $\Z/n\Z$-modules on $X^{\circ}$ such that
$u_{!}\mathcal{L}$ is locally unipotent along the stratification $\mathfrak{X}=\{ X^{\circ}, X \backslash X^{\circ} \}$ of $X$,
the sheaf
\[
R^{i}f_*(u_{!}\mathcal{L})
\]
is locally unipotent along the stratification $\mathfrak{S}=\{ S^{\circ}, S \backslash S^{\circ} \}$ of $S$ for every $i$.
\end{prop}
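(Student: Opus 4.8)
The plan is to reduce, via proper base change and the local nature of the conclusion, to a statement over a strictly local base, for which the only stratum carrying content is the open one; over that stratum one is looking at a smooth proper relative curve with an \'etale divisor removed, and the two tasks become lisseness of the higher direct images and unipotence of their monodromy.

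\textbf{Reductions.} Local unipotence along $\mathfrak{S}$ is tested on strictly local schemes over $S$ and $f$ is proper, so I would first use proper base change to replace $S$ by an arbitrary strictly local $V \to S$: one has $R^{i}f_*(u_{!}\mathcal{L})|_{V} \cong R^{i}f_{V*}(u_{V!}\mathcal{L}_{V})$, the base change $(V, f_V, X^{\circ}_V, V^{\circ}, D_V)$ is again a nodal curve adapted to a pair with $V^{\circ} := V \times_S S^{\circ}$ normal (localisation and strict henselisation preserve normality of $S^{\circ}$), and $u_{V!}\mathcal{L}_{V}$ is again locally unipotent along the pulled-back stratification. So it suffices to show: if $S$ is strictly local, then $R^{i}f_*(u_{!}\mathcal{L})$ restricts to a unipotent sheaf on each of $S^{\circ}$ and $S \setminus S^{\circ}$. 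On $S \setminus S^{\circ}$ this is immediate, since $X^{\circ}$ and $f^{-1}(S \setminus S^{\circ})$ are disjoint, whence $u_{!}\mathcal{L}$ restricts to $0$ on $f^{-1}(S \setminus S^{\circ})$ and, by proper base change, $R^{i}f_*(u_{!}\mathcal{L})|_{S \setminus S^{\circ}} = 0$. Over $S^{\circ}$ the morphism $f$ is a smooth proper relative curve, $D_{S^{\circ}} := D \cap f^{-1}(S^{\circ})$ is finite \'etale over $S^{\circ}$, $X^{\circ} = f^{-1}(S^{\circ}) \setminus D$, and, writing $j$ for the open immersion $X^{\circ} \hookrightarrow f^{-1}(S^{\circ})$, one has $R^{i}f_*(u_{!}\mathcal{L})|_{S^{\circ}} = R^{i}f_*(j_{!}\mathcal{L})$. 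Since $S$ is strictly local, $\pi_1(S^{\circ})$ is the local fundamental group around $S \setminus S^{\circ}$, and the goal becomes: $R^{i}f_*(j_{!}\mathcal{L})$ is lisse on $S^{\circ}$ with unipotent $\pi_1(S^{\circ})$-monodromy.

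\textbf{Lisseness on $S^{\circ}$.} Applying the locally unipotent hypothesis to a strict localisation of $X$ at a generic point of $D$ forces the inertia action of $\mathcal{L}$ around $D$ to be unipotent; since $n$ is invertible on $S$, a nontrivial unipotent $\Z/n\Z$-linear inertia action cannot factor through the wild pro-$p$ inertia, so $\mathcal{L}$ is tamely ramified along $D$. A tamely ramified lisse $\Z/n\Z$-sheaf on a smooth proper relative curve with an \'etale divisor at infinity is universally locally acyclic over the base (equivalently, by a relative Grothendieck--Ogg--Shafarevich count the Euler characteristic of $(X^{\circ}_{\bar{s}}, \mathcal{L}_{\bar{s}})$ is locally constant in $\bar{s} \in S^{\circ}$), so, together with properness of $f$, the sheaves $R^{i}f_*(j_{!}\mathcal{L})$ are locally constant constructible on $S^{\circ}$ for all $i$.

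\textbf{Unipotence of the monodromy.} Fix a geometric point $\bar{\eta} \to S^{\circ}$; by proper base change the stalk there is $H^{i}(X_{\bar{\eta}}, j_{!}\mathcal{L})$ with $X_{\bar{\eta}}$ a smooth proper curve and $\mathcal{L}$ lisse on $X^{\circ}_{\bar{\eta}} = X_{\bar{\eta}} \setminus D_{\bar{\eta}}$. Using the homotopy exact sequence of $X^{\circ}_{S^{\circ}} \to S^{\circ}$ together with the excision triangle relating $j_{!}\mathcal{L}$, $Rj_*\mathcal{L}$ and a complex supported on $D_{\bar{\eta}}$, I would express the $\pi_1(S^{\circ})$-action on $H^{i}(X_{\bar{\eta}}, j_{!}\mathcal{L})$ in terms of two pieces: (a) the action on $H^{*}(X_{\bar{\eta}}, \Z/n\Z)$ and on $H^{*}(D_{\bar{\eta}}, \Z/n\Z)$, i.e.\ the monodromy of the degenerating family of curves with its marked points; and (b) the action on a stalk of $\mathcal{L}$, i.e.\ the local monodromy of $\mathcal{L}$. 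Piece (b) is unipotent directly by the hypothesis, applying the definition of local unipotence of $u_{!}\mathcal{L}$ along $\mathfrak{X}$ to a strict localisation of $X$ at a point of $f^{-1}(S \setminus S^{\circ})$ generised by $\bar{\eta}$. For (a): since $X_S \to S$ is a nodal --- hence semistable --- proper curve over the strictly local base $S$ which is smooth over $S^{\circ}$, the nearby cycles of $f$ admit the explicit Picard--Lefschetz description at the nodes (this is where I would invoke the structure theory of nodal curves from \cite{deJong97, Orgogozo19}), so $\pi_1(S^{\circ})$ acts unipotently on $H^{*}(X_{\bar{\eta}}, \Z/n\Z)$; and it acts trivially on $H^{*}(D_{\bar{\eta}}, \Z/n\Z)$ because $D$ is finite \'etale over $S$. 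Since $\pi_1(S^{\circ})$-unipotence passes to subquotients and extensions and propagates through the long exact sequences and spectral sequences assembling $H^{i}(X_{\bar{\eta}}, j_{!}\mathcal{L})$ out of (a) and (b), the monodromy is unipotent; equivalently $R^{i}f_*(j_{!}\mathcal{L})$ is unipotent on $S^{\circ}$ (consistently with Remark \ref{Remark:locally unipotent is constructible along a stratification}), which finishes the argument.

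\textbf{Main obstacle.} The reductions and the spectral-sequence bookkeeping are routine; the delicate input is twofold. First and foremost is the unipotence of the degeneration monodromy of the nodal curve $X_S \to S$ over a strictly local base that may be non-reduced, non-normal and of dimension greater than one: classically ``semistable implies unipotent monodromy'' is Grothendieck's local monodromy theorem over a discrete valuation ring, and here one must either reduce the general base to that case or carry out the Picard--Lefschetz computation of the nearby cycles of a nodal curve uniformly in the base --- this is exactly the point at which the local model $\Spec \O_S[x,y]/(xy - t)$ at a node is indispensable, and I expect it to be the main hurdle. Second, one must make precise that the $\pi_1(S^{\circ})$-action on $H^{i}(X_{\bar{\eta}}, j_{!}\mathcal{L})$ is genuinely controlled by pieces (a) and (b) alone, which calls for care with the homotopy exact sequence for the non-proper morphism $X^{\circ}_{S^{\circ}} \to S^{\circ}$ and with checking that the fundamental group of a punctured strict localisation of $X$ over $f^{-1}(S \setminus S^{\circ})$ surjects, in the relevant sense, onto $\pi_1(S^{\circ})$, so that the locally unipotent hypothesis can be used to control the coefficient monodromy.
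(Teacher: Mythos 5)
Your overall strategy is the right one and matches what one would expect: reduce to a strictly local base $V$ over $S$, observe that $R^{i}f_{*}(u_{!}\mathcal{L})$ vanishes on $V\setminus V^{\circ}$ because $u_{!}\mathcal{L}$ is supported in $X^{\circ}\subseteq f^{-1}(S^{\circ})$ and hence restricts to zero on every fibre over $S\setminus S^{\circ}$, and then establish lisseness and unipotent monodromy of the restriction to $V^{\circ}$. The lisseness step is correct in outline: local unipotence of $\mathcal{L}$ around $D$ together with $n\in\O^{\times}$ forces tame ramification along the relative \'etale divisor $D$, and a smooth proper relative curve with tame coefficients and \'etale boundary has lisse higher direct images. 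Note for context that this paper does not prove Proposition~\ref{Proposition:nodal curve case} internally --- it cites \cite[Proposition~2.3.1]{Orgogozo19} outright --- so there is no house proof to calibrate against.

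The genuine gap is in the unipotence step. The claimed decomposition of the $\pi_{1}(V^{\circ})$-action on $H^{i}(X_{\bar{\eta}},j_{!}\mathcal{L})$ into ``piece (a)'' (geometric monodromy of the degenerating family on $H^{*}(X_{\bar{\eta}},\Z/n\Z)$ and $H^{*}(D_{\bar{\eta}},\Z/n\Z)$) and ``piece (b)'' (local monodromy on a stalk of $\mathcal{L}$) is not a decomposition that exists in general: $\mathcal{L}$ may have arbitrary monodromy along the geometric generic fibre $X^{\circ}_{\bar{\eta}}$, and neither the homotopy exact sequence for $X^{\circ}_{V^{\circ}}\to V^{\circ}$ nor the excision triangle for $j_{!}\mathcal{L}$ separates base monodromy from coefficient monodromy in the way the argument needs. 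What the hypothesis actually provides is that for each point $x$ of $X\setminus X^{\circ}$ lying over the closed point of $V$, the group $\pi_{1}(X_{(x)}\times_{X}X^{\circ})$ acts unipotently on a stalk of $\mathcal{L}$; converting this family of local unipotence statements --- at every node and every point of $D$ over the special fibre --- into global unipotence of the $\pi_{1}(V^{\circ})$-representation on $H^{i}(X_{\bar{\eta}},j_{!}\mathcal{L})$ is the actual content of the result and is left unaddressed. The same holds for the unipotence of $\pi_{1}(V^{\circ})$ acting on $H^{*}(X_{\bar{\eta}},\Z/n\Z)$ for a nodal degeneration over a general (possibly non-reduced, higher-dimensional) strictly local base, which you explicitly postpone. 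In short you have located the difficulty accurately, but the two points you flag as ``the main obstacle'' are precisely the crux of Orgogozo's proposition, and the proposal as written does not close either of them.
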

\begin{proof}
See \cite[Proposition 2.3.1]{Orgogozo19}.
\end{proof}

\begin{rem}\label{Remark:nodal curve cases}
The proof of Theorem \ref{Theorem:uniform base change} is inspired by that of Proposition \ref{Proposition:nodal curve case}.
In fact,
we can show that,
with the notation of Proposition \ref{Proposition:nodal curve case},
the nearby cycles for $f$ and $u_! \mathcal{L}$ are compatible with any base change and unipotent.
Since we will not use this fact in the proof of Theorem \ref{Theorem:uniform base change}, we omit the proof of it.
\end{rem}

We say that a morphism
$f \colon X \to S$
of Noetherian integral schemes
is
a \textit{pluri nodal curve adapted to a dense open subset}
$X^{\circ} \subset X$
if there are an integer $d \geq 0$, a sequence
\[
(X=X_d \underset{f_d}{\to} X_{d-1} \to \cdots \to X_1 \underset{f_1}{\to} X_0=S)
\]
of morphisms of Noetherian integral schemes, and dense open subsets $X^{\circ}_i \subset X_i$ for every $0 \leq i \leq d$ with $X^{\circ}_d=X^{\circ}$ such that
$f_i \colon X_i \to X_{i-1}$ is
a nodal curve adapted to the pair $(X^{\circ}_i, X^{\circ}_{i-1})$
for every $ 1 \leq i \leq d$.
If $d=0$, by convention, it means that $X=S$ and $f$ is the identity map.

The following theorem of de Jong plays an important role in the proof of Theorem \ref{Theorem:uniform base change}.

\begin{thm}[{de Jong \cite[Theorem 5.9]{deJong97}}]\label{Theorem:alteration}
Let $f \colon X \to S$ be a proper surjective morphism of Noetherian excellent integral schemes.
Let $X^{\circ} \subset X$ be a dense open subset.
We assume that the geometric generic fiber of $f$ is irreducible.
Then there is the following commutative diagram:
\[
\xymatrix{ X_0 \ar[r]^-{f'} \ar[d]^-{} & S' \ar[d]^-{}  \\
X\ar[r]^-{f} & S,
}
\]
where the vertical maps are integral alterations and
$f'$ is a pluri nodal curve adapted to a dense open subset
$X^{\circ\circ}_0 \subset X_0$ which is contained in the inverse image of 
$X^{\circ} \subset X$.
\end{thm}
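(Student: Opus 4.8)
The plan is to argue by induction on the relative dimension $d$ of $f$, reducing it by one at each stage by fibering $X$ into nodal curves after an integral alteration. Concretely, each inductive step will replace $X$ and $S$ by suitable integral alterations so that $f$ becomes a nodal curve (adapted to a pair of dense opens) over a new base which still maps properly and surjectively to $S$ but now has relative dimension $d-1$; one then recurses on that base. For the base case $d=0$ the morphism $f$ is generically finite and $X$ is integral, so one takes $X_0=S'$ to be the normalization of $X$: this is finite over $X$ (as $X$ is excellent), hence an integral alteration, it maps to $S$ via an integral alteration, $f'=\id$ is the pluri nodal curve of length $0$, and $X^{\circ\circ}_0$ is the preimage of $X^\circ$.

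Assume now $d\ge 1$. First, by a Chow-type argument (shrink $S$, then take closures) one reduces to the case where $f$ is projective, say $X\hookrightarrow\P^N_S$ for a relatively very ample line bundle. One then wants to project $X$ onto a $\P^{d-1}$-bundle over $S$ so that the fibers become curves: choosing a sufficiently general linear subspace $L\cong\P^{N-d}_S\subset\P^N_S$ --- which may force an alteration of $S$ in order to have enough sections at one's disposal, the role of de Jong's ``three-point lemma'' --- the projection away from $L$ is a rational map $X\dashrightarrow\P^{d-1}_S$ which, after resolving its indeterminacy by a modification $X'\to X$, becomes a morphism $h\colon X'\to\P^{d-1}_S$ with irreducible reduced geometric generic fiber of dimension $1$. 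Simultaneously one carries along the divisor $Z:=X\setminus X^\circ$: after further modifications one arranges that the part of the total transform of $Z$ dominating $\P^{d-1}_S$ is finite over $\P^{d-1}_S$ while the remaining components lie over a proper closed subset.

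The heart of the argument is to make the family of curves $h$ semistable over the whole base. Over the generic point $\eta$ of $\P^{d-1}_S$, the curve $X'_\eta$ together with finitely many marked points --- the horizontal part of $Z$, the non-smooth points of $h$, plus extra general points to guarantee stability --- defines an $\eta$-point of the moduli stack $\overline{\mathcal M}_{g,n}$ of stable $n$-pointed curves of some genus $g$. Passing to an alteration $C\to\P^{d-1}_S$ over which this point lifts to $\overline{\mathcal M}_{g,n}$, and then to a suitable irreducible component and its normalization so that $C$ is integral with irreducible geometric generic fiber over $S$, one obtains a stable $n$-pointed nodal curve $\bar X_C\to C$, which is a nodal curve adapted to a pair $(\bar X_C^\circ,C^\circ)$ of dense opens --- the marked sections providing the \'etale divisor $D$ --- with $\bar X_C^\circ$ contained in the preimage of $X^\circ$. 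Since $C\to S$ is now proper and surjective of relative dimension $d-1$ with integral source and irreducible geometric generic fiber, the inductive hypothesis applied to $C\to S$ and $C^\circ\subset C$ yields integral alterations $C_0\to C$ and $S'\to S$ and a pluri nodal curve $C_0\to S'$ adapted to a dense open $C^{\circ\circ}_0\subset C_0$ contained in the preimage of $C^\circ$. Setting $X_0:=\bar X_C\times_C C_0$ (and resolving the evident remaining indeterminacies, passing to the relevant component and normalizing) gives an integral alteration $X_0\to X$ such that $X_0\to C_0$ is again a nodal curve adapted to the pair $(X^{\circ\circ}_0,C^{\circ\circ}_0)$ for a dense open $X^{\circ\circ}_0$ inside the preimage of $X^\circ$; prepending this curve fibration to the length-$(d-1)$ tower $C_0\to S'$ exhibits the composite $f'\colon X_0\to S'$ as a pluri nodal curve adapted to $X^{\circ\circ}_0$, which completes the induction.

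The main obstacle is precisely this semistable-reduction step: forcing the family of curves $h$ to acquire at worst ordinary double points over the \emph{entire} base, not just generically. This is what makes genuine alterations (rather than modifications) of the base unavoidable, and the natural way to extract such an alteration is via the properness of $\overline{\mathcal M}_{g,n}$ together with enough smoothness to lift $\eta$-points after an alteration --- which in turn requires care: level structures to rigidify and to pass from stacks to schemes, and a treatment of semistable reduction of curves valid in residue characteristic $p$. By comparison, the remaining ingredients --- general projections over the base (using the three-point lemma and alterations of $S$), resolution of indeterminacy by blow-ups, and the bookkeeping that keeps $X^\circ$ and the bad divisor controlled so that each curve in the resulting tower is genuinely adapted to a pair of dense opens with an honest \'etale divisor $D$ --- are more routine, though they are the reason one must allow alterations of $S$ as in the statement.
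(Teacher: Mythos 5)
The paper gives no proof of this theorem beyond a citation: its proof environment points to \cite[Theorem 5.9]{deJong97} and the proof of \cite[Theorem 5.10]{deJong97}, and its only substantive remark is the observation that when the generic fiber of $f$ has dimension zero the morphism $f$ is itself an integral alteration, so one may simply take $S'=X$ and $f'=\mathrm{id}_X$. You have therefore reconstructed, at sketch level, the content of the reference the paper delegates to, and your outline is a faithful summary of de Jong's strategy: descending induction on the relative dimension, fibering into curves by a general projection after resolving indeterminacy, carrying along the boundary divisor $X\setminus X^\circ$, effecting semistable reduction of the resulting curve family via the properness of $\overline{\mathcal{M}}_{g,n}$ together with level structures to descend from the stack to a scheme, and recursing on the one-dimension-smaller base $C\to S$. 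Two small remarks. In the base case your normalization step is redundant: as the paper notes, $X_0=S'=X$ already works, since $f$ is then a proper, surjective, generically finite map of integral schemes and hence already an integral alteration. And when you set up the inductive call on $C\to S$, making the geometric generic fiber of $C$ irreducible over $S$ in general requires a finite extension of the function field of $S$ (that is, a further alteration of $S$), not merely passing to an irreducible component of $C$ and normalizing; this is a routine step but worth flagging given how carefully you track the other bookkeeping.
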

\begin{proof}
See \cite[Theorem 5.9]{deJong97} and the proof of \cite[Theorem 5.10]{deJong97}.
We note that if the dimension of the generic fiber of $f$ is zero, then $f$ is an integral alteration.
Hence we can take $S'$ as $X$ and take $f'$ as the identity map on $X$ in this case.
\end{proof}

\subsection{Preliminary lemmas}\label{Subsection:Preliminary lemmas}

We shall give two lemmas,
which will be used in the proof of Theorem \ref{Theorem:uniform base change}.

We will need the following terminology.

\begin{defn}\label{Definition:truncated}
Let $f \colon X \to S$ be a morphism of schemes.
Let $\Lambda$ be a commutative ring and $\mathcal{K} \in D^{+}(X, \Lambda)$ a complex.
Let $\rho$ be an integer.
\begin{enumerate}
    \item We say that
    \textit{the sliced nearby cycles complexes for $f$ and $\mathcal{K}$ are $\rho$-compatible with any base change}
    (or simply that \textit{the nearby cycles for $f$ and $\mathcal{K}$ are $\rho$-compatible with any base change})
    if
    for every local morphism $q \colon V \to U$
    of strictly local schemes over $S$ and every algebraic geometric point
    $\overline{\xi} \to V$ with image $\overline{\eta} \to U$,
    we have
    $\tau_{\leq \rho} \Delta=0$
    for the cone $\Delta$ of the base change map:
    \[
    q^*R\Psi_{f_U, \overline{\eta}}(\mathcal{K}_U) \to R\Psi_{f_V, \overline{\xi}}(\mathcal{K}_V) \to \Delta \to.
    \]
    \item We say that
    \textit{the sliced nearby cycles complexes for $f$ and $\mathcal{K}$ are $\rho$-unipotent}
     (or simply that \textit{the nearby cycles for $f$ and $\mathcal{K}$ are $\rho$-unipotent})
    if for every morphism $q \colon U \to S$ from a strictly local scheme $U$, a point $\eta \in U$, and an algebraic geometric point $\overline{\eta} \to U$ lying above $\eta$,
    the complex
    \[
    \tau_{\leq \rho}R\Psi_{f_{U}, \overline{\eta}}(\mathcal{K}_U)
    \]
    is
    $\Gal(\kappa(\overline{\eta})/\kappa(\eta))$-unipotent.
\end{enumerate}
\end{defn}

\begin{lem}\label{Lemma:reduction, closed subscheme}
Let $f \colon X \to Z$ and $g \colon Z \to S$
be morphisms of schemes.
Let $h:= g \circ f$ denote the composition.
Let $\mathcal{K} \in D^{+}(X, \Z/n\Z)$ be a complex.
\begin{enumerate}
    \item Assume that $g$ is a closed immersion.
    If the nearby cycles for $f$ and $\mathcal{K}$ are $\rho$-compatible with any base change
        (resp.\ are $\rho$-unipotent), then so
        are the nearby cycles for $h$ and $\mathcal{K}$.
    \item Assume that $f$ is a closed immersion.
    If the nearby cycles for $h$ and $\mathcal{K}$ are $\rho$-compatible with any base change
        (resp.\ are $\rho$-unipotent), then so
        are the nearby cycles for $g$ and $f_*\mathcal{K}$.
\end{enumerate}
\end{lem}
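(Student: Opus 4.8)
The plan is to unwind the definitions of the sliced nearby cycles functor for the composed morphism and relate them to those for the factors, using that a closed immersion is a ``trivial'' situation for nearby cycles over general bases. For part~(1), let $q\colon V\to U$ be a local morphism of strictly local schemes over $S$ and $\overline{\xi}\to V$ an algebraic geometric point with image $\overline{\eta}\to U$. The key observation is that $X_U = X\times_S U = X\times_Z (Z\times_S U) = X_{Z_U}$, where $Z_U := Z\times_S U$, and similarly $X_{U_{(\overline{\eta})}} = X_{Z_{U_{(\overline{\eta})}}}$; here I am using that $g$ is a closed immersion, so $Z_U$ is a closed subscheme of $U$ and in particular $X\times_S U$ only depends on the base change to $Z$. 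Thus the commutative diagram computing $R\Psi_{h_U,\overline{\eta}}(\mathcal{K}_U)$ is literally the same as the one computing $R\Psi_{f_{Z_U},\overline{\eta}}(\mathcal{K}_{Z_U})$ once we pass from $U$ to $Z_U$ along the appropriate geometric points, and similarly after base change to $V$. Since $Z_U$ and $Z_V$ are again strictly local schemes (a closed subscheme of a strictly Henselian local scheme is strictly Henselian local) and $Z_V\to Z_U$ is again a local morphism, the base change map for $h$ and the triple $(V\to U,\overline{\xi},\overline{\eta})$ is identified with the base change map for $f$ and the corresponding triple of strictly local schemes over $Z$. Therefore the cone of the former is the cone of the latter, and $\rho$-compatibility for $f$ gives $\rho$-compatibility for $h$; the $\rho$-unipotence statement is identical since the Galois action is transported along the same identification. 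One small point to spell out: given a strictly local scheme $U$ over $S$, the image geometric point $\overline{\eta}\to U$ lands in the closed subscheme $Z_U$ precisely when $\eta$ does, but if $\eta\notin Z_U$ then $X_{U_{(\overline{\eta})}}=\emptyset$ (the fiber of a closed immersion over a point outside is empty), so $R\Psi_{h_U,\overline{\eta}}(\mathcal{K}_U)=0$ and the statement is vacuous there; hence it suffices to treat points $\eta$ with $\eta\in Z_U$, which is the case just analyzed.

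For part~(2), now $f\colon X\to Z$ is a closed immersion and we compare $R\Psi_{g_U,\overline{\eta}}(f_*\mathcal{K}_U)$ with $R\Psi_{h_U,\overline{\eta}}(\mathcal{K}_U)$. The point is that $f$ being a closed immersion is acyclic for the nearby cycles construction: in the diagram
\[
\xymatrix{ X_{U_{(\overline{\eta})}} \ar[r]^-{j'} \ar[d] & X_U \ar[d] & \ar[l]_-{i'} X_u \ar[d] \\
Z_{U_{(\overline{\eta})}} \ar[r]_-{j} & Z_U & \ar[l]^-{i} Z_u }
\]
the vertical maps are closed immersions (base changes of $f$), and since pushforward along a closed immersion is exact and commutes with $i^*$ and $j^*$ in the relevant way (proper base change for $i^*$, and $f_*$ commutes with the pullback $j^*$ because $f$ is a closed immersion hence $X_{U_{(\overline{\eta})}} = X_U\times_{Z_U} Z_{U_{(\overline{\eta})}}$), we get a canonical isomorphism $R\Psi_{g_U,\overline{\eta}}(f_*\mathcal{K}_U)\cong (f|_{Z_u})_* R\Psi_{h_U,\overline{\eta}}(\mathcal{K}_U)$, equivariantly for the Galois action. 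Both the base change maps and the unipotence condition are then transported across this isomorphism: $\rho$-compatibility of the base change maps for $h$ implies the same for $g$ and $f_*\mathcal{K}$ because a cone is carried to a cone by the exact functor $(f|_{Z_v})_*$ and the truncation $\tau_{\le\rho}$ commutes with pushforward along a closed immersion; likewise $\Gal$-unipotence of $\tau_{\le\rho}R\Psi_{h_U,\overline{\eta}}(\mathcal{K}_U)$ transfers to $\tau_{\le\rho}R\Psi_{g_U,\overline{\eta}}(f_*\mathcal{K}_U)$ because a $G$-stable filtration with trivial subquotients is pushed forward to one of the same kind (pushforward along a closed immersion is exact and $G$-equivariant).

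The genuinely delicate step — really the only place any care is needed — is the base-change compatibility $f_*\,Rj'_*\,(j')^*\cong Rj_*\,j^*\,f_*$ and its interaction with $i^*$. For $f_*$ versus $j^*$ this is the simple fact that $X_{U_{(\overline{\eta})}}\to X_U$ is obtained from $Z_{U_{(\overline{\eta})}}\to Z_U$ by the closed base change $f$, so (proper, even finite) base change applies; for $f_*$ versus $Rj_*$ one uses that $Rj_*$ for the quasi-compact quasi-separated immersion $j$ commutes with the pushforward $f_*$ along the closed immersion — this is again proper base change, applied to the square with $j,j'$ and the two copies of $f$. I would phrase this uniformly by invoking proper base change for the cartesian square of vertical closed immersions over $j$, together with the exactness of $f_*$, rather than chasing stalks. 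Once this identification is in place, both assertions of the lemma follow formally by functoriality of cones and truncations under the exact pushforward $f_*$, together with the trivial observation in part~(1) that forming $X_U = X\times_Z Z_U$ commutes with everything in sight. I expect no serious obstacle beyond organizing these compatibilities cleanly; the whole lemma is really a bookkeeping statement asserting that closed immersions are ``invisible'' to the formation of sliced nearby cycles.
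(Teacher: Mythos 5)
Your proposal is correct and follows essentially the same route as the paper's proof: for (1) one reduces to the closed subscheme case by noting $X_U$ only depends on $Z\times_S U$ and handling the vacuous case where $\overline{\eta}$ does not land in $Z$; for (2) one establishes the identification $(f_u)_*R\Psi_{h_U,\overline{\eta}}(\mathcal{K}_U)\cong R\Psi_{g_U,\overline{\eta}}((f_*\mathcal{K})_U)$ and uses exactness of $(f_u)_*$. You have merely spelled out the commutation of $f_*$ with $j^*$, $Rj_*$, $i^*$, and the truncations in more detail than the paper (which is content to state the isomorphism); this is harmless, and one could even streamline it by noting that the identification of derived pushforwards along both legs of the cartesian square is just functoriality of $R(-)_*$ under composition, with no appeal to proper base change needed there.
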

\begin{proof}
(1) Let $q \colon U \to S$ be a morphism from a strictly local scheme $U$
and $\overline{\eta} \to U$ an algebraic geometric point.
If the image of $\overline{\eta}$ in $S$ is not contained in $Z$,
then we have
$R\Psi_{h_U, \overline{\eta}}(\mathcal{K}_U)=0$.
If the image of $\overline{\eta}$ in $S$ is contained in $Z$,
then $U':=Z \times_S U$ is a strictly local scheme and
$\overline{\eta}$ induces an algebraic geometric point
$\overline{\eta}' \to U'$.
We have $U'_{(\overline{\eta}')} \cong Z \times_S U_{(\overline{\eta})}$,
and hence
$R\Psi_{h_U, \overline{\eta}}(\mathcal{K}_U) \cong
R\Psi_{f_{U'}, \overline{\eta}'}(\mathcal{K}_{U'})$.
The assertion follows from this description.

(2) Let $q \colon U \to S$ and $\overline{\eta} \to U$ be as above.
Let $u \in U$ be the closed point.
Then we have
$(f_u)_*R\Psi_{h_U, \overline{\eta}}(\mathcal{K}_U) \cong
R\Psi_{g_{U}, \overline{\eta}}((f_*\mathcal{K})_{U})$,
where $f_u \colon X_u \to Z_u$ is the base change of $f$.
Since $(f_u)_*$ is exact, the assertion follows from this isomorphism.
\end{proof}

As in \cite{Orgogozo06},
we need some results on cohomological descent.
See \cite[Expos\'e Vbis]{SGA 4-II} and \cite[Section 5]{Deligne HodgeIII}
for the terminology used here.
Let $f \colon Y \to X$ be a morphism of schemes.
Let
\[
\beta \colon Y_{\bullet}:=\cosq_0(Y/X) \to X
\]
be the augmented simplicial object in the category of schemes
defined as in
\cite[(5.1.4)]{Deligne HodgeIII},
so $Y_m$ is the
$(m+1)$-times fiber product
$Y \times_X \cdots \times_X Y$ for $m \geq 0$.
We can associate
to the \'etale topoi of $Y_m$ ($m \geq 0$)
a topos
$(Y_\bullet)^{\sim}$; see \cite[(5.1.6)--(5.1.8)]{Deligne HodgeIII}.
Moreover,
as in \cite[(5.1.11)]{Deligne HodgeIII},
we have a morphism of topoi
\[
(\beta_*, \beta^*) \colon (Y_\bullet)^{\sim} \to X^{\sim}_\et
\]
from $(Y_\bullet)^{\sim}$ to the \'etale topos $X^{\sim}_\et$ of $X$.

\begin{lem}\label{Lemma:proper descent}
Let $f \colon X \to S$ be a morphism of schemes.
Let $\beta_0 \colon Y \to X$ be a proper surjective morphism.
We put
$\beta \colon Y_{\bullet}:=\cosq_0(Y/X) \to X$.
Let $\mathcal{F}$ be a sheaf of $\Z/n\Z$-modules on $X$ and $\mathcal{F}_m:=\beta_m^*\mathcal{F}$ the pull-back of $\mathcal{F}$ by
$\beta_m \colon Y_m \to X$.
The composition $f \circ \beta_m$ is denoted by $f_m$.
Let $\rho \geq -1$ be an integer.
    \begin{enumerate}
        \item If the nearby cycles for $f_m$ and $\mathcal{F}_m$ are $(\rho-m)$-compatible with any base change
        for every $0 \leq m \leq \rho+1$,
        then the nearby cycles for $f$ and $\mathcal{F}$ are $\rho$-compatible with any base change.
        \item If the nearby cycles for $f_m$ and $\mathcal{F}_m$ are $(\rho-m)$-unipotent for every $0 \leq m \leq \rho$,
        then the nearby cycles for $f$ and $\mathcal{F}$ are $\rho$-unipotent.
    \end{enumerate}
\end{lem}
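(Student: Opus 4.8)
The statement is a cohomological-descent comparison for sliced nearby cycles, so I would follow the standard template: reduce the assertions about the base $X$ to assertions about the simplicial scheme $Y_\bullet = \cosq_0(Y/X)$ via proper cohomological descent, then use the spectral sequence of the simplicial object to control a given degree $\le \rho$ in terms of finitely many $Y_m$'s. The key input is that for a proper surjection $\beta_0 \colon Y \to X$ the augmentation $\beta \colon Y_\bullet \to X$ is of universal cohomological descent, so for any sheaf $\mathcal F$ of $\Z/n\Z$-modules on $X$ one has $\mathcal F \xrightarrow{\sim} R\beta_* \beta^* \mathcal F$, and more to the point this persists after any base change and after restricting to the strict localizations that appear in the definition of the sliced nearby cycles functor. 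I would first record this, and the resulting first-quadrant spectral sequence
\[
E_1^{m,j} = R^j (f_m)_* \big(\text{something}\big) \Longrightarrow R^{m+j} f_* \mathcal F,
\]
in the relevant "sliced" incarnation.

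\textbf{Step 1: localize.} Fix a local morphism $q \colon V \to U$ of strictly local schemes over $S$ and an algebraic geometric point $\overline{\xi} \to V$ with image $\overline{\eta} \to U$ (for (1)); or fix $q \colon U \to S$, a point $\eta \in U$, and $\overline{\eta} \to U$ (for (2)). Using the stalk description \eqref{equation:vanishing cycle stalk}, the complex $R\Psi_{f_U,\overline{\eta}}(\mathcal F_U)$ is computed, stalkwise at a geometric point $x \to X_u$, by $R\Gamma$ of $\mathcal F_U$ over $(X_U)_{(x)} \times_U U_{(\overline{\eta})}$. Pulling the cohomological-descent resolution $\mathcal F_U \xrightarrow{\sim} R\beta_{U,*}\beta_U^*\mathcal F_U$ back to this scheme (proper base change / the fact that universal cohomological descent is stable under arbitrary base change) gives a spectral sequence expressing $R\Psi_{f_U,\overline{\eta}}(\mathcal F_U)$ in terms of the $R\Psi_{(f_m)_U, \overline{\eta}_m}(\mathcal F_{m,U})$ for the various $m$, compatibly with the analogous spectral sequence over $V$ (resp. compatibly with the $\Gal(\kappa(\overline{\eta})/\kappa(\eta))$-action). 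Concretely I would write this as a spectral sequence of $E_1$-type with $E_1^{m,\bullet} = R\Psi_{(f_m)_U,\overline{\eta}}(\mathcal F_{m,U})$ (here $\overline{\eta}$ refers to the induced algebraic geometric point on the base, which is literally the same since $Y_m \to X$ is over $X$ hence over $S$) abutting to $R\Psi_{f_U,\overline{\eta}}(\mathcal F_U)$, and likewise over $V$, with a map of spectral sequences induced by $q$.

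\textbf{Step 2: degree bookkeeping.} Now run the standard truncation argument. For part (1): the cone $\Delta$ of $q^* R\Psi_{f_U,\overline{\eta}}(\mathcal F_U) \to R\Psi_{f_V,\overline{\xi}}(\mathcal F_V)$ receives a filtration, coming from the spectral sequence, whose graded pieces are (shifts by $-m$ of) the cones of the base-change maps for $f_m$ and $\mathcal F_m$. A class in $\mathcal H^k(\Delta)$ with $k \le \rho$ is detected, on the $E_1$-page, by contributions from $\mathrm{cone}(q^* R\Psi_{(f_m)_U,\overline{\eta}} \to R\Psi_{(f_m)_V,\overline{\xi}})(\mathcal F_{m})$ in cohomological degree $k - m \le \rho - m$, which vanishes by the hypothesis that the nearby cycles for $f_m$ and $\mathcal F_m$ are $(\rho-m)$-compatible with base change, for all $m$ with $\rho - m \ge -1$, i.e. $0 \le m \le \rho+1$ — exactly the range assumed. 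Hence $\tau_{\le\rho}\Delta = 0$. (The bound $\rho \ge -1$ and the $m \le \rho+1$ cutoff are what make the indexing work, because the cone being "$-1$-compatible" is automatic.) For part (2): $\tau_{\le\rho} R\Psi_{f_U,\overline{\eta}}(\mathcal F_U)$ has, from the same spectral sequence, a finite filtration stable under $\Gal(\kappa(\overline{\eta})/\kappa(\eta))$ whose graded pieces are subquotients of $\mathcal H^{j}\big(\tau_{\le \rho - m} R\Psi_{(f_m)_U,\overline{\eta}}(\mathcal F_{m,U})\big)$ for $0 \le m \le \rho$; each of these is $\Gal$-unipotent by the hypothesis of $(\rho-m)$-unipotence, and an extension (as Galois modules) of unipotent objects by unipotent objects is unipotent, so $\tau_{\le\rho} R\Psi_{f_U,\overline{\eta}}(\mathcal F_U)$ is $\Gal$-unipotent.

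\textbf{Main obstacle.} The delicate point is not the homological algebra but making the spectral-sequence argument genuinely compatible with the two operations that do not commute with everything in sight: (a) strict localization on the base $U_{(\overline{\eta})}$ — one must know that the proper hypercover $Y_\bullet \to X$ remains of cohomological descent after the (non-finite-type, pro-étale-ish) base change $(X_U)_{(x)} \times_U U_{(\overline{\eta})} \to X$, which is where properness of $\beta_0$ and the $\mathrm{qcqs}$-ness noted before \eqref{equation:vanishing cycle stalk} are used to commute $R\Gamma$ with the projective limit defining the strict localization; and (b) the Galois action in part (2) — one needs the filtration on $R\Psi_{f_U,\overline\eta}$ coming from the simplicial spectral sequence to be $\Gal(\kappa(\overline\eta)/\kappa(\eta))$-equivariant, which holds because the whole construction $Y_\bullet \to X \to S$ and hence the spectral sequence is defined over $\O_{U,\eta}$ and the Galois action is the geometric monodromy action of $\Aut(U_{(\overline\eta)}/\Spec \O_{U,\eta})$. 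Once these compatibilities are in place, the degree count is routine. I would also remark, for part (1), that it suffices to treat the two special cases of $q$ (the generization $V = U_{(\overline{\eta})}$ and then an arbitrary local morphism of strictly local schemes), exactly as in Orgogozo's treatment, but this is optional and the direct argument above already works uniformly.
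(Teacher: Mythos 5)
Your proposal follows essentially the same route as the paper: cohomological descent for the proper hypercover $Y_\bullet\to X$, the resulting simplicial spectral sequence applied to the sliced nearby cycles, and a truncation bookkeeping argument exactly as you lay out. The paper cites \cite[Lemme 4.1]{Orgogozo06} for part (1) and gives for part (2) precisely the spectral sequence
\[
E_1^{k,l}=R^l(\beta_k)_*\,R\Psi_{(f_k)_U,\overline{\eta}}\bigl((\mathcal F_k)_U\bigr)\Longrightarrow R^{k+l}\Psi_{f_U,\overline{\eta}}(\mathcal F_U),
\]
together with the observation that $R^l(\beta_k)_*R\Psi=R^l(\beta_k)_*\tau_{\le l}R\Psi$ is $\Gal(\kappa(\overline{\eta})/\kappa(\eta))$-unipotent for $k+l\le\rho$.

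One small imprecision in your write-up: you describe the $E_1$-page (and the graded pieces of the filtration on $\Delta$, resp.\ on $\tau_{\le\rho}R\Psi_{f_U,\overline{\eta}}$) as being $R\Psi_{(f_m)_U,\overline{\eta}}(\mathcal F_{m,U})$, resp.\ the cone $\Delta_m$ for $f_m$. These complexes live on $(Y_m)_u$, not on $X_u$, so the graded pieces are actually $R(\beta_m)_*$ (the proper pushforward along $(\beta_m)_u\colon (Y_m)_u\to X_u$) of those objects, shifted by $-m$. This does not affect the degree count --- $R(\beta_m)_*$ is left-$t$-exact, so $\tau_{\le\rho-m}\Delta_m=0$ implies $\tau_{\le\rho-m}R(\beta_m)_*\Delta_m=0$; and $R^l(\beta_m)_*$ is $\Gal$-equivariant (the Galois group acts trivially on the special fibers and only on the coefficients), so it takes $\Gal$-unipotent sheaves to $\Gal$-unipotent sheaves --- but it should be said, because it is where the properness of $\beta_m$ actually enters the argument, rather than only via descent. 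Your explicit treatment of the compatibility with strict localization and with the Galois action (your ``Main obstacle'' paragraph) is correct and matches what the paper invokes through (\ref{equation:vanishing cycle stalk}) and the equivariance of the construction.
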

\begin{proof}
The assertion (1) is \cite[Lemme 4.1]{Orgogozo06}.
(See also Remark \ref{Remark:vanishing topos}.)
Although it is stated for constant sheaves, the same proof works for sheaves of $\Z/n\Z$-modules (or more generally, for torsion abelian sheaves).

The assertion (2) can be proved by the same arguments as in the proof of \cite[Lemme 4.1]{Orgogozo06}.
We shall give a sketch here.
Let $q \colon U \to S$ be a morphism from a strictly local scheme $U$
and $\overline{\eta} \to U$ an algebraic geometric point with image $\eta \in U$.
Let $u \in U$ be the closed point.
We have the following diagram:
\[
\xymatrix{ (Y_{\bullet})_{U_{(\overline{\eta})}} \ar[r]^-{j_\bullet} \ar[d]^-{\beta} & (Y_{\bullet})_U \ar[d]^-{\beta}  & \ar[l]_-{i_\bullet} (Y_{\bullet})_u \ar[d]^-{\beta} \\
X_{U_{(\overline{\eta})}} \ar[r]^-{j} & X_U & \ar[l]_-{i} X_u,
}
\]
where $\beta \colon (Y_{\bullet})_U \to X_U$ is the base change of $\beta$, etc.
By \cite[Expos\'e Vbis, Proposition 4.3.2]{SGA 4-II},
the morphism $\beta_0 \colon Y \to X$ is universally of cohomological descent,
and hence
we have
$\mathcal{F}_U \cong R\beta_*\beta^*\mathcal{F}_U$.
Using this isomorphism and the proper base change theorem,
we obtain
\[
R\Psi_{f_{U}, \overline{\eta}}(\mathcal{F}_{U}) \cong R\beta_*(i_\bullet)^*R(j_\bullet)_*(j_\bullet)^*\beta^*\mathcal{F}_U.
\]
The pull-back
of
the complex
\[
(i_\bullet)^*R(j_\bullet)_*(j_\bullet)^*\beta^*\mathcal{F}_U
\]
to
$(Y_m)_u$
is isomorphic to
$R\Psi_{(f_m)_{U}, \overline{\eta}}((\mathcal{F}_m)_{U})$ for every $m \geq 0$.
Thus we have the following spectral sequence:
\[
E^{k, l}_{1}=R^l (\beta_k)_*R\Psi_{(f_k)_{U}, \overline{\eta}}((\mathcal{F}_k)_{U}) \Rightarrow R^{k+l}\Psi_{f_{U}, \overline{\eta}}(\mathcal{F}_{U}).
\]
(See \cite[(5.2.7.1)]{Deligne HodgeIII}.)
The assertion follows from this spectral sequence since
the sheaf
\[
R^l (\beta_k)_*R\Psi_{(f_k)_{U}, \overline{\eta}}((\mathcal{F}_k)_{U})
\cong R^l (\beta_k)_* \tau_{\leq l}R\Psi_{(f_k)_{U}, \overline{\eta}}((\mathcal{F}_k)_{U})
\]
is $\Gal(\kappa(\overline{\eta})/\kappa(\eta))$-unipotent if $k+l \leq \rho$ by our assumption.
\end{proof}

\subsection{Proof of Theorem \ref{Theorem:uniform base change}}\label{Subsection:proof of theorem uniform base change}

In this subsection, we prove Theorem \ref{Theorem:uniform base change}.
Let us stress that the proof is heavily inspired by the methods of \cite{Orgogozo06, Orgogozo19}.

In this section, we use the following terminology.

\begin{defn}\label{Definition:rho-adapted}
Let $S$ be a Noetherian scheme and
$f \colon X \to S$ a morphism of finite type.
Let $\rho$ be an integer.
\begin{enumerate}
    \item Let $\mathfrak{X}$ be a stratification of $X$.
We say that an alteration $S' \to S$ is \textit{$\rho$-adapted to
the pair $(f, \mathfrak{X})$} if,
for every positive integer $n$ invertible on $S$ and every constructible sheaf $\mathcal{F}$ of $\Z/n\Z$-modules on $X$ which is locally unipotent along $\mathfrak{X}$,
    the nearby cycles for $f_{S'} \colon X_{S'} \to S'$ and $\mathcal{F}_{S'}$ are $\rho$-compatible with any base change
    and $\rho$-unipotent.
    \item Let $u \colon U \hookrightarrow X$ be an open immersion.
    We say that an alteration $S' \to S$ is \textit{$\rho$-adapted to
the pair $(f, U)$} if,
for every positive integer $n$ invertible on $S$ and
every locally constant constructible sheaf $\mathcal{L}$ of $\Z/n\Z$-modules on $U$
such that $u_!\mathcal{L}$ is locally unipotent along the stratification $\{ U, X\backslash U \}$,
the nearby cycles for $f_{S'} \colon X_{S'} \to S'$ and $(u_!\mathcal{L})_{S'}$ are $\rho$-compatible with any base change and $\rho$-unipotent.
\end{enumerate}
\end{defn}

Let $S$ be a Noetherian excellent integral scheme.
Let $\rho$ and $d$ be two integers.
We shall consider the following statement
$\textbf{P}(S, \rho, d)$:
\begin{quote}
    $\textbf{P}(S, \rho, d)$:
    Let $T \to S$ be an integral alteration and $f \colon Y \to T$ a proper morphism such that the dimension of the generic fiber of $f$ is less than or equal to $d$.
    Let $\mathfrak{Y}$ be a stratification of $Y$.
    Then
    there exists an alteration $T' \to T$ which is $\rho$-adapted to
    $(f, \mathfrak{Y})$ in the sense of Definition \ref{Definition:rho-adapted} (1).
\end{quote}

\begin{rem}\label{Remark:rho=-2 and d=-1}\
\begin{enumerate}
    \item $\textbf{P}(S, -2, d)$ holds trivially for every Noetherian excellent integral scheme $S$ and every integer $d$.
    \item For an integral scheme $T$ and a proper morphism
    $f \colon Y \to T$, the condition that the dimension of the generic fiber is less than or equal to $-1$ means that $f$ is not surjective.
    The statement $\textbf{P}(S, \rho, -1)$ is not trivial.
\end{enumerate}
\end{rem}

\begin{lem}\label{Lemma:main results are induced}
To prove Theorem \ref{Theorem:uniform base change},
it is enough to prove that statement $\textbf{P}(S, \rho, d)$ holds for every triple
$(S, \rho, d)$,
where $S$ is a Noetherian excellent integral scheme,
and $\rho$ and $d$ are integers.
\end{lem}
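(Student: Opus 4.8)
The plan is to show that Theorem~\ref{Theorem:uniform base change} is a formal consequence of the family of statements $\textbf{P}(S,\rho,d)$, so that the remaining work of Section~\ref{Section:Proof of theorem uniform base change} can be organized entirely around proving $\textbf{P}(S,\rho,d)$ by a double induction on $(\rho,d)$. First I would observe that the notion of an alteration being $\rho$-adapted to a pair $(f,\mathfrak{X})$ becomes the full conclusion of Theorem~\ref{Theorem:uniform base change} once $\rho$ is at least the cohomological amplitude relevant to the given complex $\mathcal{K}$: for $\mathcal{K}\in D^{+}(X,\Z/n\Z)$ with bounded-below constructible, locally unipotent cohomology, being $\rho$-compatible with any base change and $\rho$-unipotent for all $\rho$ is equivalent to being compatible with any base change and unipotent in the sense of Definition~\ref{Definition:base change, unipotent}. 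So the first reduction is to pass from complexes to their cohomology sheaves.

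\textbf{From complexes to sheaves.}
Given $S$ Noetherian excellent and $f\colon X\to S$ proper with stratification $\mathfrak{X}$, I would first replace $S$ by its (finitely many) integral components, i.e.\ reduce to $S$ integral; an alteration of each component assembles to an alteration of $S$, and the sliced nearby cycles functors are local on $S$ so nothing is lost. With $S$ integral, apply statement $\textbf{P}(S,\rho,d)$ with $T=S$ (the identity integral alteration), $Y=X$, $\mathfrak{Y}=\mathfrak{X}$, and $d$ the dimension of the generic fiber of $f$: this produces, for each $\rho$, an alteration $S'_\rho\to S$ that is $\rho$-adapted to $(f,\mathfrak{X})$. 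The point is that a \emph{single} alteration works for all $\rho$ simultaneously, because a bounded-below constructible complex has bounded cohomological amplitude on each strata-preserving piece; more precisely, since $f$ is proper and $X$ is Noetherian, there is a uniform bound $\rho_0$ (depending only on $f$ and $\dim$, via the proper base change / cohomological dimension estimates) beyond which $\tau_{\leq\rho}$ and $\tau_{\leq\rho_0}$ of the relevant nearby cycles complexes agree for the purposes of base change and unipotency — this uses that $R\Psi_{f_U,\overline{\eta}}$ on a constructible sheaf has cohomology concentrated in a range bounded in terms of the relative dimension. Hence $S':=S'_{\rho_0}$ is $\rho$-adapted for every $\rho$, and then for every $n$ invertible on $S$ and every admissible complex $\mathcal{K}$, a standard dévissage along the (finitely many nonzero, constructible, locally unipotent) cohomology sheaves of $\mathcal{K}$, combined with the long exact sequences of the distinguished triangles and the stability of ``$G$-unipotent'' and ``isomorphism after $q^*$'' under extensions, upgrades the sheaf-level conclusion to the complex-level conclusion of Theorem~\ref{Theorem:uniform base change}.

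\textbf{The main obstacle and how I expect it to be handled.}
The genuinely delicate point in this lemma — as opposed to the substantive content, which lies in proving $\textbf{P}(S,\rho,d)$ itself — is the interchange of quantifiers: $\textbf{P}(S,\rho,d)$ delivers an alteration $T'\to T$ for \emph{each} $\rho$, and one must extract a single alteration valid for all $\rho$ and all $n$. The $n$-uniformity is already built into Definition~\ref{Definition:rho-adapted}, so the real issue is the $\rho$-uniformity, which I would settle exactly as sketched above: identify an a priori bound $\rho_0=\rho_0(f)$ on the amplitude of the sliced nearby cycles of any constructible sheaf on a fibre of $f$ (using that $f$ is proper of bounded relative dimension and that nearby cycles over a general base on such a morphism have cohomological amplitude bounded by twice the relative dimension, say, by the strict-localization description \eqref{equation:vanishing cycle stalk} together with Artin's affine vanishing theorem applied to the strictly local base), and then note that ``$\rho_0$-adapted'' implies ``$\rho$-adapted for all $\rho$'' for such sheaves since truncation above $\rho_0$ is irrelevant. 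The only other thing to check is that the dévissage respects the hypothesis ``locally unipotent along $\mathfrak{X}$'': this is immediate because subsheaves, quotients, and extensions of sheaves locally unipotent along a fixed stratification are again locally unipotent along that stratification (Definition~\ref{Definition:locally unipotent} is manifestly closed under these operations), so each cohomology sheaf $\mathcal{H}^i(\mathcal{K})$ falls under the scope of the $\rho_0$-adapted alteration. I therefore expect this lemma to be short once $\textbf{P}(S,\rho,d)$ is in hand; its role is purely to license the inductive framework of the next subsections.
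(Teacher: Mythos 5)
Your proposal is correct and takes essentially the same approach as the paper: the crux is the uniform amplitude bound $R^i\Psi_{f_U,\overline{\eta}}(\mathcal{F}_U)=0$ for $i>2N$ (which the paper cites as \cite[Proposition 3.1]{Orgogozo06} rather than deriving from Artin vanishing), after which being $2N$-adapted is the same as being $\rho$-adapted for all $\rho$, and the passage from sheaves to complexes and from integral to general excellent base is routine d\'evissage. The only imprecision is your phrase ``finitely many nonzero cohomology sheaves'' for $\mathcal{K}\in D^+$ --- $\mathcal{K}$ may have infinitely many, but for each fixed truncation only finitely many contribute, which suffices.
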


\begin{proof}
Let $S$ be a Noetherian scheme and $f \colon X \to S$ a morphism of finite type.
Let $N$ be the supremum of dimensions of fibers of $f$.
Let $q \colon U \to S$ be a morphism from a strictly local scheme $U$
and $\overline{\eta} \to U$ an algebraic geometric point.
Then,
for every sheaf $\mathcal{F}$ of $\Z/n\Z$-modules on $X$, where $n$ is a positive integer,
we have for $i > 2N$
\[
R^i\Psi_{f_U, \overline{\eta}}(\mathcal{F}_U)=0
\]
by \cite[Proposition 3.1]{Orgogozo06}; see also Remark \ref{Remark:vanishing topos}.
By using this fact,
the assertion can be proved by standard arguments.
\end{proof}

We will prove $\textbf{P}(S, \rho, d)$ by induction on the triples
$(S, \rho, d)$.
For two Noetherian excellent integral schemes $S$ and $S'$,
we denote
\[
S' \prec S
\]
if $S'$ is isomorphic to a proper closed subscheme of an integral alteration of $S$.
For a Noetherian excellent integral scheme $S$ and an integer $\rho$,
we also consider the following statements.
\begin{itemize}
    \item $\textbf{P}(S, \rho, *)$:
    The statement $\textbf{P}(S, \rho, d')$ holds for every integer $d'$.
    \item $\textbf{P}(* \prec S, \rho, *)$:
    The statement $\textbf{P}(S', \rho, d')$ holds for every Noetherian excellent integral scheme $S'$ with $S' \prec S$ and every integer $d'$.
\end{itemize}

We begin with the following lemma.

\begin{lem}\label{Lemma:induction:Gabber's lemma}
Let $S$ be a Noetherian excellent integral scheme and $\rho$ an integer.
If $\textbf{P}(* \prec S, \rho, *)$
holds, then $\textbf{P}(S, \rho, -1)$
holds.
\end{lem}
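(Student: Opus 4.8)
The statement $\textbf{P}(S,\rho,-1)$ concerns a proper morphism $f\colon Y\to T$, where $T\to S$ is an integral alteration, whose generic fiber is empty, i.e.\ $f$ is not surjective. The idea is that the image $f(Y)$ is then a proper closed subscheme of $T$ (by properness), so the nearby cycles for $f$ are governed entirely by what happens over this smaller scheme, and we can appeal to the inductive hypothesis $\textbf{P}(* \prec S, \rho, *)$. Concretely, I would proceed as follows.

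First I would reduce to the situation where $Y$ and $T$ are integral. Given $f\colon Y\to T$ and a stratification $\mathfrak{Y}$ of $Y$, I would pass to the irreducible components $Y_1,\dots,Y_r$ of $Y$ (with reduced structure); by Lemma~\ref{Lemma:reduction, closed subscheme}~(2) applied to the closed immersions $Y_j\hookrightarrow Y$, it suffices to treat each $f|_{Y_j}\colon Y_j\to T$ separately, using the induced stratification $\mathfrak{Y}\cap Y_j$, and then combine the finitely many alterations obtained (by taking a common refinement via fiber products, as in the standard devissage). Since $f$ is not surjective and each $Y_j$ is irreducible, $f(Y_j)$ is an irreducible closed subset of $T$ that is \emph{not} all of $T$; its closure $Z_j\subset T$, with reduced structure, is an integral scheme with $Z_j\prec S$ — indeed $Z_j$ is a proper closed subscheme of $T$, and $T$ is an integral alteration of $S$, so $Z_j$ is isomorphic to a proper closed subscheme of an integral alteration of $S$. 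Now $f|_{Y_j}$ factors as $Y_j\xrightarrow{f'} Z_j \xhookrightarrow{g} T$ with $f'$ proper and surjective onto the integral scheme $Z_j$, and $g$ a closed immersion.

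Next I would apply the inductive hypothesis. The morphism $f'\colon Y_j\to Z_j$ is proper with some bound $d'$ on the dimension of its generic fiber; since $Z_j\prec S$, the hypothesis $\textbf{P}(* \prec S,\rho,*)$ gives (applying $\textbf{P}(Z_j,\rho,d')$ with the integral alteration $T\to Z_j$ taken to be $\mathrm{id}_{Z_j}$, and the stratification $\mathfrak{Y}\cap Y_j$ of $Y_j$) an alteration $Z_j'\to Z_j$ that is $\rho$-adapted to $(f',\mathfrak{Y}\cap Y_j)$. I would then produce the required alteration $T'\to T$ by forming (a component of) $Z_j'\times_{Z_j} T$ — more precisely, I would take an integral alteration $T'\to T$ dominating the $Z_j'$ for all $j$ simultaneously, arranging that $T'\times_T Z_j$ maps to $Z_j'$ over $Z_j$. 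The point is that the base change $(f|_{Y_j})_{T'}\colon (Y_j)_{T'}\to T'$ again factors through the closed subscheme $Z_j\times_T T'$, and Lemma~\ref{Lemma:reduction, closed subscheme}~(1) (for the closed immersion $Z_j\times_T T'\hookrightarrow T'$) reduces the $\rho$-compatibility with base change and the $\rho$-unipotence of the nearby cycles for $(f|_{Y_j})_{T'}$ and $(\mathcal{F})_{T'}$ to the corresponding statements for $f'$ base-changed over $Z_j'$ — which hold by the choice of $Z_j'$. (One checks that a constructible sheaf on $Y_j$ locally unipotent along $\mathfrak{Y}\cap Y_j$ pulls back, and pushes forward under the closed immersion, compatibly with these reductions; here it is crucial that Definition~\ref{Definition:rho-adapted} quantifies over \emph{all} such $n$ and $\mathcal{F}$, so the single alteration $Z_j'$ works for all of them at once.)

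\textbf{The main obstacle.} The genuine subtlety is bookkeeping with the closure/image and making the devissage functorial under base change: I need the factorization $Y_j\to Z_j\to T$ to be stable (up to replacing $Z_j$ by a closed subscheme of its base change, which is harmless since that only makes it smaller and hence still $\prec S$) after pulling back along $T'\to T$, so that Lemma~\ref{Lemma:reduction, closed subscheme} can be invoked downstairs. There is also the mild point that $f'\colon Y_j\to Z_j$ need not have irreducible geometric generic fiber, but $\textbf{P}$ as stated does not require that (it only enters later, for de Jong's theorem), so no extra work is needed here. Finally one must be slightly careful that "$Z_j\prec S$" is used correctly: $Z_j$ is a proper closed subscheme of $T$, and $T$ itself is an integral alteration of $S$, which is exactly the definition of $Z_j\prec S$, so the induction is legitimately on a strictly smaller object in the $\prec$-ordering. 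Assembling the finitely many alterations into one and verifying that the chosen sheaf-class (constructible, locally unipotent along the stratification) is preserved throughout is routine but must be written out.
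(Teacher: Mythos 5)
Your overall plan — replace $f\colon Y\to T$ by the induced map $Y\to \overline{f(Y)}$, note that the image is a proper closed subscheme of $T$ and therefore $\prec S$, apply the inductive hypothesis there, and then transport the alteration back up to $T$ — is indeed the right shape, and matches the paper. But the argument, as written, has a real gap at the crucial step, and there is also a subsidiary issue in your devissage to irreducible components.

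The central gap is in the sentence where you produce $T'\to T$ by ``taking an integral alteration dominating the $Z_j'$ for all $j$, arranging that $T'\times_T Z_j$ maps to $Z_j'$.'' This ``arranging'' is exactly the nontrivial content, and it is not achieved by a fiber product: $Z_j\hookrightarrow T$ is a closed immersion, so an expression like $Z_j'\times_{Z_j}T$ is not meaningful, and more fundamentally, an arbitrary alteration $T'\to T$ gives you no $Z_j$-morphism from (the components of) $Z_j\times_T T'$ to an alteration $Z_j'$ of $Z_j$. Extending an alteration of a proper closed subscheme of $T$ to an alteration of all of $T$ compatibly is precisely what the paper invokes Orgogozo's Proposition 1.6.2 for (the result replacing \cite[Lemme 4.3]{Orgogozo06}, and the reason for the label ``Gabber's lemma''): it yields an alteration $T'\to T$ such that every irreducible component of $Z_{T'}$, with its reduced structure, admits a $Z$-morphism to $Z'$. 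That statement is not a routine ``common refinement via fiber products'' and cannot be recovered by the bookkeeping you describe in your last paragraph; without it the inductive hypothesis does not transfer from $Z$ to $T$.

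Separately, your first reduction — passing to irreducible components $Y_j$ of $Y$ and invoking Lemma~\ref{Lemma:reduction, closed subscheme}~(2) to ``treat each $f|_{Y_j}$ separately'' — does not, by itself, handle a general constructible sheaf $\mathcal{F}$ on $Y$: Lemma~\ref{Lemma:reduction, closed subscheme}~(2) lets you go from $Y_j\to T$ and a sheaf on $Y_j$ to $Y\to T$ and the \emph{pushforward} of that sheaf, but a general $\mathcal{F}$ is not of that form. The paper instead decomposes over the irreducible components of $Z_{T'}$ (not of $Y$): it forms $W_0=\coprod_\alpha Y_{T'}\times_{Z_{T'}}Z_\alpha\to Y_{T'}$ and uses cohomological descent (Lemma~\ref{Lemma:proper descent}) to pass from the coskeleton $W_\bullet$ back to $Y_{T'}$. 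That descent is what lets a single $\mathcal{F}$ on $Y$ be handled uniformly. Your proposal would need an analogous descent step, and as it stands the ``combine the finitely many alterations'' phrase papers over exactly this point.
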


\begin{proof}
This lemma can be proved by the same arguments as in \cite[Section 4.2]{Orgogozo06} by using \cite[Proposition 1.6.2]{Orgogozo19} instead of \cite[Lemme 4.3]{Orgogozo06}.
We recall the arguments for the reader's convenience.

We assume that $\textbf{P}(* \prec S, \rho, *)$ holds.
Let $T \to S$ be an integral alteration and
let $f \colon Y \to T$ be a proper morphism.
We assume that $f$ is not surjective.
Let $\mathfrak{Y}$ be a stratification of $Y$.
Let $Z:=f(Y)$ be the schematic image of $f$.
We write $g \colon Y \to Z$ for the induced morphism.
By applying $\textbf{P}(* \prec S, \rho, *)$ to each irreducible component of $Z$,
we can find an alteration $Z' \to Z$ which is $\rho$-adapted to $(g, \mathfrak{Y})$.
By \cite[Proposition 1.6.2]{Orgogozo19},
there is an alteration
$T' \to T$
such that every irreducible component of
$Z_{T'}:=Z \times_T T'$
endowed with the reduced closed subscheme structure
has a $Z$-morphism to $Z'$.

Let $\mathcal{F}$ be a constructible sheaf of $\Z/n\Z$-modules on $Y$ which is locally unipotent along $\mathfrak{Y}$,
where $n$ is a positive integer invertible on $T$.
We shall show that the nearby cycles for $f_{T'}$ and $\mathcal{F}_{T'}$ are $\rho$-compatible with any base change and $\rho$-unipotent.
By Lemma \ref{Lemma:reduction, closed subscheme} (1),
it suffices to show
that the same properties hold for
the nearby cycles for
$g_{T'} \colon Y_{T'} \to Z_{T'}$
and $\mathcal{F}_{T'}$.
We put
\[
W_0:= \coprod_{\alpha \in \Theta} (Y_{T'} \times_{Z_{T'}} Z_\alpha),
\]
where $\{  Z_\alpha  \}_{\alpha \in \Theta}$ is the set of the irreducible components of $Z_{T'}$,
and put
\[
\beta \colon W_\bullet:= \cosq_0(W_0/Y_{T'}) \to Y_{T'}.
\]
By the constructions of $Z'$ and $T'$, and by Lemma \ref{Lemma:reduction, closed subscheme} (1),
we see that
the nearby cycles for $g_m$ and $\mathcal{F}_{m}$ are $\rho$-compatible with any base change and $\rho$-unipotent for every $m \geq 0$,
where
we write
$\mathcal{F}_{m}:=\beta^{*}_m\mathcal{F}_{T'}$ and $g_m:=g_{T'} \circ \beta_m$.
By Lemma \ref{Lemma:proper descent}, it follows that
the nearby cycles for
$g_{T'} \colon Y_{T'} \to Z_{T'}$ and $\mathcal{F}_{T'}$
are
$\rho$-compatible with any base change and $\rho$-unipotent.
\end{proof}

Our next task is to show the following lemma.

\begin{lem}\label{Lemma:induction:alteration}
Let
$(S, \rho, d)$
be a triple of a Noetherian excellent integral scheme $S$ and two integers $\rho$ and $d$.
Assume that $d \geq 0$.
If $\textbf{P}(S, \rho, d-1)$, $\textbf{P}(S, \rho-1, *)$, and $\textbf{P}(* \prec S, \rho, *)$ hold, then $\textbf{P}(S, \rho, d)$ holds.
\end{lem}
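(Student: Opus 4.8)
The plan is to prove $\textbf{P}(S,\rho,d)$ by imitating the inductive d\'evissage of Orgogozo \cite{Orgogozo06,Orgogozo19}: a proper morphism is to be reduced, after an alteration of the base, to a tower of nodal curves via de Jong's Theorem \ref{Theorem:alteration}, and the relevant properties are carried through that tower by Proposition \ref{Proposition:nodal curve case}. So let $T\to S$ be an integral alteration, $f\colon Y\to T$ a proper morphism whose generic fibre has dimension $\le d$, and $\mathfrak{Y}$ a stratification of $Y$. If that dimension is $\le d-1$ we conclude by $\textbf{P}(S,\rho,d-1)$, so assume it is exactly $d$; in particular $f$ is dominant. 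Treating the irreducible components of $Y$ one at a time and then reassembling by cohomological descent (Lemma \ref{Lemma:proper descent}), we reduce to the case that $Y$ is integral, $f$ is surjective, and the generic fibre of $f$ over the generic point $\eta\in T$ is geometrically integral. Here the non-dominant components are disposed of by $\textbf{P}(S,\rho,-1)$, which holds by Lemma \ref{Lemma:induction:Gabber's lemma} granted $\textbf{P}(*\prec S,\rho,*)$; the intersections of components, and every other piece of smaller relative dimension, are handled by $\textbf{P}(S,\rho,d-1)$; the higher terms of the cohomological descents involved are generically finite over $Y$, so enter Lemma \ref{Lemma:proper descent} only with a degree shift and are therefore covered by $\textbf{P}(S,\rho-1,*)$ (note $\textbf{P}(S,\rho,d')\Rightarrow\textbf{P}(S,\rho',d')$ for $\rho'\le\rho$); and the components created on passing to a finite alteration of $T$ to attain geometric integrality lie over proper closed subschemes, i.e.\ over schemes $\prec S$, and are absorbed by $\textbf{P}(*\prec S,\rho,*)$. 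Finally, refining $\mathfrak{Y}$, we may assume its dense open stratum $Y^{\circ}$ lies in the smooth locus of $f$.

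We then apply Theorem \ref{Theorem:alteration} to $(f,Y^{\circ})$, obtaining integral alterations $f_0\colon Y_0\to Y$ and $S'\to T$ and a pluri nodal curve $f'\colon Y_0\to S'$, adapted to a dense open $Y_0^{\circ\circ}\subset f_0^{-1}(Y^{\circ})$, which factors as $f'=g_2\circ g_1$ with $g_1\colon Y_0\to X_{d-1}$ a nodal curve adapted to $(Y_0^{\circ\circ},X_{d-1}^{\circ})$ and $g_2\colon X_{d-1}\to S'$ a pluri nodal curve of relative dimension $d-1$. After shrinking $X_{d-1}^{\circ}$ we may assume it is normal; then $Y_0\setminus Y_0^{\circ\circ}$ has generic fibre of dimension $<d$ over $S'$. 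Let $\mathcal{G}_0=f_0^{*}\mathcal{F}$ be the pull-back to $Y_0$ of a constructible $\Z/n\Z$-sheaf $\mathcal{F}$ on $Y$ locally unipotent along $\mathfrak{Y}$; since $Y^{\circ}$ is contained in a single stratum, $\mathcal{L}:=\mathcal{G}_0|_{Y_0^{\circ\circ}}$ is locally constant. The distinguished triangle $u_{!}\mathcal{L}\to\mathcal{G}_0\to i_{*}i^{*}\mathcal{G}_0$, for the open immersion $u\colon Y_0^{\circ\circ}\hookrightarrow Y_0$ and the closed immersion $i\colon Y_0\setminus Y_0^{\circ\circ}\hookrightarrow Y_0$, reduces the problem to its two outer terms, the properties in question being stable under such triangles. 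For $u_{!}\mathcal{L}$: this sheaf vanishes on $Y_0\setminus Y_0^{\circ\circ}$, hence is locally unipotent along $\{Y_0^{\circ\circ},Y_0\setminus Y_0^{\circ\circ}\}$, so Proposition \ref{Proposition:nodal curve case} shows each $R^{j}(g_1)_{*}(u_{!}\mathcal{L})$ is constructible and locally unipotent along $\mathfrak{X}_{d-1}:=\{X_{d-1}^{\circ},X_{d-1}\setminus X_{d-1}^{\circ}\}$; then $\textbf{P}(S,\rho,d-1)$ applied to $(g_2,\mathfrak{X}_{d-1})$ gives an alteration of $S'$ over which the nearby cycles for $g_2$ and these (pulled-back) sheaves are $\rho$-compatible with any base change and $\rho$-unipotent, and since sliced nearby cycles commute with the proper push-forward $R(g_1)_{*}$, the hyper-cohomology spectral sequence relating the sliced nearby cycles of $f'=g_2\circ g_1$ (applied to $u_{!}\mathcal{L}$) to the sliced nearby cycles of $g_2$ (applied to the sheaves $R^{j}(g_1)_{*}(u_{!}\mathcal{L})$) transports both properties to the abutment, by the truncation argument in the proof of Lemma \ref{Lemma:proper descent}(2). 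For $i_{*}i^{*}\mathcal{G}_0$: the morphism $f'|_{Y_0\setminus Y_0^{\circ\circ}}$ has generic fibre of dimension $<d$, so $\textbf{P}(S,\rho,d-1)$ applies to it directly, and Lemma \ref{Lemma:reduction, closed subscheme}(2) then yields the same conclusion for $f'$ and $i_{*}i^{*}\mathcal{G}_0$. In particular this route never invokes the nearby-cycles statement for a single nodal curve, in accordance with Remark \ref{Remark:nodal curve cases}.

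It remains to descend from $Y_0$ back to $Y$. Because the generic fibre of $f$ is geometrically integral, the natural proper map $a=(f_0,f')\colon Y_0\to Y\times_T S'$ is surjective onto the unique component $\widetilde{Y}$ of $Y\times_T S'$ dominating $T$ (the generic point of $Y_0$ has transcendence degree $d$ over the generic residue field of $S'$, so it maps onto the generic point of $\widetilde{Y}$), while the remaining components lie over proper closed subschemes of $S'$, hence over schemes $\prec S$. Let $T'$ be a common refinement of all the finitely many alterations produced above. Base-changing to $T'$, cohomological descent (Lemma \ref{Lemma:proper descent}) along the proper surjection $a_{T'}\colon Y_0\times_{S'}T'\to\widetilde{Y}\times_{S'}T'$---whose higher simplicial terms are generically finite and so are covered by $\textbf{P}(S,\rho-1,*)$---transfers the conclusion of the previous step to $\widetilde{Y}\times_{S'}T'\to T'$; the components of $Y\times_T T'$ lying over schemes $\prec S$ are handled by $\textbf{P}(*\prec S,\rho,*)$ together with Lemma \ref{Lemma:reduction, closed subscheme}; and a second cohomological descent along the closed cover $\{\widetilde{Y}\times_{S'}T',\ (\text{remaining components})\}$ of $Y\times_T T'$ glues these, showing that the nearby cycles for $f_{T'}\colon Y_{T'}\to T'$ and $\mathcal{F}_{T'}$ are $\rho$-compatible with any base change and $\rho$-unipotent, for every $n$ invertible on $T$ and every such $\mathcal{F}$. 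Thus $T'\to T$ is $\rho$-adapted to $(f,\mathfrak{Y})$ in the sense of Definition \ref{Definition:rho-adapted}, which is precisely $\textbf{P}(S,\rho,d)$.

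We expect the main difficulty to be the bookkeeping rather than any single new idea: one must transport the property ``constructible and locally unipotent along a suitable stratification'' coherently through the reductions of the first step, through de Jong's alteration, and---most delicately---through push-forward along the nodal curve $g_1$, which is exactly what Definition \ref{Definition:locally unipotent} and Proposition \ref{Proposition:nodal curve case} are designed to permit (cf.\ Remark \ref{Remark:locally unipotent is constructible along a stratification}). A secondary subtlety is that compatibility of nearby cycles over general bases with base change does not localize on the base in an obvious way; the gluing in the last step is therefore effected through closed covers and cohomological descent, as in \cite{Orgogozo06,Orgogozo19}, rather than by naive excision.
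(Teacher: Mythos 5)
Your overall strategy mirrors the paper's: reduce by cohomological descent and the auxiliary hypotheses to the case of an integral $Y$ dominating the base with geometrically integral fibre, invoke de Jong's Theorem \ref{Theorem:alteration} to replace $f$ by a pluri nodal curve, d\'evissage $\mathcal{F}$ through the triangle $u_!\mathcal{L}\to\mathcal{G}_0\to i_*i^*\mathcal{G}_0$, and use Proposition \ref{Proposition:nodal curve case} to feed $R(g_1)_*(u_!\mathcal{L})$ into the lower-dimensional hypothesis $\textbf{P}(S,\rho,d-1)$ applied to $g_2$. This is the right architecture.

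However, there is a genuine gap at the crucial step, the one treating the nodal curve $g_1\colon Y_0\to X_{d-1}$. You argue: since sliced nearby cycles commute with proper pushforward, applying $\textbf{P}(S,\rho,d-1)$ to $g_2$ and the sheaves $R^j(g_1)_*(u_!\mathcal{L})$, together with the hypercohomology spectral sequence of the composition, ``transports both properties to the abutment.'' But the abutment of that spectral sequence lives on the base of $g_2$, not on $Y_0$. Concretely, the commutation gives
$R(g_1)_{u,*}\,\Delta \cong \Delta\bigl(g_2,\;R(g_1)_*(u_!\mathcal{L})\bigr)$
for the cone $\Delta$ of the base change map of $R\Psi_{f'}(u_!\mathcal{L})$; vanishing of $\tau_{\le\rho}\Delta(g_2,\cdot)$ therefore only yields $\tau_{\le\rho}\,R(g_1)_{u,*}\Delta=0$, and this does \emph{not} imply $\tau_{\le\rho}\Delta=0$ — a proper pushforward of positive fibre dimension can kill nonzero complexes. (The analogous point holds for unipotency.) Unlike the cohomological descent spectral sequence of Lemma \ref{Lemma:proper descent}, which exploits $\mathcal{F}\cong R\beta_*\beta^*\mathcal{F}$ to return to the original space, there is no comparable retraction from $X_{d-1}$ back to $Y_0$.

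This is exactly what the paper's two-claim structure is designed to supply. Claim \ref{Claim:for smooth locus} first handles the smooth locus $Y'$ of $g_1$ directly, using the smooth base change theorem together with the hypothesis $\textbf{P}(S,\rho,d-1)$ applied to $g_2$ (for the constant sheaf) and to $Y\setminus Y^\circ\to T$ (for the boundary term $v_*\Lambda$). Once one knows $\tau_{\le\rho}\Delta|_{Y'}=0$, the complex $\tau_{\le\rho}\Delta$ is concentrated on the closed non-smooth locus $Z$, which is \emph{finite} over $X_{d-1}$; finiteness makes $d_*=(g_1\circ c)_*$ exact, so $d_*c^*\tau_{\le\rho}\Delta\cong\tau_{\le\rho}R(g_1)_*\Delta=0$ forces $\tau_{\le\rho}\Delta=0$. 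Your argument omits the smooth locus reduction entirely, and the final implication ``information on $X_{d-1}$ $\Rightarrow$ information on $Y_0$'' is precisely what fails without it. (Your parenthetical appeal to Remark \ref{Remark:nodal curve cases} misreads it: that remark only says the nearby-cycles statement for a nodal curve is not \emph{deduced} from Proposition \ref{Proposition:nodal curve case}; it does not dispense with the smooth/singular dichotomy.) A secondary point: you propose to ``shrink $X_{d-1}^\circ$'' to make it normal, but shrinking does not produce normality; the paper instead normalizes $T$, and then deduces normality of $X^\circ$ from smoothness of $g_2$ over $X^\circ$.
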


The proof of Lemma \ref{Lemma:induction:alteration} is divided into two steps.
The first step is to prove the following lemma.

\begin{lem}\label{Lemma:open strata}
We assume that $\textbf{P}(S, \rho, d-1)$, $\textbf{P}(S, \rho-1, *)$, and $\textbf{P}(* \prec S, \rho, *)$ hold.
Under this assumption,
to prove $\textbf{P}(S, \rho, d)$,
it suffices to prove the following statement
$\textbf{P}_{\text{nd}}(S, \rho, d)$:
\begin{quote}
$\textbf{P}_{\text{nd}}(S, \rho, d)$:
Let $T \to S$ be an integral alteration
and $f \colon Y \to T$ a pluri nodal curve adapted to a dense open subset
$Y^{\circ} \subset Y$
such that the dimension of the generic fiber of $f$ is less than or equal to $d$.
Then there is an alteration
$T' \to T$ which is $\rho$-adapted to $(f, Y^{\circ})$ in the sense of Definition \ref{Definition:rho-adapted} (2).
\end{quote}
\end{lem}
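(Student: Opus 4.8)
The plan is to reduce, by a d\'evissage in the sheaf, to an extension by zero of a locally constant sheaf from a dense open subscheme, then to apply de Jong's theorem (Theorem~\ref{Theorem:alteration}) to replace $f$ by a pluri nodal curve over an alteration of $T$, invoke $\textbf{P}_{\text{nd}}(S,\rho,d)$ there, and descend the conclusion back along that alteration by cohomological descent (Lemma~\ref{Lemma:proper descent}). We keep the standing hypotheses $\textbf{P}(S,\rho,d-1)$, $\textbf{P}(S,\rho-1,*)$, $\textbf{P}(*\prec S,\rho,*)$, so that $\textbf{P}(S,\rho,-1)$ also holds by Lemma~\ref{Lemma:induction:Gabber's lemma}; we assume $\textbf{P}_{\text{nd}}(S,\rho,d)$ and must prove $\textbf{P}(S,\rho,d)$. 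So fix an integral alteration $T\to S$, a proper morphism $f\colon Y\to T$ whose generic fibre has dimension $\le d$, and a stratification $\mathfrak{Y}$ of $Y$; we must produce an alteration of $T$ that is $\rho$-adapted to $(f,\mathfrak{Y})$. We use freely that $\rho$-compatibility with any base change and $\rho$-unipotence are preserved by replacing an alteration by a dominating one and by decomposing into finitely many pieces and recombining, so that it suffices at each stage to treat the pieces separately and then take a common integral alteration of the base.

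First I would carry out the d\'evissage in the sheaf. Since $Y$ and $Y_{\mathrm{red}}$ have the same \'etale topos and the same sliced nearby cycles, and since being locally unipotent along a stratification is stable under refining the stratification, pulling back, and restricting to locally closed subschemes (directly from Definition~\ref{Definition:locally unipotent}), we may assume $Y$ reduced; then, writing a constructible $\mathcal{F}$ locally unipotent along $\mathfrak{Y}$ into the exact sequence $0\to\mathcal{F}\to\bigoplus_{\alpha}(i_{\alpha})_{*}i_{\alpha}^{*}\mathcal{F}\to\mathcal{Q}\to 0$, where the $i_{\alpha}$ are the inclusions of the integral components of $Y$ and $\mathcal{Q}$ is supported on their pairwise intersections (of strictly smaller dimension, hence of relative dimension $\le d-1$ over $T$ on the dominant part), and using Lemma~\ref{Lemma:reduction, closed subscheme}(2) together with $\textbf{P}(S,\rho,d-1)$ and $\textbf{P}(S,\rho,-1)$, we reduce to $Y$ integral. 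If $f$ is not surjective we invoke $\textbf{P}(S,\rho,-1)$; otherwise, by a standard base change on $T$ (the resulting non-dominant and lower-dimensional pieces being absorbed by $\textbf{P}(S,\rho,-1)$ and $\textbf{P}(S,\rho,d-1)$ via the same d\'evissage) we may assume the geometric generic fibre of $f$ is irreducible. Let $u\colon Y^{\circ}\hookrightarrow Y$ be the inclusion of the stratum of $\mathfrak{Y}$ containing the generic point of $Y$: it is dense open, every constructible $\mathcal{F}$ locally unipotent along $\mathfrak{Y}$ restricts on $Y^{\circ}$ to a locally constant constructible sheaf $\mathcal{L}$ (Remark~\ref{Remark:locally unipotent is constructible along a stratification}), and $u_{!}\mathcal{L}$ is locally unipotent along $\{Y^{\circ},Y\setminus Y^{\circ}\}$ (it vanishes on the closed stratum and equals $\mathcal{F}$ on $Y^{\circ}$). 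The exact sequence $0\to u_{!}\mathcal{L}\to\mathcal{F}\to i_{*}i^{*}\mathcal{F}\to 0$, with $i$ the inclusion of $Y\setminus Y^{\circ}$, then splits the problem into handling $i_{*}i^{*}\mathcal{F}$---done uniformly by Lemma~\ref{Lemma:reduction, closed subscheme}(2), $\textbf{P}(S,\rho,d-1)$ and $\textbf{P}(S,\rho,-1)$ applied to $Y\setminus Y^{\circ}\to T$---and handling $u_{!}\mathcal{L}$ uniformly over all locally constant constructible $\mathcal{L}$ on the dense open $Y^{\circ}$.

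Next I would invoke de Jong's theorem. By Theorem~\ref{Theorem:alteration} applied to $f\colon Y\to T$ and $Y^{\circ}$, there are integral alterations $Y_{0}\to Y$ and $T'\to T$ and a pluri nodal curve $f_{0}\colon Y_{0}\to T'$ adapted to a dense open $Y_{0}^{\circ\circ}\subset Y_{0}$ contained in the preimage of $Y^{\circ}$, still with generic fibre of dimension $\le d$ and with $T'$ an integral alteration of $S$. Applying $\textbf{P}_{\text{nd}}(S,\rho,d)$ to $f_{0}$ and $Y_{0}^{\circ\circ}$ gives an alteration $T''\to T'$ that is $\rho$-adapted to $(f_{0},Y_{0}^{\circ\circ})$. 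To transport this back I would use the proper surjective morphism $\beta_{0}\colon Y_{0}\to Y_{T'}:=Y\times_{T}T'$ from de Jong's diagram, the augmented simplicial scheme $\beta\colon Y_{\bullet}:=\cosq_{0}(Y_{0}/Y_{T'})\to Y_{T'}$, and Lemma~\ref{Lemma:proper descent}. At level zero the pull-back $\beta_{0}^{*}(u_{!}\mathcal{L})$ is, since local unipotence is stable under pull-back, the extension by zero of a locally constant sheaf from the dense open $Y_{0}^{\circ}:=\beta_{0}^{-1}(Y^{\circ}_{T'})\supseteq Y_{0}^{\circ\circ}$; the exact sequence $0\to j_{!}j^{*}\beta_{0}^{*}(u_{!}\mathcal{L})\to\beta_{0}^{*}(u_{!}\mathcal{L})\to i_{*}i^{*}\beta_{0}^{*}(u_{!}\mathcal{L})\to 0$ with $j\colon Y_{0}^{\circ\circ}\hookrightarrow Y_{0}$ then separates its first term---which vanishes off $Y_{0}^{\circ\circ}$ and is locally constant there, hence is handled by $T''$ through Definition~\ref{Definition:rho-adapted}(2)---from its second term, supported on $Y_{0}\setminus Y_{0}^{\circ\circ}$ of relative dimension $\le d-1$ over $T'$ and handled by $\textbf{P}(S,\rho,d-1)$, $\textbf{P}(S,\rho,-1)$ and Lemma~\ref{Lemma:reduction, closed subscheme}(2). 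The higher levels $Y_{m}=Y_{0}\times_{Y_{T'}}\cdots\times_{Y_{T'}}Y_{0}$ ($m\ge 1$) need only be $(\rho-m)$-compatible with any base change and $(\rho-m)$-unipotent, which $\textbf{P}(S,\rho-1,*)$---applied over the integral alteration $T'$ of $S$ with suitable pull-back stratifications---supplies for the finitely many relevant $m$. Taking a common integral alteration $\widetilde{T}\to T'$ refining all the finitely many alterations produced above, Lemma~\ref{Lemma:proper descent} and Lemma~\ref{Lemma:reduction, closed subscheme} show that $\widetilde{T}\to T$ is $\rho$-adapted to $(f,\mathfrak{Y})$.

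The main difficulty throughout is uniformity: the output alteration of $T$ must be fixed before, and independently of, both the integer $n$ and the sheaf $\mathcal{F}$, so that a single alteration serves the whole family at once. This is why the statements are phrased in terms of all constructible sheaves locally unipotent along a fixed stratification, why de Jong's alteration and the pluri nodal curve may depend only on $(f,\mathfrak{Y},Y^{\circ})$, and why the descent forces the drop from $\rho$ to $\rho-1$ on the positive-dimensional simplicial pieces $Y_{m}$---consuming $\textbf{P}(S,\rho-1,*)$ and using the $\rho$-truncated notions of Definition~\ref{Definition:truncated}. The remaining, and more routine, point to watch is that each reduction above---passing to components, extending by zero, restricting to locally closed strata, pulling back along $\beta_{\bullet}$---preserves constructibility and local unipotence along the pertinent stratification; this is exactly what Definition~\ref{Definition:locally unipotent} (and, for the subsequent step proving $\textbf{P}_{\text{nd}}$, Proposition~\ref{Proposition:nodal curve case}) is designed to make possible.
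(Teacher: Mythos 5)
Your proof follows essentially the same plan as the paper's: reduce by d\'evissage to the case of an extension by zero of a locally constant sheaf from a dense open subset, apply de Jong's Theorem~\ref{Theorem:alteration} to pass to a pluri nodal curve over an alteration of $T$, and descend the conclusion by cohomological descent (Lemma~\ref{Lemma:proper descent}). There is, however, a genuine gap in the descent step. You assert that $\beta_0\colon Y_0\to Y_{T'}:=Y\times_T T'$ from de Jong's diagram is ``proper surjective'' and feed it into Lemma~\ref{Lemma:proper descent}, which requires surjectivity. But the de Jong diagram is merely commutative, not Cartesian, so $Y_0$ (being integral and dominating $Y$) surjects onto the unique component of $Y_{T'}$ dominating $T'$ --- uniqueness being exactly what the irreducible-geometric-generic-fiber reduction buys you --- yet can miss the irreducible components of $Y_{T'}$ that do \emph{not} dominate $T'$, which occur whenever $Y\to T$ is not flat. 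Without surjectivity Lemma~\ref{Lemma:proper descent} does not apply as written. The paper's Step 3 repairs exactly this: it replaces $Y_0$ by $W_0$, the disjoint union of $Y_0$ with the irreducible components of $Y_{T'}$ not dominating $T'$. Then $W_0\to Y_{T'}$ is proper and surjective; the added pieces (whose structure morphisms to $T'$ are not surjective) are controlled by $\textbf{P}(*\prec S,\rho,*)$ via Lemma~\ref{Lemma:induction:Gabber's lemma}, and the levels $W_m$ with $m\ge 1$ are handled by $\textbf{P}(S,\rho-1,*)$ just as in your argument. Modifying your $\beta_0$ in this way closes the gap.

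A secondary, non-fatal difference: your d\'evissage in the sheaf variable --- peeling off the irreducible components of $Y$ via $0\to\mathcal{F}\to\bigoplus_\alpha(i_\alpha)_*i_\alpha^*\mathcal{F}\to\mathcal{Q}\to 0$ and then the open stratum via $0\to u_!\mathcal{L}\to\mathcal{F}\to i_*i^*\mathcal{F}\to 0$ --- is more ad hoc than the paper's Step 1, which refines $\mathfrak{Y}$ to a good stratification and invokes Orgogozo's filtration result (\cite[Proposition 1.1.4]{Orgogozo19}) to reduce once and for all to a claim (I) about an \emph{arbitrary} dense open $Y^\circ\subset Y$. Because the paper's intermediate claim is parametrized over arbitrary opens rather than the original strata of $\mathfrak{Y}$, the mismatch between $\beta^{-1}(Y^\circ)$ and the adapted open $Y_0^{\circ\circ}$ that de Jong provides is absorbed cleanly in its Step 4, whereas in your version you must carry a running bookkeeping of which stratum you started from through the alteration. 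Both approaches work, but the paper's factorization avoids the repeated secondary d\'evissages.
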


\begin{proof}
We assume that
$\textbf{P}_{\text{nd}}(S, \rho, d)$
holds.
Let $T \to S$ be an integral alteration and
$f \colon Y \to T$ a proper morphism such that the dimension of the generic fiber of $f$ is less than or equal to $d$.
Let $\mathfrak{Y}$ be a stratification of $Y$.
We want to prove that
there is an alteration
$T' \to T$ which is $\rho$-adapted to $(f, \mathfrak{Y})$.

\textit{Step 1.}
It suffices to prove the following claim (I):
\begin{enumerate}
    \item[(I)] Let $u \colon Y^{\circ} \hookrightarrow Y$ be an open immersion. 
Then there is an alteration
$T' \to T$ which is $\rho$-adapted to $(f, Y^{\circ})$.
\end{enumerate}

Indeed,
by replacing $\mathfrak{Y}$ by a stratification refining it,
we may assume that $\mathfrak{Y}$ is a good stratification
in the sense of \cite[Section 1.1]{Orgogozo19} (it is called a bonne stratification in French).
Then every sheaf $\mathcal{F}$ of $\Z/n\Z$-modules on $Y$ which is constructible along $\mathfrak{Y}$
has a finite filtration
such that each successive quotient is of the form
$u_!\mathcal{L}$ where
$u \colon Y_\alpha \hookrightarrow Y$ is an immersion for some $Y_\alpha \in \mathfrak{Y}$ and $\mathcal{L}$ is a locally constant constructible sheaf of $\Z/n\Z$-modules on $Y_\alpha$; see \cite[Proposition 1.1.4]{Orgogozo19}.
If furthermore $\mathcal{F}$ is locally unipotent along $\mathfrak{Y}$, then so is every successive quotient of this filtration.
Since $\mathfrak{Y}$ consists of finitely many locally closed subsets,
by using Lemma \ref{Lemma:reduction, closed subscheme} (2),
we see that it suffices to prove the claim (I).

\textit{Step 2.}
To prove the claim (I),
we may assume that $Y$ is integral, the morphism $f$ is surjective, and the geometric generic fiber of $f$ is irreducible.

Indeed,
there is a field $L$ which is a finite extension of the function field of $T$ such that,
every irreducible component of
$Y \times_T \Spec L$
is geometrically irreducible.
Let $T' \to T$ be the normalization of $T$ in $L$.
We put
\[
\beta_0 \colon W_0:= \coprod_{\alpha \in \Theta} Y_\alpha \to Y_{T'},
\]
where $\{  Y_\alpha  \}_{\alpha \in \Theta}$ is the set of the irreducible components of $Y_{T'}$.
By $\textbf{P}(S, \rho-1, *)$ and Lemma \ref{Lemma:proper descent},
it suffices to show that there is
an alteration
$T'' \to T'$
which is $\rho$-adapted to $(f_0, \beta^{-1}_0(Y^{\circ}_{T'}))$,
where $f_0:=f_{T'} \circ \beta_0$.
It is enough to show that
the same assertion holds
after restricting to
each component
$Y_\alpha$.
By $\textbf{P}(* \prec S, \rho, *)$
and Lemma \ref{Lemma:induction:Gabber's lemma},
we may assume that $Y_\alpha \to T'$ is surjective.
Then,
by the construction of $T'$,
the geometric generic fiber of $Y_\alpha \to T'$ is irreducible.
This completes the proof of our claim.

\textit{Step 3.}
We may assume that $Y^{\circ}$ is non-empty.
We claim that we may assume further that
$f \colon Y \to T$ is a pluri nodal curve adapted to a dense open subset
$Y^{\circ\circ} \subset Y$
with $Y^{\circ\circ} \subset Y^{\circ}$.

Indeed, by Theorem \ref{Theorem:alteration},
there is the following commutative diagram:
\[
\xymatrix{ Y_0 \ar[r]^-{f'} \ar[d]^-{\beta} & T' \ar[d]^-{}  \\
Y\ar[r]^-{f} & T,
}
\]
where the vertical maps are integral alterations and
$f'$ is a pluri nodal curve adapted to a dense open subset
$Y^{\circ\circ}_0 \subset Y_0$ which is contained in
$\beta^{-1}(Y^{\circ})$.
The generic fiber of $Y_{T'} \to T'$ is irreducible.
Let $W_0$ be the disjoint union of $Y_0$ and
the irreducible components of $Y_{T'}$ which do not dominate $T'$.
Then the natural morphism
$W_0 \to Y_{T'}$
is a proper surjective morphism.
By the same arguments as in the proof of the previous step,
we see that it suffices to prove that 
there is an alteration
$T'' \to T'$
which is $\rho$-adapted to $(f', \beta^{-1}(Y^{\circ}))$.

\textit{Step 4.}
Finally, we complete the proof of the claim (I).

Let $u' \colon Y^{\circ\circ} \hookrightarrow Y$ and 
$u'' \colon Y^{\circ\circ} \hookrightarrow Y^{\circ}$
denote the open immersions.
Let $n$ be a positive integer invertible on $T$ and
$\mathcal{L}$ a locally constant constructible sheaf of $\Z/n\Z$-modules on $Y^{\circ}$ such that $u_!\mathcal{L}$ is locally unipotent along the stratification $\mathfrak{Y}=\{ Y^{\circ}, Y\backslash Y^{\circ} \}$.
We have an exact sequence
\[
0 \to u'_! u''^{*} \mathcal{L} \to u_!\mathcal{L} \to \mathcal{G} \to 0.
\]
The sheaf $u'_! u''^{*} \mathcal{L}$ is locally unipotent along the stratification $\{ Y^{\circ\circ}, Y\backslash Y^{\circ\circ} \}$.
The sheaf $\mathcal{G}$ is supported on $Y \backslash Y^{\circ\circ}$ and the restriction of $\mathcal{G}$ to
$Y \backslash Y^{\circ\circ}$ is locally unipotent along the stratification $\{ Y^{\circ} \backslash Y^{\circ\circ}, Y\backslash Y^{\circ} \}$.
By applying $\textbf{P}(S, \rho, d-1)$ to $Y\backslash Y^{\circ\circ} \to T$ and the stratification $\{ Y^{\circ} \backslash Y^{\circ\circ}, Y\backslash Y^{\circ} \}$
and using Lemma \ref{Lemma:reduction, closed subscheme} (2),
we see that
$\textbf{P}_{\text{nd}}(S, \rho, d)$ implies the claim (I) by d\'evissage.

The proof of Lemma \ref{Lemma:open strata} is now complete.
\end{proof}

Next, we prove $\textbf{P}_{\text{nd}}(S, \rho, d)$ in Lemma \ref{Lemma:open strata} under the assumptions:

\begin{lem}\label{Lemma:claim nodal case}
We assume that $\textbf{P}(S, \rho, d-1)$, $\textbf{P}(S, \rho-1, *)$, and $\textbf{P}(* \prec S, \rho, *)$ hold.
Then the statement $\textbf{P}_{\text{nd}}(S, \rho, d)$ in Lemma \ref{Lemma:open strata} is true.
\end{lem}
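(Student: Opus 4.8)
The statement $\textbf{P}_{\mathrm{nd}}(S,\rho,d)$ asks, given an integral alteration $T\to S$ and a pluri nodal curve $f\colon Y\to T$ adapted to a dense open $Y^\circ\subset Y$ (with fibers of dimension $\le d$), for an alteration $T'\to T$ that is $\rho$-adapted to $(f,Y^\circ)$. My plan is to induct on the length $d$ of the pluri nodal tower
\[
Y=Y_d\xrightarrow{f_d}Y_{d-1}\to\cdots\to Y_1\xrightarrow{f_1}Y_0=T,
\]
peeling off the top nodal curve $f_d\colon Y_d\to Y_{d-1}$ and applying Proposition \ref{Proposition:nodal curve case}. The base case $d=0$ is trivial ($f$ is the identity, so any alteration works). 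For the inductive step, write $g\colon Y\to Y':=Y_{d-1}$ for the top nodal curve (adapted to $(Y^\circ,Y^\circ_{d-1})$) and $f'\colon Y'\to T$ for the composite of the remaining tower, which is a pluri nodal curve adapted to $Y^\circ_{d-1}$ with generic fiber dimension $\le d-1$.

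\textbf{Key steps.} First I would reduce the sheaf-theoretic input: given a locally constant constructible $\mathcal{L}$ of $\Z/n\Z$-modules on $Y^\circ$ with $u_!\mathcal{L}$ locally unipotent along $\{Y^\circ,Y\setminus Y^\circ\}$, the composite $f=f'\circ g$ lets me factor the nearby cycles. I would aim to show that $Rg_*(u_!\mathcal{L})$ is, after a suitable base change on $Y_{d-1}$, built out of sheaves to which the inductive hypothesis $\textbf{P}(S,\rho,d-1)$ (applied along $f'$) and $\textbf{P}(S,\rho-1,*)$ apply. Concretely: normalizing $Y_{d-1}$ if necessary and shrinking $Y^\circ_{d-1}$ to arrange that $Y^\circ_{d-1}$ is normal, Proposition \ref{Proposition:nodal curve case} tells me each $R^ig_*(u_!\mathcal{L})$ is locally unipotent along the stratification $\{Y^\circ_{d-1},\,Y_{d-1}\setminus Y^\circ_{d-1}\}$ of $Y_{d-1}$; it is also constructible, hence constructible along that stratification. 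Then the Leray (or composite-functor) spectral sequence
\[
E_2^{p,q}=R^p\Psi_{(f')_U,\overline\eta}\bigl(R^qg_*(u_!\mathcal{L})_U\bigr)\Rightarrow R^{p+q}\Psi_{f_U,\overline\eta}\bigl((u_!\mathcal{L})_U\bigr)
\]
reduces $\rho$-compatibility and $\rho$-unipotency of the nearby cycles for $f$ to the corresponding properties for $f'$ applied to each $R^qg_*(u_!\mathcal{L})$ — with a degree shift that is exactly what makes $\textbf{P}(S,\rho-1,*)$ enter for the higher $q$, the spirit of the bookkeeping in Lemma \ref{Lemma:proper descent}. Applying $\textbf{P}(S,\rho,d-1)$ to $f'$ together with the stratification on $Y_{d-1}$ just constructed yields an alteration $T'\to T$ that works simultaneously for all these sheaves, uniformly in $n$; pulling back through the tower gives the required $\rho$-adapted alteration for $(f,Y^\circ)$. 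The assumptions $\textbf{P}(*\prec S,\rho,*)$ and Lemma \ref{Lemma:induction:Gabber's lemma} are used to handle the non-dominant components that appear when base-changing $Y_{d-1}$, exactly as in the proof of Lemma \ref{Lemma:open strata}.

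\textbf{Main obstacle.} The delicate point is that $Rg_*(u_!\mathcal{L})$ is a priori only a complex of constructible sheaves on $Y_{d-1}$, and to feed it into the inductive hypothesis I need its cohomology sheaves to be locally unipotent along a fixed stratification of $Y_{d-1}$ that does not depend on $n$ — this is precisely what Proposition \ref{Proposition:nodal curve case} supplies, but only after I have arranged the pair $(Y^\circ_{d-1},\,$normal open of $Y_{d-1})$ correctly and verified the hypothesis that $u_!\mathcal{L}$ is locally unipotent along $\{Y^\circ,Y\setminus Y^\circ\}$ is preserved under the various alterations. Keeping the stratification, and hence the alteration $T'\to T$, independent of $n$ while $\mathcal{L}$ and its filtration may vary is the heart of the matter; this is handled by working with the fixed good stratification and the dévissage of \cite[Proposition 1.1.4]{Orgogozo19} as in Lemma \ref{Lemma:open strata}, Step 1, together with Lemma \ref{Lemma:reduction, closed subscheme} (2) to descend from successive quotients supported on smaller strata (where $\textbf{P}(S,\rho,d-1)$ applies). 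Once the stratification is fixed, the spectral-sequence argument is formal, and the uniformity in $n$ is automatic because Proposition \ref{Proposition:nodal curve case} and $\textbf{P}(S,\rho,d-1)$ are themselves uniform in $n$.
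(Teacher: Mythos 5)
Your plan follows the paper's factorization $f = f' \circ g$ through the top nodal curve, and the decision to feed $Rg_*(u_!\mathcal{L})$ (which Proposition \ref{Proposition:nodal curve case} shows is locally unipotent along a fixed stratification of $Y_{d-1}$) into $\textbf{P}(S,\rho,d-1)$ for the lower tower is exactly the right use of that proposition. But there is a real gap in the spectral-sequence step, and it is the crux of the whole argument.

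The displayed spectral sequence does not exist as written. The term $R^p\Psi_{(f')_U,\overline\eta}(R^qg_*(u_!\mathcal{L})_U)$ is a sheaf on $(Y_{d-1})_u$, while the claimed abutment $R^{p+q}\Psi_{f_U,\overline\eta}((u_!\mathcal{L})_U)$ is a sheaf on $Y_u$ — they live on different spaces. What the proper base change theorem actually gives is an isomorphism $(g_u)_*R\Psi_{f_U,\overline\eta}((u_!\mathcal{L})_U) \cong R\Psi_{(f')_U,\overline\eta}(Rg_*(u_!\mathcal{L})_U)$, so any such spectral sequence controls only the \emph{pushforward} $(g_u)_*$ of the nearby cycles complex of $f$, not the complex itself. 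Since $g_u$ is a proper curve (not finite), $(g_u)_*$ is neither exact nor faithful, and knowing that $(g_u)_*\mathcal{K}$ is $\rho$-unipotent or that $(g_u)_*$ of the base change cone is $\rho$-trivial tells you essentially nothing about $\mathcal{K}$ itself. The step you call ``formal'' is precisely where all the work lies.

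The paper closes this gap by decomposing $Y$ into the smooth locus $Y'$ of the top nodal curve $h$ (your $g$) and its closed complement $Z$, and treating the two pieces differently. Over $Y'$, the morphism $Y' \hookrightarrow Y \to X$ is smooth, so the smooth base change theorem directly transports the $\rho$-compatibility statement for $g$ (your $f'$) applied to constant sheaves; this is Claim \ref{Claim:for smooth locus}, which also handles $Y'\setminus Y^\circ$ via the exact sequence $0\to u_!\Lambda\to\Lambda\to v_*\Lambda\to 0$ and Lemma \ref{Lemma:reduction, closed subscheme} (2). Over $Z$, the composite $Z \to Y \to X$ is \emph{finite}, so $d_*$ is exact and faithful on the relevant complexes; the pushforward argument then does recover the statement on $Z$, using $Rh_*\Delta \cong \Delta(g, Rh_*\mathcal{F})$ and Proposition \ref{Proposition:nodal curve case}. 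Combining via the triangle $e_!e^*\to\mathrm{id}\to c_*c^*$ finishes Claim \ref{Claim:end of the proof}. Your proposal skips this decomposition entirely, which is why the pushforward issue goes unaddressed. To make your approach work you would need to reinstate this smooth-locus/nodal-locus dichotomy, at which point the argument becomes the paper's.
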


\begin{proof}
Let $u \colon Y^\circ \hookrightarrow Y$ denote the open immersion.
Let $T \to S$ be an integral alteration and $f \colon Y \to T$ a pluri nodal curve adapted to a dense open subset
$Y^{\circ} \subset Y$ such that the dimension of the generic fiber of $f$ is less than or equal to $d$.
If $f$ is an isomorphism, then there is nothing to prove.
Hence we may assume that $f$ is not an isomorphism,
and hence there are a factorization
\[
\xymatrix{
Y \ar[r]^-{h} \ar@/_12pt/[rr]_-{f} & X \ar[r]^-{g} & T
}
\]
and a dense open subset $X^\circ \subset X$ such that 
$h \colon Y \to X$ is a nodal curve adapted to the pair $(Y^{\circ}, X^{\circ})$ and
$g \colon X \to T$ is a pluri nodal curve adapted to $X^\circ$.
Since $\textbf{P}(S, \rho, d-1)$ holds,
we may assume that
the identity map
$T \to T$ is $\rho$-adapted to
the following two pairs
\[
(Y \backslash Y^\circ \to T, \{ Y \backslash Y^\circ \}) \quad \text{and} \quad (g, \{ X^\circ, X \backslash X^\circ  \}).
\]
By replacing $T$ with its normalization, we may assume that
$T$ is normal.

We claim that the identity map $T \to T$
is $\rho$-adapted to $(f, Y^\circ)$.
The proof is divided into two parts.
First, we prove the assertion after restricting to the smooth locus 
$Y' \subset Y$ of $h$.
Then, we prove our claim by using the results on the smooth locus $Y'$.

\begin{claim}\label{Claim:for smooth locus}
Let $a \colon Y' \to T$ denote the restriction of $f$ to $Y'$.
Let $n$ be a positive integer invertible on $T$
and
$\mathcal{L}$
a locally constant constructible sheaf $\mathcal{L}$ of $\Z/n\Z$-modules on $Y^{\circ}$
such that $u_!\mathcal{L}$ is locally unipotent along the stratification $\mathfrak{Y}:=\{ Y^{\circ}, Y\backslash Y^{\circ} \}$.
Let $\mathcal{F}$ be the pull-back of
$u_!\mathcal{L}$ to $Y'$.
Then the following assertions hold:
\begin{enumerate}
    \item The nearby cycles for $a \colon Y' \to T$ and $\mathcal{F}$ are $\rho$-compatible with any base change.
    \item The nearby cycles for $a \colon Y' \to T$ and $\mathcal{F}$ are $\rho$-unipotent.
\end{enumerate}
\end{claim}

\begin{proof}
(1) 
We fix a local morphism
$q \colon V \to U$
of strictly local schemes over $T$
and an algebraic geometric point
$\overline{\xi} \to V$ with image $\overline{\eta} \to U$.
In the following, for a morphism
$\phi \colon Z \to T$
and a complex $\mathcal{K} \in D^+(Z, \Z/n\Z)$,
the cone of the base change map
\[
q^*R\Psi_{\phi_U, \overline{\eta}}(\mathcal{K}_U) \to R\Psi_{\phi_V, \overline{\xi}}(\mathcal{K}_V)
\]
is denoted by $\Delta(\phi, \mathcal{K})$.
For a morphism $\phi \colon Z \to W$ of $T$-schemes
and a $T$-scheme $T'$,
the base change $Z_{T'} \to W_{T'}$
is often denoted by the same letter $\phi$ when there is no possibility of confusion.

We want to show
$\tau_{\leq \rho} \Delta(a, \mathcal{F})=0$.
It suffices to prove that
$\tau_{\leq \rho} \Delta(a, \mathcal{F})_{x}=0$
at every geometric point $x \to Y'_s$, where $s \in V$ is the closed point.
The morphism
\[
(q^*R\Psi_{a_U, \overline{\eta}}(\mathcal{F}_U))_x \to R\Psi_{a_V, \overline{\xi}}(\mathcal{F}_V)_x
\]
on the stalks induced by the base change map can be identified with the pull-back map
\[
R\Gamma((Y'_U)_{(x)}\times_{U} U_{(\overline{\eta})}, u_!\mathcal{L}) \to R\Gamma((Y'_V)_{(x)}\times_{V} V_{(\overline{\xi})}, u_!\mathcal{L}).
\]
(See also (\ref{equation:vanishing cycle stalk}) in Remark \ref{Remark:vanishing topos}.)
Since the sheaf $u_!\mathcal{L}$ is locally unipotent along $\mathfrak{Y}=\{ Y^{\circ}, Y\backslash Y^{\circ} \}$,
we may assume that $\mathcal{L}=\Lambda$ is a constant sheaf on $Y^\circ$
by d\'evissage.

Note that $Y^\circ$ is contained in $Y'$.
Since we have the following exact sequence of sheaves on $Y'$
\[
0 \to u_!\Lambda \to \Lambda \to v_*\Lambda \to 0,
\]
where
$v \colon Y' \backslash Y^\circ \hookrightarrow Y'$
is the closed immersion and
the open immersion
$u \colon Y^\circ \hookrightarrow Y'$ is denoted by the same letter $u$,
it suffices to prove that
$\tau_{\leq \rho}\Delta(a, \Lambda)=0$
and
$\tau_{\leq \rho}\Delta(a, v_*\Lambda)=0$.

It follows from the assumption on $T$ that the nearby cycles for $a \circ v$ and the constant sheaf $\Lambda$ are $\rho$-compatible with any base change.
Hence
we have
$\tau_{\leq \rho}\Delta(a, v_*\Lambda)=0$
by Lemma \ref{Lemma:reduction, closed subscheme} (2).
By the assumption on $T$ again,
the nearby cycles for $g$ and the constant sheaf $\Lambda$ are $\rho$-compatible with any base change.
Since the composition $b \colon Y' \hookrightarrow Y \to X$ is smooth,
we have
$
\Delta(a, \Lambda) \cong b^*\Delta(g, \Lambda)
$
by the smooth base change theorem.
Hence
we obtain that
\[
\tau_{\leq \rho}\Delta(a, \Lambda) \cong \tau_{\leq \rho}b^*\Delta(g, \Lambda) \cong b^*\tau_{\leq \rho}\Delta(g, \Lambda)=0.
\]

(2) Let $q \colon U \to T$
be a morphism from a strictly local scheme $U$, a point $\eta \in U$, and an algebraic geometric point $\overline{\eta} \to U$ lying above $\eta$.
Let $s \in U$ be the closed point.
We want to show that the complex
\[
\tau_{\leq \rho}R\Psi_{a_{U}, \overline{\eta}}(\mathcal{F}_U)
\]
is $\Gal(\kappa(\overline{\eta})/\kappa(\eta))$-unipotent.

We first claim that, for every $i \leq \rho$, the sheaf
$R^i\Psi_{a_{U}, \overline{\eta}}(\mathcal{F}_U)$
is constructible.
Since we have already shown that
the nearby cycles for $a$ and $\mathcal{F}$ are $\rho$-compatible with any base change,
we may assume that $U$ is the strict localization of $T$ at $s \to T$,
in particular, we may assume that $U$ is Noetherian.
Then,
by using \cite[Proposition 7.1.9]{EGA II},
we may assume that $U$ is the spectrum of strictly Henselian discrete valuation ring,
and in this case, the claim follows from \cite[Th.\ finitude, Th\'eor\`eme 3.2]{SGA4 1/2}.
(See also \cite[Section 8]{Orgogozo06}.)

Now, it suffices to prove that, for every geometric point
$x \to Y'_s$,
the complex
\[
\tau_{\leq \rho}R\Psi_{a_U, \overline{\eta}}(\mathcal{F}_U)_x \cong \tau_{\leq \rho}R\Gamma((Y'_U)_{(x)} \times_{U} U_{(\overline{\eta})}, u_!\mathcal{L})
\]
is $\Gal(\kappa(\overline{\eta})/\kappa(\eta))$-unipotent;
see \cite[Lemme 1.2.5]{Orgogozo19}.
Since the sheaf $u_!\mathcal{L}$ is locally unipotent along $\mathfrak{Y}=\{ Y^{\circ}, Y\backslash Y^{\circ} \}$,
we reduce to the case where $\mathcal{L}=\Lambda$ is a constant sheaf on $Y^\circ$
by d\'evissage.

By the exact sequence
$
0 \to u_!\Lambda \to \Lambda \to v_*\Lambda \to 0,
$
it suffices to prove that
the nearby cycles for $a$ and the sheaf $v_*\Lambda$ (resp.\ the constant sheaf $\Lambda$) are $\rho$-unipotent.
By using the assumption on $T$,
we conclude by the same argument as in the proof of (1).
\end{proof}

\begin{claim}\label{Claim:end of the proof}
Let $n$ be a positive integer invertible on $T$
and
$\mathcal{L}$
a locally constant constructible sheaf $\mathcal{L}$ of $\Z/n\Z$-modules on $Y^{\circ}$
such that $\mathcal{F}:=u_!\mathcal{L}$ is locally unipotent along the stratification $\mathfrak{Y}=\{ Y^{\circ}, Y\backslash Y^{\circ} \}$.
Then the following assertions hold:
\begin{enumerate}
    \item The nearby cycles for $f$ and
    $\mathcal{F}$ are $\rho$-compatible with any base change.
    \item The nearby cycles for $f$ and $\mathcal{F}$ are $\rho$-unipotent.
\end{enumerate}
\end{claim}

\begin{proof}
(1)
We fix a local morphism
$q \colon V \to U$
of strictly local schemes over $T$
and an algebraic geometric point
$\overline{\xi} \to V$ with image $\overline{\eta} \to U$.
We retain the notation of the proof of Claim \ref{Claim:for smooth locus} (1).
We write $\Delta:=\Delta(f, \mathcal{F})$.
We want to show
$\tau_{\leq \rho} \Delta =0$.
Let $c \colon Z \hookrightarrow Y$ be a closed immersion
whose complement is the smooth locus $Y'$ of $h$.
By Claim \ref{Claim:for smooth locus} (1),
we have
\[
\tau_{\leq \rho} \Delta \cong c_*c^* \tau_{\leq \rho}\Delta,
\]
and hence
it suffices to show that $c^* \tau_{\leq \rho}\Delta=0$.
Since the composition $d \colon Z \to Y \to X$ is a finite morphism,
it is enough to prove that
\[
d_*c^* \tau_{\leq \rho}\Delta=0.
\]
By using $\tau_{\leq \rho} \Delta \cong c_*c^* \tau_{\leq \rho}\Delta$,
we obtain an isomorphism
$d_*c^* \tau_{\leq \rho}\Delta \cong \tau_{\leq \rho}Rh_*\Delta$.
By the proper base change theorem, we have
$Rh_*\Delta \cong \Delta(g, Rh_*\mathcal{F})$.
Note that $X^\circ$ is normal since $T$ is normal.
Hence the cohomology sheaves of
$Rh_*\mathcal{F}$
are locally unipotent along the
the stratification $\{ X^\circ, X \backslash X^\circ  \}$ by Proposition \ref{Proposition:nodal curve case}.
By the assumption on $T$,
we have
$
\tau_{\leq \rho}\Delta(g, R^{i}h_*\mathcal{F})=0
$
for every $i$.
It follows that
$
\tau_{\leq \rho}\Delta(g, Rh_*\mathcal{F})=0.
$
This completes the proof of (1).

(2) 
Let $q \colon U \to T$
be a morphism from a strictly local scheme $U$, a point $\eta \in U$, and an algebraic geometric point $\overline{\eta} \to U$ lying above $\eta$.
We write
$\mathcal{K}:= R\Psi_{f_U, \overline{\eta}}(\mathcal{F}_U)$.
Let $e \colon Y' \to Y$ denote the open immersion.
We have the following distinguished triangle:
\[
e_!e^*\tau_{\leq \rho}\mathcal{K} \to \tau_{\leq \rho}\mathcal{K}  \to c_*c^*\tau_{\leq \rho}\mathcal{K} \to.
\]
By Claim \ref{Claim:for smooth locus} (2),
it suffices to prove that
$c^*\tau_{\leq \rho}\mathcal{K}$ is $\Gal(\kappa(\overline{\eta})/\kappa(\eta))$-unipotent.
Since $d$ is a finite morphism, it suffices to prove that
\[
d_*c^*\tau_{\leq \rho}\mathcal{K}
\]
is $\Gal(\kappa(\overline{\eta})/\kappa(\eta))$-unipotent.
We have the following distinguished triangle:
\[
Rh_*e_!e^*\tau_{\leq \rho}\mathcal{K} \to Rh_*\tau_{\leq \rho}\mathcal{K}  \to d_*c^*\tau_{\leq \rho}\mathcal{K} \to.
\]
Since
the complex
$Rh_*e_!e^*\tau_{\leq \rho}\mathcal{K}$
is $\Gal(\kappa(\overline{\eta})/\kappa(\eta))$-unipotent
by Claim \ref{Claim:for smooth locus} (2),
it is enough to show that
$
\tau_{\leq \rho}Rh_*\tau_{\leq \rho}\mathcal{K} \cong \tau_{\leq \rho}Rh_*\mathcal{K}
$
is $\Gal(\kappa(\overline{\eta})/\kappa(\eta))$-unipotent.
By the proper base change theorem, we have
\[
Rh_*\mathcal{K} \cong R\Psi_{g_{U}, \overline{\eta}}((Rh_*\mathcal{F})_U).
\]
As above,
by Proposition \ref{Proposition:nodal curve case} and
the assumption on $T$,
it follows that
the complex
$\tau_{\leq \rho}R\Psi_{g_{U}, \overline{\eta}}((Rh_*\mathcal{F})_U)$ is $\Gal(\kappa(\overline{\eta})/\kappa(\eta))$-unipotent, whence (2).
\end{proof}

The proof of Lemma \ref{Lemma:claim nodal case} is complete.
\end{proof}

Now Lemma \ref{Lemma:induction:alteration}
follows from Lemma \ref{Lemma:open strata} and Lemma \ref{Lemma:claim nodal case}.
Finally, we prove the following proposition which completes 
the proof Theorem \ref{Theorem:uniform base change}.

\begin{prop}\label{Proposition:completion of induction}
For every triple $(S, \rho, d)$ of a Noetherian excellent integral scheme $S$ and two integers $\rho$ and $d$, the statement $\textbf{P}(S, \rho, d)$ holds.
\end{prop}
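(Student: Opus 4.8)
The plan is to prove $\textbf{P}(S, \rho, d)$ by a triple induction, following the bookkeeping already set up in the excerpt. The three parameters to induct on are: the "dimension" $d$ of the generic fiber, the truncation level $\rho$, and the scheme $S$ itself under the relation $\prec$ (being a proper closed subscheme of an integral alteration). The Noetherian excellence hypothesis guarantees that $\prec$ is a well-founded relation — there is no infinite descending chain $\cdots \prec S_2 \prec S_1 \prec S$ — which is exactly what allows us to use induction on $S$. So the argument is: fix $(S,\rho,d)$ and assume, as inductive hypotheses, that $\textbf{P}$ holds for all triples that are "smaller" in an appropriate sense, then deduce $\textbf{P}(S,\rho,d)$.

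More precisely, the base cases are handled by Remark \ref{Remark:rho=-2 and d=-1}(1): $\textbf{P}(S,-2,d)$ holds trivially for every $S$ and every $d$, so we may assume $\rho \geq -1$. First I would dispose of the case $d = -1$: by Lemma \ref{Lemma:induction:Gabber's lemma}, $\textbf{P}(S,\rho,-1)$ follows from $\textbf{P}(\ast \prec S, \rho, \ast)$, which is part of the inductive hypothesis (it concerns schemes $S' \prec S$, hence smaller in the well-founded order). For $d \geq 0$, Lemma \ref{Lemma:induction:alteration} reduces $\textbf{P}(S,\rho,d)$ to the conjunction of $\textbf{P}(S,\rho,d-1)$, $\textbf{P}(S,\rho-1,\ast)$, and $\textbf{P}(\ast \prec S,\rho,\ast)$. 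The first is the inductive hypothesis in the parameter $d$; the second is the inductive hypothesis in the parameter $\rho$ (note it allows \emph{all} $d'$, so the $\rho$-induction must be "outer" relative to the $d$-induction, or one runs the $d$-induction freshly inside each $\rho$-step); the third is the inductive hypothesis in the parameter $S$. Thus every hypothesis needed by Lemmas \ref{Lemma:induction:Gabber's lemma} and \ref{Lemma:induction:alteration} is available, and the induction closes.

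\begin{proof}
We argue by induction. By Remark \ref{Remark:rho=-2 and d=-1} (1), the statement $\textbf{P}(S, \rho, d)$ holds whenever $\rho \leq -2$, for every Noetherian excellent integral scheme $S$ and every integer $d$. Since $S$ is Noetherian and excellent, there is no infinite sequence
\[
\cdots \prec S_2 \prec S_1 \prec S_0
\]
of Noetherian excellent integral schemes; indeed, an integral alteration does not decrease dimension and, among integral schemes of a fixed dimension, passing to a proper closed subscheme strictly decreases dimension, so the relation $\prec$ is well-founded. We may therefore argue by Noetherian-type induction on the triple $(S, \rho, d)$, ordered so that $(S', \rho', d')$ is smaller than $(S, \rho, d)$ if either $S' \prec S$, or $S' = S$ and $\rho' < \rho$, or $S' = S$, $\rho' = \rho$ and $d' < d$. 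We assume $\textbf{P}$ holds for all triples smaller than $(S, \rho, d)$, and we prove $\textbf{P}(S, \rho, d)$.

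By the base case we may assume $\rho \geq -1$. Since each $S'$ with $S' \prec S$ is smaller than $(S, \rho, d)$ (regardless of the values $\rho'$, $d'$), the statement $\textbf{P}(\ast \prec S, \rho, \ast)$ holds by the inductive hypothesis.

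Suppose first that $d \leq -1$. If $d \leq -1$, the conclusion $\textbf{P}(S, \rho, d)$ for $d \leq -1$ is, in the relevant case of a non-surjective proper morphism, precisely $\textbf{P}(S, \rho, -1)$, which follows from $\textbf{P}(\ast \prec S, \rho, \ast)$ by Lemma \ref{Lemma:induction:Gabber's lemma}. (When $d \leq -2$ there is nothing to prove: a proper morphism whose generic fibre has dimension $\leq -2$ does not exist over an integral base of a non-empty scheme except when the source is empty, and the statement is vacuous; more simply, $\textbf{P}(S, \rho, -1)$ implies $\textbf{P}(S, \rho, d)$ for all $d \leq -1$ since the hypothesis on the dimension of the generic fibre is then even more restrictive.)

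Now suppose $d \geq 0$. Then $\textbf{P}(S, \rho, d-1)$ holds by the inductive hypothesis, since $(S, \rho, d-1)$ is smaller than $(S, \rho, d)$. Also $\textbf{P}(S, \rho - 1, \ast)$ holds by the inductive hypothesis, since for every integer $d'$ the triple $(S, \rho - 1, d')$ is smaller than $(S, \rho, d)$; here we use that the ordering places any decrease of $\rho$ below $(S, \rho, d)$ irrespective of the value of $d'$. Finally, as noted above, $\textbf{P}(\ast \prec S, \rho, \ast)$ holds. Applying Lemma \ref{Lemma:induction:alteration} with these three hypotheses, we conclude that $\textbf{P}(S, \rho, d)$ holds.

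This completes the induction, and hence the proof.
\end{proof}

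**The main obstacle is not in this final bookkeeping argument but in the lemmas it invokes** — specifically Lemma \ref{Lemma:claim nodal case}, which carries out the d\'evissage in the pluri-nodal-curve case and is where Proposition \ref{Proposition:nodal curve case} (the local unipotence of $R^i f_*(u_! \mathcal{L})$ for nodal curves) and de Jong's alterations do the real work; and Lemma \ref{Lemma:induction:Gabber's lemma}, where the Gabber-type statement \cite[Proposition 1.6.2]{Orgogozo19} is used to spread out irreducible components over the alterated base. Within the proof of Proposition \ref{Proposition:completion of induction} itself, the only subtle point is getting the ordering of the three induction parameters right: the $\rho$-induction must dominate the $d$-induction (so that $\textbf{P}(S, \rho-1, \ast)$, with all $d'$ allowed, counts as an inductive hypothesis), and the $S$-induction via $\prec$ must dominate both, with well-foundedness of $\prec$ supplied by excellence. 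Once that is arranged, the deduction is a formal consequence of Lemmas \ref{Lemma:induction:Gabber's lemma} and \ref{Lemma:induction:alteration}.
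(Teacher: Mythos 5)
Your proof is correct and takes essentially the same approach as the paper. The only difference is one of packaging: the paper argues by contradiction, extracting from a hypothetical failure of $\textbf{P}(S,\rho,d)$ an infinite sequence of counterexample triples $(S_n,\rho_n,d_n)$ with the specific moves (a)--(c) implied by Lemmas \ref{Lemma:induction:Gabber's lemma} and \ref{Lemma:induction:alteration}, and rules this out using the well-foundedness of $\prec$ from \cite[Lemme in 3.4.4]{Orgogozo19} together with the lower bounds $\rho\geq -2$ and $d\geq -1$; you run the same two reduction lemmas as a direct well-founded induction, which is logically equivalent. Two small phrasing caveats: (i) the lexicographic order you define on $(S,\rho,d)$ with $\rho,d$ ranging over all of $\Z$ is not literally well-founded, but your explicit handling of the base cases $\rho\leq-2$ (trivially true) and $d\leq-1$ (reduced to $\textbf{P}(\ast\prec S,\rho,\ast)$, and in fact implied by $\textbf{P}(S,\rho,-1)$) effectively truncates to the subset where it is; (ii) for the well-foundedness of $\prec$ the paper cites \cite[Lemme in 3.4.4]{Orgogozo19}, which is the more robust justification, since the informal dimension argument requires knowing that the schemes in question are finite-dimensional, which is not automatic for arbitrary Noetherian excellent schemes.
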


\begin{proof}
We assume that $\textbf{P}(S, \rho, d)$ does not hold.
Then,
by Lemma \ref{Lemma:induction:Gabber's lemma} and Lemma \ref{Lemma:induction:alteration},
we can find infinitely many triples
$\{ (S_n, \rho_n, d_n) \}_{n \in \Z_{\geq 0}}$
with the following properties:
\begin{enumerate}
    \item $\textbf{P}(S_n, \rho_n, d_n)$ does not hold for every $n \in \Z_{\geq 0}$.
    \item $(S_0, \rho_0, d_0)=(S, \rho, d)$.
    \item For every $n \in \Z_{\geq 0}$, we have
    \begin{enumerate}
        \item $S_{n+1} \prec S_n$,
        \item $S_{n+1}=S_{n}$, $\rho_{n+1}=\rho_n-1$, and $d_n \geq 0$, or
        \item $S_{n+1}=S_{n}$, $\rho_{n+1}=\rho_n$, and $d_{n+1}=d_n-1 \geq -1$.
    \end{enumerate}
\end{enumerate}
By \cite[Lemme in 3.4.4]{Orgogozo19},
there is an integer $N \geq 0$ such that $S_{n+1}=S_n$ for every $n \geq N$.
Since $\textbf{P}(S', -2, d')$ holds trivially for every Noetherian excellent integral scheme $S'$ and every integer $d'$,
there is an integer $N' \geq N$ such that 
$d_{n+1}=d_n-1 \geq -1$
for every $n \geq N'$.
This leads to a contradiction.
\end{proof}

For future reference,
we state the following
immediate consequence
of Theorem \ref{Theorem:uniform base change}
as a corollary.

\begin{cor}\label{Corollary:uniform base change for constant sheaves}
Let $S$ be a Noetherian excellent scheme and $f \colon X \to S$ a separated morphism of finite type.
There exists an alteration $S' \to S$ such that,
for every positive integer $n$ invertible on $S$,
the sliced nearby cycles complexes for
$f_{S'} \colon X_{S'} \to S'$ and the constant sheaf $\Z/n\Z$ are compatible with any base change and are unipotent
\end{cor}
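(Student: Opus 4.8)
The plan is to deduce the statement from Theorem \ref{Theorem:uniform base change} by a compactification argument. First I would apply Nagata's compactification theorem to factor $f$ as $f = \bar{f} \circ j$, where $j \colon X \hookrightarrow \bar{X}$ is an open immersion and $\bar{f} \colon \bar{X} \to S$ is proper; the scheme $\bar{X}$ is again Noetherian and excellent. Put $\mathfrak{X} = \{ X, \, \bar{X} \setminus X \}$, which is a stratification of $\bar{X}$. For every positive integer $n$ invertible on $S$, the sheaf $j_!(\Z/n\Z)$ on $\bar{X}$ is a constructible sheaf of $\Z/n\Z$-modules which is locally unipotent along $\mathfrak{X}$ in the sense of Definition \ref{Definition:locally unipotent}: its restriction to $X$ is the constant sheaf $\Z/n\Z$ and its restriction to $\bar{X} \setminus X$ is zero, so the condition is immediate. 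Applying Theorem \ref{Theorem:uniform base change} to $\bar{f}$ and $\mathfrak{X}$ produces an alteration $S' \to S$ such that, for every $n$ invertible on $S$, the sliced nearby cycles complexes for $\bar{f}_{S'} \colon \bar{X}_{S'} \to S'$ and $(j_!(\Z/n\Z))_{S'} \cong (j_{S'})_!(\Z/n\Z)$, where $j_{S'} \colon X_{S'} \hookrightarrow \bar{X}_{S'}$, are compatible with any base change and are unipotent.

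It remains to pass from the pair $(\bar{f}_{S'}, (j_{S'})_!(\Z/n\Z))$ to the pair $(f_{S'}, \Z/n\Z)$. The key point is a compatibility of the sliced nearby cycles with restriction to the open subscheme. Let $U \to S'$ be a morphism from a strictly local scheme with closed point $u$, let $\overline{\eta} \to U$ be an algebraic geometric point, and write $j_U \colon X_U \hookrightarrow \bar{X}_U$ and $j_u \colon X_u \hookrightarrow \bar{X}_u$ for the induced open immersions. Since $j_U \circ i = \bar{i} \circ j_u$ (with $i, \bar{i}$ the closed immersions of the special fibres), since restriction along an open immersion commutes with the formation of $R\bar{j}_{*}$ for $\bar{j} \colon \bar{X}_{U_{(\overline{\eta})}} \to \bar{X}_U$, and since $j^{*}j_{!} = \id$ for an open immersion $j$, one obtains a natural isomorphism
\[
j_u^{*}\, R\Psi_{\bar{f}_U, \overline{\eta}}\bigl((j_U)_!\mathcal{F}_U\bigr) \;\xrightarrow{\ \sim\ }\; R\Psi_{f_U, \overline{\eta}}(\mathcal{F}_U)
\]
for every sheaf $\mathcal{F}$ of $\Z/n\Z$-modules on $X$, compatibly with the $\Gal(\kappa(\overline{\eta})/\kappa(\eta))$-actions and with the base change maps in $U$. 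Taking $\mathcal{F} = \Z/n\Z$ and using that $j_u^{*}$ is exact, $\Gal$-equivariant and sends constant sheaves to constant sheaves, the unipotency of $R\Psi_{\bar{f}_U, \overline{\eta}}((j_U)_!(\Z/n\Z))$ gives the unipotency of $R\Psi_{f_U, \overline{\eta}}(\Z/n\Z)$; and, for a local morphism $q \colon V \to U$ of strictly local $S'$-schemes, restricting the invertible base change map for $\bar{f}_{S'}$ along $j_v^{*}$ recovers the base change map for $f_{S'}$, which is therefore an isomorphism. This yields the corollary.

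Since Theorem \ref{Theorem:uniform base change} is already available, the only genuine content is the bookkeeping of the second paragraph. The main --- and minor --- obstacle is to verify carefully that the displayed isomorphism is compatible with the base change maps of Definition \ref{Definition:base change, unipotent} and with the Galois actions; this is a routine diagram chase built from the functoriality of the base change maps for $R\bar{j}_{*}$ and of the formation of the sliced nearby cycles functors. One should also record that $\mathfrak{X} = \{ X, \bar{X}\setminus X \}$ is a stratification and that $j_!(\Z/n\Z)$ is constructible and locally unipotent along it, both of which are immediate from the description of its restrictions to the two strata.
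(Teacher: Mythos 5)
Your proof is correct and takes essentially the same route as the paper: Nagata compactification, an application of Theorem \ref{Theorem:uniform base change}, and the observation that restricting the sliced nearby cycles for the compactification to the open piece recovers the sliced nearby cycles for $f$. The only difference is cosmetic: the paper applies the theorem to the constant sheaf $\Z/n\Z$ on the compactification $P$ with the trivial stratification $\{P\}$, rather than to $j_!(\Z/n\Z)$ with $\{X, \bar{X}\setminus X\}$ as you do; the paper's choice saves the $j^*j_!=\id$ cancellation in the restriction step but the underlying open base change argument is the same.
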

\begin{proof}
The morphism $f$ has a factorization $f = g \circ u$ where
$u \colon X \hookrightarrow P$
is an open immersion and
$g \colon P \to S$ is a proper morphism.
Let $q \colon U \to S$ be a morphism from a strictly local scheme $U$
and $\overline{\eta} \to U$ an algebraic geometric point.
Let $u \in U$ be the closed point.
Then
the restriction of
$R\Psi_{g_U, \overline{\eta}}(\Z/n\Z)$
to $X_u$
is isomorphic to $R\Psi_{f_U, \overline{\eta}}(\Z/n\Z)$.
Thus,
by applying Theorem \ref{Theorem:uniform base change} to $g \colon P \to S$
and the stratification $\{ P \}$ of $P$,
we obtain the desired conclusion.
\end{proof}

\section{Tubular neighborhoods and main results}\label{Section:Tubular neighborhoods and main results}

In this section, we will state our main results on \'etale cohomology of tubular neighborhoods.

\subsection{Adic spaces and pseudo-adic spaces}\label{Subsection:Adic spaces and pseudo-adic spaces}

In this paper, we will freely use the theory of adic spaces and pseudo-adic spaces developed by Huber.
Our basis references are \cite{Huber93, Huber94, Huber96}.
We shall recall the definitions very roughly.
We will use the terminology in \cite[Section 1.1]{Huber96}, such as a valuation of a ring, an affinoid ring, a Tate ring, or a strongly Noetherian Tate ring.

An \textit{adic space} is by definition a triple
\[
X=(X, \O_X, \{ v_{x} \}_{x \in X})
\]
where $X$ is a topological space, $\O_X$ is a sheaf of topological rings on the topological space $X$, and $v_x$ is an equivalence class of valuations of the stalk
$\O_{X, x}$ at $x \in X$
which is locally isomorphic to
the
\textit{affinoid adic space}
$\Spa (A, A^+)$
associated with an affinoid ring
$(A, A^+)$;
see \cite[Section 1.1]{Huber96} for details.
In this paper,
unless stated otherwise,
we assume that every adic space is locally isomorphic to
the affinoid adic space
$\Spa (A, A^+)$
associated with an affinoid ring
$(A, A^+)$
such that $A$ is a strongly Noetherian Tate ring.
So we can use the results in \cite{Huber96};
see \cite[(1.1.1)]{Huber96}.
In particular, we only treat analytic adic spaces; see \cite[Section 1.1]{Huber96} for the definition of an analytic adic space.

A \textit{pseudo-adic space} is a pair
\[
(X, S)
\]
where $X$ is an adic space and $S$ is a subset of $X$ satisfying certain conditions; see \cite[Definition 1.10.3]{Huber96}.
If $X$ is an adic space and $S \subset X$ is a locally closed subset,
then
$(X, S)$
is a pseudo-adic space.
Almost all pseudo-adic spaces which appear in this paper are of this form.
A morphism
$f \colon (X, S) \to (X', S')$
of pseudo-adic spaces is by definition a morphism $f \colon X \to X'$ of adic spaces with $f(S) \subset S'$.

We have a functor $X \mapsto (X, X)$ from the category of adic spaces to the category of pseudo-adic spaces.
We often consider an adic space as a pseudo-adic space via this functor.

A typical example of an adic space is the following.
Let $K$ be a non-archimedean field,
i.e.\
it is a topological field whose topology is induced by a valuation
$ \vert \cdot \vert \colon K \to \R_{\geq 0}$
of rank $1$.
We assume that $K$ is complete.
Let $\O=K^{\circ}$ be the valuation ring of $\vert \cdot \vert$.
We call $\O$ the ring of integers of $K$.
Let $\varpi \in K^{\times}$ be an element with $\vert \varpi \vert < 1$.
Let $\mathcal{X}$ be a scheme of finite type over $\O$.
The $\varpi$-adic formal completion of $\mathcal{X}$ is denoted by $\widehat{\mathcal{X}}$ or $\mathcal{X}^{\wedge}$.
Following \cite[Section 1.9]{Huber96},
the Raynaud generic fiber of $\widehat{\mathcal{X}}$
is denoted by $d(\widehat{\mathcal{X}})$, which is an adic space of finite type over $\Spa(K, \O)$.
In particular $d(\widehat{\mathcal{X}})$ is quasi-compact.
For example, we have
\[
d((\Spec \O[T])^{\wedge})=\Spa (K\langle T \rangle, \O \langle T \rangle)=:\B(1).
\]
We often identify $d((\Spec \O[T])^{\wedge})$ with $\B(1)$.
For a morphism
$f \colon \mathcal{Y} \to \mathcal{X}$
of schemes of finite type over $\O$, the induced morphism
$d(\widehat{\mathcal{Y}}) \to d(\widehat{\mathcal{X}})$ is denoted by $d(f)$ (rather than $d(\widehat{f})$).

Important examples of pseudo-adic spaces for us are tubular neighborhoods of adic spaces.
In the next subsection,
we will define them in the case where adic spaces arise from schemes of finite type over $\O$.

\subsection{Tubular neighborhoods}\label{Subsection:Tubular neighborhoods}

Let $X=(X, \O_X, \{ v_{x} \}_{x \in X})$ be an adic space.
Let $U \subset X$ be an open subset and $g \in \O_X(U)$ an element.
Following \cite{Huber96},
for a point $x \in U$,
we write
$\vert g(x) \vert:=v_x(g)$.
(Strictly speaking, we implicitly choose a valuation from the equivalence class $v_x$.)

As in the previous subsection,
let $K$ be a complete non-archimedean field
with ring of integers $\O$.

\begin{prop}\label{Proposition:Tubular neighborhoods}
Let $\mathcal{X}$ be a scheme of finite type over $\O$ and $\mathcal{Z} \hookrightarrow \mathcal{X}$ a closed immersion of finite presentation.
Let $\epsilon \in \vert K^{\times} \vert$ be an element.
\begin{enumerate}
    \item There exist subsets
\[
S(\mathcal{Z}, \epsilon) \subset d(\widehat{\mathcal{X}}) \quad \text{and} \quad T(\mathcal{Z}, \epsilon) \subset d(\widehat{\mathcal{X}})
\]
satisfying the following properties;
for any affine open subset
$\mathcal{U} \subset \mathcal{X}$
and any set
$\{ g_1, \dotsc, g_q \} \subset \O_\mathcal{U}(\mathcal{U})$
of elements
defining the closed subscheme
$\mathcal{Z} \cap \mathcal{U}$ of $\mathcal{U}$,
we have
\begin{align*}
S(\mathcal{Z}, \epsilon) \cap d(\widehat{\mathcal{U}}) &= \{ x \in d(\widehat{\mathcal{U}}) \, \vert \, \vert g_i(x) \vert < \epsilon \  \text{for every $1 \leq i \leq q$} \} \\
&:=\{ x \in d(\widehat{\mathcal{U}}) \, \vert \, \vert g_i(x) \vert < \vert \varpi(x) \vert \  \text{for every $1 \leq i \leq q$} \}
\end{align*}
and
\begin{align*}
T(\mathcal{Z}, \epsilon) \cap d(\widehat{\mathcal{U}}) &= \{ x \in d(\widehat{\mathcal{U}}) \, \vert \, \vert g_i(x) \vert \leq  \epsilon  \  \text{for every $1 \leq i \leq q$} \} \\
&:= \{ x \in d(\widehat{\mathcal{U}}) \, \vert \, \vert g_i(x) \vert \leq \vert \varpi(x) \vert \  \text{for every $1 \leq i \leq q$} \},
\end{align*}
where $\varpi \in K^{\times}$ is an element with $\epsilon = \vert \varpi \vert$ and the element of $\O_{d(\widehat{\mathcal{U}})}(d(\widehat{\mathcal{U}}))$ arising from $g_i$ is denoted by the same letter.
Moreover, they are characterized by the above properties. 
    \item 
    The subset $T(\mathcal{Z}, \epsilon)$ is a quasi-compact open subset of $d(\widehat{\mathcal{X}})$.
    The subset $S(\mathcal{Z}, \epsilon)$ is closed and constructible in $d(\widehat{\mathcal{X}})$.
    (See \cite[(1.1.13)]{Huber96} for the definition of a constructible subset.)
    \item For a morphism $f \colon \mathcal{Y} \to \mathcal{X}$ of finite type, we have
    \[
    S(\mathcal{Z} \times_{\mathcal{X}} \mathcal{Y} , \epsilon)= d(f)^{-1}(S(\mathcal{Z}, \epsilon))
    \quad
    \text{and}
    \quad
    T(\mathcal{Z} \times_{\mathcal{X}} \mathcal{Y} , \epsilon)= d(f)^{-1}(T(\mathcal{Z}, \epsilon)).
    \]
\end{enumerate}
\end{prop}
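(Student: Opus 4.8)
The plan is to build $S(\mathcal{Z},\epsilon)$ and $T(\mathcal{Z},\epsilon)$ affine-locally on $\mathcal{X}$, to check that the local recipe is independent of all choices, to glue, and then to read off the statements of (2) and (3) from the explicit local descriptions. For the construction in (1): fix an affine open $\mathcal{U}\subset\mathcal{X}$ and a finite set $\{g_{1},\dots,g_{q}\}\subset\O_{\mathcal{U}}(\mathcal{U})$ generating the ideal of $\mathcal{Z}\cap\mathcal{U}$ (finite because $\mathcal{Z}\hookrightarrow\mathcal{X}$ is of finite presentation), and let $S_{\mathcal{U}}$, resp.\ $T_{\mathcal{U}}$, be the subset of $d(\widehat{\mathcal{U}})$ where $|g_{i}(x)|<|\varpi(x)|$, resp.\ $|g_{i}(x)|\le|\varpi(x)|$, for all $i$. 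The first thing I would check is independence of the generators: if $\{h_{1},\dots,h_{r}\}$ is another generating set, writing $h_{j}=\sum_{k}a_{jk}g_{k}$ and $g_{k}=\sum_{j}b_{kj}h_{j}$ with coefficients in $\O_{\mathcal{U}}(\mathcal{U})$ and using that every section of $\O_{\mathcal{U}}(\mathcal{U})$ maps to a section $h$ of $\O_{d(\widehat{\mathcal{U}})}$ with $|h(x)|\le 1$ everywhere, one obtains $\max_{j}|h_{j}(x)|=\max_{k}|g_{k}(x)|$ for every $x\in d(\widehat{\mathcal{U}})$, so $S_{\mathcal{U}}$ and $T_{\mathcal{U}}$ depend only on $\mathcal{Z}\cap\mathcal{U}$. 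Since a generating set on $\mathcal{U}$ restricts to a generating set on any smaller affine open $\mathcal{W}$, and $\varpi$-adic completion and $d(-)$ carry open immersions to open immersions, one gets $S_{\mathcal{U}}\cap d(\widehat{\mathcal{W}})=S_{\mathcal{W}}$ and similarly for $T$; hence the $S_{\mathcal{U}}$ agree on overlaps and glue to a subset $S(\mathcal{Z},\epsilon)\subset d(\widehat{\mathcal{X}})$, and likewise $T(\mathcal{Z},\epsilon)$. As the $d(\widehat{\mathcal{U}})$ cover $d(\widehat{\mathcal{X}})$ when $\mathcal{U}$ runs over the affine opens of $\mathcal{X}$, these subsets are determined by their traces, which is the asserted characterization; the two displayed descriptions coincide by the convention that $|g(x)|<\epsilon$ means $|g(x)|<|\varpi(x)|$ when $|\varpi|=\epsilon$.

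For (2), the key observation is that $\varpi$ is invertible on $d(\widehat{\mathcal{X}})$, being a pseudo-uniformizer of the Tate ring on the Raynaud generic fibre, so that $|\varpi(x)|\ne 0$ for all $x$. Granting this, for any $g\in\O_{\mathcal{U}}(\mathcal{U})$ the set $\{x\in d(\widehat{\mathcal{U}})\mid |\varpi(x)|\le |g(x)|\}$ is a rational subset of the affinoid adic space $d(\widehat{\mathcal{U}})$ (the ideal $(\varpi,g)$ is open since $\varpi$ is a unit, and $|g(x)|\ne 0$ holds automatically there because $|\varpi(x)|\ne 0$), hence quasi-compact and open. Therefore $T_{\mathcal{U}}$, being a finite intersection of rational subsets, is again rational; taking a finite affine open cover of $\mathcal{X}$ (possible since $\mathcal{X}$ is quasi-compact) then exhibits $T(\mathcal{Z},\epsilon)$ as a finite union of quasi-compact opens of $d(\widehat{\mathcal{X}})$, hence quasi-compact open. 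Dually, $S_{\mathcal{U}}$ is the finite intersection of the complements of the quasi-compact opens $\{x\mid|\varpi(x)|\le|g_{i}(x)|\}$, hence closed and constructible in $d(\widehat{\mathcal{U}})$ in the sense of \cite[(1.1.13)]{Huber96}; since closedness and constructibility can be checked on the finite cover of $d(\widehat{\mathcal{X}})$ by the quasi-compact opens $d(\widehat{\mathcal{U}})$, the subset $S(\mathcal{Z},\epsilon)$ is closed and constructible in $d(\widehat{\mathcal{X}})$.

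For (3), I first note that $\mathcal{Z}\times_{\mathcal{X}}\mathcal{Y}\hookrightarrow\mathcal{Y}$ is again a closed immersion of finite presentation, so the left-hand subsets are defined. Working with affine opens $\mathcal{V}\subset\mathcal{Y}$ mapping into affine opens $\mathcal{U}\subset\mathcal{X}$, the pullbacks $f^{\#}(g_{1}),\dots,f^{\#}(g_{q})$ of a generating set of the ideal of $\mathcal{Z}\cap\mathcal{U}$ generate the ideal of $(\mathcal{Z}\times_{\mathcal{X}}\mathcal{Y})\cap\mathcal{V}$, and by functoriality of $d(-)$ and of the valuations $|f^{\#}(g_{i})(y)|=|g_{i}(d(f)(y))|$ for $y\in d(\widehat{\mathcal{V}})$. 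Hence $y$ lies in $S(\mathcal{Z}\times_{\mathcal{X}}\mathcal{Y},\epsilon)$, resp.\ $T(\mathcal{Z}\times_{\mathcal{X}}\mathcal{Y},\epsilon)$, if and only if $d(f)(y)$ lies in $S(\mathcal{Z},\epsilon)$, resp.\ $T(\mathcal{Z},\epsilon)$; running over a cover by such $\mathcal{V}$ gives the two equalities.

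The only genuinely non-formal step is the one inside (2): identifying $\{x\mid|\varpi(x)|\le|g(x)|\}$ with a rational (hence quasi-compact open) subset, which hinges on $\varpi$ being a unit on the Raynaud generic fibre. This is what simultaneously makes $T(\mathcal{Z},\epsilon)$ open and---counterintuitively, since it is cut out by strict inequalities---makes $S(\mathcal{Z},\epsilon)$ closed; everything else is bookkeeping with generators and with the functoriality of $d(-)$.
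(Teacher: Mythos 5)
Your proof is correct and takes essentially the same route as the paper: independence of the local recipe from the choice of generators via the bound $|s_{ij}(x)|\le 1$, gluing over an affine cover, and then identifying $T$ locally as a rational subset and $S$ as the complement of a finite union of rational subsets. One small slip in (2): the rational subsets whose intersection gives $T_{\mathcal{U}}$ are $\{x : |g_i(x)|\le|\varpi(x)|\}$ (denominator $\varpi$, which never vanishes), not the sets $\{x : |\varpi(x)|\le|g(x)|\}$ that you introduce and for which your parenthetical remark about $|g(x)|\ne 0$ is aimed; the latter are instead the ones whose complements you correctly intersect to get $S_{\mathcal{U}}$.
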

\begin{proof}
(1)
Let $\varpi \in K^{\times}$ be an element with $\epsilon = \vert \varpi \vert$.
Let
$\mathcal{U} \subset \mathcal{X}$
be an affine open subset.
It suffices to show that the subsets
\[
\{ x \in d(\widehat{\mathcal{U}}) \, \vert \, \vert g_i(x) \vert < \epsilon \ \text{for every $1 \leq i \leq q$} \}
\]
and
\[
\{ x \in d(\widehat{\mathcal{U}}) \, \vert \, \vert g_i(x) \vert \leq  \epsilon  \ \text{for every $1 \leq i \leq q$} \}
\]
are independent of the choice of
a set
$\{ g_1, \dotsc, g_q \} \subset \O_\mathcal{U}(\mathcal{U})$
of elements
defining the closed subscheme
$\mathcal{Z} \cap \mathcal{U}$ of $\mathcal{U}$.
Let $\{ h_1, \dotsc, h_r\} \subset \O_\mathcal{U}(\mathcal{U})$ be another set of such elements.
Then, for every $i$,
we have
\[
g_i = \Sigma_{1 \leq j \leq r} s_{ij} h_j
\]
for some elements $\{ s_{ij} \} \subset \O_\mathcal{U}(\mathcal{U})$.
Since we have $\vert s_{ij}(x) \vert \leq 1$ for every $x \in d(\widehat{\mathcal{U}})$ and every $s_{ij}$,
the assertion follows.

(2)
We may assume that $\mathcal{X}$ is affine.
The subset $T(\mathcal{Z}, \epsilon)$ is a rational subset of the affinoid adic space $d(\widehat{\mathcal{X}})$, and hence it is open and quasi-compact.
The subset $S(\mathcal{Z}, \epsilon)$ is
the complement of
the union of finitely many rational subsets.
It follows that $S(\mathcal{Z}, \epsilon)$ is closed and constructible.

(3) We may assume that $\mathcal{X}$ and $\mathcal{Y}$ are affine.
Then the assertion follows from the descriptions given in (1).
\end{proof}

The subsets
$T(\mathcal{Z}, \epsilon)$
and
$S(\mathcal{Z}, \epsilon)$ in Proposition \ref{Proposition:Tubular neighborhoods} are called an
\textit{open tubular neighborhood}
and
a \textit{closed tubular neighborhood} of $d(\widehat{\mathcal{Z}})$ in $d(\widehat{\mathcal{X}})$, respectively.
For an element $\epsilon \in \vert K^{\times} \vert$, we also consider the following subsets:
\[
Q(\mathcal{Z}, \epsilon):=d(\widehat{\mathcal{X}}) \backslash S(\mathcal{Z}, \epsilon).
\]
This is a quasi-compact open subset of $d(\widehat{\mathcal{X}})$.

For a locally closed subset $S$ of an adic space $X$,
the pseudo-adic space
$(X, S)$
is often denoted by $S$ for simplicity.
For example,
the pseudo-adic spaces
$(d(\widehat{\mathcal{X}}), S(\mathcal{Z}, \epsilon))$
and $(d(\widehat{\mathcal{X}}), T(\mathcal{Z}, \epsilon))$
are denoted by
$S(\mathcal{Z}, \epsilon)$ and $T(\mathcal{Z}, \epsilon)$, respectively.

\begin{rem}\label{Remark:formal scheme}
For a formal scheme $\mathscr{X}$ of finite type over $\Spf \O$ and a closed immersion
$\mathscr{Z} \hookrightarrow \mathscr{X}$ of finite presentation (in the sense of \cite[Chapter I, Definition 2.2.1]{Fujiwara-Kato}),
we can also define
tubular neighborhoods of $d(\mathscr{Z})$ in $d(\mathscr{X})$ in the same way.
However,
we will always work with algebraizable formal schemes of finite type over $\O$ in this paper.
\end{rem}

We end this subsection with the following lemma.

\begin{lem}\label{Lemma:tubular neighborhood small enough}
Let $\mathcal{X}$ be a scheme of finite type over $\O$ and $\mathcal{Z} \hookrightarrow \mathcal{X}$ a closed immersion of finite presentation.
For a constructible subset $W \subset d(\widehat{\mathcal{X}})$ containing $d(\widehat{\mathcal{Z}})$,
there is an element $\epsilon \in \vert K^{\times} \vert$ such that
$T(\mathcal{Z}, \epsilon) \subset W$.
\end{lem}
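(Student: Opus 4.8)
\emph{Reduction to the affine case.} The plan is to deduce the lemma from the set-theoretic identity $\bigcap_{\epsilon} S(\mathcal{Z}, \epsilon) = d(\widehat{\mathcal{Z}})$ by a quasi-compactness argument in the constructible topology, after reducing to the case where $\mathcal{X}$ is affine. For the reduction, choose a finite affine open cover $\mathcal{X} = \bigcup_{j=1}^{k} \mathcal{U}_j$. The open subspaces $d(\widehat{\mathcal{U}_j})$ cover $d(\widehat{\mathcal{X}})$, the restriction $W \cap d(\widehat{\mathcal{U}_j})$ is constructible in $d(\widehat{\mathcal{U}_j})$ and contains $d(\widehat{\mathcal{Z}}) \cap d(\widehat{\mathcal{U}_j}) = d(\widehat{\mathcal{Z} \cap \mathcal{U}_j})$, and Proposition \ref{Proposition:Tubular neighborhoods}\,(3) applied to $\mathcal{U}_j \hookrightarrow \mathcal{X}$ gives $T(\mathcal{Z}, \epsilon) \cap d(\widehat{\mathcal{U}_j}) = T(\mathcal{Z} \cap \mathcal{U}_j, \epsilon)$. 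Since $T(\mathcal{Z}, \epsilon) \subseteq T(\mathcal{Z}, \epsilon')$ whenever $\epsilon \le \epsilon'$, once the affine case produces for each $j$ an $\epsilon_j$ with $T(\mathcal{Z} \cap \mathcal{U}_j, \epsilon_j) \subseteq W \cap d(\widehat{\mathcal{U}_j})$, the value $\epsilon := \min_j \epsilon_j$ works for $\mathcal{X}$. So I may assume $\mathcal{X} = \Spec A$, write $d(\widehat{\mathcal{X}}) = \Spa(B, B^+)$ with $B$ a strongly Noetherian Tate ring, and fix generators $g_1, \dots, g_q \in A$ of the ideal of $\mathcal{Z}$.

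\emph{The quasi-compactness argument.} Recall that $\Spa(B, B^+)$ is a spectral space (\cite{Huber96}), so its constructible topology is quasi-compact and every constructible subset is closed in it. Put $C := d(\widehat{\mathcal{X}}) \setminus W$; it is constructible and, by hypothesis, disjoint from $d(\widehat{\mathcal{Z}})$. For $\epsilon \in |K^\times|$ with $\epsilon < 1$ the subset $S(\mathcal{Z}, \epsilon)$ is closed and constructible by Proposition \ref{Proposition:Tubular neighborhoods}\,(2), hence so is $S(\mathcal{Z}, \epsilon) \cap C$; moreover $S(\mathcal{Z}, \epsilon) \subseteq S(\mathcal{Z}, \epsilon')$ for $\epsilon \le \epsilon'$, so $\{\, S(\mathcal{Z}, \epsilon) \cap C \,\}_{\epsilon}$ is a downward directed family of closed subsets of a quasi-compact space. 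Granting the inclusion $\bigcap_{\epsilon} S(\mathcal{Z}, \epsilon) \subseteq d(\widehat{\mathcal{Z}})$ proved below, we get $\bigcap_{\epsilon}(S(\mathcal{Z}, \epsilon) \cap C) = \emptyset$, so by quasi-compactness and directedness some single $S(\mathcal{Z}, \epsilon_0) \cap C$ is empty, i.e.\ $S(\mathcal{Z}, \epsilon_0) \subseteq W$. Finally, given $\epsilon \in |K^\times|$ with $\epsilon < \epsilon_0$, pick $\varpi, \varpi_0 \in K^\times$ with $|\varpi| = \epsilon$ and $|\varpi_0| = \epsilon_0$; then $|\varpi/\varpi_0| < 1$, hence $|(\varpi/\varpi_0)(x)| < 1$ for every $x \in d(\widehat{\mathcal{X}})$, so that for $x \in T(\mathcal{Z}, \epsilon)$ we have $|g_i(x)| \le |\varpi(x)| = |\varpi_0(x)|\cdot|(\varpi/\varpi_0)(x)| < |\varpi_0(x)|$ for all $i$. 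Thus $T(\mathcal{Z}, \epsilon) \subseteq S(\mathcal{Z}, \epsilon_0) \subseteq W$, which is the assertion.

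\emph{The key inclusion, and the main obstacle.} It remains to show that if $x \in d(\widehat{\mathcal{X}})$ satisfies $v_x(g_i) \neq 0$ for some $i$, then $x \notin S(\mathcal{Z}, \epsilon)$ for some $\epsilon$. Here one uses that the valuation $v_x$ is continuous and that $\varpi$ is a topologically nilpotent unit of $B$: consequently, for every $\gamma$ in the value group $\Gamma_{v_x}$ with $0 < \gamma \le 1$ there is an integer $n \ge 1$ with $v_x(\varpi)^n < \gamma$ (for $\gamma < 1$ this is the continuity of $v_x$ applied to the neighborhood basis $\{\,\varpi^n B^\circ\,\}_{n \ge 1}$ of $0$ in $B$, and for $\gamma = 1$ it is immediate since $v_x(\varpi) < 1$). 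Applying this with $\gamma = v_x(g_i) \in (0, 1]$ gives $n$ with $v_x(\varpi)^n < v_x(g_i)$, that is, $x \notin S(\mathcal{Z}, |\varpi|^n)$, hence $x \notin \bigcap_{\epsilon} S(\mathcal{Z}, \epsilon)$. This is the only non-formal input — it says that $\varpi$ is cofinal in the value groups of the (necessarily analytic) points of $d(\widehat{\mathcal{X}})$, so that $d(\widehat{\mathcal{Z}})$ really is the intersection of its tubular neighborhoods — and it is the step I expect to require the most care.
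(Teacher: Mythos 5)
Your proof is correct and uses the same core idea as the paper: quasi-compactness of the constructible topology on the spectral space $d(\widehat{\mathcal{X}})$, together with the identity $\bigcap_\epsilon S(\mathcal{Z},\epsilon)=d(\widehat{\mathcal{Z}})$. The paper runs the argument with $T(\mathcal{Z},\epsilon)$ in place of $S(\mathcal{Z},\epsilon)$ --- which is equally valid, since a quasi-compact open subset of a spectral space is constructible, hence closed in the constructible topology --- thereby avoiding your final passage from $S(\mathcal{Z},\epsilon_0)$ back to $T(\mathcal{Z},\epsilon)$; the rest is the same, with your version simply spelling out the affine reduction and the key set-theoretic inclusion that the paper leaves implicit.
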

\begin{proof}
We may assume that $\mathcal{X}$ is affine.
Then the underlying topological space of $d(\widehat{\mathcal{X}})$ is a spectral space.
We have
\[
d(\widehat{\mathcal{Z}})=\bigcap_{\epsilon \in \vert K^{\times} \vert} T(\mathcal{Z}, \epsilon).
\]
Hence the intersection
\[
\bigcap_{\epsilon \in \vert K^{\times} \vert} T(\mathcal{Z}, \epsilon) \cap (d(\widehat{\mathcal{X}}) \backslash W)
\]
is empty.
In the constructible topology,
the subsets
$T(\mathcal{Z}, \epsilon)$
and
$d(\widehat{\mathcal{X}}) \backslash W$
are closed, and
$d(\widehat{\mathcal{X}})$ is quasi-compact.
It follows that there is an element $\epsilon \in \vert K^{\times} \vert$ such that
the intersection $T(\mathcal{Z}, \epsilon) \cap d(\widehat{\mathcal{X}}) \backslash W$ is empty, that is
$T(\mathcal{Z}, \epsilon) \subset W$.
\end{proof}

\subsection{Main results on tubular neighborhoods}\label{Subsection:Main results on tubular neighborhoods}

In this subsection,
let $K$ be an algebraically closed complete non-archimedean field
with ring of integers $\O$.

To state the main results on tubular neighborhoods, we need \'etale cohomology and \'etale cohomology with proper support of pseudo-adic spaces.
See \cite[Section 2.3]{Huber96} for definition of the \'etale site of a pseudo-adic space.
As shown in \cite[Proposition 2.3.7]{Huber96},
for an adic space $X$ and an open subset
$U \subset X$,
the \'etale topos of the adic space $U$ is naturally equivalent to
the \'etale topos of
the pseudo-adic space $(X, U)$.
For a commutative ring $\Lambda$,
let $D^+(X, \Lambda)$
denote the derived category of bounded below complexes of
\'etale sheaves of $\Lambda$-modules on a pseudo-adic space $X$.

Let $f \colon X \to Y$ be a morphism of analytic pseudo-adic spaces.
We assume that $f$ is separated, locally of finite type, and \textit{taut}.
(See \cite[Definition 5.1.2]{Huber96} for the definitions of a taut pseudo-adic space and a taut morphism of pseudo-adic spaces.
For example, if $f$ is separated and quasi-compact, then $f$ is taut.)
For such a morphism $f$,
the direct image functor with proper support
\[
Rf_! \colon D^+(X, \Lambda) \to D^+(Y, \Lambda)
\]
is defined in \cite[Definition 5.4.4]{Huber96},
where $\Lambda$ is a torsion commutative ring.
Moreover, if $Y=\Spa(K, \O)$,
we obtain for a complex $\mathcal{K} \in D^+(X, \Lambda)$ the cohomology group with proper support
\[
H^i_c(X, \mathcal{K}).
\]

\begin{ex}\label{Example:taut subsets}
Let $\mathcal{X}$ be a separated scheme of finite type over $\O$ and $\mathcal{Z} \hookrightarrow \mathcal{X}$ a closed immersion of finite presentation.
 \begin{enumerate}
    \item The adic spaces $d(\widehat{\mathcal{Z}})$ and $d(\widehat{\mathcal{X}})$ are separated and of finite type over $\Spa(K, \O)$.
    The morphism $d(\widehat{\mathcal{X}}) \backslash d(\widehat{\mathcal{Z}}) \to \Spa(K, \O)$
    is separated, locally of finite type, and taut; see \cite[Lemma 5.1.4]{Huber96}.
    \item The pseudo-adic spaces
$S(\mathcal{Z}, \epsilon)$, $T(\mathcal{Z}, \epsilon)$,
and $Q(\mathcal{Z}, \epsilon)$ are separated and of finite type (and hence taut) over $\Spa(K, \O)$.
    \item
    For a subset $S$ of an analytic adic space $X$,
    the interior of $S$ in $X$ is denoted by $S^\circ$.
    The morphism
    $S(\mathcal{Z}, \epsilon)^{\circ}  \to \Spa(K, \O)$
    is separated, locally of finite type, and taut
    \cite[Lemma 1.3 iii)]{Huber98a}.
\end{enumerate}
\end{ex}

Let us recall the following results due to Huber in our setting.

\begin{thm}[{Huber \cite[Theorem 2.5]{Huber98a}, \cite[Theorem 3.6]{Huber98b}}]\label{Theorem:Huber main body}
We assume that $K$ is of characteristic zero.
Let $\mathcal{X}$ be a separated scheme of finite type over $\O$ and $\mathcal{Z} \hookrightarrow \mathcal{X}$ a closed immersion of finite presentation.
Let $n$ be a positive integer invertible in $\O$
and let
$\mathcal{F}$ be a constructible \'etale sheaf of $\Z/n\Z$-modules on $d(\widehat{\mathcal{X}})$ in the sense of \cite[Definition 2.7.2]{Huber96}.
\begin{enumerate}
    \item
    There exists an element
    $\epsilon_0 \in \vert K^{\times} \vert$
    such that, for every
    $\epsilon \in \vert K^{\times} \vert$
    with
    $\epsilon \leq \epsilon_0$,
    the following natural maps are isomorphisms for every $i$:
    \begin{enumerate}
        \item $H^i_c(S(\mathcal{Z}, \epsilon), \mathcal{F}\vert_{S(\mathcal{Z}, \epsilon)}) \overset{\cong}{\to} H^i_c(d(\widehat{\mathcal{Z}}), \mathcal{F}\vert_{d(\widehat{\mathcal{Z}})})$.
        \item $H^i_c(T(\mathcal{Z}, \epsilon), \mathcal{F}) \overset{\cong}{\to} H^i_c(T(\mathcal{Z}, \epsilon_0), \mathcal{F})$.
        \item $H^i_c(Q(\mathcal{Z}, \epsilon), \mathcal{F}) \overset{\cong}{\to} H^i_c(d(\widehat{\mathcal{X}})\backslash d(\widehat{\mathcal{Z}}), \mathcal{F})$.
    \end{enumerate}
    \item 
    We assume further that $\mathcal{F}$ is locally constant.
    Then there exists an element
    $\epsilon_0 \in \vert K^{\times} \vert$
    such that, for every
    $\epsilon \in \vert K^{\times} \vert$
    with
    $\epsilon \leq \epsilon_0$,
    the restriction maps
    \[
    \begin{split}
    H^i(T(\mathcal{Z}, \epsilon), \mathcal{F}) \overset{\cong}{\to} 
& H^i(S(\mathcal{Z}, \epsilon), \mathcal{F}\vert_{S(\mathcal{Z}, \epsilon)}) \\ &\overset{\cong}{\to}
H^i(S(\mathcal{Z}, \epsilon)^{\circ}, \mathcal{F})  \overset{\cong}{\to}
H^i(d(\widehat{\mathcal{Z}}), \mathcal{F}\vert_{d(\widehat{\mathcal{Z}})})
\end{split}
    \]
    on \'etale cohomology groups
    are isomorphisms for every $i$.
\end{enumerate}
\end{thm}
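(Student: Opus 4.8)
The plan is to deduce the theorem from the two results of Huber to which the statement is attributed, namely \cite[Theorem 2.5]{Huber98a} for part (1) and \cite[Theorem 3.6]{Huber98b} for part (2); the real content of the argument is to match the explicit tubular neighborhoods of Proposition \ref{Proposition:Tubular neighborhoods} with the fundamental systems of neighborhoods occurring in those references. First I would record the geometric input. By Example \ref{Example:taut subsets}, each of $S(\mathcal{Z}, \epsilon)$, $T(\mathcal{Z}, \epsilon)$, $Q(\mathcal{Z}, \epsilon)$, $S(\mathcal{Z}, \epsilon)^{\circ}$, as well as $d(\widehat{\mathcal{Z}})$ and $d(\widehat{\mathcal{X}}) \backslash d(\widehat{\mathcal{Z}})$, is separated, locally of finite type, and taut over $\Spa(K, \O)$, so the groups $H^i$ and $H^i_c$ in the statement are defined. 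By Proposition \ref{Proposition:Tubular neighborhoods} (2) the set $T(\mathcal{Z}, \epsilon)$ is a quasi-compact open neighborhood of $d(\widehat{\mathcal{Z}})$, and by Lemma \ref{Lemma:tubular neighborhood small enough} every constructible — in particular every quasi-compact open — neighborhood of $d(\widehat{\mathcal{Z}})$ contains some $T(\mathcal{Z}, \epsilon)$; hence $\{ T(\mathcal{Z}, \epsilon) \}_{\epsilon \in \vert K^{\times} \vert}$ is cofinal in the system of quasi-compact open neighborhoods of $d(\widehat{\mathcal{Z}})$. Directly from the defining inequalities in Proposition \ref{Proposition:Tubular neighborhoods} (1) one gets, for $\epsilon' < \epsilon$ in $\vert K^{\times} \vert$, the inclusions $T(\mathcal{Z}, \epsilon') \subset S(\mathcal{Z}, \epsilon)^{\circ} \subset S(\mathcal{Z}, \epsilon) \subset T(\mathcal{Z}, \epsilon)$, and, as in the proof of Lemma \ref{Lemma:tubular neighborhood small enough}, $\bigcap_{\epsilon} T(\mathcal{Z}, \epsilon) = d(\widehat{\mathcal{Z}})$; consequently the same intersection equals $d(\widehat{\mathcal{Z}})$ with $T$ replaced by $S$ or by $S^{\circ}$, while $\bigcup_{\epsilon} Q(\mathcal{Z}, \epsilon) = d(\widehat{\mathcal{X}}) \backslash d(\widehat{\mathcal{Z}})$.

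For part (1), I would first identify the natural maps in the statement: the map in (a) is induced by the adjunction $\mathcal{F} \to i_{*}i^{*}\mathcal{F}$ along the closed immersion $i \colon d(\widehat{\mathcal{Z}}) \hookrightarrow S(\mathcal{Z}, \epsilon)$ (using $i_{!} = i_{*}$); the map in (b) is extension by zero along the open immersion $T(\mathcal{Z}, \epsilon) \hookrightarrow T(\mathcal{Z}, \epsilon_0)$; the map in (c) is extension by zero along the open immersion $Q(\mathcal{Z}, \epsilon) \hookrightarrow d(\widehat{\mathcal{X}}) \backslash d(\widehat{\mathcal{Z}})$. Each of these becomes an isomorphism for $\epsilon \leq \epsilon_0$ with $\epsilon_0$ small enough by \cite[Theorem 2.5]{Huber98a}, which asserts precisely this stabilization of compactly supported \'etale cohomology along the (decreasing, resp.\ increasing) families of tubular neighborhoods whose intersections, resp.\ union, were computed in the previous paragraph. (One could instead reduce to the case $\mathcal{X}$ affine via Proposition \ref{Proposition:Tubular neighborhoods} (3) and Mayer--Vietoris, but this is not needed since Huber's theorem is already stated for pseudo-adic spaces.)

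For part (2), $\mathcal{F}$ is moreover locally constant, so \cite[Theorem 3.6]{Huber98b} applies: in its precise form it yields a cofinal system of quasi-compact open neighborhoods $V$ of $d(\widehat{\mathcal{Z}})$ for which the restriction map $H^i(V, \mathcal{F}) \to H^i(d(\widehat{\mathcal{Z}}), \mathcal{F}\vert_{d(\widehat{\mathcal{Z}})})$ is an isomorphism for all $i$. Since $\{ T(\mathcal{Z}, \epsilon) \}_{\epsilon}$ is cofinal among such neighborhoods, the restriction map $H^i(T(\mathcal{Z}, \epsilon), \mathcal{F}) \to H^i(d(\widehat{\mathcal{Z}}), \mathcal{F}\vert_{d(\widehat{\mathcal{Z}})})$ is an isomorphism for $\epsilon$ small, and likewise for the open subsets $S(\mathcal{Z}, \epsilon)^{\circ}$, which are also cofinal since $T(\mathcal{Z}, \epsilon') \subset S(\mathcal{Z}, \epsilon)^{\circ}$ for $\epsilon' < \epsilon$. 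It then remains to interpolate the (non-open) pseudo-adic space $S(\mathcal{Z}, \epsilon)$: I would chase the composites of restriction maps attached to $d(\widehat{\mathcal{Z}}) \subset S(\mathcal{Z}, \epsilon)^{\circ} \subset S(\mathcal{Z}, \epsilon) \subset T(\mathcal{Z}, \epsilon) \subset S(\mathcal{Z}, \epsilon'')^{\circ}$ for $\epsilon'' > \epsilon$, whose outer maps to $H^i(d(\widehat{\mathcal{Z}}), \mathcal{F}\vert_{d(\widehat{\mathcal{Z}})})$ are compatible isomorphisms by the above, so that repeated application of the two-out-of-three property forces every individual map in the chain $H^i(T(\mathcal{Z}, \epsilon), \mathcal{F}) \to H^i(S(\mathcal{Z}, \epsilon), \mathcal{F}\vert_{S(\mathcal{Z}, \epsilon)}) \to H^i(S(\mathcal{Z}, \epsilon)^{\circ}, \mathcal{F}) \to H^i(d(\widehat{\mathcal{Z}}), \mathcal{F}\vert_{d(\widehat{\mathcal{Z}})})$ to be an isomorphism for $\epsilon$ small enough.

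The step I expect to be the main (and essentially the only) obstacle is not a deep one: it is the bookkeeping needed to fit the genuinely pseudo-adic subspaces $S(\mathcal{Z}, \epsilon)$, and the possibly non-quasi-compact open subspaces $S(\mathcal{Z}, \epsilon)^{\circ}$, into Huber's framework. One must keep careful track of which arrows are extension-by-zero (for $H^i_c$) and which are restriction (for $H^i$), verify the interleaving inclusions from the explicit valuation inequalities, and check that all the resulting triangles commute so that the two-out-of-three arguments apply. Once Proposition \ref{Proposition:Tubular neighborhoods}, Lemma \ref{Lemma:tubular neighborhood small enough}, and the two cited theorems of Huber are in hand, no further input is required.
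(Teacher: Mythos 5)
The paper's entire proof of this theorem is a citation: \emph{``See \cite[Theorem 2.5]{Huber98a} for the proof of (1) and a more general result \ldots\ See \cite[Theorem 3.6]{Huber98b} for the proof of (2) and a more general result.''} That is, Huber's Theorems 2.5 and 3.6 are already phrased in terms of the tubular neighborhoods $S(\mathcal{Z},\epsilon)$, $T(\mathcal{Z},\epsilon)$, $S(\mathcal{Z},\epsilon)^\circ$, $Q(\mathcal{Z},\epsilon)$, so the chain of isomorphisms stated here is a special case of what Huber proves; nothing more is required.

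Your proposal for part (1) does essentially this, so it is fine. For part (2), however, you instead try to \emph{deduce} the full chain of isomorphisms from the weaker statement of Theorem~\ref{Theorem:Huber's theorem} (existence of a cofinal family of quasi-compact open neighborhoods $V$ with $H^i(V,\mathcal{F})\cong H^i(d(\widehat{\mathcal{Z}}),\mathcal{F})$), and that deduction has a real gap. Two-out-of-three does not do what you claim. From the chain
\[
H^i(T(\mathcal{Z},\epsilon),\mathcal{F})\xrightarrow{\,a\,} H^i(S(\mathcal{Z},\epsilon),\mathcal{F})\xrightarrow{\,b\,} H^i(S(\mathcal{Z},\epsilon)^\circ,\mathcal{F})\xrightarrow{\,c\,} H^i(d(\widehat{\mathcal{Z}}),\mathcal{F})
\]
you can get that $c\circ b\circ a$ is an isomorphism (because $T(\mathcal{Z},\epsilon)$ is in your cofinal system of quasi-compact opens), and with more work (an inverse-limit argument, since $S(\mathcal{Z},\epsilon)^\circ=\bigcup_m T(\mathcal{Z},t^{1/m}\epsilon)$, plus \cite[Lemma 3.9.2 i)]{Huber96}) you can also get that $c$ is an isomorphism. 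That leaves $b\circ a$ an isomorphism, which only tells you $a$ is a split monomorphism and $b$ is a split epimorphism. Cofinality of the $T$'s gives you no further control over the cohomology of the pseudo-adic space $S(\mathcal{Z},\epsilon)$, which is closed (not open, so not itself in any cofinal system of open neighborhoods). To separate $a$ from $b$ one needs an independent argument that restriction from a closed tubular neighborhood to its interior is an isomorphism for locally constant sheaves; this is exactly \cite[Lemma 2.5]{Huber98b}, which the present paper also invokes at the analogous point in the proof of Lemma~\ref{Lemma:key lemma, tubular neighborhood}(4). Without that input, or without invoking Huber's \cite[Theorem 3.6]{Huber98b} in the precise form already stated for tubular neighborhoods, the step ``repeated two-out-of-three forces every individual map to be an isomorphism'' does not go through. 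Relatedly, your justification that $H^i(S(\mathcal{Z},\epsilon)^\circ,\mathcal{F})\to H^i(d(\widehat{\mathcal{Z}}),\mathcal{F})$ is an isomorphism ``because the $S^\circ$'s are also cofinal'' is not correct as stated: being sandwiched between two $T$'s with isomorphic cohomology does not control the cohomology of the non-quasi-compact open in between; you need the inverse-limit argument explicitly.
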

\begin{proof}
See \cite[Theorem 2.5]{Huber98a} for the proof of (1) and a more general result.
(See \cite[Remark 5.5.11]{Huber96} for the constructions of the natural maps.)
See \cite[Theorem 3.6]{Huber98b} for the proof (2) and a more general result.
\end{proof}

\begin{rem}\label{Remark:Huber's result positive char}
For an algebraically closed complete non-archimedean field
$K$ of positive characteristic,
an analogous statement to Theorem \ref{Theorem:Huber main body} (1) is proved in \cite[Corollary 5.8]{Huber07}.
\end{rem}

\begin{rem}\label{Remark:mod p cohomology}
If the residue field of $\O$ is of positive characteristic $p >0$,
the assumption that $n$ is invertible in $\O$ in Theorem \ref{Theorem:Huber main body} is essential.
For example, the \'etale cohomology group
$H^1(\B(1), \Z/p\Z)$
is an infinite dimensional $\Z/p\Z$-vector space; see the computations in \cite[Remark 6.4.2]{Berkovich}.
However
we have
$H^1(\{ 0 \}, \Z/p\Z)=0$
for the origin $0 \in \B(1)$.
\end{rem}

The main objective of this paper is to prove uniform variants of Theorem \ref{Theorem:Huber main body} for constant sheaves.
The main result on \'etale cohomology groups with proper support of tubular neighborhoods is as follows.

\begin{thm}\label{Theorem:tubular neighborhood compact support}
Let $K$ be an algebraically closed complete non-archimedean field
with ring of integers $\O$.
Let $\mathcal{X}$ be a separated scheme of finite type over $\O$ and $\mathcal{Z} \hookrightarrow \mathcal{X}$ a closed immersion of finite presentation.
Then there exists an element
$\epsilon_0 \in \vert K^{\times} \vert$
such that, for every
$\epsilon \in \vert K^{\times} \vert$
with
$\epsilon \leq \epsilon_0$ and for every positive integer $n$ invertible in $\O$,
the following natural maps are isomorphisms for every $i$:
\begin{enumerate}
        \item $H^i_c(S(\mathcal{Z}, \epsilon), \Z/n\Z) \overset{\cong}{\to} H^i_c(d(\widehat{\mathcal{Z}}), \Z/n\Z)$.
        \item $H^i_c(T(\mathcal{Z}, \epsilon), \Z/n\Z) \overset{\cong}{\to} H^i_c(T(\mathcal{Z}, \epsilon_0), \Z/n\Z)$.
        \item $H^i_c(Q(\mathcal{Z}, \epsilon), \Z/n\Z) \overset{\cong}{\to} H^i_c(d(\widehat{\mathcal{X}})\backslash d(\widehat{\mathcal{Z}}), \Z/n\Z)$.
    \end{enumerate}
\end{thm}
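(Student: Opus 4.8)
The plan is to translate the statement into one about the higher direct image sheaves $R^jf^{\rig}_!\Z/n\Z$ on the unit disc and then to feed in the uniform local constancy result. First I would reduce, exactly as in the sketch of the proof of Theorem \ref{Theorem:Huber's theorem} in Section \ref{Section:Introduction}, to the case where $\mathcal{Z}=f^{-1}(0)$ for a separated morphism of finite presentation $f\colon\mathcal{X}\to\Spec\O[T]$: one blows up $\mathcal{X}$ along $\mathcal{Z}$, which is a modification that is an isomorphism over $\mathcal{X}\setminus\mathcal{Z}$ and turns $\mathcal{Z}$ into a Cartier divisor. The price is that the constant sheaf $\Z/n\Z$ gets replaced by $Rd(\pi)_*\Z/n\Z$ for the blow-up $\pi$; the cohomology sheaves of this complex are constructible along a stratification of $d(\widehat{\mathcal{X}})$ that does not depend on $n$, which is why one works in the slightly more general framework of Section \ref{Section:Local constancy of higher direct images with proper support for generically smooth morphisms} throughout. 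With this reduction in hand, each of $S(\mathcal{Z},\epsilon)$, $T(\mathcal{Z},\epsilon)$, $Q(\mathcal{Z},\epsilon)$, $d(\widehat{\mathcal{Z}})$ and $d(\widehat{\mathcal{X}})\setminus d(\widehat{\mathcal{Z}})$ is the preimage under $f^{\rig}$ of a rational subset, an open or punctured disc, the origin, or the punctured unit disc in $\B(1)$; by the Leray spectral sequence for $f^{\rig}_!$ and the proper base change theorem of \cite{Huber96}, the three assertions reduce to understanding $R^jf^{\rig}_!\Z/n\Z$ near $0\in\B(1)$, uniformly in $n$, together with a computation of the $H^i_c$ of these subsets with such coefficients.

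Next I would deal with the fact that $f^{\rig}$ need not be smooth near $0$. Using de Jong's alterations (Theorem \ref{Theorem:alteration}) I would replace $\mathcal{X}$ by a scheme over $\Spec\O[T]$ whose associated morphism $f^{\rig}$ becomes smooth over $\B(\epsilon)\setminus\{0\}$ for some $\epsilon\in\vert K^\times\vert$; passing to the associated proper hypercover and invoking that a proper surjective morphism is universally of cohomological descent for torsion sheaves — the mechanism already used in Lemma \ref{Lemma:proper descent} — I would reduce each of the three comparison maps to the corresponding maps for the terms of the hypercover, all of which now fall under the generically smooth case. The point that makes this go through is that the alteration, the hypercover, and the resulting stratifications form a fixed package of geometric data, chosen independently of $n$; the uniformity over all $n$ is exactly what Theorem \ref{Theorem:uniform base change}, and hence Theorem \ref{Theorem:uniform local constancy introduction}, is built to deliver.

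In the generically smooth case, Theorem \ref{Theorem:uniform local constancy introduction} — in the generalized form of Section \ref{Section:Local constancy of higher direct images with proper support for generically smooth morphisms} — provides a single $\epsilon_0\in\vert K^\times\vert$, valid for all $n$ invertible in $\O$, such that $R^jf^{\rig}_!\Z/n\Z$ restricted to $\B(\epsilon_0)\setminus\{0\}$ is locally constant constructible and is trivialized by a composition of a Kummer covering with a finite Galois \'etale covering of an annulus. The remaining work is then explicit: for $\epsilon\leq\epsilon_0$ one computes the $H^i_c$ of the open and closed tubular neighborhoods, of $Q(\mathcal{Z},\epsilon)$, and of the limiting spaces by means of the long exact sequences attached to the open/closed decompositions (for instance $T(\mathcal{Z},\epsilon)\subset T(\mathcal{Z},\epsilon_0)$, with complement a half-open annulus, and $Q(\mathcal{Z},\epsilon)\subset d(\widehat{\mathcal{X}})\setminus d(\widehat{\mathcal{Z}})$, whose complement is a punctured tubular neighborhood — so that assertion (3) in fact follows from assertion (1)), the explicit compact-support cohomology of discs, punctured discs and finite \'etale coverings of annuli, and the properties of the discriminant function recalled in Appendix \ref{Appendix:finite etale coverings of annuli}; one then checks that these computations are compatible with the natural maps of \cite{Huber96}, which yields the three isomorphisms with $\epsilon_0$ independent of $n$.

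The main obstacle is the uniformity in $n$ rather than any single computation: one must guarantee that $\epsilon_0$, the alterations and hypercovers used to pass to the generically smooth case, and the trivializing coverings of the boundary annuli can all be chosen at once for every $n$ invertible in $\O$. This is precisely the role of the uniform refinement of Orgogozo's theorem (Theorem \ref{Theorem:uniform base change}), which is established by the delicate induction on the triples $(S,\rho,d)$ carried out in Section \ref{Section:Proof of theorem uniform base change} using de Jong's alteration theorem and locally unipotent sheaves. Once this uniform control is available, the compact-support computations for discs and annuli, although lengthy, proceed in essentially the same way for every $n$.
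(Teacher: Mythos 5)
Your outline captures the right global strategy — reduce to the case where $\mathcal{Z}=f^{-1}(0)$ for a map $f\colon\mathcal{X}\to\Spec\O[T]$, translate the tubular-neighborhood cohomologies into statements about $R^jd(f)_!\Z/n\Z$ near $0\in\B(1)$, use alterations to reach a generically smooth situation, and feed in Theorem \ref{Theorem:uniform base change} through the uniform local constancy result. But there are two genuine issues.

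First, you do not address the positive-characteristic obstruction. In characteristic $p>0$, after de~Jong's alteration the morphism $f_K\colon\mathcal{X}_K\to\mathbb{A}^1_K$ is still not in general smooth over any dense open subset of $\mathbb{A}^1_K$; smoothness over a dense open follows from generic regularity only in characteristic zero. The paper circumvents this in Lemma \ref{Lemma:reduction to generic smooth case} by applying \cite[Theorem 4.1]{deJong96} over the perfection $K(T^{1/p^\infty})$, extracting an integer $N$ and an alteration over $K(T^{1/p^N})$ that is smooth, and then trading the defining function $f$ for its $p^N$-th root, which only rescales the tubular radii by $\epsilon\mapsto\epsilon^{p^N}$ and is therefore harmless. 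Without some such Frobenius twist your reduction to the generically smooth case fails in positive characteristic, so the proof as proposed would break there. (Also note that the relevant citation for this step is \cite[Theorem 4.1]{deJong96}, not Theorem \ref{Theorem:alteration}, which is de~Jong's pluri-nodal-curve theorem used inside the proof of Theorem \ref{Theorem:uniform base change}.)

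Second, your handling of the blow-up differs from the paper's, and the way you state it is not quite a proof. You propose to replace the constant sheaf by $Rd(\pi)_*\Z/n\Z$ and treat its cohomology sheaves inside the generalized framework of Section \ref{Section:Local constancy of higher direct images with proper support for generically smooth morphisms}. That framework concerns sheaves on the scheme $\mathcal{X}$ over $\O$ that are ``adapted'' to a chosen auxiliary alteration $\pi$ of the base curve (Definition \ref{Definition:uniform finiteness adapted}); it is not set up to absorb an arbitrary push-forward complex along a blow-up. More to the point, the assertions (1)--(3) are about \emph{constant} coefficients on the original $d(\widehat{\mathcal{X}})$, and $Rd(\pi)_*\Z/n\Z$ differs from $\Z/n\Z$ exactly along $d(\widehat{\mathcal{Z}})$, where the blow-up is not an isomorphism; so your replacement changes the statement you are proving unless you add a five-lemma argument comparing the cone of $\Z/n\Z\to Rd(\pi)_*\Z/n\Z$ (supported on $d(\widehat{\mathcal{Z}})$) on both sides of each comparison map. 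What the paper does instead is reformulate the theorem as vanishing of $H^j_c$ on the complements $S(\mathcal{Z},\epsilon)\setminus d(\widehat{\mathcal{Z}})$ and $T(\mathcal{Z},\epsilon_0)\setminus T(\mathcal{Z},\epsilon)$ (the statements $\textbf{P}_c(i)$ of Lemma \ref{Lemma:reduction to V(f) case compact support}), run a fixed-degree induction in $i$, and use genuine cohomological descent along the proper hypercover of the blow-up via Corollary \ref{Corollary:cohomological descent spectral sequence compact support}; these complements avoid $d(\widehat{\mathcal{Z}})$, which is what makes the blow-up invisible. Your route could be pushed through, but the degree induction and the cohomological descent corollary (not Lemma \ref{Lemma:proper descent}, which concerns the sliced nearby cycles) are what actually carry the weight in the paper, and any variant needs to make explicit how the cone supported on $d(\widehat{\mathcal{Z}})$ is controlled.

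Finally, a smaller remark: for the proof of Theorem \ref{Theorem:tubular neighborhood compact support} one only ever applies Theorem \ref{Theorem:uniform local constancy} to the constant sheaf (this is the content of Proposition \ref{Proposition:constant sheaves adapted} followed by Theorem \ref{Theorem:uniform local constancy} in the proof of Lemma \ref{Lemma:key lemma, tubular neighborhood}); the more general ``adapted'' framework of Section \ref{Section:Local constancy of higher direct images with proper support for generically smooth morphisms} is not forced by the blow-up, as your write-up suggests.
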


The main result on \'etale cohomology groups of tubular neighborhoods is as follows.

\begin{thm}\label{Theorem:tubular neighborhood direct image}
Let $K$ be an algebraically closed complete non-archimedean field
with ring of integers $\O$.
Let $\mathcal{X}$ be a separated scheme of finite type over $\O$ and $\mathcal{Z} \hookrightarrow \mathcal{X}$ a closed immersion of finite presentation.
Then there exists an element
$\epsilon_0 \in \vert K^{\times} \vert$
such that, for every
$\epsilon \in \vert K^{\times} \vert$
with
$\epsilon \leq \epsilon_0$
and for every positive integer $n$ invertible in $\O$,
the restriction maps
\[
H^i(T(\mathcal{Z}, \epsilon), \Z/n\Z) \overset{\cong}{\to} 
H^i(S(\mathcal{Z}, \epsilon), \Z/n\Z) \overset{\cong}{\to}
H^i(S(\mathcal{Z}, \epsilon)^{\circ}, \Z/n\Z)  \overset{\cong}{\to}
H^i(d(\widehat{\mathcal{Z}}), \Z/n\Z)
\]
are isomorphisms for every $i$.
\end{thm}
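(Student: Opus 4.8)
\emph{Strategy.} The plan is to reproduce, uniformly in $n$, the argument for Huber's Theorem~\ref{Theorem:Huber main body} sketched above, with Theorem~\ref{Theorem:uniform local constancy introduction} (and its not-necessarily-constant-coefficient refinement) as the main new input. I will first reduce to a very special geometric situation, then invoke the uniform local constancy together with Poincar\'e duality, and finally re-run Huber's explicit computation on discs and annuli, checking that its output does not depend on $n$.

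\emph{Geometric reductions.} As in the sketch, replace $\mathcal{X}$ by its blow-up along $\mathcal{Z}$: if $\pi\colon\mathcal{X}'\to\mathcal{X}$ denotes this blow-up and $E=\pi^{-1}(\mathcal{Z})$ the exceptional Cartier divisor, then by Proposition~\ref{Proposition:Tubular neighborhoods}(3) the morphism $d(\pi)$ carries the tubular neighborhoods of $d(\widehat{E})$ onto those of $d(\widehat{\mathcal{Z}})$, and a d\'evissage along the distinguished triangle $\Z/n\Z\to Rd(\pi)_*\Z/n\Z\to\mathcal{C}$ on $d(\widehat{\mathcal{X}})$ — whose third term $\mathcal{C}$ is supported on $d(\widehat{\mathcal{Z}})$, where it is controlled by the not-necessarily-constant-coefficient form of Theorem~\ref{Theorem:uniform local constancy introduction} — reduces the assertion for $(\mathcal{X},\mathcal{Z})$ to the case where $\mathcal{Z}$ is cut out by a single function, i.e.\ $\mathcal{Z}=f^{-1}(0)$ for a separated morphism of finite presentation $f\colon\mathcal{X}\to\Spec\O[T]$. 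Then $(f^{\rig})^{-1}(0)=d(\widehat{\mathcal{Z}})$ and, by Proposition~\ref{Proposition:Tubular neighborhoods}(1), $T(\mathcal{Z},\epsilon)=(f^{\rig})^{-1}(\B(\epsilon))$, while $S(\mathcal{Z},\epsilon)$ and $S(\mathcal{Z},\epsilon)^{\circ}$ are interleaved with the $T(\mathcal{Z},\cdot)$'s. Next, by an (iterated) use of de Jong's alterations and truncated cohomological descent — along a proper hypercovering of $\mathcal{X}$ over $\Spec\O[T]$, cut off at the relevant cohomological degree as in Lemma~\ref{Lemma:proper descent} so that only finitely many terms occur and a single $\epsilon_0$ suffices — I would reduce to the case in which $f^{\rig}$ is \emph{smooth} over $\B(\epsilon)\backslash\{0\}$, which is the hypothesis of Theorem~\ref{Theorem:uniform local constancy introduction}.

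\emph{Uniform local constancy and the disc computation.} In this situation, Theorem~\ref{Theorem:uniform local constancy introduction}, combined with relative Poincar\'e duality \cite[Corollary 7.5.5]{Huber96} over the smooth locus $(f^{\rig})^{-1}(\B(\epsilon)\backslash\{0\})\to\B(\epsilon)\backslash\{0\}$ (which transfers the statements from $R^qf^{\rig}_!$ to $R^qf^{\rig}_*$), yields $\epsilon_0\in|K^{\times}|$, $\epsilon_0\le\epsilon$, such that, \emph{for every} $n$ invertible in $\O$, the sheaf $(R^qf^{\rig}_*\Z/n\Z)|_{\B(\epsilon_0)\backslash\{0\}}$ is locally constant of finite type for all $q$, and on each annulus $\B(a,b)$ with $a<b\le\epsilon_0$ is trivialized by the composite of a finite Galois \'etale covering $g$ \emph{independent of $n$} with a Kummer covering of degree $m$ (depending on $n$). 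For $\epsilon\le\epsilon_0$, the Leray spectral sequence for $(f^{\rig})^{-1}(\B(\epsilon))\to\B(\epsilon)$ together with the open base-change isomorphism reduces $H^i(T(\mathcal{Z},\epsilon),\Z/n\Z)$ to the groups $H^p(\B(\epsilon),R^qf^{\rig}_*\Z/n\Z)$. Re-running Huber's explicit calculation — using his constructibility and continuity results for \'etale sheaves on adic curves, the local-cohomology sequence at the origin, and the description of the cohomology of small discs and small annuli with a tame locally constant sheaf — one finds these groups to be independent of $\epsilon\le\epsilon_0$; the essential point is that, by the estimates on the discriminant function $\delta_h$ from Appendix~\ref{Appendix:finite etale coverings of annuli}, the radius below which such a cover ``splits'' is bounded in terms of $g$ alone, hence independently of the Kummer degree $m$ and of $n$. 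Applying the same analysis to the fibre over $0$ identifies the common group with $H^i(d(\widehat{\mathcal{Z}}),\Z/n\Z)$, compatibly with restriction; a standard argument with the interleaved restriction maps (and Theorem~\ref{Theorem:tubular neighborhood compact support} for the compact-support terms appearing in the local-cohomology sequence) then shows that all three maps in the displayed chain are isomorphisms.

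\emph{Main obstacle.} The crux is the uniformity in $n$. For fixed $n$ this is Huber's Theorem~\ref{Theorem:Huber main body}(2); but the radius at which $R^qf^{\rig}_*\Z/n\Z$ becomes locally constant on the punctured disc, and the degree of the Kummer covering needed to trivialize it, both a priori grow with $n$, so there is no naive bound on the ``stabilization radius'' in the disc computation. Theorem~\ref{Theorem:uniform local constancy introduction} — which ultimately rests on the uniform refinement of Orgogozo's theorem on the sliced nearby cycles — removes the first difficulty by providing a single $\epsilon_0$ valid for all $n$ and a single Galois covering $g$, and the discriminant-function estimates remove the second by showing that the residual Kummer twist of degree $m$ contributes nothing new below $\epsilon_0$. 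The analysis of points of rank $2$ on $\B(1)$ and on its finite \'etale coverings, which underlies both of these ingredients, is the other technically delicate point.
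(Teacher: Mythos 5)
Your overall architecture matches the paper's: blow up along $\mathcal{Z}$, use de Jong alterations and cohomological descent (truncated, as in Lemma~\ref{Lemma:proper descent}) to reach a $f \colon \mathcal{X}\to\Spec\O[T]$ whose associated $f^{\rig}$ is smooth over a punctured disc, invoke Theorem~\ref{Theorem:uniform local constancy} together with Poincar\'e duality, and then rerun Huber's disc-and-annulus computation with the single $\epsilon_0$ supplied by the uniform local constancy. The paper implements the reduction somewhat differently, proving $\textbf{P}_c(i)$ and $\textbf{P}(i)$ by a joint induction on $i$ and using cohomological descent (Corollaries~\ref{Corollary:cohomological descent spectral sequence compact support}, \ref{Corollary:cohomological descent direct image}) at the blow-up stage, whereas your devissage along $\Z/n\Z\to Rd(\pi)_*\Z/n\Z\to\mathcal{C}$ is an alternative route; it would work, though note that since $\mathcal{C}^a$ is supported on the closed analytic subspace $d(\widehat{\mathcal{Z}})$, the theorem for $\mathcal{C}^a$ is \emph{trivial} and does not require the non-constant form of Theorem~\ref{Theorem:uniform local constancy} at all — indeed Theorem~\ref{Theorem:uniform local constancy} is formulated only for sheaves adapted to a morphism to $\Spec\O[T]$, not general constructible complexes on $\mathcal{X}$, so the invocation as stated does not quite make sense.

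There are, however, two genuine gaps. First, blowing up along $\mathcal{Z}$ turns $\mathcal{Z}$ into a Cartier divisor, i.e.\ a \emph{locally} principal, not globally principal, closed subscheme: in general no global $f\in\O_{\mathcal{X}'}(\mathcal{X}')$ cuts out the exceptional divisor. Your reduction asserts $\mathcal{Z}=f^{-1}(0)$ for a global $f\colon\mathcal{X}\to\Spec\O[T]$, which is false without a further argument. The paper handles this via a finite affine cover $\{\mathcal{U}_\alpha\}$ on which $\mathcal{Z}$ becomes principal, together with the Mayer--Vietoris spectral sequences of \cite[Remark 5.5.12]{Huber96}; this step is needed and is not a cosmetic gloss. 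Second, in characteristic $p>0$, the field $K(T)$ is imperfect and de Jong's alteration applied to $\mathcal{X}\times_{\Spec\O[T]}\Spec K(T)$ does not in general make the generic fiber smooth over $K(T)$; the paper has to apply \cite[Theorem 4.1]{deJong96} over the perfection $K(T^{1/p^\infty})$, descend to some $K(T^{1/p^N})$, and then replace $f$ by an $N$-th iterate $p$-th root $f'$ with $(f')^{p^N}=f$, observing that the tubular neighborhoods are unchanged up to reparametrizing $\epsilon\mapsto\epsilon^{p^N}$. Your sketch is silent on this, and the statement of the theorem imposes no characteristic hypothesis, so the argument as written would fail in positive characteristic. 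Beyond these two points, your description of the key case (the comparison of $H^p(\B(\epsilon),R^qf^{\rig}_*\Z/n\Z)$ with $H^p(\D(\epsilon)^\circ, R^qf^{\rig}_*\Z/n\Z)$ and with the fiber at $0$, via tameness and the annulus computations of the appendix) is accurate in substance, though the uniformity in $n$ comes from choosing $\epsilon_0$ in terms of $g$ and the overconvergence radius \emph{before} any Kummer degree $m$ is involved, rather than from bounding a ``splitting radius in terms of $g$ alone'' as you phrase it.
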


Theorem \ref{Theorem:main result adic space} follows from Theorem \ref{Theorem:tubular neighborhood direct image}; see also the following remark.

\begin{rem}\label{Remark:finite presentation}
In Theorem \ref{Theorem:tubular neighborhood compact support} and Theorem \ref{Theorem:tubular neighborhood direct image},
the assumption that the closed immersion
$\mathcal{Z} \hookrightarrow \mathcal{X}$
is of finite presentation 
is not important in practice.
Indeed,
if we are only interested in the adic spaces
$d(\widehat{\mathcal{Z}})$ and $d(\widehat{\mathcal{X}})$,
then by replacing $\mathcal{Z}$ with the closed subscheme
$\mathcal{Z}' \hookrightarrow \mathcal{Z}$
defined by the sections killed by a power of a non-zero element of the maximal ideal of $\O$,
we can reduce to the case where $\mathcal{Z}$ is flat over $\O$ without changing $d(\widehat{\mathcal{Z}})$.
Then $\mathcal{Z}$ is of finite presentation over $\O$ by \cite[Premi\`ere partie, Corollaire 3.4.7]{Raynaud-Gruson}, and hence $\mathcal{Z} \hookrightarrow \mathcal{X}$ is also of finite presentation.
\end{rem}

The proofs of Theorem \ref{Theorem:tubular neighborhood compact support} and Theorem \ref{Theorem:tubular neighborhood direct image} will be given in Section \ref{Section:proofs of main theorems}.
In the rest of this section,
we will restate Theorem \ref{Theorem:tubular neighborhood direct image} for proper schemes over $K$.

Let $L \subset K$ be a subfield of $K$ which is a complete non-archimedean field with the induced topology.
Let $\O_L$ be the ring of integers of $L$.
For a scheme $X$ of finite type over $L$,
the adic space associated with $X$ is denoted by
\[
X^{\ad}:= X \times_{\Spec L} \Spa(L, \O_L);
\]
see \cite[Proposition 3.8]{Huber94}.
For an adic space $Y$ locally of finite type over $\Spa(L, \O_L)$,
we denote by 
\[
Y_{K}:=Y \times_{\Spa(L, \O_L)} \Spa(K, \O)
\]
the base change of $Y$ to $\Spa(K, \O)$, which exists by \cite[Proposition 1.2.2]{Huber96}.

\begin{cor}\label{Corollary:proper scheme}
Let $X$ be a proper scheme over $L$ and $Z \hookrightarrow X$ a closed immersion.
We have a closed immersion $Z^{\ad} \hookrightarrow X^{\ad}$ of adic spaces over $\Spa(L, \O_L)$.
Then,
there is a quasi-compact open subset $V$ of $X^{\ad}$
containing
$Z^{\ad}$
such that,
for every positive integer $n$ invertible in $\O$,
the restriction map
\[
H^i(V_K, \Z/n\Z) \to H^i((Z^{\ad})_K, \Z/n\Z)
\]
is an isomorphism for every $i$.
\end{cor}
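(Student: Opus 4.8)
The plan is to deduce the corollary from Theorem \ref{Theorem:tubular neighborhood direct image} by choosing a proper formal model of $X$ over $\O_L$ and then enlarging the base field to $K$.

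First I would choose a proper $\O_L$-scheme $\mathcal{X}$ of finite presentation with $\mathcal{X}\times_{\O_L}L\cong X$ (such a model exists since $X$ is proper over $L$), and let $\mathcal{Z}\hookrightarrow\mathcal{X}$ be the scheme-theoretic closure of $Z$, which is proper over $\O_L$ with generic fiber $Z$. By Remark \ref{Remark:finite presentation} one may further arrange that $\mathcal{Z}\hookrightarrow\mathcal{X}$ is a closed immersion of finite presentation, without altering $d(\widehat{\mathcal{Z}})$. Since $\mathcal{X}$ and $\mathcal{Z}$ are proper over $\O_L$, the GAGA-type comparison between the Raynaud generic fiber and the adic space of the algebraic generic fiber (\cite{Huber96}) gives $d(\widehat{\mathcal{X}})\cong X^{\ad}$ and $d(\widehat{\mathcal{Z}})\cong Z^{\ad}$, compatibly with the closed immersions. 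Setting $\mathcal{X}_\O:=\mathcal{X}\times_{\O_L}\O$ and $\mathcal{Z}_\O:=\mathcal{Z}\times_{\O_L}\O$, the pair $\mathcal{Z}_\O\hookrightarrow\mathcal{X}_\O$ is a closed immersion of finite presentation of separated schemes of finite type over $\O$, and since both the $\varpi$-adic completion and the functor $d$ commute with the base change $\Spf\O_L\to\Spf\O$ one obtains $d(\widehat{\mathcal{X}_\O})\cong(X^{\ad})_K$ and $d(\widehat{\mathcal{Z}_\O})\cong(Z^{\ad})_K$.

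Next I would apply Theorem \ref{Theorem:tubular neighborhood direct image} to $\mathcal{Z}_\O\hookrightarrow\mathcal{X}_\O$ (this is where one uses that $K$ is algebraically closed) to obtain $\epsilon_0\in|K^\times|$ such that for every $\epsilon\in|K^\times|$ with $\epsilon\le\epsilon_0$ and every positive integer $n$ invertible in $\O$, the restriction map $H^i(T(\mathcal{Z}_\O,\epsilon),\Z/n\Z)\to H^i(d(\widehat{\mathcal{Z}_\O}),\Z/n\Z)$ is an isomorphism for all $i$. Because the valuation of $L$ has rank $1$, the group $|L^\times|$ is nontrivial, so I can pick $\varpi\in L^\times$ with $\epsilon:=|\varpi|\le\epsilon_0$; then $\epsilon\in|L^\times|\subset|K^\times|$. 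I would then take $V:=T(\mathcal{Z},\epsilon)$, regarded as a subset of $d(\widehat{\mathcal{X}})\cong X^{\ad}$: by Proposition \ref{Proposition:Tubular neighborhoods} it is a quasi-compact open subset of $X^{\ad}$ containing $d(\widehat{\mathcal{Z}})\cong Z^{\ad}$. Since $\varpi$ and the local equations of $\mathcal{Z}$ are pulled back from $\O_L$, the inequalities defining the tubular neighborhood cut out, on $d(\widehat{\mathcal{X}_\O})$, exactly $T(\mathcal{Z}_\O,\epsilon)$, which is thus the preimage of $V$ under the base change map $d(\widehat{\mathcal{X}_\O})\to d(\widehat{\mathcal{X}})$; hence $V_K\cong T(\mathcal{Z}_\O,\epsilon)$. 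Consequently $H^i(V_K,\Z/n\Z)\to H^i((Z^{\ad})_K,\Z/n\Z)$ is identified with $H^i(T(\mathcal{Z}_\O,\epsilon),\Z/n\Z)\to H^i(d(\widehat{\mathcal{Z}_\O}),\Z/n\Z)$, which is an isomorphism for every $n$ invertible in $\O$ and every $i$.

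The main obstacle will be this last compatibility. Proposition \ref{Proposition:Tubular neighborhoods}(3) only handles base change along morphisms of finite type over a fixed $\O$, whereas here the complete base field is enlarged from $L$ to $K$, so one has to verify directly — from the explicit inequalities $|g_i(x)|\le|\varpi(x)|$ — that forming tubular neighborhoods is compatible with such an extension; this is precisely the reason it is essential to take $\epsilon$ in $|L^\times|$ and not merely in $|K^\times|$, so that a defining element $\varpi$ is available already over $\O_L$. The remaining inputs — existence of a finitely presented proper $\O_L$-model of $X$ and the comparison $d(\widehat{\mathcal{X}})\cong X^{\ad}$ for proper $\mathcal{X}$ — are standard and present no real difficulty.
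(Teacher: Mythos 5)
Your proposal is correct and follows essentially the same route as the paper: take an $\O_L$-model of $Z\hookrightarrow X$ (the paper invokes Nagata's compactification explicitly here and reduces to finite presentation via the same Remark), base change to $\O$, apply Theorem~\ref{Theorem:tubular neighborhood direct image}, pick $\epsilon\in|L^\times|$ with $\epsilon\le\epsilon_0$, and observe that tubular neighborhoods commute with the base change $\Spa(L,\O_L)\to\Spa(K,\O)$ so that $T(\mathcal{Z},\epsilon)_K=T(\overline{\mathcal{Z}},\epsilon)$. The one thing worth noting is that the paper states this last base-change compatibility without comment, while you correctly flag that it is exactly the step requiring $\epsilon\in|L^\times|$ and a direct check against the defining inequalities rather than an appeal to Proposition~\ref{Proposition:Tubular neighborhoods}(3).
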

\begin{proof}
There exist a proper scheme
$\mathcal{X}$
over $\Spec \O_L$
and a closed immersion
$\mathcal{Z} \hookrightarrow \mathcal{X}$
such that
the base change of it to $\Spec L$ is
isomorphic to the closed immersion $Z \hookrightarrow X$
by Nagata's compactification theorem; see \cite[Chapter II, Theorem F.1.1]{Fujiwara-Kato} for example.
As in Remark \ref{Remark:finite presentation},
we may assume that $\mathcal{Z} \hookrightarrow \mathcal{X}$ is of finite presentation.
Let
\[
\overline{\mathcal{X}}:=\mathcal{X} \times_{\Spec \O_L} \Spec \O
\quad \text{and} \quad \overline{\mathcal{Z}}:=\mathcal{Z} \times_{\Spec \O_L} \Spec \O
\]
denote the fiber products.
We have
$
d(\widehat{\overline{\mathcal{Z}}}) \cong d(\widehat{\mathcal{Z}})_K
$
and
$
d(\widehat{\overline{\mathcal{X}}}) \cong d(\widehat{\mathcal{X}})_K.
$
For an element
$\epsilon \in \vert L^{\times} \vert$,
we have
$T(\mathcal{Z}, \epsilon)_K=T(\overline{\mathcal{Z}}, \epsilon)$
in $d(\widehat{\overline{\mathcal{X}}})$.
By \cite[Proposition 1.9.6]{Huber96},
we have
$d(\widehat{\mathcal{Z}}) = Z^{\ad}$
and $d(\widehat{\mathcal{X}}) = X^{\ad}$.
Therefore, the assertion follows from 
Theorem \ref{Theorem:tubular neighborhood direct image}.
\end{proof}

\section{\'Etale cohomology with proper support of adic spaces and nearby cycles}\label{Section:Etale cohomology with compact support of adic spaces and nearby cycles}

In this section,
we study the relation between
the compatibility of the sliced nearby cycles functors with base change and
the bijectivity of specialization maps on stalks of higher direct image sheaves with proper support for adic spaces
by using a comparison theorem of Huber \cite[Theorem 5.7.8]{Huber96}.

\subsection{Analytic adic spaces associated with formal schemes}\label{Analytic adic spaces associated with formal schemes}

In this subsection,
we recall the functor $d(-)$
from a certain category of formal schemes to the category of analytic adic spaces defined in \cite[Section 1.9]{Huber96}.

Following \cite{Huber96},
for a commutative ring $A$ and an element $s \in A$,
let
\[
A(s/s)
\]
denote
the localization $A[1/s]$ equipped with the structure of a Tate ring such that the image $A_0$ of the map
$A \to A[1/s]$
is a ring of definition and
$s A_0$
is an ideal of definition.

We record the following well known results.

\begin{lem}\label{Lemma:quotient is complete}
Let $A$ be a commutative ring endowed with the $\varpi$-adic topology for an element $\varpi \in A$
satisfying the following two properties:
\begin{enumerate}
\renewcommand{\labelenumi}{(\roman{enumi})}
    \item $A$ is $\varpi$-adically complete, i.e.\
    the following natural map is an isomorphism:
    \[
    A \to \widehat{A}:=\plim[n] A/\varpi^nA.
    \]
    \item Let $A\langle X_1, \dotsc, X_n \rangle$ be the $\varpi$-adic completion of
    $A[X_1, \dotsc, X_n]$, called the restricted formal power series ring.
    Then $A\langle X_1, \dotsc, X_n \rangle[1/\varpi]$
    is Noetherian for every $n \geq 0$.
\end{enumerate}
Then the following assertions hold:
\begin{enumerate}
    \item For every ideal $I \subset A\langle X_1, \dotsc, X_n \rangle$,
    the quotient
    $
    A\langle X_1, \dotsc, X_n \rangle/I
    $
    is $\varpi$-adically complete.
    \item Let $B$ be an $A$-algebra
    such that the $\varpi$-adic completion $\widehat{B}$ of $B$ is isomorphic to $A\langle X_1, \dotsc, X_n \rangle$.
    Let $I \subset B$ be an ideal.
    Then,
    the $\varpi$-adic completion $\widehat{B/I}$ of $B/I$ is isomorphic to $\widehat{B}/I\widehat{B}$.
    \item The Tate ring $A(\varpi/\varpi)$ is complete and
    we have for every $n \geq 0$
    \[
    A \langle X_1, \dotsc, X_n \rangle[1/\varpi]\cong A(\varpi/\varpi)\langle X_1, \dotsc, X_n \rangle.
    \]
    Here
    $A(\varpi/\varpi)\langle Y_1, \dotsc, Y_m \rangle$ is the ring defined in \cite[Section 1.1]{Huber96} for the Tate ring $A(\varpi/\varpi)$.
    In particular, the Tate ring $A(\varpi/\varpi)$ is strongly Noetherian.
\end{enumerate}
\end{lem}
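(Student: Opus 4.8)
The plan is to prove the three assertions in the order (1), (2), (3), deducing (2) and (3) from (1); assertion (1) is the substantial one, and I expect it to contain the only step that is not a formal manipulation of $\varpi$-adic completions. Throughout I will write $R_n := A\langle X_1, \dots, X_n \rangle$, so that $R_0 = A$ by hypothesis (i), and I will use freely that each $R_n$ is $\varpi$-adically complete and separated (being the $\varpi$-adic completion of the ring $A[X_1, \dots, X_n]$ along the principal, hence finitely generated, ideal $(\varpi)$, so that moreover $R_n/\varpi^k R_n \cong (A/\varpi^k A)[X_1, \dots, X_n]$).

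For (1): since $R_n$ is complete, for an ideal $I \subseteq R_n$ the quotient $R_n/I$, with its $\varpi$-adic topology (which is the quotient topology), is $\varpi$-adically complete if and only if $I$ is $\varpi$-adically closed, equivalently $\bigcap_k \varpi^k (R_n/I) = 0$. To establish this I would bring in hypothesis (ii). First pass to the $\varpi$-saturation $I^{\mathrm{sat}} := I R_n[1/\varpi] \cap R_n = \bigcup_k (I :_{R_n} \varpi^k)$; as $R_n[1/\varpi]$ is Noetherian, $I^{\mathrm{sat}} R_n[1/\varpi] = I R_n[1/\varpi]$ is finitely generated, $R_n/I^{\mathrm{sat}}$ is $\varpi$-torsion-free, and $(R_n/I^{\mathrm{sat}})[1/\varpi]$ is Noetherian. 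For such a torsion-free quotient with Noetherian generic fibre, the required $\varpi$-adic separatedness is the formal-scheme analogue of the classical theorem that ideals of affinoid algebras are closed; it rests on the coherence (``adhesiveness'') of restricted power series rings of this type, for which I would appeal to \cite{Fujiwara-Kato}. Finally I would descend from $I^{\mathrm{sat}}$ to $I$ along the short exact sequence $0 \to I^{\mathrm{sat}}/I \to R_n/I \to R_n/I^{\mathrm{sat}} \to 0$: choosing $f_1, \dots, f_m \in I$ that generate $I R_n[1/\varpi]$, the finitely presented module $R_n/(f_1, \dots, f_m)$ has finitely generated $\varpi$-power-torsion submodule by the same coherence, so $I^{\mathrm{sat}}/I$ is bounded $\varpi$-power torsion, hence $\varpi$-adically complete and separated, and then so is the extension $R_n/I$.

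For (2), with $B$ as in the statement and $\widehat{B} \cong R_n$, the only input beyond (1) is the identification $\widehat{B}/\varpi^k \widehat{B} \cong B/\varpi^k B$ for all $k$ (again a general fact about completion along a finitely generated ideal), from which
\[
\widehat{B/I} \;=\; \plim[k]\, B/(I + \varpi^k B) \;\cong\; \plim[k]\, \widehat{B}/(I\widehat{B} + \varpi^k \widehat{B}) \;=\; \widehat{\,\widehat{B}/I\widehat{B}\,};
\]
by (1) the ring $\widehat{B}/I\widehat{B} = R_n/I R_n$ is already $\varpi$-adically complete, so the right-hand side is $\widehat{B}/I\widehat{B}$ and (2) follows. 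For (3): part (1) with $n = 0$, applied to the ideal $T \subseteq A$ of $\varpi$-power torsion, shows that $A_0 := A/T$ — which is the ring of definition of $A(\varpi/\varpi) = A_0[1/\varpi]$, with ideal of definition $\varpi A_0$ — is $\varpi$-adically complete; this is precisely the completeness of the Tate ring $A(\varpi/\varpi)$. By the definition of $A(\varpi/\varpi)\langle X_1, \dots, X_n\rangle$ recalled in \cite[Section 1.1]{Huber96}, this ring is $A_0\langle X_1, \dots, X_n\rangle[1/\varpi]$; applying (2) to $B = A[X_1, \dots, X_n]$ with the ideal $T[X_1, \dots, X_n]$ identifies $A_0\langle X_1, \dots, X_n\rangle$ with $R_n/(T R_n)$, and since $T$ maps to zero in $R_n[1/\varpi]$ this gives $A\langle X_1, \dots, X_n\rangle[1/\varpi] \cong A(\varpi/\varpi)\langle X_1, \dots, X_n\rangle$, which together with (ii) is the ``strongly Noetherian'' assertion.

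The hard part of all this is the $\varpi$-adic separatedness of an arbitrary quotient of $R_n$ in the proof of (1): because $R_n$ itself is not Noetherian in general — only $R_n[1/\varpi]$ is — it cannot be obtained formally and genuinely requires the coherence of these restricted power series rings, with the $\varpi$-power torsion of $A$ having to be carried along separately; everything else is bookkeeping with $\varpi$-adic completions.
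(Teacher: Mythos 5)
Your proof is correct and takes essentially the same route as the paper's: both treat assertion (1) as the one nontrivial input and ultimately appeal to Fujiwara--Kato for it (the paper cites Chapter~0, Proposition~8.4.4 outright, while you unfold the saturation-and-coherence argument that underlies it but still fall back on that reference for the key step), and both deduce (2) and (3) by the same $\varpi$-adic bookkeeping. In particular, for (3) the paper also obtains completeness of $A_0 = A/T$ from (1) with $n=0$ and then kills the $\varpi$-power torsion $T$ after inverting $\varpi$, presented there via the right-exact sequence $N \otimes_B \widehat{B} \to \widehat{B} \to A_0\langle X_1,\dotsc,X_n\rangle \to 0$ rather than your direct invocation of (2), which is an equivalent packaging of the same computation.
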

\begin{proof}
See \cite[Chapter 0, Proposition 8.4.4]{Fujiwara-Kato} for (1).
The rest of the proposition is an immediate consequence of (1).
We will sketch the proof for the reader's convenience.

(2) By (1), the ring $\widehat{B}/I\widehat{B}$ is $\varpi$-adically complete.
Hence we have
\[
    \widehat{B/I}=\plim[n] (B/I)/\varpi^n
    \cong
    \plim[n] (\widehat{B}/I\widehat{B})/\varpi^n
    \cong
    \widehat{B}/I\widehat{B}.
\]

(3) Let $A_0$ be the image of the map $A \to A[1/\varpi]$.
By (1), the ring $A_0$ is $\varpi$-adically complete, and hence $A(\varpi/\varpi)$ is complete.
It is clear from the definitions that
\[
A_0 \langle Y_1, \dotsc, Y_m \rangle[1/\varpi] \cong A(\varpi/\varpi)\langle Y_1, \dotsc, Y_m \rangle.
\]
Let $N$ be the kernel of the surjection
$B:=A[X_1, \dotsc, X_n] \to A_0[X_1, \dotsc, X_n]$.
By using (2), we have the following exact sequence:
\[
N\otimes_{B}  \widehat{B} \to \widehat{B} \to
A_0 \langle Y_1, \dotsc, Y_m \rangle \to 0.
\]
Since $N[1/\varpi]=0$, we have
$(N\otimes_{B}  \widehat{B})[1/\varpi]=0$, and hence
\[
\widehat{B}[1/\varpi] \cong
A_0 \langle Y_1, \dotsc, Y_m \rangle[1/\varpi].
\]
This completes the proof of (3).
\end{proof}

Let $\mathcal{C}$ be
the category whose objects are formal schemes which are locally isomorphic to
$\Spf A$
for an adic ring $A$ with an ideal of definition $\varpi A$ such that
the pair $(A, \varpi)$ satisfies the conditions in Lemma \ref{Lemma:quotient is complete}.
The morphisms in $\mathcal{C}$ are adic morphisms.
A formal scheme in $\mathcal{C}$ satisfies the condition (S) in \cite[Section 1.9]{Huber96} by Lemma \ref{Lemma:quotient is complete} (3).
In \cite[Proposition 1.9.1]{Huber96}, Huber defined a functor 
\[
d(-)
\]
from $\mathcal{C}$ to the category of analytic adic spaces.
For a formal scheme $\mathscr{X}$ in $\mathcal{C}$,
the adic space
$d(\mathscr{X})$
is equipped with a morphism of ringed spaces
\[
\lambda \colon d(\mathscr{X}) \to \mathscr{X}.
\]
This map is called a specialization map.
If $A$ and $\varpi \in A$ satisfy the conditions in Lemma \ref{Lemma:quotient is complete},
then we have
\[
d(\Spf A)=\Spa (A(\varpi/\varpi), A^{+})
\]
where $A^+$ is the integral closure of $A$ in $A(\varpi/\varpi)=A[1/\varpi]$.
The map $\lambda \colon d(\Spf A) \to \Spf A$ sends
$x \in d(\Spf A)$ to the prime ideal
$
\{ a \in A \, \vert \, \vert a (x) \vert < 1 \} \subset A.
$
If $f \colon \mathscr{X} \to \mathscr{Y}$
is an adic morphism of formal schemes in $\mathcal{C}$,
then the induced morphism
$d(f) \colon d(\mathscr{X}) \to d(\mathscr{Y})$
fits into the following commutative diagram:
\[
\xymatrix{ d(\mathscr{X}) \ar[r]^-{\lambda} \ar[d]^-{d(f)} &  \mathscr{X} \ar[d]^-{f}   \\
d(\mathscr{Y}) \ar[r]^-{\lambda} & \mathscr{Y}.
}
\]

For the sake of completeness,
we include a proof of the following result on the compatibility of the functor $d(-)$ with fiber products.

\begin{prop}\label{Proposition:base change compatibility}
Let $f \colon \mathscr{X} \to \mathscr{Y}$
be a morphism locally of finite type of formal schemes in $\mathcal{C}$.
Let $\mathscr{Z} \to \mathscr{Y}$ be
an adic morphism of formal schemes in $\mathcal{C}$.
Then the morphism
\[
d(\mathscr{X} \times_\mathscr{Y} \mathscr{Z}) \to
d(\mathscr{X}) \times_{d(\mathscr{Y})} d(\mathscr{Z})
\]
induced by the universal property of the fiber product is an isomorphism.
\end{prop}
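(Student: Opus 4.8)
The plan is to reduce to the affine case and then identify the two sides by an explicit computation, with Lemma~\ref{Lemma:quotient is complete} doing the work of controlling the completed tensor products.

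\textbf{Step 1 (the morphism, and reduction to the affine case).} First one checks that $\mathscr{X}\times_{\mathscr{Y}}\mathscr{Z}$ exists in $\mathcal{C}$, so that functoriality of $d(-)$ produces the canonical morphism of the statement. This is local on $\mathscr{X}$, $\mathscr{Y}$, $\mathscr{Z}$: choosing affine opens $\Spf A\subset\mathscr{Y}$, $\Spf B\subset\mathscr{X}$ with $f(\Spf B)\subset\Spf A$, and $\Spf C\subset\mathscr{Z}$ whose structure morphism factors through $\Spf A$, the hypothesis that $f$ is locally of finite type lets us write $B\cong A\langle X_1,\dotsc,X_n\rangle/I$, while $A\to C$ is adic; by Lemma~\ref{Lemma:quotient is complete}~(2) the completed tensor product is $B\,\widehat{\otimes}_A\,C\cong C\langle X_1,\dotsc,X_n\rangle/IC\langle X_1,\dotsc,X_n\rangle=:D$, and by Lemma~\ref{Lemma:quotient is complete}~(3) the pair $(D,\varpi)$ again satisfies the conditions defining $\mathcal{C}$. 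Gluing these affine fiber products gives $\mathscr{X}\times_{\mathscr{Y}}\mathscr{Z}\in\mathcal{C}$. Since $d(-)$ sends open immersions to open immersions and open coverings to open coverings (see \cite[Section 1.9]{Huber96}), since it follows from the local description of $d(-)$ that $d(f)$ is locally of finite type, so that $d(\mathscr{X})\times_{d(\mathscr{Y})}d(\mathscr{Z})$ exists by \cite[Proposition 1.2.2]{Huber96}, and since both fiber products are compatible with restriction to affine opens of the above type, it suffices to prove the isomorphism when $\mathscr{X}=\Spf B$, $\mathscr{Y}=\Spf A$, $\mathscr{Z}=\Spf C$ with $B=A\langle X_1,\dotsc,X_n\rangle/I$ and $A\to C$ adic.

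\textbf{Step 2 (the underlying Tate rings).} Write $d(\Spf A)=\Spa(A(\varpi/\varpi),A^{+})$, $d(\Spf B)=\Spa(B(\varpi/\varpi),B^{+})$, $d(\Spf C)=\Spa(C(\varpi/\varpi),C^{+})$, $d(\Spf D)=\Spa(D(\varpi/\varpi),D^{+})$ as in \cite[Section 1.9]{Huber96}. By Lemma~\ref{Lemma:quotient is complete}~(3) and exactness of localization, $B(\varpi/\varpi)\cong A(\varpi/\varpi)\langle X_1,\dotsc,X_n\rangle/(I)$ and $D(\varpi/\varpi)\cong C(\varpi/\varpi)\langle X_1,\dotsc,X_n\rangle/(I)$; all the Tate rings involved are strongly Noetherian by Lemma~\ref{Lemma:quotient is complete}~(3), and $B(\varpi/\varpi)$ is topologically of finite type over $A(\varpi/\varpi)$. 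Hence by \cite[Proposition 1.2.2]{Huber96} the fiber product $d(\mathscr{X})\times_{d(\mathscr{Y})}d(\mathscr{Z})$ is the affinoid adic space $\Spa(R,R^{+})$ with $R=B(\varpi/\varpi)\,\widehat{\otimes}_{A(\varpi/\varpi)}\,C(\varpi/\varpi)\cong C(\varpi/\varpi)\langle X_1,\dotsc,X_n\rangle/(I)$ and $R^{+}$ the ring of integral elements of $R$ generated by the images of $B^{+}$ and $C^{+}$. Thus $R\cong D(\varpi/\varpi)$, the canonical morphism is an isomorphism on underlying Tate rings compatibly with the two projections, and it remains to show $D^{+}=R^{+}$ inside $R$.

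\textbf{Step 3 (the rings of integral elements).} By definition $D^{+}$ is the integral closure of the image of $D$ in $R$. On the one hand, the image of $B$ in $R$ is contained in the image of $D$ (via $A\langle X_1,\dotsc,X_n\rangle/I\to C\langle X_1,\dotsc,X_n\rangle/I$), so $B^{+}$, being integral over $B$, is integral over $D$; likewise $C^{+}$ is integral over $C$, hence over $D$. Therefore $R^{+}$ is integral over $D$, so $R^{+}\subseteq D^{+}$. On the other hand, the image of $D$ in $R$ is the closure of the subring generated by the image of $C$ and by $X_1,\dotsc,X_n$; the image of $C$ lies in $C^{+}$ and each $X_i$ lies in $B^{+}$, so this subring lies in $R^{+}$, and since a ring of integral elements is open in $R$, hence closed, its closure (which is the image of $D$) also lies in $R^{+}$. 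As $R^{+}$ is integrally closed in $R$, this gives $D^{+}\subseteq R^{+}$. Hence $D^{+}=R^{+}$, the canonical morphism is an isomorphism of affinoid adic spaces, and by Step~1 the proposition follows. The only place where Lemma~\ref{Lemma:quotient is complete} is essential is the identification of the completed tensor products with quotients of restricted power series rings, both over $A$ (Step~1) and over $A(\varpi/\varpi)$ (Step~2); once that is in hand, matching the rings of integral elements is elementary, and I do not anticipate a genuine obstacle.
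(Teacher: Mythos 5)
Your proof is correct and follows essentially the same route as the paper's: reduce to the affine case, use Lemma~\ref{Lemma:quotient is complete} to present the formal-scheme fiber product as a quotient of a restricted power series ring, and identify the resulting affinoid adic space with Huber's fiber product of $d(\mathscr{X})$ and $d(\mathscr{Z})$ over $d(\mathscr{Y})$. The only cosmetic difference is that the paper works with the uncompleted tensor product $A\otimes_B C$ and closes the argument by invoking \cite[Lemma 1.5]{Huber94} (the adic space of an affinoid ring equals that of its completion), whereas you work with the completed tensor product and match the rings of integral elements by hand in Step~3; the two arguments are equivalent in content.
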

\begin{proof}
First, we note that
the fiber product 
$
d(\mathscr{X}) \times_{d(\mathscr{Y})} d(\mathscr{Z})
$
exists by \cite[Proposition 1.2.2]{Huber96}
since $d(f) \colon d(\mathscr{X}) \to d(\mathscr{Y})$ is locally of finite type.

We may assume that
$\mathscr{X}=\Spf A$,
$\mathscr{Y}= \Spf B$, and 
$\mathscr{Z}= \Spf C$ are affine,
where $B$ and $C$ satisfy the conditions in Lemma \ref{Lemma:quotient is complete} for some element $\varpi \in B$ and for its image in $C$, respectively.
We may assume further that $A$ is of the form $B\langle X_1, \dotsc, X_n \rangle/I$.
We write
$
D:=A \otimes_B C$.
The source of the morphism in question
is isomorphic to
\[
\Spa ((\widehat{D})(\varpi/\varpi), E^{+})
\]
where
$\widehat{D}$
is the $\varpi$-adic completion of
$D$
and $E^{+}$ is the integral closure of
$\widehat{D}$ in
$(\widehat{D})[1/\varpi]$.
On the other hand,
the target of the morphism in question
is isomorphic to
\[
\Spa (D(\varpi/\varpi), F^+)
\]
where $F^+$ is the integral closure of $D$ in $D[1/\varpi]$.
Let $D_0$ be the image of the map $D \to D[1/\varpi]$.
Clearly,
the completion of $D(\varpi/\varpi)$ is isomorphic to
$(\widehat{D}_0)(\varpi/\varpi)$.
By a similar argument as in the proof of Lemma \ref{Lemma:quotient is complete} (3),
we have
$(\widehat{D})(\varpi/\varpi) \cong (\widehat{D}_0)(\varpi/\varpi)$.
This completes the proof of the proposition since the adic spaces associated with an affinoid ring and its completion are naturally isomorphic (see \cite[Lemma 1.5]{Huber94}).
\end{proof}

A valuation ring $R$ is called a
\textit{microbial valuation ring}
if the field of fractions $L$ of $R$ admits a topologically nilpotent unit $\varpi$ with respect to the valuation topology; see \cite[Definition 1.1.4]{Huber96}.
We equip $R$ with the valuation topology unless explicitly mentioned otherwise.
In this case, the element $\varpi$ is contained in $R$, the ideal $\varpi R$ is an ideal of definition of $R$, and we have $L=R[1/\varpi]$.
The completion
$
\widehat{R}
$
of $R$ is also a microbial valuation ring.

Let $R$ be a complete microbial valuation ring.
It is well known that
\[
R\langle X_1, \dotsc, X_n \rangle[1/\varpi] \cong
L \langle X_1, \dotsc, X_n \rangle
\]
is Noetherian for every $n \geq 0$; see \cite[5.2.6, Theorem 1]{BGR}.
A formal scheme $\mathscr{X}$ locally of finite type over $\Spf R$ is in the category $\mathcal{C}$.

\subsection{\'Etale cohomology with proper support of adic spaces and nearby cycles}\label{Subsection:Etale cohomology with compact support of adic spaces and nearby cycles}

We shall recall a comparison theorem of Huber.
To formulate his result,
we need some preparations.

Let $R$ be a microbial valuation ring with field of fractions $L$.
We assume that $R$ is a strictly Henselian local ring.
Let $\varpi$ be a topologically nilpotent unit in $L$.
Let $\overline{L}$ be a separable closure of $L$ and
let
$\overline{R}$ be the valuation ring of $\overline{L}$ which extends $R$.

We will use the following notation.
For a scheme 
$\mathcal{X}$ over $R$,
we write
\[
\overline{\mathcal{X}}:= \mathcal{X} \times_{\Spec R} \Spec \overline{R} \quad \text{and} \quad
\mathcal{X}':= \mathcal{X} \times_{\Spec R} \Spec R/\varpi R.
\]
Let $\eta \in \Spec R$ and $\overline{\eta} \in \Spec \overline{R}$ be the generic points.
We define
\[
\mathcal{X}_{\eta}:=\mathcal{X} \times_{\Spec R} \eta
\quad \text{and} \quad
\mathcal{X}_{\overline{\eta}}:=\mathcal{X} \times_{\Spec R} \overline{\eta}.
\]
The $\varpi$-adic formal completion of a scheme (or a ring) $\mathcal{X}$ over $R$ is denoted by $\widehat{\mathcal{X}}$.
Let $s \in \Spec R$
be the closed point
and $\mathcal{X}_s$ the special fiber of $\mathcal{X}$.
We will use the same notation for morphisms of schemes over $R$ when there is no possibility of confusion.

We write
\[
S:=\Spa(L, R)= d(\Spf \widehat{R}) \quad \text{and} \quad \overline{S}:=\Spa(\overline{L}, \overline{R}) = d(\Spf \widehat{\overline{R}}).
\]
Let $t \in S$ and $\overline{t} \in \overline{S}$ be the closed points corresponding to the valuation rings $R$ and $\overline{R}$,
respectively.
The pseudo-adic space
$
(\overline{S}, \{ \overline{t} \})
$
is also denoted by $\overline{t}$.
The natural morphism
$\xi \colon \overline{t} \to S$
is a geometric point with support $t \in S$
in the sense of \cite[Definition 2.5.1]{Huber96}.

Let
$f \colon \mathcal{X} \to \Spec R$
be a separated morphism of finite type of schemes.
The induced morphism
\[
d(f) \colon d(\widehat{\mathcal{X}}) \to S
\]
is separated and of finite type;
the separatedness can be checked for example by using Proposition \ref{Proposition:base change compatibility}.
(We often write $d(f)$ instead of $d(\widehat{f})$.)

There is a natural morphism
$d(\widehat{\mathcal{X}}) \to \mathcal{X}_{\eta}$
of locally ringed spaces; see \cite[(1.9.4)]{Huber96}.
An \'etale morphism $Y \to \mathcal{X}_{\eta}$
defines an adic space
$d(\widehat{\mathcal{X}}) \times_{\mathcal{X}_{\eta}} Y$,
which is \'etale over $d(\widehat{\mathcal{X}})$; see \cite[Proposition 3.8]{Huber94} and \cite[Corollary 1.7.3 i)]{Huber96}.
In this way, we get a morphism of \'etale sites
\[
a \colon d(\widehat{\mathcal{X}})_\et \to (\mathcal{X}_{\eta})_\et.
\]

Let $\Lambda$ be a torsion commutative ring.
Let $\mathcal{F}$ be an \'etale sheaf of $\Lambda$-modules on $\mathcal{X}$.
Let
$
\mathcal{F}^a
$
denote
the pull-back of $\mathcal{F}$ by the composition
\[
d(\widehat{\mathcal{X}})_\et \overset{a}{\to} (\mathcal{X}_{\eta})_\et \to \mathcal{X}_\et.
\]
Recall that we have the direct image functor with proper support
\[
Rd(f)_! \colon D^+(d(\widehat{\mathcal{X}}), \Lambda) \to D^+(S, \Lambda)
\]
for $d(f)$ by \cite[Definition 5.4.4]{Huber96}.
We define
$
R^nd(f)_!\mathcal{F}^a:=H^n(Rd(f)_!\mathcal{F}^a).
$
We will describe the stalk
\[
(R^nd(f)_!\mathcal{F}^a)_{\overline{t}}:=\Gamma(\overline{t}, \xi^*R^nd(f)_!\mathcal{F}^a)
\]
at the geometric point $\xi \colon  \overline{t} \to S$ in terms of the sliced nearby cycles functor relative to $f$.
Recall that we defined the sliced nearby cycles functor
\[
R\Psi_{f, \overline{\eta}}:=i^{*}Rj_*j^* \colon D^{+}(\mathcal{X}, \Lambda) \to D^{+}(\mathcal{X}_s, \Lambda)
\]
in Section \ref{Section:Nearby cycles over general bases}.
Here we fix the notation by the following commutative diagram:
\[
\xymatrix{ \mathcal{X}_{\overline{\eta}} \ar[r]^-{j} \ar[d]^-{} & \mathcal{X} \ar[d]^-{}  & \ar[l]_-{i} \mathcal{X}_s \ar[d]^-{} \\
\overline{\eta} \ar[r]_-{} & \Spec R & \ar[l]^-{} s.
}
\]
Now we can state the following result due to Huber:

\begin{thm}[Huber {\cite[Theorem 5.7.8]{Huber96}}]\label{Theorem:Huber comparison}
There is an isomorphism
\[
(R^nd(f)_!\mathcal{F}^a)_{\overline{t}}
\cong
H^n_c(\mathcal{X}_s, R\Psi_{f, \overline{\eta}}(\mathcal{F}))
\]
for every $n$.
This isomorphism is compatible with the natural actions of $\Gal(\overline{L}/L)$ on both sides.
\end{thm}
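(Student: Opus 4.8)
The plan is to deduce the formula from the behaviour of the specialization map $\lambda$ under $R\Gamma$ and its relation to the sliced nearby cycles functor, after a standard reduction to the case where $\mathcal{X}$ is proper over $\Spec R$; this is the strategy underlying Huber's comparison results, and the hard analytic-versus-algebraic input is already available in \cite{Huber94, Huber96}.

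\emph{Step 1: reduction to $R=\overline{R}$.} By the proper base change theorem for the functor $Rd(f)_{!}$ together with Proposition~\ref{Proposition:base change compatibility} (compatibility of $d(-)$ with the base change $\Spec\widehat{\overline{R}}\to\Spec R$), the stalk $(R^{n}d(f)_{!}\mathcal{F}^{a})_{\overline{t}}$ is unchanged when $(R,\mathcal{X},\mathcal{F})$ is replaced by $(\widehat{\overline{R}},\mathcal{X}_{\widehat{\overline{R}}},\mathcal{F}_{\widehat{\overline{R}}})$, using that $(-)^{a}$ commutes with base change. On the other hand $R\Psi_{f,\overline{\eta}}(\mathcal{F})=R\Psi_{f_{\overline{R}},\overline{\eta}}(\mathcal{F}_{\overline{R}})$ by the very definition of the sliced nearby cycles functor, and passing to $\overline{R}$ or to its completion alters $\mathcal{X}_{s}$ only by a purely inseparable extension of the residue field, which is invisible to \'etale cohomology. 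Hence we may assume $R=\overline{R}$, so that $L=\overline{L}$, $\eta=\overline{\eta}$, $t=\overline{t}$. The group $\Gal(\overline{L}/L)$ acts compatibly on $\Spec\overline{R}$ over $\Spec R$, hence on every object constructed below; since each morphism we produce is visibly equivariant, the resulting isomorphism will carry the required $\Gal(\overline{L}/L)$-action automatically.

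\emph{Step 2: reduction to a local statement for proper $\overline{\mathcal{X}}$.} By Nagata's compactification theorem (\cite[Chapter II, Theorem F.1.1]{Fujiwara-Kato}) choose an open immersion $u\colon\mathcal{X}\hookrightarrow\overline{\mathcal{X}}$ with $\overline{f}\colon\overline{\mathcal{X}}\to\Spec R$ proper. Let $\lambda\colon d(\widehat{\overline{\mathcal{X}}})\to\widehat{\overline{\mathcal{X}}}$ be the specialization map, whose target has underlying space $|\overline{\mathcal{X}}_{s}|$; then $u$ induces an open immersion $j\colon d(\widehat{\mathcal{X}})\hookrightarrow d(\widehat{\overline{\mathcal{X}}})$ with image the tube $\lambda^{-1}(\mathcal{X}_{s})$, and we write $u_{s}\colon\mathcal{X}_{s}\hookrightarrow\overline{\mathcal{X}}_{s}$ for the open immersion of special fibres. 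Since $\overline{\mathcal{X}}$ is proper over $R$, $Rd(f)_{!}\mathcal{F}^{a}=Rd(\overline{f})_{*}(j_{!}\mathcal{F}^{a})$, so by the proper base change theorem for adic spaces $(R^{n}d(f)_{!}\mathcal{F}^{a})_{\overline{t}}\cong H^{n}(d(\widehat{\overline{\mathcal{X}}})_{\overline{t}},j_{!}\mathcal{F}^{a})$, where $d(\widehat{\overline{\mathcal{X}}})_{\overline{t}}$ is the pseudo-adic fibre of $d(\overline{f})$ over $\overline{t}$ (it is all of $d(\widehat{\overline{\mathcal{X}}})$ when $R$ has rank one) and maps via $\lambda$ to $\overline{\mathcal{X}}_{s}$; and on the scheme side $H^{n}_{c}(\mathcal{X}_{s},R\Psi_{f,\eta}\mathcal{F})=H^{n}(\overline{\mathcal{X}}_{s},(u_{s})_{!}R\Psi_{f,\eta}\mathcal{F})$ because $\overline{\mathcal{X}}_{s}$ is proper. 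Hence, after taking $R\Gamma$ and invoking the Leray spectral sequence for $\lambda$, it suffices to construct a $\Gal(\overline{L}/L)$-equivariant isomorphism
\[
R\lambda_{*}(j_{!}\mathcal{F}^{a})\ \cong\ (u_{s})_{!}\,R\Psi_{f,\eta}(\mathcal{F})\qquad\text{in }D^{+}(\overline{\mathcal{X}}_{s},\Lambda).
\]

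\emph{Step 3: the local comparison, which is the main obstacle.} One checks the displayed isomorphism on stalks at geometric points $x\to\overline{\mathcal{X}}_{s}$. If $x\notin\mathcal{X}_{s}$, both stalks vanish, the left-hand one because $\lambda$ is proper (here properness of $\overline{\mathcal{X}}/R$, via Huber's properness statements for specialization maps, is used), so its stalk is computed on $\lambda^{-1}(x)$, which is disjoint from the support $\lambda^{-1}(\mathcal{X}_{s})$ of $j_{!}\mathcal{F}^{a}$. If $x\in\mathcal{X}_{s}$, then $(R\lambda_{*}j_{!}\mathcal{F}^{a})_{x}=\varinjlim_{U}R\Gamma(\lambda^{-1}(U),\mathcal{F}^{a})$, the colimit over \'etale neighbourhoods $U\to\mathcal{X}_{s}$ of $x$; passing to the limit over the adic spaces $d(\widehat{\mathcal{X}\times_{\mathcal{X}_{s}}U})$ identifies this with the \'etale cohomology of $d(\Spf A)$, where $A$ is the $\varpi$-adic completion of the henselization $\mathcal{O}^{h}_{\mathcal{X},x}$ of $\mathcal{X}$ at $x$. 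The decisive input is then Huber's comparison theorem relating the \'etale cohomology of a scheme of finite type over a valuation ring (or over the local ring $\mathcal{O}^{h}_{\mathcal{X},x}$) to that of the associated adic space \cite{Huber94, Huber96}: it identifies this cohomology with $R\Gamma\bigl((\Spec\mathcal{O}^{h}_{\mathcal{X},x})_{\overline{\eta}},\mathcal{F}\bigr)=(R\Psi_{f,\eta}\mathcal{F})_{x}$, compatibly with the Galois action. Carrying out this punctual comparison rigorously — controlling the colimit over \'etale neighbourhoods, reducing the analytic-versus-algebraic step to Huber's established comparison and proper base change theorems, and matching the two actions of $\Gal(\overline{L}/L)$ — is the substance of the proof; the bookkeeping of Step~2 (independence of $H^{\ast}_{c}$ of the chosen compactification, and the fact that $j_{!}\mathcal{F}^{a}$ is the extension by zero from the tube $\lambda^{-1}(\mathcal{X}_{s})$ and not from $\mathcal{X}^{\ad}$) is routine but must be handled with care.
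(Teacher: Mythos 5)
Your overall architecture matches the paper's: reduce to $R=\overline{R}$ via the base-change compatibility of the sliced nearby cycles functor and the compatibility of $d(-)$ with fibre products; compactify by Nagata; compute the stalk over $\overline{t}$ as cohomology of the proper adic space via proper base change; then compare through the specialization map and restrict to the special fibre. This is exactly the paper's plan.

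However, there is a genuine error in Steps~2--3, namely the systematic conflation of the mod-$\varpi$ reduction $\overline{\mathcal{X}}'=\overline{\mathcal{X}}\times_{\Spec R}\Spec R/\varpi R$ with the special fibre $\overline{\mathcal{X}}_{s}$. The underlying space of the formal scheme $\widehat{\overline{\mathcal{X}}}$ is $|\overline{\mathcal{X}}'|$, not $|\overline{\mathcal{X}}_{s}|$, and likewise the image of $j$ is the tube $\lambda^{-1}(\mathcal{X}')$, not $\lambda^{-1}(\mathcal{X}_{s})$. These two spaces coincide only when $R$ has rank one; but the theorem is stated for an arbitrary strictly Henselian microbial valuation ring $R$, which may have higher rank (and such $R$ arise in the paper's applications as $k(x)^{\wedge h+}$ at rank-$\geq 2$ points of adic spaces). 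For higher-rank $R$, $\Spec R/\varpi R$ has more than one point, so $\overline{\mathcal{X}}_{s}\hookrightarrow\overline{\mathcal{X}}'$ is a proper closed immersion, the \'etale topoi differ, and your displayed target isomorphism $R\lambda_{*}(j_{!}\mathcal{F}^{a})\cong(u_{s})_{!}R\Psi_{f,\eta}(\mathcal{F})$ does not even type-check: the left side lives in $D^{+}(\overline{\mathcal{X}}',\Lambda)$. Concretely, your stalkwise verification in Step~3 breaks at a point $x\in\mathcal{X}'\setminus\mathcal{X}_{s}$: $\lambda^{-1}(x)$ lies inside the tube $\lambda^{-1}(\mathcal{X}')$ where $j_{!}\mathcal{F}^{a}$ is nonzero, so the left-hand stalk does not vanish there, while you assert that it does. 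The paper is careful on exactly this point: its specialization morphism of sites $\lambda$ targets $\mathcal{X}'_{\et}$, and a separate step (the proper base change theorem for the scheme morphism $\mathcal{P}'\to\Spec R/\varpi R$, applied to $u'_{!}i'^{*}Rj_{*}j^{*}\mathcal{F}$) is used afterwards to restrict from $\mathcal{P}'$ to $\mathcal{P}_{s}$ and recover $H^{n}_{c}(\mathcal{X}_{s},-)$.

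A second, smaller point: the comparison you invoke as the ``decisive input'' in Step~3 is precisely Huber's global statements, which the paper cites directly as \cite[Corollary 3.5.11 ii)]{Huber96} (compatibility of $R\lambda_{*}$ with $u_{!}$) and \cite[Theorem 3.5.13]{Huber96} ($R\lambda_{*}\mathcal{F}^{a}\cong i'^{*}Rj_{*}j^{*}\mathcal{F}$). Checking these on stalks via a colimit over \'etale neighbourhoods of $\mathcal{O}^{h}_{\mathcal{X},x}$ is feasible but amounts to re-proving those theorems; citing them globally, as the paper does, is cleaner and automatically gives the $\Gal(\overline{L}/L)$-equivariance once the reduction to $R=\overline{R}$ is done. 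So your proof has the right skeleton, but the $\mathcal{X}'$ versus $\mathcal{X}_{s}$ distinction needs to be restored and the final reduction to the special fibre needs its own proper base change argument for schemes.
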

\begin{proof}
We recall the construction of the isomorphism
since it is a key ingredient in this paper and the construction will make the compatibility of it with specialization maps clear; see the proof of Proposition \ref{Proposition:overconvergent}.

First, we recall the following fact.
Let $\overline{\mathcal{F}}$
be the pull-back of $\mathcal{F}$ to $\overline{\mathcal{X}}$.
By
\cite[(1) in the proof of Proposition 4.2.4]{Huber96},
we see that 
the base change map
\begin{equation}\label{equation:Huber 4.2.4}
    R\Psi_{f, \overline{\eta}}(\mathcal{F}) \to
R\Psi_{\overline{f}, \overline{\eta}}(\overline{\mathcal{F}})
\end{equation}
is an isomorphism,
where
$\overline{f} \colon \overline{\mathcal{X}} \to \Spec \overline{R}$ is the base change of $f$.

There is a factorization $f=g \circ u$
where
$u \colon \mathcal{X} \hookrightarrow \mathcal{P}$ is an open immersion and
$g \colon \mathcal{P} \to \Spec R$
is a proper morphism by Nagata's compactification theorem.
By using the valuation criterion \cite[Corollary 1.3.9]{Huber96},
we see that the morphism
$d(g) \colon d(\widehat{\mathcal{P}}) \to S$
is proper; see also the proof of \cite[Lemma 3.5]{Mieda06}.

Let $q \colon \Spec \overline{R} \to \Spec R$
denote the morphism induced by the inclusion $R \subset \overline{R}$.
The base change of it will be denoted by the same letter $q$
when there is no possibility of confusion.
We have
the following Cartesian diagram:
\[
\xymatrix{ d(\widehat{\overline{\mathcal{X}}}) \ar[r]^-{d(\overline{u})} \ar[d]^-{d(q)} &  d(\widehat{\overline{\mathcal{P}}}) \ar[d]^-{d(q)}   \\
d(\widehat{\mathcal{X}}) \ar[r]^-{d(u)} & d(\widehat{\mathcal{P}}).
}
\]
By \cite[Proposition 2.5.13 i) and Proposition 2.6.1]{Huber96}, we have
\begin{equation}\label{equation: stalk direct image}
    (R^nd(f)_!\mathcal{F}^a)_{\overline{t}}  \cong (R^nd(g)_*d(u)_!\mathcal{F}^a)_{\overline{t}} 
    \cong H^n(d(\widehat{\overline{\mathcal{P}}}), d(q)^*d(u)_!\mathcal{F}^a).
\end{equation}
(See \cite[Section 2.7]{Huber96} for the functor $d(u)_!$.)
By \cite[Proposition 5.2.2 iv)]{Huber96}, we have
$d(q)^*d(u)_!\mathcal{F}^a \cong d(\overline{u})_!d(q)^*\mathcal{F}^a$.
The sheaf
$d(q)^*\mathcal{F}^a$
is isomorphic to the pull-back of
$\overline{\mathcal{F}}$
by the composition
\[
d(\widehat{\overline{\mathcal{X}}})_\et \overset{a}{\to} (\mathcal{X}_{\overline{\eta}})_\et  \to (\overline{\mathcal{X}})_\et.
\]
Therefore,
in view of (\ref{equation:Huber 4.2.4}),
we reduce to the case where $R=\overline{R}$.
The construction given below shows that the desired isomorphism
is $\Gal(\overline{L}/L)$-equivariant.

Let
$\lambda \colon d(\widehat{\mathcal{X}})_\et \to (\mathcal{X}')_\et$
denote the morphism of sites defined by sending
an \'etale morphism
$h \colon Y \to \mathcal{X}'$
to $d(\widetilde{Y}) \to d(\widehat{\mathcal{X}})$
where
$\widetilde{Y} \to \widehat{\mathcal{X}}$
is
an \'etale morphism of formal schemes lifting $h$; see \cite[Lemma 3.5.1]{Huber96}.
Similarly, we have a morphism
$\lambda \colon d(\widehat{\mathcal{P}})_\et \to (\mathcal{P}')_\et$ of sites.
By applying \cite[Corollary 3.5.11 ii)]{Huber96} to the following diagram
\[
\xymatrix{ d(\widehat{\mathcal{X}})_\et \ar[r]^-{d(u)} \ar[d]^-{\lambda} & d(\widehat{\mathcal{P}})_\et \ar[d]^-{\lambda}   \\
(\mathcal{X}')_\et \ar[r]^-{u'} & (\mathcal{P}')_\et,}
\]
we have
$
R\lambda_*d(u)_!\mathcal{F}^a \cong u'_!R\lambda_*\mathcal{F}^a.
$
Moreover, by applying \cite[Theorem 3.5.13]{Huber96} to the following diagram
\[
\xymatrix{ d(\widehat{\mathcal{X}})_\et \ar[r]^-{a} \ar[d]^-{\lambda} & (\mathcal{X}_{\eta})_\et \ar[d]^-{j}   \\
(\mathcal{X}')_\et \ar[r]^-{i'} & \mathcal{X}_\et,}
\]
we have an isomorphism
$
R\lambda_*\mathcal{F}^a \cong i'^*Rj_*j^*\mathcal{F}.
$
So we have 
\begin{align*}
 H^n(d(\widehat{\mathcal{P}}), d(u)_!\mathcal{F}^a) 
    & \cong H^n(\mathcal{P}', R\lambda_*d(u)_!\mathcal{F}^a) \\
    & \cong
    H^n(\mathcal{P}', u'_!R\lambda_*\mathcal{F}^a) \\
    & \cong
    H^n(\mathcal{P}', u'_!i'^*Rj_*j^*\mathcal{F}).
\end{align*}
Together with (\ref{equation: stalk direct image}),
we obtain the following isomorphism
\begin{equation}\label{equation:main formula}
    (R^nd(f)_!\mathcal{F}^a)_{\overline{t}}
\cong H^n(\mathcal{P}', u'_!i'^*Rj_*j^*\mathcal{F}).
\end{equation}
The proper base change theorem for schemes implies that
\[
H^n(\mathcal{P}', u'_!i'^*Rj_*j^*\mathcal{F})
\cong H^n_c(\mathcal{X}_s, R\Psi_{f, \eta}(\mathcal{F})).
\]
This isomorphism completes the construction of the desired isomorphism.
\end{proof}

\subsection{Specialization maps on the stalks of $Rd(f)_!$}\label{Subsection:Specialization maps for higher direct image sheaves with proper support}

In this subsection, we work over
a complete non-archimedean field $K$ with ring of integers $\O$ for simplicity.
We fix a topologically nilpotent unit $\varpi$ in $K$.

For an adic space $X$ over $\Spa(K, \O)$,
we will use the following notation.
For a point $x \in X$,
let $k(x)$ be the residue field  of the local ring $\O_{X, x}$
and $k(x)^+$ the valuation ring corresponding to the valuation $v_x$.
We note that $k(x)^+$ is a microbial valuation ring and
the image of $\varpi$, also denoted by $\varpi$, is a topologically nilpotent unit in $k(x)$.
For a geometric point
$\xi$ of $X$,
let $\Supp(\xi) \in X$ denote the support of it.

We recall strict localizations of analytic adic spaces.
Let
$
\xi \colon s \to X
$
be a geometric point.
The strict localization
\[
X(\xi)
\]
of $X$ at $\xi$ is defined in \cite[Section 2.5.11]{Huber96}.
It is an adic space over $X$
with an $X$-morphism
$s \to X(\xi)$.
We write $x:=\Supp(\xi)$.
By \cite[Proposition 2.5.13]{Huber96},
the strict localization $X(\xi)$ is isomorphic to
\[
\Spa(\overline{k}(x), \overline{k}(x)^+)
\]
over $X$,
where $\overline{k}(x)$ is a separable closure of $k(x)$ and $\overline{k}(x)^+$ is a valuation ring extending $k(x)^+$.
A specialization morphism
$\xi_1 \to \xi_2$
of geometric points of $X$ is by definition a morphism
$X(\xi_1) \to X(\xi_2)$
over $X$,
and such a morphism exists if and only if we have
\[
\Supp(\xi_2) \in \overline{\{ \Supp(\xi_1) \}}.
\]
Let $\mathcal{F}$ be an abelian \'etale sheaf on $X$.
A specialization morphism
$\xi_1 \to \xi_2$
of geometric points of $X$
induces a mapping
\[
\mathcal{F}_{\xi_2} \to \mathcal{F}_{\xi_1}
\]
on the stalks in the usual way; see \cite[(2.5.16)]{Huber96}.

\begin{defn}\label{Definition:overconvergent}
Let $X$ be an adic space over $\Spa(K, \O)$
(or more generally an analytic adic space).
Let $\mathcal{F}$ be an abelian \'etale sheaf on $X$.
For a subset $W \subset X$,
we say that
$\mathcal{F}$
is \textit{overconvergent} on $W$
if,
for every specialization morphism
$\xi_1 \to \xi_2$
of geometric points of $X$
whose supports are contained in
$W$,
the induced map
$
\mathcal{F}_{\xi_2} \to \mathcal{F}_{\xi_1}
$
is bijective.
\end{defn}

Let $\mathcal{X}$ be a scheme of finite type over $\Spec \O$.
We write
$
\mathcal{X}_{K}:=\mathcal{X} \times_{\Spec \O} \Spec K.
$
For an \'etale sheaf $\mathcal{F}$ on $\mathcal{X}$,
let $
\mathcal{F}^a
$
denote the pull-back of $\mathcal{F}$ by the composition
\[
d(\widehat{\mathcal{X}})_\et \overset{a}{\to} (\mathcal{X}_{K})_\et \to \mathcal{X}_\et.
\]
Let $\Lambda$ be a torsion commutative ring.

\begin{prop}\label{Proposition:overconvergent}
Let $\mathcal{X}$ and $\mathcal{Y}$ be separated schemes of finite type over $\O$.
Let
$f \colon \mathcal{X} \to \mathcal{Y}$
be a morphism over $\O$.
Let
$\mathcal{F}$
be
an \'etale sheaf of $\Lambda$-modules on $\mathcal{X}$.
We assume that
the sliced nearby cycles complexes for $f$ and $\mathcal{F}$ are compatible with any base change; see Definition \ref{Definition:base change, unipotent} (1).
Let
$s \in \widehat{\mathcal{Y}}$
be a point.
We consider
the inverse image $\lambda^{-1}(s)$ under the specialization map
\[
\lambda \colon d(\widehat{\mathcal{Y}}) \to \widehat{\mathcal{Y}}.
\]
Then,
the sheaf $R^nd(f)_!\mathcal{F}^a$ is overconvergent on $\lambda^{-1}(s)$
for every $n$.
\end{prop}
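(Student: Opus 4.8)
The plan is to compute the stalk of $R^{n}d(f)_{!}\mathcal{F}^{a}$ at a geometric point whose support lies in $\lambda^{-1}(s)$ by means of Huber's comparison theorem (Theorem \ref{Theorem:Huber comparison}), and to observe that, as the support varies within $\lambda^{-1}(s)$, this stalk changes only by an extension of a base field of a \emph{fixed} fibre of $f$, an operation under which compactly supported \'etale cohomology of schemes is invariant. Concretely, I would fix a specialization morphism $\xi_{1}\to\xi_{2}$ of geometric points of $d(\widehat{\mathcal{Y}})$ whose supports $y_{1},y_{2}$ lie in $\lambda^{-1}(s)$, and reduce to showing that $(R^{n}d(f)_{!}\mathcal{F}^{a})_{\xi_{2}}\to(R^{n}d(f)_{!}\mathcal{F}^{a})_{\xi_{1}}$ is bijective. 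Let $s'\in\mathcal{Y}$ be the image of $s$ under $\widehat{\mathcal{Y}}\to\mathcal{Y}$ (it lies in the special fibre $\mathcal{Y}\otimes_{\mathcal{O}}\mathcal{O}/\varpi\mathcal{O}$), and let $\mathcal{X}_{s'}$ be the scheme-theoretic fibre of $f$ over $s'$, a separated scheme of finite type over $\kappa(s')$. For $i\in\{1,2\}$ the strict localization $d(\widehat{\mathcal{Y}})(\xi_{i})$ is $\Spa(\overline{k}(y_{i}),\overline{k}(y_{i})^{+})$ by \cite[Proposition 2.5.13]{Huber96}; set $\overline{R}_{i}:=\overline{k}(y_{i})^{+}$, a microbial valuation ring (the image of $\varpi$ is a topologically nilpotent unit) which is a strictly Henselian local ring: its fraction field is separably closed, and, valuation rings being integrally closed, so is its residue field $\kappa_{i}$. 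Since $y_{i}\in\lambda^{-1}(s)$, the composite $d(\widehat{\mathcal{Y}})(\xi_{i})\to d(\widehat{\mathcal{Y}})\xrightarrow{\lambda}\widehat{\mathcal{Y}}\to\mathcal{Y}$ carries the closed point of $d(\widehat{\mathcal{Y}})(\xi_{i})$ to $s'$, and induces a morphism of schemes $\Spec\overline{R}_{i}\to\mathcal{Y}$ under which the closed point $s_{i}$ lies over $s'$; hence the special fibre of the base change $\overline{f}_{i}\colon\overline{\mathcal{X}}_{i}:=\mathcal{X}\times_{\mathcal{Y}}\Spec\overline{R}_{i}\to\Spec\overline{R}_{i}$ is $\mathcal{X}_{s'}\times_{\kappa(s')}\kappa_{i}$.

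Next I would invoke Huber's comparison theorem after base change. Using the compatibility of $d(-)$ with fibre products (Proposition \ref{Proposition:base change compatibility}) and the base change property of $Rd(-)_{!}$ \cite[Theorem 5.4.6]{Huber96}, one has $(R^{n}d(f)_{!}\mathcal{F}^{a})_{\xi_{i}}\cong(R^{n}d(\overline{f}_{i})_{!}\mathcal{F}_{i}^{a})_{\overline{t}_{i}}$, where $\mathcal{F}_{i}$ is the pull-back of $\mathcal{F}$ to $\overline{\mathcal{X}}_{i}$ and $\overline{t}_{i}$ is the closed point of $d(\widehat{\mathcal{Y}})(\xi_{i})$; then Theorem \ref{Theorem:Huber comparison} applied to $\overline{f}_{i}$ gives
\[
(R^{n}d(f)_{!}\mathcal{F}^{a})_{\xi_{i}}\ \cong\ H^{n}_{c}\bigl(\mathcal{X}_{s'}\times_{\kappa(s')}\kappa_{i},\ R\Psi_{\overline{f}_{i},\overline{\eta}_{i}}(\mathcal{F}_{i})\bigr),
\]
where $\overline{\eta}_{i}$ is the generic point of $\Spec\overline{R}_{i}$ (already algebraically geometric, as $\overline{k}(y_{i})$ is separably closed).

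Now I would feed in the hypothesis. The specialization morphism $\xi_{1}\to\xi_{2}$ induces, together with $\lambda$ and $d(\widehat{\mathcal{Y}})\to\mathcal{Y}$, a morphism of $\mathcal{Y}$-schemes $\Spec\overline{R}_{1}\to\Spec\overline{R}_{2}$ that is local along the closed points (both lying over $s'$; this is where $y_{1},y_{2}\in\lambda^{-1}(s)$ is used again) and whose restriction to special fibres is a $\kappa(s')$-morphism $\Spec\kappa_{1}\to\Spec\kappa_{2}$, i.e.\ a field extension $\kappa_{2}\hookrightarrow\kappa_{1}$. Feeding this local morphism into the hypothesis that the sliced nearby cycles complexes for $f$ and $\mathcal{F}$ are compatible with any base change (Definition \ref{Definition:base change, unipotent}(1)), the base change map
\[
\bigl(R\Psi_{\overline{f}_{2},\overline{\eta}_{2}}(\mathcal{F}_{2})\bigr)\big|_{\mathcal{X}_{s'}\times_{\kappa(s')}\kappa_{1}}\ \longrightarrow\ R\Psi_{\overline{f}_{1},\overline{\eta}_{1}}(\mathcal{F}_{1})
\]
along $\mathcal{X}_{s'}\times_{\kappa(s')}\kappa_{1}\to\mathcal{X}_{s'}\times_{\kappa(s')}\kappa_{2}$ is an isomorphism. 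Tracing the identifications of the previous step through the explicit construction in the proof of Theorem \ref{Theorem:Huber comparison} — carried out once and for all with a single Nagata compactification of $f$, so as to be functorial in the base — one sees that the stalk map $(R^{n}d(f)_{!}\mathcal{F}^{a})_{\xi_{2}}\to(R^{n}d(f)_{!}\mathcal{F}^{a})_{\xi_{1}}$ is identified with the pull-back map
\[
H^{n}_{c}\bigl(\mathcal{X}_{s'}\times_{\kappa(s')}\kappa_{2},\ \mathcal{G}\bigr)\ \longrightarrow\ H^{n}_{c}\bigl(\mathcal{X}_{s'}\times_{\kappa(s')}\kappa_{1},\ \mathcal{G}|_{\kappa_{1}}\bigr),\qquad \mathcal{G}:=R\Psi_{\overline{f}_{2},\overline{\eta}_{2}}(\mathcal{F}_{2}),
\]
induced by $\kappa_{2}\hookrightarrow\kappa_{1}$. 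This is bijective, because \'etale cohomology with compact support of a separated scheme of finite type over a field, with coefficients in a bounded-below complex of torsion \'etale sheaves, is invariant under arbitrary extension of the base field: choose a compactification and apply the proper base change theorem for schemes. This settles the proposition.

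The hard part is the penultimate step: verifying that Huber's comparison isomorphism is compatible with the specialization morphism $\xi_{1}\to\xi_{2}$. This forces one to revisit the proof of Theorem \ref{Theorem:Huber comparison} — the reduction along (\ref{equation:Huber 4.2.4}) to a separably closed valuation ring, the Nagata compactification $f=g\circ u$, the comparison maps \cite[Corollary 3.5.11, Theorem 3.5.13]{Huber96} between the adic and the reduction sides through $\lambda$, and the proper base change theorem — and to check that each of these intertwines the morphism $d(\widehat{\mathcal{Y}})(\xi_{1})\to d(\widehat{\mathcal{Y}})(\xi_{2})$ of strict localizations with the corresponding morphisms of reductions and of nearby-cycle data; some care with the point-set topology of the strict localizations, and with which specializations actually occur inside a tube $\lambda^{-1}(s)$, is needed here. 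The conceptual point on which everything rests is the interplay of two things: the restriction to $\lambda^{-1}(s)$ forces the two fibres above to be base changes of the \emph{single} scheme $\mathcal{X}_{s'}$, while the hypothesis forces the nearby-cycle complexes on them to agree after the extension $\kappa_{2}\hookrightarrow\kappa_{1}$ — neither alone would give overconvergence.
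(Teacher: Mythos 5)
Your overall strategy follows the paper (reduce the stalk of $R^{n}d(f)_{!}\mathcal{F}^{a}$ to compactly supported cohomology of the fibre over $s'$ via Theorem \ref{Theorem:Huber comparison}, then use the base-change hypothesis), but there is a concrete gap at the crucial step, and it is exactly the one the paper flags and works around.

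You assert that the morphism $r\colon\Spec\overline{R}_{1}\to\Spec\overline{R}_{2}$ induced by the specialization $\xi_{1}\to\xi_{2}$ is a \emph{local} morphism of strictly local schemes, on the grounds that both closed points lie over $s'\in\mathcal{Y}$. That inference is a non sequitur, and the claim is false whenever $y_{1}\neq y_{2}$ — the paper says so explicitly (``We remark that $r$ is not a local morphism if $y_{1}\neq y_{2}$''). A specialization $\xi_{1}\to\xi_{2}$ in an analytic adic space corresponds to an inclusion $\overline{R}_{2}\subsetneq\overline{R}_{1}$ of valuation rings sharing (up to completion and separable closure) a fraction field, with $\overline{R}_{2}$ of strictly higher rank; the preimage in $\overline{R}_{2}$ of the maximal ideal $\mathfrak{m}_{1}\subset\overline{R}_{1}$ is a \emph{non-maximal} prime of $\overline{R}_{2}$ (e.g.\ for $y_{1}=\eta(r)^{\flat}$ and $y_{2}=\eta(r)$ it is the rank-$1$ prime inside the rank-$2$ valuation ring). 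Both $\mathfrak{m}_{1}$ and $\mathfrak{m}_{2}$ do lie over $s'$, as does the intermediate prime, but that does not make $r$ carry $\mathfrak{m}_{1}$ to $\mathfrak{m}_{2}$. Consequently Definition \ref{Definition:base change, unipotent}(1) cannot be ``fed'' along $r$, and your claimed isomorphism $\bigl(R\Psi_{\overline{f}_{2},\overline{\eta}_{2}}(\mathcal{F}_{2})\bigr)\big|_{\mathcal{X}_{s'}\otimes\kappa_{1}}\cong R\Psi_{\overline{f}_{1},\overline{\eta}_{1}}(\mathcal{F}_{1})$ does not follow from the hypothesis as stated. A secondary consequence of the same error: the ``restriction $\kappa_{2}\hookrightarrow\kappa_{1}$'' which you use for the final field-extension-invariance step is not what $r$ induces on closed fibres, since the closed point of $\Spec\overline{R}_{1}$ maps to a non-closed point of $\Spec\overline{R}_{2}$ whose residue field is neither $\kappa_{2}$ nor $\kappa_{1}$.

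The paper's proof hinges on precisely the factorization you skip: choose an algebraic geometric point $\overline{s}\to\mathcal{Y}$ above $s'$ and let $U=\Spec R$ be the strict localization of $\mathcal{Y}$ at $\overline{s}$. Because $y_{1},y_{2}\in\lambda^{-1}(s)$, the maps $q_{m}\colon U_{m}\to\mathcal{Y}$ send $s_{m}$ to $s'$, so there are genuinely \emph{local} morphisms $\widetilde{q}_{m}\colon U_{m}\to U$ of strictly local schemes with $\widetilde{q}_{1}=\widetilde{q}_{2}\circ r$. The hypothesis applies separately to $\widetilde{q}_{1}$ and $\widetilde{q}_{2}$; via Theorem \ref{Theorem:Huber comparison} and proper base change, this shows that the two maps $\phi_{m}\colon H^{n}_{c}(\mathcal{X}\times_{\mathcal{Y}}\overline{s},R\Psi_{f_{U},\overline{\eta}}(\mathcal{F}_{U}))\to H^{n}_{c}(\mathcal{X}\times_{\mathcal{Y}}s_{m},R\Psi_{f_{U_{m}},\eta_{m}}(\mathcal{F}_{m}))$ are isomorphisms, and the stalk map in question is the transfer $\phi$ with $\phi_{1}=\phi\circ\phi_{2}$, hence an isomorphism. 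You correctly sensed in your penultimate paragraph that tracing the comparison isomorphism through specializations is the delicate point; the resolution, however, is not that $r$ is local but that it factors through the scheme-theoretic strict localization $U$ by two local morphisms, to each of which the hypothesis applies.
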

\begin{proof}
Let
$\xi_1 \to \xi_2$
be a specialization morphism
of geometric points of $d(\widehat{\mathcal{Y}})$
whose supports are contained in $\lambda^{-1}(s)$.
We write $y_m:= \Supp(\xi_m)$ ($m=1, 2$).
Let $\overline{k}(y_m)$ be a separable closure of $k(y_m)$ and let $\overline{k}(y_m)^+$ be a valuation ring extending $k(y_m)^+$.
We identify
$d(\widehat{\mathcal{Y}})(\xi_m)$
with $\Spa(\overline{k}(y_m), \overline{k}(y_m)^+)$.
Let $R_m$ be the completion of $\overline{k}(y_m)^+$
and we put $U_m:= \Spec R_m$.
The morphism
$\Spa(\overline{k}(y_m), \overline{k}(y_m)^+) \to d(\widehat{\mathcal{Y}})$
induces a natural morphism
\[
q_m \colon U_m \to \mathcal{Y}
\]
over $\Spec \O$
and
the specialization morphism
$d(\widehat{\mathcal{Y}})(\xi_1) \to d(\widehat{\mathcal{Y}})(\xi_2)$
induces a natural $\mathcal{Y}$-morphism
\[
r \colon U_1 \to U_2.
\]
By the assumption,
we have
$
q_m(s_m)=s
$
for the closed point $s_m \in U_m$,
where the image of $s \in \widehat{\mathcal{Y}}$ in $\mathcal{Y}$ is denoted by the same letter.
Let $\overline{s} \to \mathcal{Y}$ be an algebraic geometric point lying above $s$ and
let
$U=\Spec R$ be the strict localization of $\mathcal{Y}$ at $\overline{s}$.
There are local $\mathcal{Y}$-morphisms
$\widetilde{q}_m  \colon U_m \to U$ $(m=1, 2)$ such that
the following diagram commutes:
\[
\xymatrix{ U_1 \ar[rd]_-{\widetilde{q}_1} \ar[rr]^-{r} & & U_2 \ar[ld]^-{\widetilde{q}_2}  \\
& U.&
}
\]
We remark that $r$ is not a local morphism if $y_1 \neq y_2$.
Let $\eta_m$ be the generic point of $U_m$.
Then we have $r(\eta_1)=\eta_2$.
We write
$\eta:=\widetilde{q}_1(\eta_1)=\widetilde{q}_2(\eta_2)$.
Let
$\overline{\eta} \to U$
denote the algebraic geometric point
which is the image of $\eta_2$.
We fix the notation by the following commutative diagrams:
\[
\xymatrix{ \mathcal{X} \times_{\mathcal{Y}} \eta_m \ar[r]^-{j_m} \ar[d]^-{\widetilde{q}_m} & \mathcal{X} \times_{\mathcal{Y}} U_m \ar[d]^-{\widetilde{q}_m}  & \ar[l]_-{i'_m}  \ar[d]^-{\widetilde{q}_m} \mathcal{X} \times_{\mathcal{Y}} \Spec R_m/\varpi R_m &  \ar[l]^-{} \ar[d]^-{\widetilde{q}_m} \mathcal{X} \times_{\mathcal{Y}} s_m \\
\mathcal{X} \times_{\mathcal{Y}} U_{(\overline{\eta})}  \ar[r]^-{j} & \mathcal{X} \times_{\mathcal{Y}} U & \ar[l]_-{i'} \mathcal{X} \times_{\mathcal{Y}} \Spec R/\varpi R & \ar[l]^-{} \mathcal{X} \times_{\mathcal{Y}} \overline{s},
}
\]
\[
\xymatrix{ \mathcal{X} \times_{\mathcal{Y}} \eta_1 \ar[r]^-{j_1} \ar[d]^-{r} & \mathcal{X} \times_{\mathcal{Y}} U_1 \ar[d]^-{r}  & \ar[l]_-{i'_1}  \ar[d]^-{r} \mathcal{X} \times_{\mathcal{Y}} \Spec R_1/\varpi R_1 \\
\mathcal{X} \times_{\mathcal{Y}} \eta_2  \ar[r]^-{j_2} & \mathcal{X} \times_{\mathcal{Y}} U_2 & \ar[l]_-{i'_2} \mathcal{X} \times_{\mathcal{Y}} \Spec R_2/\varpi R_2.
}
\]

There is a factorization $f=g \circ u$
where $u \colon \mathcal{X} \hookrightarrow \mathcal{P}$ is an open immersion and
$g \colon \mathcal{P} \to \mathcal{Y}$
is a proper morphism by Nagata's compactification theorem.
Let
\[
u'_m \colon \mathcal{X} \times_{\mathcal{Y}} \Spec R_m/\varpi R_m \hookrightarrow \mathcal{P}'_m:=\mathcal{P} \times_{\mathcal{Y}} \Spec R_m/\varpi R_m
\]
and
\[
u' \colon \mathcal{X} \times_{\mathcal{Y}} \Spec R/\varpi R \hookrightarrow \mathcal{P}':=\mathcal{P} \times_{\mathcal{Y}} \Spec R/\varpi R
\]
be the morphisms induced by $u$.
Let $\mathcal{F}_{m}$ be the pull-back of $\mathcal{F}$ to $\mathcal{X} \times_{\mathcal{Y}} U_m$ and
$\mathcal{F}_U$ the pull-back of $\mathcal{F}$ to $\mathcal{X} \times_{\mathcal{Y}} U$.
For $m=1, 2$,
by the isomorphism (\ref{equation:main formula}) in the proof of Theorem \ref{Theorem:Huber comparison},
we have
\[
 (R^nd(f)_!\mathcal{F}^a)_{\xi_m}
\cong H^n(\mathcal{P}'_m, (u'_m)_! i'^*_m R{j_m}_* j^*_m \mathcal{F}_m).
\]
Via these isomorphisms,
the map $(R^nd(f)_!\mathcal{F}^a)_{\xi_2} \to (R^nd(f)_!\mathcal{F}^a)_{\xi_1}$
can be identified with the
composition 
\begin{align*}
 \phi  \colon  H^n(\mathcal{P}'_2, (u'_2)_! i'^*_2 R{j_2}_* j^*_2 \mathcal{F}_2) & \to  H^n(\mathcal{P}'_1, (u'_1)_! r^* i'^*_2 R{j_2}_* j^*_2 \mathcal{F}_2) \\
& \to H^n(\mathcal{P}'_1, (u'_1)_! i'^*_1 R{j_1}_* j^*_1 \mathcal{F}_1).
\end{align*}
We have the following commutative diagram:
\[
\xymatrix{ H^n(\mathcal{P}'_2, (u'_2)_! i'^*_2 R{j_2}_* j^*_2 \mathcal{F}_2)  \ar[rr]^-{\phi} & & H^n(\mathcal{P}'_1, (u'_1)_! i'^*_1 R{j_1}_* j^*_1 \mathcal{F}_1)   \\
& \ar[lu]^-{\phi_2} H^n(\mathcal{P}', u'_! i'^* Rj_* j^* \mathcal{F}_U). \ar[ru]_-{\phi_1}&
}
\]
By the proper base change theorem,
the map $\phi_m$ is identified with the map
\[
H^n_c(\mathcal{X} \times_{\mathcal{Y}} \overline{s}, R\Psi_{f_U, \overline{\eta}}(\mathcal{F}_U)) \to H^n_c(\mathcal{X} \times_{\mathcal{Y}} s_m, R\Psi_{f_{U_m}, \eta_m} (\mathcal{F}_m)).
\]
For both $m=1$ and $m=2$,
this map is an isomorphism by our assumption that
the sliced nearby cycles complexes for $f$ and $\mathcal{F}$ are compatible with any base change.
Thus $\phi$ is also an isomorphism.
The proof of the proposition is complete.
\end{proof}

\section{Local constancy of higher direct images with proper support}
\label{Section:Local constancy of higher direct images with proper support for generically smooth morphisms}

In this section, we study local constancy of higher direct images with proper support for generically smooth morphisms of adic spaces whose target is one-dimensional.
We will formulate and prove the results not only for constant sheaves,
but also for non-constant sheaves satisfying certain conditions related to the sliced nearby cycles functors.

Throughout this section,
we fix an algebraically closed complete non-archimedean field $K$ with ring of integers $\O$.

\subsection{Tame sheaves on annuli}\label{Subsection:Tame sheaves on annuli}

In this subsection, we recall two theorems on finite \'etale coverings on annuli and the punctured disc,
which are essentially proved in
\cite{Lutkebohmert93, Ramero05, LS05}.
We do not impose any conditions on the characteristic of $K$.
Since we can not directly apply some results there and
some results are only stated in the case where the base field is of characteristic zero,
we give proofs of the theorems in Appendix \ref{Appendix:finite etale coverings of annuli}.

To state the two theorems,
we need some preparations.
Recall that
we defined
$\B(1)=\Spa(K \langle T \rangle, \O \langle T \rangle)$.
Let
\[
\B(1)^{*}:=\B(1) \backslash \{ 0 \}
\]
be the \textit{punctured disc},
where $0 \in \B(1)$ is
the $K$-rational point corresponding to $0 \in K$.
It is an adic space locally of finite type over
$\Spa (K, \O)$.
We fix a valuation
$ \vert \cdot \vert \colon K \to \R_{\geq 0}$
of rank $1$ such that the topology of $K$ is induced by it.
For elements $a, b \in \vert K^{\times} \vert$ with
$a \leq b \leq 1$,
we define
\begin{align*}
\B(a, b)&:= \{ x \in \B(1) \, \vert \, a \leq  \vert T(x) \vert \leq b  \} \\
&:= \{ x \in \B(1) \, \vert \, \vert \varpi_a(x) \vert \leq  \vert T(x) \vert \leq \vert \varpi_b(x) \vert \},
\end{align*}
which is called an \textit{open annulus}.
Here $\varpi_a, \varpi_b \in K^\times$ are elements such that
$a=\vert \varpi_a \vert$ and $b=\vert \varpi_b \vert$.
It is a rational subset of $\B(1)$,
and hence it is an affinoid open subspace of $\B(1)$.

Let $m$ be a positive integer invertible in $K$.
The finite \'etale morphism
$
\varphi_m \colon \B(1)^{*} \to \B(1)^{*}
$
defined by $T \mapsto T^m$ is called a \textit{Kummer covering} of degree $m$.
For elements $a, b \in \vert K^{\times} \vert$ with
$a \leq b \leq 1$,
the restriction
\[
\varphi_m \colon \B(a^{1/m}, b^{1/m}) \to \B(a, b)
\]
of $\varphi_m$ is also called a Kummer covering of degree $m$.
(We also call a morphism of affinoid adic spaces of finite type over $\Spa(K, \O)$ a Kummer covering if it is isomorphic to
$\varphi_m \colon \B(a^{1/m}, b^{1/m}) \to \B(a, b)$
for $a, b \in \vert K^{\times} \vert$ and some $m$ with $a \leq b \leq 1$.)

In this paper, we use the following notion of tameness for
\'etale sheaves on
one-dimensional smooth adic spaces over $\Spa(K, \O)$.

\begin{defn}\label{Definition:tame sheaf}
Let $X$ be a one-dimensional smooth adic space over $\Spa(K, \O)$.
Let $x \in X$ be a
point which has a proper generalization in $X$,
i.e.\ there exists a point $x' \in X$ with
$x \in \overline{\{ x' \}}$ and $x \neq x'$.
Let
\[
k(x)^{\wedge h+}
\]
be the Henselization of the completion of the valuation ring $k(x)^+$ of $x$.
Let $L(x)$ be a separable closure of the field of fractions $k(x)^{\wedge h}$ of $k(x)^{\wedge h+}$.
It induces a geometric point
$\overline{x} \to X$
with support $x$.
For an \'etale sheaf $\mathcal{F}$ on $X$,
we say that 
$\mathcal{F}$ is \textit{tame} at $x \in X$
if the action of
\[
\Gal(L(x)/k(x)^{\wedge h})
\]
on the stalk
$
\mathcal{F}_{\overline{x}}
$
at the geometric point $\overline{x}$
factors through a finite group $G$ such that $\sharp \, G$ is invertible in $\O$,
where $\sharp \, G$ denotes the cardinality of $G$.
\end{defn}

Now we can formulate the results.

\begin{thm}[{\cite[Theorem 2.2]{Lutkebohmert93}, \cite[Theorem 4.11]{LS05}, \cite[Theorem 2.4.3]{Ramero05}}]\label{Theorem:split into annuli}
Let $f \colon X \to \B(1)^{*}$ be a finite \'etale morphism of adic spaces.
There exists an element $\epsilon \in \vert K^\times \vert$ with $\epsilon \leq 1$ such that,
for all $a, b \in \vert K^{\times} \vert$
with
$a < b \leq \epsilon$,
we have
\[
f^{-1}(\B(a, b)) \cong \coprod^{n}_{i=1} \B(c_i, d_i)
\]
for some elements $c_i, d_i \in \vert K^{\times} \vert$
with
$c_i < d_i \leq 1$ ($1 \leq i \leq n$).
If $K$ is of characteristic zero,
then we can take such an element $\epsilon \in \vert K^\times \vert$ so that
the restriction
\[
\B(c_i, d_i)  \to \B(a, b)
\]
of $f$ to every component $\B(c_i, d_i)$ appearing in the above decomposition is a Kummer covering.
\end{thm}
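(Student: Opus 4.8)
The plan is to reduce to a single connected component of the covering and then to control the reduction of that component inside a very small annulus around the puncture, following Lütkebohmert's analysis of the $p$-adic Riemann existence problem and its refinements in \cite{Ramero05, LS05}. First I would decompose $X$ into its connected components, each again finite \'etale over $\B(1)^*$, and treat them separately; so assume $X$ is connected. For $a, b \in \vert K^\times \vert$ with $a < b \le 1$, put $X_{a, b} := f^{-1}(\B(a, b))$; since $\B(a, b)$ is affinoid and $f$ is finite, $X_{a, b}$ is a smooth $K$-affinoid curve, finite \'etale of degree $d=\deg f$ over $\B(a, b)$. The first observation is that $X_{a, b}$ is ``bi-ended'': the open immersion of a boundary sub-annulus $\B(b', b) \hookrightarrow \B(a, b)$ (resp.\ $\B(a, a') \hookrightarrow \B(a, b)$) induces a surjection on fundamental groups, so the restriction of $X_{a, b}$ over such a sub-annulus is again connected; hence over each of the two ends of $\B(a, b)$ the curve $X_{a, b}$ has exactly one end.

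Next I would invoke the structure theory of $K$-analytic curves: since $K$ is algebraically closed and complete, no base extension is needed, and $X_{a, b}$ carries a formal semistable model with a skeleton $\Gamma_{a, b}$ together with a harmonic map $\Gamma_{a, b} \to I_{a, b}$ onto the skeleton $I_{a, b} \cong [\log a, \log b]$ of $\B(a, b)$, whose local degrees are governed by the ramification of $f$. The heart of the matter --- and this is exactly the content of \cite[Theorem 2.2]{Lutkebohmert93}, \cite[Theorem 4.11]{LS05}, \cite[Theorem 2.4.3]{Ramero05} --- is that the special features of $X = f^{-1}(\B(1)^*)$ over the puncture do not accumulate there: passing to a Galois closure if necessary and analysing the discriminant function $\delta_f \colon [0, \infty) \to \R_{\ge 0}$ attached to $f$, one shows it is piecewise affine with only finitely many break points and hence affine near the puncture, and that below the corresponding radius $\epsilon \in \vert K^\times \vert$ the reduction of $X$ at every Gauss point of radius $\rho \le \epsilon$ has constant combinatorial type, with no loops and no positive-genus vertices. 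In the appendix I would re-run this analysis keeping track of $\delta_f$ directly, which is what makes it go through with no restriction on the characteristic of $K$: even when wild ramification persists all the way to the puncture (as for Artin--Schreier coverings in equal characteristic $p$), the piecewise-affineness and the finiteness of the set of break points of $\delta_f$ still hold, and this is all that is needed below.

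Finally, for $a < b \le \epsilon$ the skeleton of each connected component $X'$ of $X_{a, b}$ is a connected graph with no loops, no positive-genus vertices, and exactly two ends (by bi-endedness), hence a segment; a smooth $K$-affinoid curve with skeleton a segment and good reduction along every Gauss point is an open annulus, and choosing the coordinate so that its outer radius is $\le 1$ gives $X' \cong \B(c_i, d_i)$ with $c_i < d_i \le 1$ and $c_i, d_i \in \vert K^\times \vert$ (using that $\vert K^\times \vert$ is divisible). When $K$ has characteristic zero one gets more: the local monodromy of $f$ at the puncture is a quotient of the tame fundamental group of the germ of the punctured disc, which is procyclic, so the component attached to a monodromy orbit of size $m$ is, after rescaling the coordinate, the Kummer covering $\B(a^{1/m}, b^{1/m}) \to \B(a, b)$; shrinking $\epsilon$ once more if necessary (there are only finitely many components) makes this hold uniformly, as in Lütkebohmert's $p$-adic Riemann existence theorem.

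The step I expect to be the main obstacle is the ``non-accumulation at the puncture'', i.e.\ the finiteness of the break points of $\delta_f$ together with the uniform control of the reduction type for all $\rho \le \epsilon$, carried out in arbitrary characteristic; the cited references are stated either over characteristic-zero base fields or without the uniformity in the pair $(a, b)$ that we require, so this analysis must be redone. By contrast, the passage from ``$X'$ is an annulus'' to the stated adic-space isomorphism (the identification of $f^{-1}(\B(a, b))$ with a disjoint union of rational subsets $\B(c_i, d_i) \subset \B(1)$) is routine given Huber's formalism.
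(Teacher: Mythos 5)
Your overall strategy matches the paper's in broad outline: the key object is Ramero's discriminant function $\delta_f$, which by Theorem \ref{Theorem:discriminant function} is continuous, piecewise linear, convex with integer slopes, and bounded above; convexity plus boundedness forces $\delta_f$ to be eventually constant, giving the uniform $\epsilon$. You correctly identify this ``non-accumulation of breaks at the puncture'' as the heart of the matter, and correctly note that the cited references must be redone to get both arbitrary characteristic and uniformity in $(a,b)$; that is exactly what the appendix does.

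There is, however, a genuine gap in the ``bi-endedness'' step. You argue that the inclusion of a boundary sub-annulus $\B(b', b) \hookrightarrow \B(a, b)$ is surjective on fundamental groups, so $f^{-1}(\B(b', b))$ is connected, hence there is exactly one end of $X_{a,b}$ over each end of $\B(a,b)$. Two problems. First, the $\pi_1$-surjectivity for boundary sub-annuli is asserted without justification; unlike the complex picture, it is not a purely topological deformation-retract statement for adic annuli, and you would need to supply an argument. Second, and more seriously, even granting it the inference is a non sequitur: ``connected over $\B(b',b)$'' says the restriction has one connected component, but a connected finite \'etale cover of an annulus can perfectly well have several branches facing the $b$-boundary, i.e.\ several points in the fiber over the rank-$2$ point $\eta(b)'$ --- connectedness and ``one end'' are not the same thing. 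In the paper the logic runs the other way around: one first passes to a smaller central annulus where the component count stabilizes (Lemma \ref{Lemma:the number of connected components}), and then, using the linearity of $\delta_f$, Lemma \ref{Lemma:the number of prime ideals} together with Proposition \ref{Proposition:reduction ordinary double point} shows that the reduction of a connected component has a single ordinary double point, so that $f^{-1}(\eta(a)')$ and $f^{-1}(\eta(b))$ are singletons; the connectedness over boundary sub-annuli is then \textit{deduced} from this one-pointness in Claim \ref{Claim:inverse image consists one element}. The bi-endedness really does require the discriminant machinery, not a fundamental-group surjectivity argument.

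A secondary remark: your passage from ``skeleton is a segment, no loops, no positive-genus vertices'' to ``the component is an open annulus'' is also left unjustified. The paper instead applies Bosch--L\"utkebohmert \cite[Proposition 2.3]{BL85} to the interior of the formal tube over the ordinary double point and then identifies the component with an annulus by counting boundary points of the closure via Example \ref{Example:complement closure} (3). Your characteristic-zero argument via procyclicity of tame local monodromy is fine and is indeed how the paper handles that case, via \cite[Claim 2.4.5]{Ramero05}.
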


\begin{thm}\label{Theorem:trivialization of tame sheaf}
Let $a, b \in \vert K^{\times} \vert$ be elements with
$a < b \leq 1$.
Let $\mathcal{F}$ be a locally constant \'etale sheaf with finite stalks on
$\B(a, b)$.
We assume that
the sheaf 
$\mathcal{F}$
is tame at every
$x \in \B(a, b)$ having a proper generalization in $\B(a, b)$, in the sense of Definition \ref{Definition:tame sheaf}.
Let $t \in \vert K^\times \vert$ be an element with $a/b < t^2 < 1$.
Then
the restriction $\mathcal{F} \vert_{\B(a/t, tb)}$ of $\mathcal{F}$ to $\B(a/t, tb)$
is trivialized by
a Kummer covering $\varphi_m$ of degree $m$, i.e.\
the pull-back
\[
\varphi^*_m(\mathcal{F} \vert_{\B(a/t, tb)})
\]
is a constant sheaf.
Moreover, we can assume that the degree $m$ is invertible in $\O$.
\end{thm}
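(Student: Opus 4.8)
The plan is to reduce the statement to the structure theory of tame finite \'etale coverings of annuli. Since $\mathcal{F}$ is locally constant with finite stalks on the connected affinoid adic space $\B(a,b)$, it is trivialized by a connected finite Galois \'etale covering: take $h\colon Y\to\B(a,b)$ to be the covering attached to the kernel of the monodromy representation of $\mathcal{F}$, so that $Y$ is connected, $h$ is Galois with group $G$ the image of the monodromy, and $h^{*}\mathcal{F}$ is constant. It then suffices to show that over $\B(a/t,tb)$ the covering $h$ becomes a disjoint union of Kummer coverings of degree invertible in $\O$: restricting the constant sheaf $h^{*}\mathcal{F}$ to one such component produces a Kummer covering $\varphi_{m}$ with $\varphi_{m}^{*}(\mathcal{F}|_{\B(a/t,tb)})$ constant.

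First I would check that $h$ is a tame covering, i.e.\ that $\#G$ is invertible in $\O$; this is the only place the tameness hypothesis on $\mathcal{F}$ is used. One considers rank-$2$ points $x$ of $\B(a,b)$ lying infinitesimally close to the two boundary circles $\{|T|=a\}$ and $\{|T|=b\}$; such points have proper generalizations in $\B(a,b)$, and the field $k(x)^{\wedge h}$ of Definition \ref{Definition:tame sheaf} is a Henselian valued field whose absolute Galois group maps onto the inertia subgroup of $G$ attached to the corresponding end of the annulus. By hypothesis $\mathcal{F}$, hence the monodromy, is tame at $x$, so each of these two inertia subgroups has order prime to the residue characteristic of $\O$; since $K$ is algebraically closed, these inertia subgroups topologically generate the \'etale fundamental group of $\B(a,b)$, so $G$ is tame. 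Equivalently, one can argue through the discriminant function $\delta_{h}$ of $h$: tameness of $\mathcal{F}$ at the boundary rank-$2$ points forces the slopes of $\delta_{h}$ near the two endpoints to be integers prime to the residue characteristic, which is precisely tameness of $h$.

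Next I would invoke the structure theory of tame finite \'etale coverings of annuli developed in \cite{Lutkebohmert93, Ramero05, LS05} and recalled in Appendix \ref{Appendix:finite etale coverings of annuli} (essentially Theorem \ref{Theorem:split into annuli} together with the analysis of $\delta_{h}$): for the tame covering $h$ and for any $a',b'\in|K^{\times}|$ with $a<a'\leq b'<b$, the restriction $h^{-1}(\B(a',b'))\to\B(a',b')$ is a disjoint union of Kummer coverings whose degrees divide $\#G$. The condition $a/b<t^{2}<1$ guarantees $a<a/t\leq tb<b$ and $tb\leq b\leq 1$, so we may take $a'=a/t$ and $b'=tb$; since $h$ is connected and Galois, $G$ permutes the components of $h^{-1}(\B(a/t,tb))$ transitively, and a single component gives a Kummer covering $\varphi_{m}\colon\B((a/t)^{1/m},(tb)^{1/m})\to\B(a/t,tb)$ with $m\mid\#G$, hence $m$ invertible in $\O$. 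Pulling the constant sheaf $h^{*}\mathcal{F}$ back along the inclusion of this component shows $\varphi_{m}^{*}(\mathcal{F}|_{\B(a/t,tb)})$ is constant, completing the proof.

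The main obstacle is the tameness step of the second paragraph: making precise the passage from the pointwise condition of Definition \ref{Definition:tame sheaf} at the rank-$2$ boundary points to the tameness of the whole monodromy group $G$, and in particular justifying that the inertia at the two ends generates the fundamental group of $\B(a,b)$ (equivalently, that $\delta_{h}$ detects wildness only through its boundary slopes). The remaining ingredient — splitting a tame covering of a slightly thinner annulus into Kummer coverings — is exactly what Appendix \ref{Appendix:finite etale coverings of annuli}, following \cite{Lutkebohmert93, Ramero05, LS05}, provides.
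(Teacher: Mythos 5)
Your overall strategy -- take the Galois covering $h\colon Y\to\B(a,b)$ killing the monodromy, show it is tame, then split it into Kummer coverings over a thinner annulus -- matches the paper's. However, the tameness step, which you yourself flag as the main obstacle, contains a genuine gap, and the paper closes it by a different mechanism than the one you sketch.

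Your plan is to use tameness of $\mathcal{F}$ at rank-$2$ points near the two boundary circles, conclude that the two boundary inertia subgroups of $G$ have order prime to the residue characteristic, and then deduce $\#G$ invertible in $\O$ from the claim that ``these inertia subgroups topologically generate the \'etale fundamental group of $\B(a,b)$.'' This fails in two ways. First, what tameness at a point $x\in h^{-1}(\eta(r))$ controls is only the stabilizer $\Stab_x\subset G$ (isomorphic to $\Gal(k(x)^{\wedge h}/k(r)^{\wedge h})$ by \cite[5.5]{Huber01}), and $\#\Stab_x=\#G/\#h^{-1}(\eta(r))$ can be a proper divisor of $\#G$; you cannot read off $\#G$ from boundary inertia alone without first knowing the fibre over $\eta(b)$ is a single point. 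Second, even granting the (unproved, and to my knowledge false in general) claim that the two boundary inertia subgroups generate the fundamental group, being generated by subgroups of order prime to $p$ does not imply having order prime to $p$; e.g.\ $S_3$ is generated by two elements of order $2$ but $3\mid\#S_3$.

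The paper's proof sidesteps both issues. After replacing $X$ by a quotient so that the monodromy $\rho\colon G\to\Aut(\Gamma(X,f^*\mathcal{F}))$ is injective and, via Lemma~\ref{Lemma:the number of connected components}, arranging that $X$ stays connected on $\B(a/s,sb)$ for all $t<s\leq 1$, it uses tameness of $\mathcal{F}$ at \emph{every} $\eta(r)$ with $a<r\leq b$ (not just the two ends) to conclude that all the stabilizers $\Stab_x$ are of order invertible in $\O$. By Lemma~\ref{Lemma:Swan character} (Ramero's Swan-character formula), this forces the discriminant function $\delta_f$ to be \emph{constant}. Constancy of $\delta_f$, combined with Lemma~\ref{Lemma:the number of prime ideals} and Proposition~\ref{Proposition:reduction ordinary double point}, gives that $f^{-1}(\eta(b))$ is a single point $x$, so $G\cong\Stab_x$ and $\#G$ is invertible in $\O$. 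The last step also differs from your sketch: Theorem~\ref{Theorem:split into annuli} in positive characteristic only produces a disjoint union of annuli, not Kummer coverings, so the paper instead observes (via \cite[Proposition 2.5, Corollary 2.7, Corollary 5.4]{Huber01}) that $G$ is cyclic of order $m$ invertible in $\O$ and then identifies $H^1(\B(a,b),\Z/m\Z)$ with $A(a,b)^\times/(A(a,b)^\times)^m$, which is cyclic of order $m$ generated by $T$, to conclude that the $\Z/m\Z$-torsor is a disjoint union of Kummer coverings. You should replace the ``inertia generates $\pi_1$'' step with the discriminant-function argument (using tameness at \emph{all} $\eta(r)$), and replace the appeal to Theorem~\ref{Theorem:split into annuli} with the Kummer-sequence computation of $H^1$.
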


We prove Theorem \ref{Theorem:split into annuli} and Theorem \ref{Theorem:trivialization of tame sheaf} in Appendix \ref{Appendix:finite etale coverings of annuli}.

\begin{rem}\label{Remark:Riemann existence}
If $K$ is of characteristic zero, then Theorem \ref{Theorem:split into annuli} is known as the $p$-adic Riemann existence theorem of L\"utkebohmert \cite{Lutkebohmert93}.
\end{rem}

\subsection{Local constancy of $R^id(f)_!$ for generic smooth morphisms}\label{Subsection:Local constancy of higher direct images with proper support}

As in Section \ref{Section:Etale cohomology with compact support of adic spaces and nearby cycles},
we use the following notation.
Let $\mathcal{X}$ be a scheme of finite type over $\O$.
We write
$
\mathcal{X}_{K}:=\mathcal{X} \times_{\Spec \O} \Spec K.
$
For an \'etale sheaf $\mathcal{F}$ on $\mathcal{X}$,
we denote by
$
\mathcal{F}^a
$
the pull-back of $\mathcal{F}$ by the composition
$
d(\widehat{\mathcal{X}})_\et \overset{a}{\to} (\mathcal{X}_{K})_\et \to \mathcal{X}_\et.
$
(See Section \ref{Subsection:Etale cohomology with compact support of adic spaces and nearby cycles} for the morphism
$a \colon d(\widehat{\mathcal{X}})_\et \to (\mathcal{X}_{K})_\et$.)

Let us introduce the following slightly technical definition.

\begin{defn}\label{Definition:uniform finiteness adapted}
We consider the following diagram:
\[
\xymatrix{  & \mathcal{Z} \ar[d]^-{\pi}  & \\
\mathcal{X} \ar[r]^-{f} & \mathcal{Y},
}
\]
where
\begin{itemize}
    \item $\mathcal{Y}=\Spec A$ is an integral affine scheme of finite type over $\O$
such that
$\mathcal{Y}_K$ is one-dimensional and smooth over $K$,
    \item $f \colon \mathcal{X} \to \mathcal{Y}$ is a separated morphism of finite type, and
    \item $\pi \colon \mathcal{Z} \to \mathcal{Y}$ is a proper surjective morphism
    such that $\mathcal{Z}$ is an integral scheme whose generic fiber $\mathcal{Z}_K$ is smooth over $K$,
    and the base change $\pi_K \colon \mathcal{Z}_K \to \mathcal{Y}_K$ is a finite morphism.
\end{itemize}
Let $n$ be a positive integer invertible in $\O$
and $\mathcal{F}$ a sheaf of $\Z/n\Z$-modules on $\mathcal{X}$.
We say that
$\mathcal{F}$
is \textit{adapted} to the pair $(f, \pi)$
if the following conditions are satisfied:
\begin{enumerate}
    \item The \'etale  sheaf $\mathcal{F}^a$ of $\Z/n\Z$-modules on $d(\widehat{\mathcal{X}})$ is constructible in the sense of \cite[Definition 2.7.2]{Huber96}.
    \item 
    The sliced nearby cycles complexes for $f_{\mathcal{Z}}$ and $\mathcal{F}_{\mathcal{Z}}$ are compatible with any base change.
    \item The sliced nearby cycles complexes for $f_{\mathcal{Z}}$ and $\mathcal{F}_{\mathcal{Z}}$ are unipotent.
\end{enumerate}
\end{defn}
See Definition \ref{Definition:base change, unipotent} for
the terminology used in the conditions (2) and (3).
Here we retain the notation of Section \ref{Section:Nearby cycles over general bases}.
For example $f_{\mathcal{Z}}$ denotes the base change
$
f_{\mathcal{Z}} \colon \mathcal{X} \times_\mathcal{Y} \mathcal{Z} \to \mathcal{Z}
$
of $f$ and $\mathcal{F}_{\mathcal{Z}}$
denotes the pull-back of the sheaf $\mathcal{F}$ to $\mathcal{X} \times_\mathcal{Y} \mathcal{Z}$.

For the proofs of Theorem \ref{Theorem:tubular neighborhood compact support} and Theorem \ref{Theorem:tubular neighborhood direct image},
we need the following proposition, which is a consequence of Theorem \ref{Theorem:uniform base change}.

\begin{prop}\label{Proposition:constant sheaves adapted}
Let $\mathcal{Y}=\Spec A$ be an integral affine scheme of finite type over $\O$
such that
$\mathcal{Y}_K$ is one-dimensional and smooth over $K$.
Let $f \colon \mathcal{X} \to \mathcal{Y}$ be a separated morphism of finite presentation.
Then, there exists
a proper surjective morphism
$\pi \colon \mathcal{Z} \to \mathcal{Y}$
as in Definition \ref{Definition:uniform finiteness adapted}
such that,
for every positive integer $n$ invertible in $\O$,
the constant sheaf
$\Z/n\Z$ on $\mathcal{X}$ is adapted to $(f, \pi)$.
\end{prop}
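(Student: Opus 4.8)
The plan is to apply Corollary \ref{Corollary:uniform base change for constant sheaves} to $f$ and then to upgrade the resulting alteration so that it satisfies the geometric requirements of Definition \ref{Definition:uniform finiteness adapted}, using that the two nearby-cycle conditions appearing there are preserved under arbitrary base change of the target. Since $\O$ need not be Noetherian, one first descends to a Noetherian base: because $\mathcal{Y}_K$ is non-empty and $\mathcal{Y}$ is integral, $\mathcal{Y}$ is $\O$-flat, hence of finite presentation over $\O$ by \cite[Premi\`ere partie, Corollaire 3.4.7]{Raynaud-Gruson}, and therefore so is $\mathcal{X}$; a standard spreading-out argument then produces a Noetherian excellent integral scheme $\Spec \O_1$ with $\O_1 \subseteq \O$ and a descent $f_1 \colon \mathcal{X}_1 \to \mathcal{Y}_1 = \Spec A_1$ of $f$ to a separated morphism of finite type over $\O_1$, with $\mathcal{Y}_1$ integral and $\mathcal{X}_1 \times_{\mathcal{Y}_1} \mathcal{Y} = \mathcal{X}$ over $\mathcal{Y}$. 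The one subtlety is that $\O_1$ must be chosen so that every integer $n$ invertible in $\O$ stays invertible in $\O_1$; this is arranged by taking $\O_1$ finitely generated over $\Q$, over $\Z_{(p)}$, or over $\F_p$ according to the characteristics of $K$ and of its residue field, since each of these three rings is Noetherian excellent and already contains every such $n$ as a unit. Applying Corollary \ref{Corollary:uniform base change for constant sheaves} to $f_1$ furnishes an alteration $\mathcal{Z}' \to \mathcal{Y}_1$ such that, for every $n$ invertible in $\O$ (hence invertible on $\mathcal{Y}_1$, by the choice of $\O_1$), the sliced nearby cycles complexes for $(f_1)_{\mathcal{Z}'} \colon (\mathcal{X}_1)_{\mathcal{Z}'} \to \mathcal{Z}'$ and the constant sheaf $\Z/n\Z$ are compatible with any base change and unipotent.

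The crucial point is the elementary observation, implicit in the arguments of Section \ref{Section:Proof of theorem uniform base change}, that if the sliced nearby cycles complexes for a morphism $g \colon X' \to Y'$ and a complex $\mathcal{K}$ are compatible with any base change (resp.\ unipotent), then for every morphism $h \colon W \to Y'$ the same holds for $g_W \colon X'_W \to W$ and $\mathcal{K}_W$: for a strictly local scheme $U$ over $W$ one has a canonical identification $(X'_W)_U = X'_U$ (viewing $U$ over $Y'$ through $h$) which is compatible with the sliced nearby cycles complexes and with the base change maps, so both conditions, which only refer to strictly local schemes over the base, transfer verbatim. In view of this it suffices to produce a morphism $\pi \colon \mathcal{Z} \to \mathcal{Y}$ which factors through $\mathcal{Z}' \times_{\mathcal{Y}_1} \mathcal{Y}$ over $\mathcal{Y}$ and which satisfies the geometric hypotheses of Definition \ref{Definition:uniform finiteness adapted}: $\mathcal{Z}$ integral, $\pi$ proper and surjective, $\mathcal{Z}_K$ smooth over $K$, and $\pi_K \colon \mathcal{Z}_K \to \mathcal{Y}_K$ finite. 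For then conditions (2) and (3) of being adapted hold for $f$ and $\Z/n\Z$ for every $n$ invertible in $\O$, while condition (1) is automatic because $(\Z/n\Z)^a$ is the constant sheaf $\Z/n\Z$ on the quasi-compact quasi-separated adic space $d(\widehat{\mathcal{X}})$, hence constructible.

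To construct $\mathcal{Z}$, pick an irreducible component $\mathcal{Z}''$ of $\mathcal{Z}' \times_{\mathcal{Y}_1} \mathcal{Y}$ dominating $\mathcal{Y}$ (one exists since $\mathcal{Z}' \times_{\mathcal{Y}_1} \mathcal{Y} \to \mathcal{Y}$ is surjective and $\mathcal{Y}$ is irreducible), endowed with its reduced structure; then $\mathcal{Z}''$ is integral, the morphism $\mathcal{Z}'' \to \mathcal{Y}$ is proper and surjective, and it is generically finite because $\mathcal{Z}' \to \mathcal{Y}_1$ is. The scheme $\mathcal{Z}''_K := \mathcal{Z}'' \times_{\Spec \O} \Spec K$ is then an integral curve of finite type over $K$ which dominates $\mathcal{Y}_K$ generically finitely, so $\mathcal{Z}''_K \to \mathcal{Y}_K$ is proper and quasi-finite --- its fibres being finite because $\mathcal{Z}''_K$ is an irreducible curve dominating $\mathcal{Y}_K$ --- hence finite. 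Normalizing $\mathcal{Z}''_K$, which is a finite operation as $\mathcal{Z}''_K$ is of finite type over a field, yields a normal integral curve over the algebraically closed field $K$, that is, a smooth curve $W$ over $K$, still finite over $\mathcal{Y}_K$; moreover $W$ extends over $\mathcal{Y}$, for example as the relative spectrum over $\mathcal{Z}''$ of a suitable finite quasi-coherent $\mathcal{O}_{\mathcal{Z}''}$-subalgebra of $j_*\mathcal{O}_W$ (where $j \colon \mathcal{Z}''_K \hookrightarrow \mathcal{Z}''$ is the open immersion), to a finite surjective morphism $\mathcal{Z} \to \mathcal{Z}''$ with $\mathcal{Z}$ integral and $\mathcal{Z}_K = W$. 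The composite $\pi \colon \mathcal{Z} \to \mathcal{Z}'' \to \mathcal{Y}$ is then proper and surjective with $\mathcal{Z}_K$ smooth over $K$ and $\pi_K$ finite, and it factors through $\mathcal{Z}' \times_{\mathcal{Y}_1} \mathcal{Y}$; this is the desired $\pi$.

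I expect the main difficulty to be logistical rather than conceptual. On the one hand, one must carry out the descent to a Noetherian excellent base while keeping all of the integers $n$ invertible in $\O$ invertible there; on the other hand, one must upgrade the bare alteration produced by Corollary \ref{Corollary:uniform base change for constant sheaves} to a morphism meeting every clause of Definition \ref{Definition:uniform finiteness adapted} without disturbing the nearby-cycle conclusions, and this is precisely where the base-change-stability observation of the second paragraph is indispensable. All of the genuine nearby-cycle content has already been packaged into Theorem \ref{Theorem:uniform base change}.
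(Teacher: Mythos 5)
Your proof is correct and follows essentially the same route as the paper: descend $f$ to a finitely generated $\Z_{(p)}$-algebra (chosen so that the integers invertible in $\O$ remain invertible on the descended base), invoke Corollary \ref{Corollary:uniform base change for constant sheaves} there, pull the resulting alteration back to $\mathcal{Y}$, pass to an irreducible component dominating $\mathcal{Y}$, and then normalize the generic fibre and extend to an integral proper $\mathcal{Y}$-scheme. The only additions over the paper's terse write-up are that you make explicit the (correct and implicitly used) observation that Definition \ref{Definition:base change, unipotent}'s conditions persist under arbitrary base change of the target, and you spell out why condition (1) of Definition \ref{Definition:uniform finiteness adapted} is automatic for constant sheaves.
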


\begin{proof}
Let $p \geq 0$ be the characteristic of the residue field of $\O$.
Let $\Z_{(p)}$ be the localization of $\Z$ at the prime ideal $(p)$.
We may find a finitely generated $\Z_{(p)}$-subalgebra $A_0$ of $A$
and a separated morphism $f_0 \colon \mathcal{X}_0 \to \Spec A_0$ of finite type such that
the base change
$\mathcal{X}_0 \times_{\Spec A_0} \mathcal{Y}$
is isomorphic to $\mathcal{X}$ over $\mathcal{Y}$.
By applying Corollary \ref{Corollary:uniform base change for constant sheaves} to $f_0$,
we find an alteration
$\pi_0 \colon \mathcal{Z}_0 \to \Spec A_0$
satisfying the properties stated there.
The base change
\[
\pi' \colon \mathcal{Z}':=\mathcal{Z}_0 \times_{\Spec A_0} \mathcal{Y} \to \mathcal{Y}
\]
to $\mathcal{Y}$ is generically finite, proper, and surjective.
By restricting $\pi'$ to an irreducible component $\mathcal{Z}''$ of
$\mathcal{Z}'$ dominating
$\mathcal{Y}$,
we obtain a morphism
$\pi'' \colon \mathcal{Z}'' \to \mathcal{Y}$.
The scheme
$\mathcal{Z}''_K$
is one-dimensional since $\pi''$ is generically finite.
It follows that
$\pi''_K \colon \mathcal{Z}''_K \to \mathcal{Y}_K$
is finite.
Let
$h \colon Z \to \mathcal{Z}''_K$
be the normalization of $\mathcal{Z}''_K$.
There exists a proper surjective morphism
$\mathcal{Z} \to \mathcal{Z}''$ such that
$\mathcal{Z}$ is integral and the base change
$\mathcal{Z}_K \to \mathcal{Z}''_K$
is isomorphic to $h$.
Then
we define $\pi$ as the composition
\[
\pi \colon \mathcal{Z} \to \mathcal{Z}'' \to \mathcal{Y}.
\]
By the construction,
the constant sheaf $\Z/n\Z$ is adapted to $(f, \pi)$ for every positive integer $n$ invertible in $\O$.
\end{proof}

By using the results in Section \ref{Section:Etale cohomology with compact support of adic spaces and nearby cycles},
we prove the following proposition:

\begin{prop}\label{Proposition:overconvergent and unipotent}
Let $f \colon \mathcal{X} \to \mathcal{Y}$
and $\pi \colon \mathcal{Z} \to \mathcal{Y}$
be morphisms as in Definition \ref{Definition:uniform finiteness adapted}.
We have the following diagram:
\[
\xymatrix{  & d(\widehat{\mathcal{Z}}) \ar[d]^-{d(\pi)}  & \\
d(\widehat{\mathcal{X}}) \ar[r]^-{d(f)} & d(\widehat{\mathcal{Y}}).
}
\]
Then the following assertions hold:
\begin{enumerate}
    \item  Let $\mathcal{F}$ be an \'etale sheaf of $\Z/n\Z$-modules on $\mathcal{X}$
adapted to $(f, \pi)$ with $n \in \O^{\times}$.
For every $i$, the sheaf
\[
d(\pi)^*R^id(f)_!\mathcal{F}^a
\]
on $d(\widehat{\mathcal{Z}})$ is tame at every
$z \in d(\widehat{\mathcal{Z}})$ having a proper generalization in $d(\widehat{\mathcal{Z}})$, in the sense of Definition \ref{Definition:tame sheaf}.
    \item Let $y \in d(\widehat{\mathcal{Y}})$ be a $K$-rational point.
    There exists an open subset
    $
    V \subset d(\widehat{\mathcal{Y}})
    $
    containing $y$ such that,
    for every \'etale sheaf $\mathcal{F}$ of $\Z/n\Z$-modules on $\mathcal{X}$ adapted to $(f, \pi)$ with $n \in \O^{\times}$,
    the sheaf
    $
    d(\pi)^*R^id(f)_!\mathcal{F}^a
    $
    is overconvergent on $d(\pi)^{-1}(V)$ for every $i$.
\end{enumerate}
\end{prop}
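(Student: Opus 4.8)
The plan is to deduce both assertions from the results of Section \ref{Section:Etale cohomology with compact support of adic spaces and nearby cycles} after one preliminary base change. By Proposition \ref{Proposition:base change compatibility} the natural map $d(\widehat{\mathcal{X}\times_{\mathcal{Y}}\mathcal{Z}})\to d(\widehat{\mathcal{X}})\times_{d(\widehat{\mathcal{Y}})}d(\widehat{\mathcal{Z}})$ is an isomorphism, so the compatibility of $Rd(-)_!$ with base change (as used already in the proof of Proposition \ref{Proposition:overconvergent}) yields a canonical isomorphism $d(\pi)^{*}R^{i}d(f)_{!}\mathcal{F}^{a}\cong R^{i}d(f_{\mathcal{Z}})_{!}(\mathcal{F}_{\mathcal{Z}})^{a}$, where $f_{\mathcal{Z}}\colon \mathcal{X}\times_{\mathcal{Y}}\mathcal{Z}\to \mathcal{Z}$. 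I therefore replace $(f,\mathcal{F})$ by $(f_{\mathcal{Z}},\mathcal{F}_{\mathcal{Z}})$ throughout; note that then $\mathcal{Z}_{K}$ is a smooth curve over $K$, so $d(\widehat{\mathcal{Z}})$ is a one‑dimensional smooth adic space over $\Spa(K,\O)$; that $(\mathcal{F}_{\mathcal{Z}})^{a}$ is constructible (pull‑back of a constructible sheaf), hence $R^{i}d(f_{\mathcal{Z}})_{!}(\mathcal{F}_{\mathcal{Z}})^{a}$ is constructible by \cite[Theorem 6.2.2]{Huber96} and in particular has finite stalks; and — this is the crucial point — that the sliced nearby cycles for $f_{\mathcal{Z}}$ and $\mathcal{F}_{\mathcal{Z}}$ are compatible with any base change and unipotent, by the hypothesis that $\mathcal{F}$ is adapted to $(f,\pi)$.

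For part (1), fix $z\in d(\widehat{\mathcal{Z}})$ having a proper generalization. Since $K$ is algebraically closed and $d(\widehat{\mathcal{Z}})$ is a smooth curve, such a $z$ has rank $2$ and the residue field of its valuation is algebraic over the (algebraically closed) residue field of $K$; hence the residue field of $k(z)^{\wedge h+}$ is separably closed, so $R:=k(z)^{\wedge h+}$ is a strictly Henselian microbial valuation ring with $\Frac R=k(z)^{\wedge h}$. Base‑changing $f_{\mathcal{Z}}$ along the natural morphism $\Spec R\to \mathcal{Z}$ and applying Theorem \ref{Theorem:Huber comparison} identifies the stalk $(R^{i}d(f_{\mathcal{Z}})_{!}(\mathcal{F}_{\mathcal{Z}})^{a})_{\overline{z}}$, $\Gal(L(z)/k(z)^{\wedge h})$‑equivariantly, with $H^{i}_{c}$ of the special fibre with coefficients in the sliced nearby cycles complex of this base change. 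By the compatibility with base change (applied to the local morphism $\Spec R\to \Spec \O_{\mathcal{Z},\overline{s}}$ as in Definition \ref{Definition:base change, unipotent} (1)) that complex is the pull‑back to the special fibre of the sliced nearby cycles complex of $f_{\mathcal{Z}}$ and $\mathcal{F}_{\mathcal{Z}}$, and by unipotency, read with $\kappa(\eta)=k(z)^{\wedge h}$ and $\kappa(\overline{\eta})=L(z)$, it is $\Gal(L(z)/k(z)^{\wedge h})$‑unipotent. Since taking $H^{i}_{c}$ of a complex with unipotent cohomology sheaves again gives a unipotent $\Gal$‑module (a unipotent sheaf has a finite filtration with constant quotients, on whose cohomology the action is trivial, and unipotence is stable under the spectral sequence and extensions), $(R^{i}d(f_{\mathcal{Z}})_{!}(\mathcal{F}_{\mathcal{Z}})^{a})_{\overline{z}}$ is a finitely generated (hence finite) $\Z/n\Z$‑module on which $\Gal(L(z)/k(z)^{\wedge h})$ acts unipotently; the image of the action then lies in a finite group of order a product of prime powers dividing $n$, which is invertible in $\O$. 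This is exactly tameness in the sense of Definition \ref{Definition:tame sheaf}.

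For part (2), I first observe that overconvergence is a pointwise condition: every open subset is stable under generization, and in a one‑dimensional analytic adic space over $\Spa(K,\O)$ a rank‑one point has no proper generization while a rank‑two point has a unique one; hence a sheaf is overconvergent on an open set $W$ exactly when, for every rank‑two point $w\in W$, the cospecialization map from its stalk at $w$ to its stalk at the rank‑one generization of $w$ is bijective. Now, because $y$ is $K$‑rational and $\pi_{K}$ is finite, one can choose a small open $V\ni y$ so that $d(\pi)^{-1}(V)\to V$ is finite; then $d(\pi)^{-1}(V)$ is a finite disjoint union of adic spaces finite over a disc, and after passing to a suitable algebraizable modification $\mathcal{Z}'\to \mathcal{Z}$ of formal models — which changes neither $d(\widehat{\mathcal{Z}})$ nor the sheaf $R^{i}d(f_{\mathcal{Z}})_{!}(\mathcal{F}_{\mathcal{Z}})^{a}$ — each of these pieces becomes the tube $\lambda_{\mathcal{Z}'}^{-1}(s_{j})$ of a closed point $s_{j}$ of $\widehat{\mathcal{Z}'}$. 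The property ``compatible with any base change'' is inherited by $f_{\mathcal{Z}'}\colon \mathcal{X}\times_{\mathcal{Y}}\mathcal{Z}'\to \mathcal{Z}'$ and $\mathcal{F}_{\mathcal{Z}'}$, so Proposition \ref{Proposition:overconvergent} applies and shows $R^{i}d(f_{\mathcal{Z}'})_{!}(\mathcal{F}_{\mathcal{Z}'})^{a}$ is overconvergent on each $\lambda_{\mathcal{Z}'}^{-1}(s_{j})$. Since these tubes are open they contain, with each rank‑two point, also its rank‑one generization, and since distinct $s_{j}$ give disjoint tubes, the pointwise criterion above gives overconvergence on all of $d(\pi)^{-1}(V)$.

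The step I expect to be the main obstacle is the geometric bookkeeping in part (2): arranging the open neighborhood $V$ of $y$ together with the auxiliary formal model $\mathcal{Z}'$ so that $d(\pi)^{-1}(V)$ genuinely decomposes into tubes of closed points over which Proposition \ref{Proposition:overconvergent} is applicable, all while staying inside the framework of algebraizable formal schemes of finite type over $\O$; a secondary, more routine point is to verify in part (1) that the $\Gal(L(z)/k(z)^{\wedge h})$‑action produced by Theorem \ref{Theorem:Huber comparison} coincides with the one occurring in Definition \ref{Definition:tame sheaf}.
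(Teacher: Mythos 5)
Part (1) of your argument is essentially the proof given in the paper: base change along $\pi$ via the compatibility of $Rd(-)_!$, pass to $R = k(z)^{\wedge h+}$, invoke the comparison theorem, and deduce tameness from unipotence and finiteness. Your direct argument that the residue field of $k(z)^{\wedge h+}$ is separably closed (the paper instead cites Huber's \emph{Swan representations} paper) is fine, as is the group-theoretic observation that a unipotent action on a finite $n$-torsion module has image of order a product of primes dividing $n$.

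In part (2) the strategy is the right one — reduce to Proposition \ref{Proposition:overconvergent} applied to the fibers $\lambda^{-1}(s_j)$ — but the implementation contains a genuine error, and it occurs precisely where you predicted trouble. You want to modify the formal model so that each connected component of $d(\pi)^{-1}(V)$ \emph{equals} a tube $\lambda_{\mathcal{Z}'}^{-1}(s_j)$ of a closed point, and then you assert ``Since these tubes are open, they contain, with each rank-two point, also its rank-one generization.'' This is false: for a closed point $s$ of the special fiber, $\lambda^{-1}(s)$ is a \emph{closed} constructible subset, not an open one, and it is not stable under generization. For instance in $\B(1) = d(\Spf\O\langle T\rangle)$ the rank-two point $\eta(1)$ lies in $\lambda^{-1}(0)$ but its rank-one generization $\eta(1)^{\flat}$ maps under $\lambda$ to the generic point of $\mathbb{A}^1_\kappa$, not to $0$. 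For the same reason a piece of $d(\pi)^{-1}(V)$, being open, can never literally equal such a tube, so the modification you invoke cannot produce what you want. The fix is simply that equality is not needed: since the $z_j$ are $K$-rational and each $\lambda^{-1}(\lambda(z_j))$ is closed constructible, one can find (via Lemma \ref{Lemma:tubular neighborhood small enough}) an open $V_j$ with $z_j \in V_j \subset \lambda^{-1}(\lambda(z_j))$, and then use finiteness of $d(\pi)$ to shrink $V$ so that $d(\pi)^{-1}(V) \subset \bigcup_j V_j$. The tubes $\lambda^{-1}(\lambda(z_j))$ are pairwise disjoint and $\lambda$ is compatible with specialization, so any specialization pair with both supports in $d(\pi)^{-1}(V)$ lies in a single $\lambda^{-1}(\lambda(z_j))$, and Proposition \ref{Proposition:overconvergent} gives the bijectivity of the cospecialization map. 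No modification of the formal model is required.
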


\begin{proof}
For an \'etale sheaf $\mathcal{F}$ of $\Z/n\Z$-modules on
$\mathcal{X}$ with $n \in \O^\times$,
the pull-back of
$\mathcal{F}^a$ by $d(\widehat{\mathcal{X}_{\mathcal{Z}}}) \to d(\widehat{\mathcal{X}})$
is isomorphic to
$(\mathcal{F}_{\mathcal{Z}})^a$,
and hence,
by using the base change theorem \cite[Theorem 5.4.6]{Huber96} for $Rd(f)_!$,
we have
\[
d(\pi)^*R^id(f)_!\mathcal{F}^a \cong 
R^id(f_\mathcal{Z})_!(\mathcal{F}_{\mathcal{Z}})^a.
\]

(1)
This is an immediate consequence of Theorem \ref{Theorem:Huber comparison}.
Indeed,
let
$z \in d(\widehat{\mathcal{Z}})$ be an element having a proper generalization in $d(\widehat{\mathcal{Z}})$.
Let
\[
R:=k(z)^{\wedge h+}
\]
be the Henselization of the completion of the valuation ring $k(z)^+$ of $z$.
By \cite[Corollary 5.4]{Huber01},
the residue field of $R$ is algebraically closed.
We write $U:= \Spec R$.
Let $L$ be the field of fractions of $R$.
The composite
\[
\Spa(L, R) \to \Spa(k(z), k(z)^+) \to d(\widehat{\mathcal{Z}})
\]
is induced by a natural morphism $q \colon U \to \mathcal{Z}$ of schemes over $\O$.
Let $\overline{L}$ be a separable closure of $L$,
which
induces a geometric point
$\overline{t} \to \Spa(L, R)$
and a geometric point
$\overline{z} \to d(\widehat{\mathcal{Z}})$
in the usual way.

Let $\mathcal{F}$ be an \'etale sheaf of $\Z/n\Z$-modules on $\mathcal{X}$
adapted to $(f, \pi)$ with $n \in \O^{\times}$.
By applying Theorem \ref{Theorem:Huber comparison} to
$f_U \colon \mathcal{X}_U \to U$,
we have $\Gal(\overline{L}/L)$-equivariant isomorphisms
\[
(R^id(f_\mathcal{Z})_!(\mathcal{F}_{\mathcal{Z}})^a)_{\overline{z}}
\cong
(R^id(f_U)_!(\mathcal{F}_{U})^a)_{\overline{t}} \cong
H^i_c((\mathcal{X}_U)_s, R\Psi_{f_U, \overline{\eta}}(\mathcal{F}_{U})),
\]
where $s \in  U$ is the closed point and
$\overline{\eta}=\Spec \overline{L} \to U$ is the algebraic geometric point.
By \cite[Corollary 5.4.8 and Proposition 6.2.1 i)]{Huber96},
the left hand side is a finitely generated $\Z/n\Z$-module.
Moreover,
the action of $\Gal(\overline{L}/L)$ on it factors through a finite group.
Since the complex
$
R\Psi_{f_{U}, \overline{\eta}}(\mathcal{F}_{U})
$
is $\Gal(\overline{L}/L)$-unipotent
and the integer $n$ is invertible in $\O$,
it follows that
the action of
$\Gal(\overline{L}/L)$
on
the right hand side
factors through a finite group $G$ such that $\sharp \, G$ is invertible in $\O$.
This proves (1).

(2)
Since $\pi \colon \mathcal{Z} \to \mathcal{Y}$ is proper,
by \cite[Proposition 1.9.6]{Huber96},
we have
$
d(\widehat{\mathcal{Z}}) \cong d(\widehat{\mathcal{Y}}) \times_{\mathcal{Y}} \mathcal{Z},
$
where
$d(\widehat{\mathcal{Y}}) \times_{\mathcal{Y}} \mathcal{Z}$
is the adic space over
$d(\widehat{\mathcal{Y}})$ associated with
$
d(\widehat{\mathcal{Y}}) \to \mathcal{Y}_K \to \mathcal{Y}
$
and
$\pi \colon \mathcal{Z} \to \mathcal{Y}$; see \cite[Proposition 3.8]{Huber94}.
Since
$\pi_K \colon \mathcal{Z}_K \to \mathcal{Y}_K$
is a finite morphism,
it follows that
\[
d(\widehat{\mathcal{Z}}) \cong d(\widehat{\mathcal{Y}}) \times_{\mathcal{Y}} \mathcal{Z} \cong d(\widehat{\mathcal{Y}}) \times_{\mathcal{Y}_K} \mathcal{Z}_K
\]
is finite over $d(\widehat{\mathcal{Y}})$.
The inverse image
\[
d(\pi)^{-1}(y)=\{ z_1, \dotsc, z_m \}
\]
of $y \in d(\widehat{\mathcal{Y}})$ consists of finitely many $K$-rational points.
Let
\[
\lambda \colon d(\widehat{\mathcal{Z}}) \to \widehat{\mathcal{Z}}
\]
be the specialization map associated with the formal scheme $\widehat{\mathcal{Z}}$.
Since
the inverse image
$\lambda^{-1}(\lambda(z_j))$
of
$\lambda(z_j)$
is a closed constructible subset of
$d(\widehat{\mathcal{Z}})$
and $z_j$ is a $K$-rational point,
there exists an open neighborhood
$
V_j \subset d(\widehat{\mathcal{Z}})
$
of $z_j$
with
\[
V_j \subset \lambda^{-1}(\lambda(z_j))
\]
for every $j$; see Lemma \ref{Lemma:tubular neighborhood small enough}.
Since $d(\pi)$ is a finite morphism,
there is an open neighborhood
$
V \subset d(\widehat{\mathcal{Y}})
$
of $y$
with
$
d(\pi)^{-1}(V) \subset \cup_j V_j.
$
By using Proposition \ref{Proposition:overconvergent},
we see that,
for every \'etale sheaf
$\mathcal{F}$
of $\Z/n\Z$-modules on $\mathcal{X}$ adapted to $(f, \pi)$ with $n \in \O^{\times}$,
the sheaf
$R^id(f_\mathcal{Z})_!(\mathcal{F}_{\mathcal{Z}})^a$
is overconvergent on $d(\pi)^{-1}(V)$.
\end{proof}

We need the following finiteness result due to Huber.

\begin{thm}[{Huber \cite[Theorem 6.2.2]{Huber96}}]\label{Theorem:constructible theorem}
Let $Y$ be an adic space over $\Spa(K, \O)$.
Let $f \colon X \to Y$ be a morphism of adic spaces
which is \textit{smooth}, separated, and quasi-compact.
Let $\mathcal{F}$ be a constructible sheaf of $\Z/n\Z$-modules on $X$ with $n \in \O^\times$.
Then, the sheaf
$
R^if_!\mathcal{F}
$
on $Y$ is a constructible sheaf of $\Z/n\Z$-modules for every $i$.
\end{thm}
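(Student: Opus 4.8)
Since the final statement is verbatim \cite[Theorem 6.2.2]{Huber96}, the plan is simply to invoke Huber's result; there is nothing new to prove here. For orientation I will nonetheless indicate the shape of the argument one would reconstruct. First I would reduce to a local and ``universal'' situation: constructibility of a sheaf on $Y$ may be checked locally on $Y$, and both the formation of $R^if_!\mathcal{F}$ and the property of being constructible are compatible with base change $Y' \to Y$ --- the former by the base change theorem \cite[Theorem 5.4.6]{Huber96}. Thus one may assume that $Y=\Spa(A,A^+)$ is affinoid with $A$ strongly Noetherian.

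The proof then proceeds by d\'evissage on the relative dimension of $f$ together with Noetherian induction on $Y$: it suffices to produce, for each $i$, a dense open $U \subset Y$ on which $R^if_!\mathcal{F}$ is locally constant with finite stalks, and then to stratify off $Y \setminus U$ and iterate. In the relative-dimension-zero case one uses Huber's structure theory for separated quasi-finite morphisms (locally a composite of an open immersion and a finite morphism): finite morphisms are handled via $R^if_!=R^if_*$ and the finiteness and constructibility of proper pushforward, and open immersions via the extension-by-zero and excision formalism.

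In positive relative dimension one uses that a smooth morphism factors, \'etale-locally on the source, through the relative unit ball over $Y$; combining the Leray spectral sequence, the projection formula, and the explicit computation of the compactly supported cohomology of the closed ball, the open ball, the punctured ball and annuli over a geometric point of $Y$, one deduces generic local constancy of the sheaves $R^if_!\mathcal{F}$. The hard part is exactly this relative-curve step, and more precisely the control of how the compactly supported cohomology of an affinoid curve varies in a family; this is where the finiteness theorems of \cite[Chapter 5]{Huber96} --- and, in the algebraizable case, the comparison with nearby cycles as in Theorem \ref{Theorem:Huber comparison} above --- do the work. We will not reproduce any of this; see \cite[Theorem 6.2.2]{Huber96} for the complete argument.
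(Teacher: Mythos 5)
Your proposal is correct and matches the paper exactly: the paper's ``proof'' is simply the citation ``See \cite[Theorem 6.2.2]{Huber96},'' and you likewise defer to Huber's original argument. The additional orientation sketch you supply is harmless but not required.
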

\begin{proof}
See \cite[Theorem 6.2.2]{Huber96}.
\end{proof}

\begin{rem}\label{Remark:constructible}
The assertion of
Theorem \ref{Theorem:constructible theorem}
can fail for non-smooth morphisms.
See the introduction of \cite{Huber98a} for details.
See also \cite[Proposition 7.1]{Huber07} for a more general result
for smooth, separated, and quasi-compact morphisms of analytic pseudo-adic spaces.
\end{rem}

As in the previous sections, we write
$\B(1)=\Spa(K \langle T \rangle, \O \langle T \rangle)$.
For an element $\epsilon \in \vert K^{\times} \vert$ with $\epsilon \leq 1$,
we define
\[
\B(\epsilon):= \{ x \in \B(1) \, \vert \, \vert T(x) \vert \leq \epsilon \}
\]
and
$
\B(\epsilon)^* := \B(\epsilon) \backslash \{ 0 \}.
$
Let $X$ be a one-dimensional adic space of finite type over $\Spa(K, \O)$.
We define
an
\textit{open disc}
$V \subset X$
as an open subset $V$ of $X$
equipped with an isomorphism
\[
\phi \colon \B(1) \cong V
\]
over $\Spa(K, \O)$.
For an open disc $V \subset X$,
we write
\[
V(\epsilon):=\phi(\B(\epsilon)) \quad \text{and} \quad
V(\epsilon)^*:=\phi(\B(\epsilon)^*).
\]
Similarly, we write
\[
V(a, b):=\phi(\B(a, b))
\]
for an open annulus $\B(a, b) \subset \B(1)$.

The main result of this section is the following theorem.

\begin{thm}\label{Theorem:uniform local constancy}
Let $f \colon \mathcal{X} \to \mathcal{Y}$
and $\pi \colon \mathcal{Z} \to \mathcal{Y}$
be morphisms as in Definition \ref{Definition:uniform finiteness adapted}.
We assume that
there is an open disc
$V \subset d(\widehat{\mathcal{Y}})$
such that
\[
d(f) \colon d(\widehat{\mathcal{X}}) \to d(\widehat{\mathcal{Y}})
\]
is smooth over $V(1)^*$.
Then there exists an element
$\epsilon_0 \in \vert K^{\times} \vert$
with $\epsilon_0 \leq 1$
such that,
for every \'etale sheaf $\mathcal{F}$ of $\Z/n\Z$-modules on $\mathcal{X}$ adapted to $(f, \pi)$ with $n \in \O^\times$,
the following two assertions hold:
\begin{enumerate}
    \item The restriction
    \[
    (R^id(f)_!\mathcal{F}^a)\vert_{V(\epsilon_0)^*}
    \]
    of $R^id(f)_!\mathcal{F}^a$ to $V(\epsilon_0)^*$ is a locally constant constructible sheaf of $\Z/n\Z$-modules
    for every $i$.
    \item 
    For elements $a, b \in \vert K^{\times} \vert$ with $a < b \leq \epsilon_0$,
    there exists a composition
    \[
    h \colon \B(c^{1/m}, d^{1/m}) \overset{\varphi_m}{\longrightarrow} \B(c, d) \overset{g}{\longrightarrow} V(a, b)
    \]
    of
    a Kummer covering
    $
    \varphi_m
    $
    of degree $m$, where $m$ is invertible in $\O$,
    with
    a finite Galois \'etale morphism
    $g$,
    such that
    the pull-back
    \[
    h^{*}((R^id(f)_!\mathcal{F}^a)\vert_{V(a, b)})
    \]
    is a constant sheaf associated with a finitely generated $\Z/n\Z$-module for every $i$.
    If $K$ is of characteristic zero, then we can take $g$ as a Kummer covering.
\end{enumerate}
\end{thm}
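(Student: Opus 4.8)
The plan is to study the sheaves $\mathcal{G}^i := R^id(f)_!\mathcal{F}^a$ on $d(\widehat{\mathcal{Y}})$, which via $\phi$ we regard as sheaves on the sub-discs and sub-annuli of $\B(1)$. By Huber's finiteness theorems the range of $i$ with $\mathcal{G}^i \neq 0$ depends only on the fibre dimension of $f$, hence not on $n$, so it suffices to treat a fixed $i$ and then pick $\epsilon_0$ serving all of them. The three inputs I would use are: Huber's constructibility theorem \ref{Theorem:constructible theorem}; the overconvergence and tameness statements of Proposition \ref{Proposition:overconvergent and unipotent} (whose uniformity in $n$ rests, through Proposition \ref{Proposition:overconvergent}, Theorem \ref{Theorem:Huber comparison} and Proposition \ref{Proposition:constant sheaves adapted}, on a \emph{single} alteration $\pi$ good for all $n$); and the two results on coverings of annuli, Theorem \ref{Theorem:split into annuli} and Theorem \ref{Theorem:trivialization of tame sheaf}.

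For assertion (1) I would first observe that over every affinoid annulus $V(a,b)$ with $a<b\le 1$ the morphism $d(f)^{-1}(V(a,b))\to V(a,b)$ is smooth, separated and quasi-compact, so $\mathcal{G}^i|_{V(a,b)}$ is constructible with finite stalks by Theorem \ref{Theorem:constructible theorem} and the base change theorem for $Rd(f)_!$; covering $V(1)^*$ by such annuli, $\mathcal{G}^i|_{V(1)^*}$ is constructible. Next I would make $\mathcal{G}^i$ overconvergent near the origin uniformly in $n$: Proposition \ref{Proposition:overconvergent and unipotent}(2) applied to the $K$-rational point $\phi(0)$ furnishes an open $V''\ni\phi(0)$, independent of $n$ and of the adapted sheaf, on which $d(\pi)^*\mathcal{G}^i$ is overconvergent; since $d(\pi)$ is finite and surjective, specialization morphisms of geometric points of $d(\widehat{\mathcal{Y}})$ lift to $d(\widehat{\mathcal{Z}})$ and, via $(d(\pi)^*\mathcal{G}^i)_{\tilde\xi}=\mathcal{G}^i_{d(\pi)(\tilde\xi)}$, this forces $\mathcal{G}^i$ itself to be overconvergent on $V''$. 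Lemma \ref{Lemma:tubular neighborhood small enough} then yields $\epsilon_0\in|K^\times|$, independent of $n$, with $V(\epsilon_0)\subset V''$. Finally, a constructible overconvergent sheaf on the smooth $K$-curve $V(\epsilon_0)^*$ is locally constant: the non-locally-constant locus is a proper closed constructible subset, hence a finite set of classical points, and at such a point the overconvergence condition — equivalently, by Proposition \ref{Proposition:overconvergent}, the compatibility of the nearby cycles with base change, which one detects at the rank $2$ generizations — is incompatible with such a point being bad, so the locus is empty. This proves (1), with finite stalks, uniformly over all adapted $\mathcal{F}$ and all $n\in\O^\times$.

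For assertion (2), fix $a<b\le\epsilon_0$ (shrinking $\epsilon_0$ slightly I may assume $b<\epsilon_0$ and choose $a',b'$ with $0<a'<a<b<b'\le\epsilon_0$). From $d(\pi)$ I would extract, over a punctured disc $V(\epsilon_1)^*$ with $\epsilon_1$ independent of $n$, a finite Galois \'etale covering $g_0\colon\widetilde{W}\to V(\epsilon_1)^*$ — the Galois closure of a connected component of $d(\pi)^{-1}(V(\epsilon_1)^*)$, the purely inseparable part being irrelevant for \'etale cohomology with $\Z/n\Z$-coefficients — such that $g_0^*\mathcal{G}^i$ is locally constant and, by Proposition \ref{Proposition:overconvergent and unipotent}(1), tame at every point with a proper generalization, with finite monodromy of order invertible in $\O$. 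Requiring $\epsilon_0\le\epsilon_1$ and applying Theorem \ref{Theorem:split into annuli} to $g_0$ over $V(a',b')$ writes $g_0^{-1}(V(a',b'))$ as a disjoint union of annuli; I fix one component $\B(c',d')$, so $g'\colon\B(c',d')\to V(a',b')$ is finite Galois \'etale — a Kummer covering when $K$ has characteristic zero. Then $(g')^*\mathcal{G}^i$ is locally constant and tame on $\B(c',d')$, so Theorem \ref{Theorem:trivialization of tame sheaf} gives, for $t$ near $1$, a Kummer covering $\varphi_m$ of $\B(c'/t,td')$ of degree $m$ invertible in $\O$ trivializing $(g')^*\mathcal{G}^i|_{\B(c'/t,td')}$. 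Using the monotone radius function of the \'etale map $g'$ of annuli — from the discriminant-function theory of \cite{Lutkebohmert93,Ramero05,LS05} — for $t$ close enough to $1$ the preimage $(g')^{-1}(V(a,b))$ is an annulus $\B(c,d)\subset\B(c'/t,td')$; setting $g:=g'|_{\B(c,d)}\colon\B(c,d)\to V(a,b)$ (still finite Galois \'etale, Kummer in the characteristic zero case) and $h:=g\circ\varphi_m$, the pullback $h^*(\mathcal{G}^i|_{V(a,b)})$ is the restriction of a constant sheaf on a finitely generated $\Z/n\Z$-module. Here $g$ depends only on $\pi$, hence not on $n$, and the only $n$-dependent datum is the Kummer degree $m$, which is always invertible in $\O$.

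The main obstacle throughout is the uniformity in $n$: for a fixed $n$ the three conclusions are essentially Huber's theorems together with L\"utkebohmert's $p$-adic Riemann existence theorem in the characteristic zero case, but the radii and coverings produced there a priori depend on $n$. Uniformity is obtained precisely by replacing ``the nearby cycles for $f$ and the given sheaf are compatible with any base change and unipotent'' by the uniform statement of Theorem \ref{Theorem:uniform base change} (via Corollary \ref{Corollary:uniform base change for constant sheaves} and Proposition \ref{Proposition:constant sheaves adapted}): this produces one alteration $\pi$ for all $n$, hence one covering $g$, one overconvergence locus, and one tameness assertion, leaving only the Kummer degree $m$ free but harmless. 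A secondary, purely geometric nuisance is the bookkeeping of radii in (2): Theorem \ref{Theorem:trivialization of tame sheaf} trivializes only over a slightly shrunk annulus, which is absorbed by starting from the slightly larger annulus $V(a',b')$, using local constancy of $\mathcal{G}^i$ on all of $V(\epsilon_0)^*$ from part (1), and the monotonicity of the radius function of a finite \'etale map of annuli.
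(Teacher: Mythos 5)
Your proposal follows essentially the same route as the paper's own proof: uniformity rests on the single alteration $\pi$ obtained from Proposition \ref{Proposition:constant sheaves adapted} and Theorem \ref{Theorem:uniform base change}, overconvergence and tameness come from Proposition \ref{Proposition:overconvergent and unipotent}, constructibility from Theorem \ref{Theorem:constructible theorem}, and the Kummer trivialization over a sub-annulus comes from Theorem \ref{Theorem:split into annuli} and Theorem \ref{Theorem:trivialization of tame sheaf} applied to the finite Galois \'etale part extracted from $d(\pi)$. The only substantive deviation is that the paper simply derives assertion (1) from assertion (2) and cites \cite[Lemma 2.7.11]{Huber96} for the implication ``constructible $+$ overconvergent $\Rightarrow$ locally constant,'' whereas you prove (1) directly by descending overconvergence along the non-\'etale finite surjective $d(\pi)$ (which requires the true but not-entirely-automatic lifting of geometric specializations along finite surjective morphisms of adic spaces) and sketch a re-derivation of Huber's lemma --- both detours that the paper's ordering of (1) and (2) avoids.
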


\begin{proof}
Clearly, the first assertion (1) follows from the second assertion (2). 
We shall prove (2).

\textit{Step 1.}
We may assume that,
for the dominant morphism
$\pi \colon \mathcal{Z} \to \mathcal{Y}$,
the separable closure $k(\mathcal{Y})^{\sep}$ of the function field $k(\mathcal{Y})$
of $\mathcal{Y}$ in the function field of $\mathcal{Z}$ is Galois over $k(\mathcal{Y})$.

Indeed,
there is a finite surjective morphism
$Z' \to \mathcal{Z}_K$
from an integral scheme $Z'$ which is smooth over $K$
such that
the separable closure of the function field
$k(\mathcal{Y})$
of $\mathcal{Y}$
in the function field of $Z'$ is Galois over $k(\mathcal{Y})$.
There exists a proper surjective morphism
$\mathcal{Z}' \to \mathcal{Z}$ such that
$\mathcal{Z}'$ is integral and the base change
$\mathcal{Z}'_K \to \mathcal{Z}_K$
is isomorphic to $Z' \to \mathcal{Z}_K$.
We define $\pi'$ as the composition
$\pi' \colon \mathcal{Z}' \to \mathcal{Z} \to \mathcal{Y}$.
If a sheaf $\mathcal{F}$ is adapted to $(f, \pi)$, then it is also adapted to
$(f, \pi')$.
Thus it suffices to prove Theorem \ref{Theorem:uniform local constancy} for $(f, \pi')$.

\textit{Step 2.}
We will choose an appropriate $\epsilon_0 \in \vert K^{\times} \vert$.

Let $W \to \mathcal{Y}_K$ be the normalization of
$\mathcal{Y}_K$ in $k(\mathcal{Y})^{\sep}$.
Then
the induced morphism
$\mathcal{Z}_K \to W$ is finite, radicial, and surjective
and there is a dense open subset
$U \subset \mathcal{Y}_K$
over which $W \to \mathcal{Y}_K$ is a finite Galois \'etale morphism.
Let
\[
W':=d(\widehat{\mathcal{Y}}) \times_{\mathcal{Y}_K} W 
\]
be
the adic space over
$d(\widehat{\mathcal{Y}})$ associated with
$
d(\widehat{\mathcal{Y}}) \to \mathcal{Y}_K
$
and
$W \to \mathcal{Y}_K$.
Let
$
g \colon W' \to d(\widehat{\mathcal{Y}})
$
denote the structure morphism.
The morphism $d(\pi)$
can be written as the composition of
finite morphisms
\[
d(\widehat{\mathcal{Z}}) \overset{\alpha}{\to} W' \overset{g}{\to} d(\widehat{\mathcal{Y}}).
\]

Let
$\epsilon_1 \in \vert K^{\times} \vert$
be an element with $\epsilon_1 \leq 1$
such that
$V(\epsilon_1)^* \subset d(\widehat{\mathcal{Y}})$ is mapped into $U$
under the map
$d(\widehat{\mathcal{Y}}) \to \mathcal{Y}_K$.
Then the restriction
\[
g^{-1}(V(\epsilon_1)^*) \to V(\epsilon_1)^*
\]
is finite and \'etale.
By Theorem \ref{Theorem:split into annuli},
there exists an element $\epsilon_2 \in \vert K^\times \vert$ with $\epsilon_2 \leq \epsilon_1$ such that,
for all $a, b \in \vert K^{\times} \vert$
with
$a < b \leq \epsilon_2$,
we have
\[
g^{-1}(V(a, b)) \cong \coprod^{N}_{j=1} \B(c_j, d_j)
\]
for some elements $c_j, d_j \in \vert K^{\times} \vert$
with
$c_j < d_j \leq 1$ ($1 \leq j \leq N$).
If $K$ is of characteristic zero,
then we can take such $\epsilon_2 \in \vert K^\times \vert$ so that
the restriction
\[
\B(c_j, d_j)  \to V(a, b)
\]
of $g$ to every component $\B(c_j, d_j)$ appearing in the above decomposition is a Kummer covering.
By Proposition \ref{Proposition:overconvergent and unipotent} (2),
there exists an element $\epsilon_3 \in \vert K^\times \vert$ with $\epsilon_3 \leq \epsilon_2$
such that,
for every \'etale sheaf $\mathcal{F}$ of $\Z/n\Z$-modules on $\mathcal{X}$ adapted to $(f, \pi)$ with $n \in \O^{\times}$,
the sheaf
$
d(\pi)^*R^id(f)_!\mathcal{F}^a
$
is overconvergent on $d(\pi)^{-1}(V(\epsilon_3))$ for every $i$.
Let $t \in \vert K^{\times} \vert$ be an element with $t < 1$.
Then we put $\epsilon_0:=t\epsilon_3$.

\textit{Step 3.}
We shall show that $\epsilon_0$ satisfies the condition.

Indeed,
let $\mathcal{F}$ be an \'etale sheaf of $\Z/n\Z$-modules on $\mathcal{X}$ adapted to $(f, \pi)$ with $n \in \O^{\times}$.
Let $a, b \in \vert K^{\times} \vert$
be elements with $a < b \leq \epsilon_0$.
We have
$
g^{-1}(V(ta, b/t)) \cong \coprod^{N}_{i=1} \B(c_j, d_j)
$
for some elements $c_j, d_j \in \vert K^{\times} \vert$
with
$c_j < d_j \leq 1$ ($1 \leq j \leq N$).
We take a component
$\B(c_1, d_1)$
of the decomposition.
The restriction
\[
g \colon \B(c_1, d_1) \to V(ta, b/t)
\]
of $g$ is denoted by the same letter.
By the construction, it is a finite Galois \'etale morphism.

We remark that,
since the morphism
$\mathcal{Z}_K \to W$ is finite, radicial, and surjective,
it follows that
$
\alpha \colon d(\widehat{\mathcal{Z}}) \to  W'
$
is a homeomorphism and,
for every $z \in d(\widehat{\mathcal{Z}})$,
the extension
$k(\alpha(z))^{\wedge} \to k(z)^{\wedge}$
of the completions of the residue fields
is a finite purely inseparable extension, and hence
the extension
$k(\alpha(z))^{\wedge h} \to k(z)^{\wedge h}$
of the Henselizations of these fields is also a finite purely inseparable extension.

Since
$
d(f) \colon d(\widehat{\mathcal{X}}) \to d(\widehat{\mathcal{Y}})
$
is smooth over $V(1)^*$,
the sheaf of $\Z/n\Z$-modules
\[
\mathcal{G}:=g^{*}((R^id(f)_!\mathcal{F}^a)\vert_{V(ta, b/t)})
\]
on $\B(c_1, d_1)$
is constructible by Theorem \ref{Theorem:constructible theorem}.
By the construction,
it is overconvergent on $\B(c_1, d_1)$.
Therefore, by \cite[Lemma 2.7.11]{Huber96},
the sheaf $\mathcal{G}$ is locally constant.
Moreover,
by Proposition \ref{Proposition:overconvergent and unipotent} (1) and the remark above,
the sheaf
$
\mathcal{G}
$
is tame at every
$x \in \B(c_1, d_1)$ having a proper generalization in $\B(c_1, d_1)$.
We have
$
g^{-1}(V(a, b))=\B(c, d)
$
for some elements $c, d \in \vert K^\times \vert$ with $c_1 < c < d  < d_1$.
Hence,
by Theorem \ref{Theorem:trivialization of tame sheaf},
we conclude that
the restriction
of 
$\mathcal{G}$
to
$g^{-1}(V(a, b))=\B(c, d)$
is trivialized by a Kummer covering
$\varphi_m \colon \B(c^{1/m}, d^{1/m}) \to \B(c, d)$ with $m \in \O^\times$.

The proof of Theorem \ref{Theorem:uniform local constancy} is now complete.
\end{proof}

\begin{rem}\label{Remark:finite cover is independent of n}
In Theorem \ref{Theorem:uniform local constancy}, the morphism $g$ can be taken independent of $\mathcal{F}$ although the integer $m$ possibly depends on $\mathcal{F}$.
\end{rem}

For an element $ \epsilon \in \vert K^{\times} \vert$,
we define
\[
\D(\epsilon):= \{ x \in \B(1) \, \vert \, \vert T(x) \vert < \epsilon \}.
\]
This is a closed constructible subset of $\B(1)$.
For later use, we record the following results.

\begin{lem}\label{Lemma:calculation of cohomology}
Let $n$ be a positive integer invertible in $\O$.
\begin{enumerate}
    \item Let $a, b \in \vert K^\times \vert$ be elements with
$a < b \leq 1$.
Let $\mathcal{F}$ be a
locally constant constructible sheaf of $\Z/n\Z$-modules on $\B(a, b)$.
Assume that there exists a finite \'etale morphism
$
h \colon \B(c, d) \to \B(a, b)
$
such that $h$ is a composition of finite Galois \'etale morphisms
and
the pull-back
$
h^{*}\mathcal{F}
$
is a constant sheaf.
Then the following assertions hold:
\begin{enumerate}
    \item  We have
    \[
    H^i_c(\B(b) \backslash \B(a), \mathcal{F}\vert_{\B(b) \backslash \B(a)})=0
    \quad
    \text{and}
    \quad
    H^i_c(\D(b) \cap \B(a, b), \mathcal{F}\vert_{\D(b) \cap \B(a, b)})=0
    \]
    for every $i$.
    \item
    The restriction map
    \[
    H^i(\B(a, b), \mathcal{F})
    \to
    H^i(\D(b)^{\circ} \cap \B(a, b), \mathcal{F})
    \]
    is an isomorphism for every $i$, where $\D(b)^{\circ}$ is the interior of $\D(b)$ in $\B(1)$.
\end{enumerate}
\item Let $\mathcal{F}$ be a
locally constant constructible sheaf of $\Z/n\Z$-modules on $\B(1)^*$.
Assume that
for all $a, b \in \vert K^{\times} \vert$ with $a < b \leq 1$,
there exists a finite \'etale morphism
$
h \colon \B(c, d) \to \B(a, b)
$
such that $h$ is a composition of finite Galois \'etale morphisms
and
the pull-back
$
h^{*}(\mathcal{F}\vert_{\B(a, b)})
$
is a constant sheaf.
Then we have 
\[
H^i_c(\D(1) \backslash \{ 0 \}, \mathcal{F}\vert_{\D(1) \backslash \{ 0 \}})=0
\]
for every $i$.
\end{enumerate}
\end{lem}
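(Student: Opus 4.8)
All three assertions have the same shape, and the plan is to reduce each of them, via an excision long exact sequence and Poincar\'e duality, to the statement that the inclusion of a suitable ``boundary circle'' into an annulus or into the punctured disc is an isomorphism on \'etale cohomology, and then to establish the latter---after reducing to constant coefficients by means of the trivializing coverings---by the explicit description of the units of an annulus.

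First I would record the relevant topology. For an annulus $\B(a,b)$ the inner circle $\{\,\vert T\vert=a\,\}$ is the Weierstrass subdomain $\{\,\vert T\vert\le a\,\}$ and the outer circle $\{\,\vert T\vert=b\,\}$ is the rational subdomain $\{\,\vert T\vert\ge b\,\}$; both are \emph{open}, quasi-compact subspaces of $\B(a,b)$, each isomorphic to $\Spa(K\langle S,S^{-1}\rangle,\O\langle S,S^{-1}\rangle)$, and their closed complements are $\B(b)\setminus\B(a)=\B(a,b)\setminus\{\,\vert T\vert=a\,\}$ and $\D(b)\cap\B(a,b)=\B(a,b)\setminus\{\,\vert T\vert=b\,\}$; moreover $\D(b)^{\circ}\cap\B(a,b)=\bigcup_{b'<b}\B(a,b')$ is an increasing union of the open Weierstrass subdomains $\B(a,b')$, and, inside $\B(1)^{*}$, the set $\D(1)\setminus\{0\}=\B(1)^{*}\setminus\{\,\vert T\vert=1\,\}$ is closed with open quasi-compact complement $\{\,\vert T\vert=1\,\}$. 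Using these descriptions, the excision triangle for the relevant open circle and its closed complement gives, in cases (1)(a) and (2), long exact sequences
\[
\cdots\to H^{k}_{c}(\{\,\vert T\vert=r\,\},\mathcal{F})\to H^{k}_{c}(X,\mathcal{F})\to H^{k}_{c}(Y,\mathcal{F}\vert_{Y})\to\cdots,
\]
with $X$ the ambient annulus (or $\B(1)^{*}$) and $Y$ the half-open piece, so that the asserted vanishing $H^{*}_{c}(Y,\mathcal{F}\vert_{Y})=0$ is equivalent to the first arrow being an isomorphism. By Huber's Poincar\'e duality \cite[Corollary 7.5.5]{Huber96}, applied to the open immersion $\{\,\vert T\vert=r\,\}\hookrightarrow X$ of smooth one-dimensional taut adic spaces over $\Spa(K,\O)$, that arrow is dual to the restriction $H^{*}(X,\mathcal{F}^{\vee}(1))\to H^{*}(\{\,\vert T\vert=r\,\},\mathcal{F}^{\vee}(1)\vert)$, so it is enough to show that restriction map is an isomorphism. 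For (1)(b) one passes instead to the increasing union $\D(b)^{\circ}\cap\B(a,b)=\bigcup_{b'}\B(a,b')$, applies the Milnor sequence for cohomology along this tower of open subspaces, and is reduced in the same way to the statement that each restriction $H^{*}(\B(a,b),\mathcal{F})\to H^{*}(\B(a,b'),\mathcal{F}\vert)$ between sub-annuli with a common radius is an isomorphism; and (2) is handled by the same mechanism together with the fact that $\B(1)^{*}$ and any boundary sub-annulus $\B(a,1)$ have the same cohomology with these coefficients.

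To prove the restriction isomorphisms I would reduce to constant coefficients using the trivializing covering $h$. Writing $h$ as a composition of finite Galois \'etale morphisms and filtering the locally constant sheaf, one reduces to the case that $h$ is Galois with group $G$ and $h^{*}\mathcal{F}=\underline{\Z/m\Z}$ is constant with $m$ invertible in $\O$; for the ordinary-cohomology statements one compares the Hochschild--Serre spectral sequences of $h$ over the two spaces, and for the compact-support statements one uses the projection formula $Rh_{*}h^{*}(-)\cong(-)\otimes_{\Z/m\Z}\Z/m\Z[G]$ with the faithful flatness of $\Z/m\Z[G]$ over $\Z/m\Z$; here the structure of finite \'etale coverings of annuli (Theorem~\ref{Theorem:split into annuli} and Appendix~\ref{Appendix:finite etale coverings of annuli}) is used to identify $h^{-1}$ of a boundary circle, resp.\ of a sub-annulus, with the corresponding locus in the source annulus. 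For the constant sheaf $\underline{\Z/m\Z}$ the required restriction maps are isomorphisms by inspection: in degree $0$ by connectedness; in degree $1$ because Kummer theory identifies both sides with $\O^{\times}/(\O^{\times})^{m}$ of the respective rings of functions, each freely generated modulo $m$-th powers by the class of $T$---using that $K$ is algebraically closed, so $K^{\times}$ is $m$-divisible---and the restriction sends $[T]$ to $[T]$; and in degrees $\ge 2$ because all the groups vanish, these being non-proper smooth curves over $K$, hence of cohomological dimension $\le 1$ for $\Z/m\Z$-coefficients (again by Poincar\'e duality, since they have no proper connected component).

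\textbf{Main obstacle.} The delicate point is not the cohomology computation but the bookkeeping with the pseudo-adic loci $\B(\epsilon)$, $\D(\epsilon)$, $\D(\epsilon)^{\circ}$ and $\{\,\vert T\vert=r\,\}$: one must pin down precisely which are open and which are closed in which ambient space, so that the excision sequences, the Milnor sequences and the colimit formulas for compactly supported cohomology are valid; check tautness so that Poincar\'e duality and $H^{*}_{c}$ are available; and, for (2), control the behaviour of $H^{*}_{c}$ near the puncture, where the naive union of sub-annuli does not exhaust $\B(1)^{*}$. A secondary difficulty is that the degrees of the trivializing coverings need not be invertible modulo $n$, so that $\mathcal{F}$ cannot be split off by a trace map and one must argue throughout via the group ring, invoking Appendix~\ref{Appendix:finite etale coverings of annuli} to control the preimages of the boundary loci.
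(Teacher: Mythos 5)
Your approach is correct but routes parts (1)(a) and (2) through a genuinely different argument from the paper's. Where the paper reduces to constant coefficients via the descent spectral sequence of \cite[4.2 ii)]{Huber98a} and then simply quotes Huber's explicit vanishing $H^i_c(\B(d)\setminus\B(c),M)=0$ from \cite[the proof of Theorem 2.5]{Huber98a}, you instead apply the excision triangle for the open ``boundary circle'' $\{\,\vert T\vert=r\,\}$ against its closed complement and then dualize via \cite[Corollary 7.5.5]{Huber96} to a restriction isomorphism on ordinary cohomology, which you compute by hand. For (2) the paper just writes $\D(1)\setminus\{0\}$ as an increasing union of the closed annuli $\D(1)\cap\B(\epsilon,1)$ and invokes the colimit formula \cite[Proposition 5.4.5 ii)]{Huber96} to reduce directly to (1)(a), which is shorter than your excision-plus-Milnor-tower treatment of $\B(1)^*$. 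Your route is more self-contained (it re-derives rather than quotes the boundary-annulus computation) but leans more heavily on Poincar\'e duality and on being careful about tautness and finiteness for non-quasi-compact spaces; the paper's route is shorter by treating Huber's computation as a black box. For (1)(b), both arguments coincide: the Milnor tower $\D(b)^{\circ}\cap\B(a,b)=\bigcup_{b'<b}\B(a,b')$, finiteness to kill the $\varprojlim^1$, and the Kummer sequence in degree $\leq 1$.

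One step is not right as stated: the ``projection formula'' $Rh_{*}h^{*}(-)\cong(-)\otimes_{\Z/n\Z}\Z/n\Z[G]$ that you invoke for the compact-support descents. For a nontrivial $G$-Galois covering $h\colon Y\to X$ the sheaf $h_{*}\underline{\Z/n\Z}$ is locally constant but not constant; its monodromy is the regular representation of a quotient of $\pi_{1}(X)$, so the right-hand side of your formula is wrong, and $\underline{\Z/n\Z}$ is a $G$-equivariant direct summand of $h_{*}\underline{\Z/n\Z}$ only when $\sharp G$ is invertible, which is precisely what you cannot assume here. The mechanism that does work is the Cartan--Leray (Hochschild--Serre type) spectral sequence $H^{p}\bigl(G,H^{q}_{c}(Y,h^{*}\mathcal F)\bigr)\Rightarrow H^{p+q}_{c}(X,\mathcal F)$, which is what \cite[4.2 ii)]{Huber98a} supplies and what the paper uses. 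Since your main line dualizes to ordinary cohomology before descending along $h$ and there uses Hochschild--Serre anyway, this is an easily repaired aside rather than a gap in your overall argument.
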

\begin{proof}
(1)
After possibly changing the coordinate function of $\B(c, d)$,
we may assume that
$
h^{-1}(\B(a, a)) = \B(c, c)
$
and
$
h^{-1}(\B(b, b)) = \B(d, d).
$
Then, we have
\[
h^{-1}(\B(b) \backslash \B(a)) = \B(d) \backslash \B(c) \quad \text{and} \quad
h^{-1}(\D(b) \cap \B(a, b)) = \D(d) \cap \B(c, d).
\]

We shall show the first equality of (a).
By the spectral sequence in \cite[4.2 ii)]{Huber98a} and the fact that $h$ is a composition of finite Galois \'etale morphisms,
it is enough to show that,
for a constant sheaf $M$ on $\B(d) \backslash \B(c)$ associated with a finitely generated $\Z/n\Z$-module,
we have for every $i$
\begin{equation}\label{equation:compact support annulas formula}
    H^i_c(\B(d) \backslash \B(c), M)=0.
\end{equation}
This is proved in \cite[(II) in the proof of Theorem 2.5]{Huber98a}.
(Although the characteristic of the base field is always assumed to be zero in \cite{Huber98a}, we need not assume that the characteristic of $K$ is zero here.)

We shall show the second equality of (a).
Similarly as above,
it suffices to show that,
for a constant sheaf $M$ on $\D(d) \cap \B(c, d)$ associated with a finitely generated $\Z/n\Z$-module,
we have
$
H^i_c(\D(d) \cap \B(c, d), M)=0
$
for every $i$.
But this follows from (\ref{equation:compact support annulas formula}) since
$\D(d) \cap \B(c, d)$ is isomorphic to $\B(d) \backslash \B(c)$ as a pseudo-adic space over $\Spa(K, \O)$.

We shall prove (b).
By the \v{C}ech-to-cohomology spectral sequences
and by the fact that $h$ is a composition of finite Galois \'etale morphisms,
it is enough to show that,
for a constant sheaf $M$ on $\B(c, d)$ associated with a finitely generated $\Z/n\Z$-module,
the restriction map
\[
H^i(\B(c, d), M)
\to
H^i(\D(d)^{\circ} \cap \B(c, d), M)
\]
is an isomorphism for every $i$.
Let $t \in \vert K^\times \vert$ be an element with
$t <1$.
Then we have
\[
\D(d)^{\circ} \cap \B(c, d) = \bigcup_{m \in \Z_{>0}} \B(c, t^{1/m} d).
\]
(See also \cite[Lemma 1.3]{Huber98a}.)
Moreover
$H^i(\B(c,  t^{1/m} d), M)$ is a finite group for every $i$; see \cite[Proposition 6.1.1]{Huber96}.
Therefore,
by \cite[Lemma 3.9.2 i)]{Huber96},
we obtain that
\[
H^i(\D(d)^{\circ} \cap \B(c, d), M)
\cong 
\plim[m] H^i(\B(c,  t^{1/m} d), M)
\]
for every $i$.
Thus it suffices to prove that,
for every $m \in \Z_{>0}$,
the restriction map
\[
H^i(\B(c,  d), M) \to H^i(\B(c,  t^{1/m} d), M)
\]
is an isomorphism for every $i$.
By \cite[Example 6.1.2]{Huber96},
both sides vanish when $i \geq 2$.
For $i \leq 1$,
the assertion can be proved by using the Kummer sequence.
(See the last paragraph of the proof of Theorem \ref{Theorem:trivialization of tame sheaf} in Appendix \ref{Appendix:finite etale coverings of annuli}.)

(2)
Since
\[
\D(1) \backslash \{ 0 \} = \bigcup_{\epsilon \in \vert K^{\times} \vert, \,  \epsilon < 1} \D(1) \cap \B(\epsilon, 1),
\]
we have
\[
\ilim[\epsilon] H^i_c(\D(1) \cap \B(\epsilon, 1), \mathcal{F}\vert_{\D(1) \cap \B(\epsilon, 1)}) \cong H^i_c(\D(1) \backslash \{ 0 \}, \mathcal{F}\vert_{\D(1) \backslash \{ 0 \}})
\]
by \cite[Proposition 5.4.5 ii)]{Huber96}.
Therefore, the assertion follows from (1).
\end{proof}

\section{Proofs of Theorem \ref{Theorem:tubular neighborhood compact support} and Theorem \ref{Theorem:tubular neighborhood direct image}}\label{Section:proofs of main theorems}

In this section, we shall prove Theorem \ref{Theorem:tubular neighborhood compact support} and Theorem \ref{Theorem:tubular neighborhood direct image}.
Let $K$ be an algebraically closed complete non-archimedean field with ring of integers $\O$.

The main part of the proofs of Theorem \ref{Theorem:tubular neighborhood compact support} and Theorem \ref{Theorem:tubular neighborhood direct image} is contained in the following lemma.

\begin{lem}\label{Lemma:key lemma, tubular neighborhood}
Let $\mathcal{X}$ be a separated scheme of finite type over $\O$ and $\mathcal{Z} \hookrightarrow \mathcal{X}$ a closed immersion defined by one global section $f \in \O_{\mathcal{X}}(\mathcal{X})$.
Let
$
f \colon \mathcal{X} \to \Spec \O[T]
$
be the morphism defined by $T \mapsto f$,
which is also denoted by $f$.
We assume that
there is an element $\epsilon_1 \in \vert K^\times \vert$ 
with $\epsilon_1 \leq 1$
such that 
\[
d(f) \colon d(\widehat{\mathcal{X}}) \to d((\Spec \O[T])^{\wedge})=\B(1).
\]
is smooth over $\B(\epsilon_1)^*=\B(\epsilon_1) \backslash \{ 0 \}$.
Then, there exists an element
$\epsilon_0 \in \vert K^{\times} \vert$
with $\epsilon_0 \leq \epsilon_1$,
such that
the following assertions hold
for every
$\epsilon \in \vert K^{\times} \vert$
with
$\epsilon \leq \epsilon_0$,
every positive integer $n$ invertible in $\O$,
and every integer $i$.
\begin{enumerate}
    \item 
    $H^i_c(S(\mathcal{Z}, \epsilon)\backslash d(\widehat{\mathcal{Z}}), \Z/n\Z)=0$ and
    $H^i_c(T(\mathcal{Z}, \epsilon_0)\backslash T(\mathcal{Z}, \epsilon), \Z/n\Z)=0.$
    \item
    $
    H^i(T(\mathcal{Z}, \epsilon), \Z/n\Z) \overset{\cong}{\to}
    H^i(S(\mathcal{Z}, \epsilon)^{\circ}, \Z/n\Z).
    $
    \item
    $
    H^i(S(\mathcal{Z}, \epsilon), \Z/n\Z)\overset{\cong}{\to}
    H^i(d(\widehat{\mathcal{Z}}), \Z/n\Z).
    $
    \item
    $
    H^i(S(\mathcal{Z}, \epsilon), \Z/n\Z) \overset{\cong}{\to}
    H^i(S(\mathcal{Z}, \epsilon)^{\circ}, \Z/n\Z).
    $
\end{enumerate}
\end{lem}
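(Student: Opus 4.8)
\emph{Plan.} The strategy is to reduce every assertion to the uniform local constancy theorem of Section~\ref{Section:Local constancy of higher direct images with proper support for generically smooth morphisms} together with the explicit computations of Lemma~\ref{Lemma:calculation of cohomology}. First I would put things in a form to which Theorem~\ref{Theorem:uniform local constancy} applies. We may assume $f\colon\mathcal{X}\to\mathcal{Y}:=\Spec\O[T]$ is separated of finite presentation (this holds in the situation in which the lemma is used). Since $\mathcal{Y}$ is integral affine of finite type over $\O$ with $\mathcal{Y}_K=\A^1_K$ one-dimensional and smooth, Proposition~\ref{Proposition:constant sheaves adapted} provides a proper surjective morphism $\pi\colon\mathcal{W}\to\mathcal{Y}$ as in Definition~\ref{Definition:uniform finiteness adapted} such that the constant sheaf $\Z/n\Z$ on $\mathcal{X}$ is adapted to $(f,\pi)$ for every $n$ invertible in $\O$. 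The quasi-compact open subset $\{x\in\B(1)\mid |T(x)|\le\epsilon_1\}$ is an open disc of $d(\widehat{\mathcal{Y}})=\B(1)$ over which $d(f)$ is smooth away from the origin, so Theorem~\ref{Theorem:uniform local constancy}, applied with this open disc, produces (after rescaling the radius) an element $\epsilon_0\in|K^\times|$ with $\epsilon_0\le\epsilon_1$ such that, for every $n$ invertible in $\O$ and every $i$, the sheaf $(R^id(f)_!\Z/n\Z)|_{\B(\epsilon_0)^*}$ is locally constant constructible, and for all $a<b\le\epsilon_0$ its restriction to $\B(a,b)$ is trivialized by a composition of a Kummer covering with a finite Galois \'etale morphism, hence by a composition of finite Galois \'etale morphisms. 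By Poincar\'e duality \cite[Corollary~7.5.5]{Huber96}, using the smoothness of $d(f)$ over $\B(\epsilon_1)^*$, the sheaves $R^id(f)_*\Z/n\Z$ have the same properties over $\B(\epsilon_0)^*$, uniformly in $n$.

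\emph{Part (1).} Writing $\D(\epsilon)=\{x\in\B(1)\mid |T(x)|<\epsilon\}$, one has $S(\mathcal{Z},\epsilon)\setminus d(\widehat{\mathcal{Z}})=d(f)^{-1}(\D(\epsilon)\setminus\{0\})$ and $T(\mathcal{Z},\epsilon_0)\setminus T(\mathcal{Z},\epsilon)=d(f)^{-1}(\B(\epsilon_0)\setminus\B(\epsilon))$. Both base subsets are locally closed in $\B(1)$ and contained in $\B(\epsilon_0)^*$; by the base change theorem \cite[Theorem~5.4.6]{Huber96} the restrictions of $R^jd(f)_!\Z/n\Z$ to them compute the higher direct images with proper support of the restricted morphisms, so the Leray spectral sequence for $Rd(f)_!$ reduces the claimed vanishing to the vanishing of the groups $H^i_c$ of these subsets with coefficients in $R^jd(f)_!\Z/n\Z$. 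For $\B(\epsilon_0)\setminus\B(\epsilon)$ this is exactly Lemma~\ref{Lemma:calculation of cohomology}(1)(a) (first equality), applied to the restriction of $R^jd(f)_!\Z/n\Z$ to $\B(\epsilon,\epsilon_0)$, which is trivialized by a composition of finite Galois \'etale morphisms by the previous step. For $\D(\epsilon)\setminus\{0\}=\bigcup_\delta(\D(\epsilon)\cap\B(\delta,\epsilon))$ one passes to the colimit over $\delta\in|K^\times|$ with $\delta<\epsilon$ using \cite[Proposition~5.4.5~ii)]{Huber96}, and each term vanishes by Lemma~\ref{Lemma:calculation of cohomology}(1)(a) (second equality).

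\emph{Parts (2), (3), (4).} The excision long exact sequences for the closed subspace $d(\widehat{\mathcal{Z}})\hookrightarrow S(\mathcal{Z},\epsilon)$ and for the open inclusions $S(\mathcal{Z},\epsilon)^\circ\hookrightarrow S(\mathcal{Z},\epsilon)\hookrightarrow T(\mathcal{Z},\epsilon)$ translate (2), (3), (4) into the chain of isomorphisms of Theorem~\ref{Theorem:Huber main body}(2) for the constant sheaf $\Z/n\Z$. I would follow the argument of Huber \cite{Huber98b}: the Leray spectral sequence for $d(f)$ expresses the cohomology of $T(\mathcal{Z},\epsilon)$, $S(\mathcal{Z},\epsilon)$ and $S(\mathcal{Z},\epsilon)^\circ$ in terms of the cohomology of $\B(\epsilon)$, $\D(\epsilon)$ and $\D(\epsilon)^\circ$ with coefficients in the restrictions of the sheaves $R^jd(f)_*\Z/n\Z$, while $H^j(d(\widehat{\mathcal{Z}}),\Z/n\Z)$ is the stalk of $R^jd(f)_*\Z/n\Z$ at the origin $0\in\B(1)$, the strict localization of $\B(1)$ at $0$ being $\Spa(K,\O)$ itself, with preimage $d(\widehat{\mathcal{Z}})$. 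Since $R^jd(f)_*\Z/n\Z$ is locally constant constructible over $\B(\epsilon_0)^*$ and uniformly trivializable there over small annuli, Lemma~\ref{Lemma:calculation of cohomology}(1)(b), together with the limit arguments \cite[Lemma~3.9.2]{Huber96} and \cite[Proposition~5.4.5]{Huber96}, allows Huber's comparison computations to be carried out with every occurring radius bounded below by $\epsilon_0$, which gives (2), (3), (4) for every $\epsilon\le\epsilon_0$ and every $n$ invertible in $\O$.

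\emph{Main obstacle.} The genuinely new ingredient is the uniform choice of $\epsilon_0$: a single radius that works for all $n$ invertible in $\O$ simultaneously. This is exactly what Theorem~\ref{Theorem:uniform local constancy} provides, resting ultimately on the uniform refinement of Orgogozo's theorem (Theorem~\ref{Theorem:uniform base change}); once it and Poincar\'e duality are available, the remainder is a reorganization of the arguments of \cite{Huber98a, Huber98b}, with Lemma~\ref{Lemma:calculation of cohomology} doing the explicit work on annuli and discs. The remaining technical burden is the bookkeeping with \'etale cohomology with and without compact support of the (non-proper, non-quasi-compact) pseudo-adic spaces that occur, and with the behaviour of $R^jd(f)_*\Z/n\Z$ at the origin.
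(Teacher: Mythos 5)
Your proposal is correct and follows essentially the same route as the paper: reduce to finite presentation via flatness, invoke Proposition~\ref{Proposition:constant sheaves adapted} and Theorem~\ref{Theorem:uniform local constancy} (together with Poincar\'e duality applied componentwise to the smooth locus) to fix a single $\epsilon_0$ for all $n$, and then reduce each assertion via spectral sequences for $d(f)$ to the annulus/disc computations packaged in Lemma~\ref{Lemma:calculation of cohomology}, exactly as the paper does in Section~\ref{Subsection:Proof of the key case}. The only detail worth flagging is that for part~(3) the identification $H^i(\{0\},\mathcal{F}_j|_{\{0\}})\cong H^j(d(\widehat{\mathcal{Z}}),\Z/n\Z)$ (for $i=0$, and $0$ otherwise) requires citing \cite[Example~2.6.2]{Huber96} and then deducing the isomorphism from the vanishing $H^i_c(\D(\epsilon)\setminus\{0\},\mathcal{F}_j)=0$, using $H^i=H^i_c$ on the proper spaces $\D(\epsilon)$ and $\{0\}$, rather than by treating $H^j(d(\widehat{\mathcal{Z}}),\Z/n\Z)$ as a literal stalk of $R^jd(f)_*\Z/n\Z$; and for part~(4) the paper cites \cite[Lemma~2.5]{Huber98b} rather than Lemma~\ref{Lemma:calculation of cohomology}, though the substance is the same.
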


We first deduce Theorem \ref{Theorem:tubular neighborhood compact support} and Theorem \ref{Theorem:tubular neighborhood direct image} from Lemma \ref{Lemma:key lemma, tubular neighborhood}.
We will prove Lemma \ref{Lemma:key lemma, tubular neighborhood} in Section \ref{Subsection:Proof of the key case}.

\subsection{Cohomological descent for analytic adic spaces}\label{Subsection:cohomological descent for analytic adic spaces}

We will recall some results on cohomological descent for analytic pseudo-adic spaces.
Our basic references are \cite[Expos\'e Vbis]{SGA 4-II}
and \cite[Section 5]{Deligne HodgeIII}.
See also \cite[Section 3]{Mieda06}.

Let $f \colon Y \to X$ be a morphism of finite type of analytic pseudo-adic spaces.
Let
\[
\beta \colon Y_{\bullet}:=\cosq_0(Y/X) \to X
\]
be the augmented simplicial object in the category of analytic pseudo-adic spaces of finite type over $X$
(this category has finite projective limits by \cite[Proposition 1.10.6]{Huber96})
defined as in
\cite[(5.1.4)]{Deligne HodgeIII}.
So $Y_m$ is the
$(m+1)$-times fiber product
$Y \times_X \cdots \times_X Y$ for $m \geq 0$.
As in \cite[(5.1.6)--(5.1.8)]{Deligne HodgeIII},
one can associate to
the \'etale topoi $(Y_m)^{\sim}_{\et}$ ($m \geq 0$)
a topos
$(Y_\bullet)^{\sim}$.
Moreover,
as in \cite[(5.1.11)]{Deligne HodgeIII},
we have a morphism of topoi
\[
(\beta_*, \beta^*) \colon (Y_\bullet)^{\sim} \to X^{\sim}_\et
\]
from $(Y_\bullet)^{\sim}$ to the \'etale topos $X^{\sim}_\et$ of $X$.
We say that $f \colon Y \to X$ is a
\textit{morphism of cohomological descent} for torsion abelian \'etale sheaves
if for every torsion abelian \'etale sheaf $\mathcal{F}$ on $X$,
the natural morphism
\[
\mathcal{F} \to R\beta_*\beta^*\mathcal{F}
\]
in the derived category $D^+(X^{\sim}_\et)$
is an isomorphism.
See \cite[Expos\'e Vbis, Section 2]{SGA 4-II} for details.

As a consequence of the proper base change theorem for analytic pseudo-adic spaces \cite[Theorem 4.4.1]{Huber96},
we have the following proposition.
We formulate it in the generality we need.

\begin{prop}\label{Proposition:proper descent for adic spaces}
Let $f \colon Y \to X$ be a morphism of analytic adic spaces which is proper, of finite type, and surjective. 
Then
for every morphism $Z \to X$ of analytic pseudo-adic spaces,
the base change $f \colon Y \times_X Z \to Z$ is of cohomological descent for torsion abelian \'etale sheaves.
\end{prop}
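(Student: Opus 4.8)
The plan is to reduce, by Huber's proper base change theorem, to the case where the base is a single geometric point, and there to exhibit a section of the structure morphism, after which universal cohomological descent becomes formal.

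First I would set up the reductions. Fix a torsion abelian \'etale sheaf $\mathcal{F}$ on $Z$; write $W := Y \times_X Z$ and $\beta \colon W_\bullet := \cosq_0(W/Z) \to Z$ for the \v{C}ech nerve, which exists because the category of analytic pseudo-adic spaces has finite inverse limits \cite[Proposition 1.10.6]{Huber96}. Since properness is stable under base change and composition, each $W_m = W \times_Z \cdots \times_Z W$ is proper and of finite type over $Z$. Whether $\mathcal{F} \to R\beta_*\beta^*\mathcal{F}$ is an isomorphism may be checked on stalks, and via the descent spectral sequence $E_1^{p,q} = R^q\beta_{p\,*}\beta_p^*\mathcal{F} \Rightarrow H^{p+q}$ (as in \cite[(5.2.7.1)]{Deligne HodgeIII}) it is enough to understand the sheaves $R^q\beta_{p\,*}\beta_p^*\mathcal{F}$ on stalks. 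By Huber's proper base change theorem \cite[Theorem 4.4.1]{Huber96}, for a geometric point $\overline{z} \to Z$ one has $(R^q\beta_{p\,*}\beta_p^*\mathcal{F})_{\overline{z}} \cong H^q(W_p \times_Z \overline{z},\, M)$ with $M$ the constant sheaf $\mathcal{F}_{\overline{z}}$, and $W_p \times_Z \overline{z}$ is the $(p{+}1)$-fold fiber product over $\overline{z}$ of the fiber $V := W \times_Z \overline{z}$. Thus it suffices to prove: for a geometric point $\overline{z} = \Spa(\ell, \ell^+)$ and a proper, finite type, surjective morphism $V \to \overline{z}$ of analytic adic spaces, the augmentation $\cosq_0(V/\overline{z}) \to \overline{z}$ is of cohomological descent for torsion sheaves.

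Next I would produce a section of $V \to \overline{z}$. Since a purely inseparable field extension and a completion each induces an equivalence of \'etale topoi, I may replace $\overline{z}$ by $\Spa(\widehat{\overline{\ell}}, \cdot)$ and $V$ by its base change, i.e.\ assume $\ell$ algebraically closed and complete. Surjectivity provides a point of $V$ lying over the (rank one) generic point $\Spa(\ell, \ell^\circ)$ of $\overline{z}$; the resulting nonempty proper adic space over $\Spa(\ell, \ell^\circ)$ has a classical point with residue field $\ell$, which gives an $\overline{z}$-morphism $\Spa(\ell, \ell^\circ) \to V$; and the valuative criterion of properness \cite[Corollary 1.3.9]{Huber96} extends this uniquely to a section $s \colon \overline{z} \to V$. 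Given $s$, the augmented simplicial object $\cosq_0(V/\overline{z}) \to \overline{z}$ acquires an extra degeneracy, so the associated complex is homotopy-trivial and the augmentation is of \emph{universal} cohomological descent; this is the standard split-nerve argument \cite[Expos\'e Vbis, Section 2]{SGA 4-II}. Unwinding the reductions then gives the proposition, in parallel with its scheme-theoretic counterpart \cite[Expos\'e Vbis, Proposition 4.3.2]{SGA 4-II}.

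The step I expect to be the main obstacle is the construction of the section: one must deal with the possibly higher-rank base $\Spa(\ell, \ell^+)$ and with the fact that separably closed fields need not be algebraically closed, i.e.\ establish the adic-geometry analogue of ``a nonempty proper scheme over a separably closed field has a rational point'' and then globalize it over $\overline{z}$ by the valuative criterion. Once this geometric input, together with the compatibility of the whole diagram with Huber's proper base change theorem, is in place, the spectral-sequence bookkeeping and the split-simplicial formalism are routine.
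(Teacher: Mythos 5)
Your proposal is correct and follows essentially the same route as the paper: reduce by Huber's proper base change theorem to the case where the base is a single geometric point, then construct a section of the fiber using the valuative criterion of properness together with the existence of a rational point on a nonempty proper adic space over an algebraically closed complete field, and conclude by the split-nerve formalism. Your treatment is a bit more explicit than the paper's (which compresses the last step to ``then it is well known'') but there is no substantive difference.
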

\begin{proof}
First, we note that the fiber product $Y \times_X Z \to Z$ exists by \cite[Proposition 1.10.6]{Huber96}.
By the proper base change theorem for analytic pseudo-adic spaces
\cite[Theorem 4.4.1 (b)]{Huber96},
it suffices to prove that, for every geometric point
$S \to X$,
the base change $Y \times_X S \to S$ is of cohomological descent for torsion abelian \'etale sheaves.
It is enough to show that there exists a section $S \to Y \times_X S$; see \cite[Expos\'e Vbis, Proposition 3.3.1]{SGA 4-II} for example.
The existence of a section can be easily proved in our case:
By the properness of $f$ and \cite[Corollary 1.3.9]{Huber96}, we may assume that $S$ is of rank $1$.
Then it is well known.
\end{proof}

For future reference,
we deduce the following corollaries from Proposition \ref{Proposition:proper descent for adic spaces}.

\begin{cor}\label{Corollary:cohomological descent spectral sequence compact support}
Let $\mathcal{X}$ be a separated scheme of finite type over $\O$.
Let
$\beta_0 \colon \mathcal{Y} \to \mathcal{X}$
be a proper surjective morphism.
We put
$
\beta \colon \mathcal{Y}_{\bullet}=\cosq_0(\mathcal{Y}/\mathcal{X}) \to \mathcal{X}.
$
Let $Z \subset d(\widehat{\mathcal{X}})$ be a taut locally closed subset.
Then we have the following spectral sequence:
\[
E^{i, j}_{1}=H^j_c(Z_i, \Z/n\Z) \Rightarrow H^{i+j}_c(Z, \Z/n\Z),
\]
where $Z_i$ is the inverse image of $Z$ under the morphism
$d(\beta_i) \colon d(\widehat{\mathcal{Y}}_i) \to d(\widehat{\mathcal{X}})$.
\end{cor}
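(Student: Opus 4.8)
The plan is to derive the spectral sequence from the cohomological descent statement of Proposition \ref{Proposition:proper descent for adic spaces} together with the standard spectral sequence of a simplicial topos, the one point requiring care being that the relevant cohomology is cohomology \emph{with compact support}.

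First I would record the basic properties of the morphism of analytic adic spaces $d(\beta_0) \colon d(\widehat{\mathcal{Y}}) \to d(\widehat{\mathcal{X}})$. It is of finite type, since $\beta_0$ is of finite type and both sides are quasi-compact. It is proper, since $\beta_0$ is proper: this follows from the valuative criterion \cite[Corollary 1.3.9]{Huber96}, exactly as in the proof of Theorem \ref{Theorem:Huber comparison} (see also \cite[Lemma 3.5]{Mieda06}). It is surjective, since $\beta_0$ is proper and surjective: then $d(\widehat{\mathcal{Y}}) \cong d(\widehat{\mathcal{X}}) \times_{\mathcal{X}} \mathcal{Y}$ by \cite[Proposition 1.9.6]{Huber96}, and over each point $x \in d(\widehat{\mathcal{X}})$ the fibre is the nonempty adic space attached to the $k(x)$-scheme $\mathcal{Y}\times_{\mathcal{X}}\Spec k(x)$.

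Writing $Z$ also for the pseudo-adic space $(d(\widehat{\mathcal{X}}), Z)$, Proposition \ref{Proposition:proper descent for adic spaces} applied to $Z \to d(\widehat{\mathcal{X}})$ shows that the base change $d(\beta_0) \colon Z_0 \to Z$ is a morphism of cohomological descent for torsion abelian \'etale sheaves. By Proposition \ref{Proposition:base change compatibility} (together with the fact that $\varpi$-adic completion commutes with fibre products of schemes over $\O$) one has $d(\widehat{\mathcal{Y}_m}) \cong d(\widehat{\mathcal{Y}}) \times_{d(\widehat{\mathcal{X}})} \cdots \times_{d(\widehat{\mathcal{X}})} d(\widehat{\mathcal{Y}})$, so that the $m$-th term of the augmented simplicial pseudo-adic space $\beta \colon Z_\bullet := \cosq_0(Z_0/Z) \to Z$ is exactly $Z_m = d(\beta_m)^{-1}(Z)$, and each structure morphism $d(\beta_m) \colon Z_m \to Z$ is proper because $\beta_m \colon \mathcal{Y}_m \to \mathcal{X}$ is. Let $g \colon Z \to \Spa(K,\O)$ be the structure morphism and $g_m := g \circ d(\beta_m) \colon Z_m \to \Spa(K,\O)$, which is separated, locally of finite type and taut, so that $H^i_c(Z_m,-)$ is defined.

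Cohomological descent gives an isomorphism $\Z/n\Z \xrightarrow{\ \sim\ } R\beta_*\beta^*(\Z/n\Z)$ in $D^+(Z,\Z/n\Z)$, hence $Rg_!(\Z/n\Z) \cong Rg_!\bigl(R\beta_*\beta^*(\Z/n\Z)\bigr)$. Since each $d(\beta_m)$ is proper we have $Rd(\beta_m)_* = Rd(\beta_m)_!$, and by the composition law for $Rf_!$ the complex $Rg_!\bigl(R\beta_*\beta^*(\Z/n\Z)\bigr)$ is identified with the total complex of the cosimplicial object $[m] \mapsto R(g_m)_!(\Z/n\Z)$. The associated first spectral sequence \cite[(5.2.7.1)]{Deligne HodgeIII}, used exactly as in the proof of Lemma \ref{Lemma:proper descent}, then reads $E_1^{i,j} = R^j(g_i)_!(\Z/n\Z) = H^j_c(Z_i,\Z/n\Z) \Rightarrow H^{i+j}\bigl(Rg_!(\Z/n\Z)\bigr) = H^{i+j}_c(Z,\Z/n\Z)$. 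It lies in the first quadrant (simplicial degree $i \geq 0$, cohomological degree $j \geq 0$) and therefore converges; this is the asserted spectral sequence.

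The step that needs genuine attention is the interchange of $Rg_!$ with the cosimplicial totalization implicit in $R\beta_*$ over the infinite hypercovering $Z_\bullet$; this is precisely what forces one to use that every component $d(\beta_m)$ of the hypercovering is proper, so that compact support along $g_m$ is compatible with proper pushforward along $d(\beta_m)$ and so that the induced filtration on $Rg_!(\Z/n\Z)$ is locally finite in each degree (here $H^j_c=0$ for $j<0$ is what makes the spectral sequence first-quadrant). The remaining bookkeeping --- identifying $\cosq_0(Z_0/Z)_m$ with $Z_m$, tautness of the $g_m$, and convergence --- is routine.
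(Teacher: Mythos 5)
Your proof is correct in outline, but it takes a genuinely different route from the paper's. The paper begins by invoking Nagata's compactification to embed $\mathcal{X}$ densely in a scheme $\mathcal{P}$ that is proper over $\Spec\O$, and replaces the $0$-coskeleton over $\mathcal{X}$ by one over $\mathcal{P}$ via a Cartesian square $\mathcal{Y} = \mathcal{Q}\times_{\mathcal{P}}\mathcal{X}$. Since $d(\widehat{\mathcal{P}})$ is then proper over $\Spa(K,\O)$, every term $H^j_c(Z_i,\Z/n\Z)$ becomes an ordinary cohomology group $H^j(d(\widehat{\mathcal{Q}}_i), d(\beta'_i)^*j_!\Z/n\Z)$, and the spectral sequence is then literally the descent spectral sequence of \cite[Expos\'e Vbis, Proposition 2.5.5]{SGA 4-II} applied to the cohomological descent of $d(\beta'_0)\colon d(\widehat{\mathcal{Q}})\to d(\widehat{\mathcal{P}})$, with $R\Gamma_c$ never appearing on the simplicial side. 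You instead stay on $\mathcal{X}$ and work with $Rg_!$ directly, exploiting that each $d(\beta_m)\colon Z_m\to Z$ is proper so that $Rg_!\circ Rd(\beta_m)_* = Rg_!\circ Rd(\beta_m)_! \cong R(g_m)_!$. The paper's route buys one thing: the descent spectral sequence can be cited off the shelf, with no need to pass $Rg_!$ across the infinite cosimplicial totalization; the cost is a second use of Nagata. Your route avoids compactification but the load-bearing step is exactly the one you flag: you must argue that the Postnikov filtration on $R\beta_*\beta^*\Z/n\Z$, whose $p$-th graded piece $Rd(\beta_p)_*\beta_p^*\Z/n\Z[-p]$ is concentrated in cohomological degrees $\geq p$, remains exhaustive and convergent after applying $Rg_!$; this does go through because $Rg_!$ has finite cohomological dimension (\cite[Proposition 5.5.8]{Huber96}) and each $R(g_p)_!\Z/n\Z[-p]$ is concentrated in degrees $\geq p$, so that in each fixed output degree only finitely many simplicial levels contribute and $Rg_!$ commutes with the relevant homotopy limit. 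I would spell that sentence out rather than rely on the slogan that the filtration is ``locally finite,'' since it---not the composition law for $Rf_!$---is where properness of the $d(\beta_m)$'s is genuinely used.
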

\begin{proof}
By Nagata's compactification theorem,
there exists a proper scheme $\mathcal{P}$ over $\Spec \O$
with a dense open immersion
$u \colon \mathcal{X} \hookrightarrow \mathcal{P}$ over $\Spec \O$.
Moreover,
there is the following Cartesian diagram of schemes:
\[
\xymatrix{ \mathcal{Y} \ar[r]^-{u'} \ar[d]^-{\beta_0} & \mathcal{Q} \ar[d]^-{\beta'_0} \\
\mathcal{X} \ar[r]^-{u}  & \mathcal{P}
}
\]
where $u'$ is an open immersion and $\beta'_0$ is a proper surjective morphism.
The morphism
$d(\beta'_0) \colon d(\widehat{\mathcal{Q}}) \to d(\widehat{\mathcal{P}})$
is proper, of finite type, and surjective; see \cite[Lemma 3.5]{Mieda06} (although the base field is assumed to be a discrete valuation field in \cite{Mieda06}, the same proof works).
We put
$
\beta' \colon \mathcal{Q}_{\bullet}=\cosq_0(\mathcal{Q}/\mathcal{P}) \to \mathcal{P}.
$
We have
a taut locally closed embedding
$j \colon Z \to d(\widehat{\mathcal{P}})$.
Let $j_m \colon Z_m \to d(\widehat{\mathcal{Q}}_m)$
be the pull-back of $j$ by $d(\beta'_m)$.
Then we have
$H^i_c(Z, \Z/n\Z)=H^i(d(\widehat{\mathcal{P}}), j_!\Z/n\Z)$
and
\[
H^i_c(Z_m, \Z/n\Z)=H^i(d(\widehat{\mathcal{Q}}_m), (j_m)_!\Z/n\Z)
=H^i(d(\widehat{\mathcal{Q}}_m), d(\beta'_m)^*j_!\Z/n\Z).
\]
Therefore the assertion follows from Proposition \ref{Proposition:proper descent for adic spaces} and \cite[Expos\'e Vbis, Proposition 2.5.5]{SGA 4-II}.
\end{proof}

\begin{cor}\label{Corollary:cohomological descent direct image}
Let $\beta_0 \colon Y \to X$ be a morphism of analytic adic spaces which is proper, of finite type, and surjective.
We put
$\beta \colon Y_{\bullet}=\cosq_0(Y/X) \to X$.
Let $i \colon Z \hookrightarrow W$ be an inclusion of locally closed subsets of $X$.
For $m \geq 0$, let $i_m \colon Z_m \hookrightarrow W_m$ be the pull-back of $i$ by
$\beta_m \colon Y_m \to X$.
Let
\[
R\Gamma(W, \Z/n\Z) \to R\Gamma(Z, \Z/n\Z) \to \mathcal{K} \to
\]
and
\[
R\Gamma(W_m, \Z/n\Z) \to R\Gamma(Z_m, \Z/n\Z) \to \mathcal{K}_m \to
\]
be distinguished triangles.
Let $\rho \geq -1$ be an integer.
If $\tau_{\leq (\rho-m)}\mathcal{K}_m = 0$
for every $0 \leq m \leq \rho+1$,
then we have
$\tau_{\leq \rho}\mathcal{K}=0$.
\end{cor}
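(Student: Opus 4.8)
The plan is to derive $\tau_{\leq\rho}\mathcal{K}=0$ from cohomological descent (Proposition \ref{Proposition:proper descent for adic spaces}) together with a degree count on a spectral sequence, in exactly the style of the proof of Corollary \ref{Corollary:cohomological descent spectral sequence compact support}. First I would record the elementary observation that $R\Gamma(W,\Z/n\Z)$, $R\Gamma(Z,\Z/n\Z)$, and the analogous objects for $W_m$ and $Z_m$ all lie in $D^{\geq 0}$, so that the mapping cones $\mathcal{K}$ and $\mathcal{K}_m$ lie in $D^{\geq -1}$; hence proving $\tau_{\leq\rho}\mathcal{K}=0$ amounts to proving $H^k(\mathcal{K})=0$ for $-1\leq k\leq\rho$.

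Next I would apply Proposition \ref{Proposition:proper descent for adic spaces} to the proper, finite-type, surjective morphism $\beta_0\colon Y\to X$ and to the morphisms $W\to X$ and $Z\to X$ of analytic pseudo-adic spaces determined by the locally closed subsets $W$ and $Z$: since $\cosq_0(\beta_0^{-1}(W)/W)=Y_\bullet\times_X W$ has $m$-th term $W_m$ (and similarly for $Z$), the augmented simplicial pseudo-adic spaces $W_\bullet\to W$ and $Z_\bullet\to Z$ are morphisms of cohomological descent for torsion abelian \'etale sheaves. The inclusion $Z\hookrightarrow W$ induces a morphism of simplicial objects $Z_\bullet\to W_\bullet$ over $W$, and $g_m\colon Z_m\hookrightarrow W_m$ is its term in degree $m$, so $\mathcal{K}$ is the cohomology on the simplicial topos associated with $\cosq_0(\beta_0^{-1}(W)/W)$ of the cone of $\Z/n\Z\to Rg_*\Z/n\Z$, a complex whose restriction to $W_m$ is the cone computing $\mathcal{K}_m$. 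The standard hypercohomology spectral sequence for a simplicial topos (\cite[Expos\'e Vbis, Proposition 2.5.5]{SGA 4-II}, \cite[(5.2.7.1)]{Deligne HodgeIII}) then gives $E_1^{m,q}=H^q(\mathcal{K}_m)\Rightarrow H^{m+q}(\mathcal{K})$ with $m\geq 0$; equivalently, one may simply take mapping cones of the two descent spectral sequences computing $H^\ast(W,\Z/n\Z)$ and $H^\ast(Z,\Z/n\Z)$, which are compatible with the restriction maps along $i$ and the $i_m$.

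Finally I would run the degree count. Fix $k$ with $-1\leq k\leq\rho$. Every graded piece of $H^k(\mathcal{K})$ for the spectral-sequence filtration is a subquotient of $E_1^{m,k-m}=H^{k-m}(\mathcal{K}_m)$ for some $m\geq 0$. If $0\leq m\leq\rho+1$, then $k-m\leq\rho-m$, so $H^{k-m}(\mathcal{K}_m)=0$ by the hypothesis $\tau_{\leq\rho-m}\mathcal{K}_m=0$; if $m\geq\rho+2$, then $k-m\leq\rho-m\leq -2$, so $H^{k-m}(\mathcal{K}_m)=0$ because $\mathcal{K}_m\in D^{\geq -1}$. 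Hence $H^k(\mathcal{K})=0$ for all $k\leq\rho$, which is the claim. I do not expect a genuine obstacle here: the content is entirely formal once Proposition \ref{Proposition:proper descent for adic spaces} is available, and the only points that need a little care are the compatibility of the cohomological-descent spectral sequence with the formation of mapping cones and the bookkeeping of truncation degrees, including the $D^{\geq -1}$ remark that handles the range $m\geq\rho+2$ not covered by the hypothesis.
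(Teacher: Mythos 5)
Your proposal is correct and takes essentially the same route as the paper: both apply Proposition~\ref{Proposition:proper descent for adic spaces} to obtain cohomological descent for the base-changed simplicial resolutions of $W$ and $Z$, build the cone $\Delta$ on the simplicial topos of $W_\bullet$ whose total cohomology is $\mathcal{K}$ and whose restriction to $W_m$ computes $\mathcal{K}_m$, and conclude from the simplicial hypercohomology spectral sequence $E_1^{m,q}=H^q(\mathcal{K}_m)\Rightarrow H^{m+q}(\mathcal{K})$. Your explicit degree count, including the observation that $\mathcal{K}_m\in D^{\geq -1}$ handles the terms with $m\geq\rho+2$ that the hypothesis does not reach, is exactly the bookkeeping the paper leaves implicit in the phrase ``the assertion follows from the following spectral sequence.''
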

\begin{proof}
The assertion can be proved by a similar method used in \cite[Lemma 4.1]{Orgogozo06}.
We shall give a brief sketch here.

Let
$\beta' \colon W_{\bullet}=\cosq_0(W_0/W) \to W$
(resp.\
$\beta'' \colon Z_{\bullet}=\cosq_0(Z_0/Z) \to Z$)
be the base change of
$\beta$ to $W$ (resp.\ to $Z$).
The morphism $i$ induces a morphism
$
i_\bullet \colon (Z_{\bullet})^\sim \to (W_{\bullet})^\sim
$
of topoi.
For the sheaf $\Z/n\Z$ on $W$,
we have
$\Z/n\Z \cong R\beta'_*\beta'^*\Z/n\Z$
by Proposition \ref{Proposition:proper descent for adic spaces},
and we obtain isomorphisms
\[
Ri_*i^* R\beta'_*\beta'^*\Z/n\Z \cong Ri_*R\beta''_*(i_{\bullet})^*\beta'^*\Z/n\Z
\cong R\beta'_*R(i_{\bullet})_*(i_{\bullet})^*\beta'^*\Z/n\Z
\]
by the proper base change theorem \cite[Theorem 4.4.1 (b)]{Huber96} and by a spectral sequence as in \cite[(5.2.7.1)]{Deligne HodgeIII} (see also \cite[Tag 0D7A]{StacksProject}).
Thus,
by applying $R\beta'_*$ to
the following distinguished triangle
\[
\beta'^*\Z/n\Z \to R(i_{\bullet})_*(i_{\bullet})^*\beta'^*\Z/n\Z \to \Delta \to,
\]
we have the following distinguished triangle
\[
\Z/n\Z \to Ri_*i^*\Z/n\Z \to R\beta'_*\Delta \to.
\]
This implies that
$
R\Gamma((W_{\bullet})^\sim, \Delta)\cong
R\Gamma(W, R\beta'_*\Delta) \cong \mathcal{K}.
$
By
\cite[Expos\'e Vbis, Corollaire 1.3.12]{SGA 4-II},
we have
$R\Gamma(W_m, \Delta \vert_{W_m}) \cong \mathcal{K}_m$
for every $m \geq 0$,
where $\Delta \vert_{W_m}$ denotes the restriction of $\Delta$ to $(W_m)^\sim_\et$.
Now the assertion follows from the following spectral sequence (cf.\ \cite[(5.2.3.2)]{Deligne HodgeIII} and \cite[Tag 09WJ]{StacksProject}):
\[
E^{p, q}_1=H^q(W_p, \Delta \vert_{W_p}) \Rightarrow
H^{p+q}((W_{\bullet})^\sim, \Delta).
\]
\end{proof}

\subsection{Reduction to the key case} \label{Subsection:Reduction to the key case}

In this subsection, we deduce Theorem \ref{Theorem:tubular neighborhood compact support} and Theorem \ref{Theorem:tubular neighborhood direct image} from Lemma \ref{Lemma:key lemma, tubular neighborhood}.
A theorem of de Jong \cite[Theorem 4.1]{deJong96} will again play a key role.

\begin{lem}\label{Lemma:reduction to V(f) case compact support}
To prove Theorem \ref{Theorem:tubular neighborhood compact support}, it suffices to prove the following statement
$\textbf{P}_c(i)$ for every integer $i$.
\begin{enumerate}
    \item[$\textbf{P}_c(i)$:]
    Let $\mathcal{X}$ be a separated scheme of finite type over $\O$ and $\mathcal{Z} \hookrightarrow \mathcal{X}$ a closed immersion defined by one global section $f \in \O_{\mathcal{X}}(\mathcal{X})$.
Then there exists an element
$\epsilon_0 \in \vert K^{\times} \vert$,
such that for every
$\epsilon \in \vert K^{\times} \vert$
with
$\epsilon \leq \epsilon_0$ and every positive integer $n$ invertible in $\O$,
we have
\[
H^j_c(S(\mathcal{Z}, \epsilon)\backslash d(\widehat{\mathcal{Z}}), \Z/n\Z)=0
\quad \text{and} \quad
H^j_c(T(\mathcal{Z}, \epsilon_0)\backslash T(\mathcal{Z}, \epsilon), \Z/n\Z)=0
\]
for every $j \leq i$.
\end{enumerate}
\end{lem}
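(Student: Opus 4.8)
The plan is to split the statement into two reductions: first passing from the three isomorphism assertions of Theorem \ref{Theorem:tubular neighborhood compact support} to the vanishing of compactly supported cohomology of complements of tubular neighborhoods, and then passing, for an arbitrary finitely presented closed immersion, from that vanishing to the case of a single global defining function, which is precisely $\textbf{P}_c(i)$.

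For the first reduction I would note that $d(\widehat{\mathcal{Z}})$ is closed in $S(\mathcal{Z},\epsilon)$, that $T(\mathcal{Z},\epsilon)$ is open in $T(\mathcal{Z},\epsilon_0)$ when $\epsilon\le\epsilon_0$, and that $Q(\mathcal{Z},\epsilon)=d(\widehat{\mathcal{X}})\setminus S(\mathcal{Z},\epsilon)$ is open in $d(\widehat{\mathcal{X}})\setminus d(\widehat{\mathcal{Z}})$ with closed complement $S(\mathcal{Z},\epsilon)\setminus d(\widehat{\mathcal{Z}})$. All of the pseudo-adic spaces involved are taut over $\Spa(K,\O)$ by Example \ref{Example:taut subsets} and \cite[Lemma 5.1.4]{Huber96}, so the excision triangles for $Rf_!$ from \cite[Section 5.5]{Huber96} yield long exact sequences
\begin{align*}
\cdots\to H^i_c(S(\mathcal{Z},\epsilon)\setminus d(\widehat{\mathcal{Z}})) &\to H^i_c(S(\mathcal{Z},\epsilon)) \to H^i_c(d(\widehat{\mathcal{Z}})) \to\cdots,\\
\cdots\to H^i_c(T(\mathcal{Z},\epsilon)) &\to H^i_c(T(\mathcal{Z},\epsilon_0)) \to H^i_c(T(\mathcal{Z},\epsilon_0)\setminus T(\mathcal{Z},\epsilon)) \to\cdots,\\
\cdots\to H^i_c(Q(\mathcal{Z},\epsilon)) &\to H^i_c(d(\widehat{\mathcal{X}})\setminus d(\widehat{\mathcal{Z}})) \to H^i_c(S(\mathcal{Z},\epsilon)\setminus d(\widehat{\mathcal{Z}})) \to\cdots
\end{align*}
(with $\Z/n\Z$-coefficients), in which the displayed first maps are the natural maps of Theorem \ref{Theorem:tubular neighborhood compact support} (cf.\ \cite[Remark 5.5.11]{Huber96}). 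It therefore suffices to find $\epsilon_0$ such that, for all $\epsilon\le\epsilon_0$ and all $n$ invertible in $\O$, $H^i_c(S(\mathcal{Z},\epsilon)\setminus d(\widehat{\mathcal{Z}}),\Z/n\Z)=0$ and $H^i_c(T(\mathcal{Z},\epsilon_0)\setminus T(\mathcal{Z},\epsilon),\Z/n\Z)=0$ for every $i$; as $d(\widehat{\mathcal{X}})$ is finite-dimensional these groups vanish above a fixed degree, so it is enough to handle the degrees $i\le\rho$ for a preassigned integer $\rho$.

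For the second reduction, fix $\rho$ and a finitely presented closed immersion $\mathcal{Z}\hookrightarrow\mathcal{X}$. I would set $\mathcal{Y}:=\Bl_{\mathcal{Z}}\mathcal{X}\sqcup\mathcal{Z}$, with the evident proper surjective morphism $p\colon\mathcal{Y}\to\mathcal{X}$, and put $\mathcal{Y}_\bullet=\cosq_0(\mathcal{Y}/\mathcal{X})$. On the blow-up factor the preimage of $\mathcal{Z}$ is the exceptional effective Cartier divisor, hence locally principal; consequently on every $\mathcal{Y}_k$ the preimage $\mathcal{Z}\times_{\mathcal{X}}\mathcal{Y}_k$ is locally principal on the factor $(\Bl_{\mathcal{Z}}\mathcal{X})^{\times_{\mathcal{X}}(k+1)}$ (being the pullback of that divisor along a projection), while it equals the whole space on any factor containing a copy of $\mathcal{Z}$ (the diagonal of a closed immersion is an isomorphism, so the corresponding tubular-neighborhood complement is empty). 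Applying Corollary \ref{Corollary:cohomological descent spectral sequence compact support} to $p$ and to the taut locally closed subsets $W=S(\mathcal{Z},\epsilon)\setminus d(\widehat{\mathcal{Z}})$ and $W=T(\mathcal{Z},\epsilon_0)\setminus T(\mathcal{Z},\epsilon)$ of $d(\widehat{\mathcal{X}})$, and identifying via Proposition \ref{Proposition:Tubular neighborhoods}(3) the pullback $W_k$ of $W$ along $d(\widehat{\mathcal{Y}_k})\to d(\widehat{\mathcal{X}})$ with the corresponding complement of a tubular neighborhood inside $d(\widehat{\mathcal{Y}_k})$, one obtains first-quadrant spectral sequences $E_1^{k,l}=H^l_c(W_k,\Z/n\Z)\Rightarrow H^{k+l}_c(W,\Z/n\Z)$; for the abutment to vanish in degrees $\le\rho$ only the finitely many simplicial degrees $k\le\rho$ are relevant. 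For each such $k$ I would cover $(\Bl_{\mathcal{Z}}\mathcal{X})^{\times_{\mathcal{X}}(k+1)}$ by finitely many affine opens on which $\mathcal{Z}$ is cut out by one global function; a Mayer--Vietoris spectral sequence for $Rf_!$ along this cover (its finite intersections being again affine with $\mathcal{Z}$ principal, by separatedness) reduces the vanishing of $H^l_c(W_k,\Z/n\Z)$ in the relevant bounded range to $\textbf{P}_c(\rho)$ applied to each of these affines. Taking $\epsilon_0$ to be the minimum of the finitely many thresholds produced this way, together with a short d\'evissage in $\epsilon$ to reconcile the $\epsilon_0$'s appearing on both sides of the $T$-statement, finishes the argument.

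The point that needs care is the simultaneous handling of two nested descents — the simplicial one along $p$ and the Mayer--Vietoris one along affine covers of the $\mathcal{Y}_k$ — against the fact that $\textbf{P}_c(i)$ only provides vanishing in a bounded range of degrees and that we must exhibit one uniform $\epsilon_0$: this is possible only because both spectral sequences live in a quadrant, so that in any fixed total degree only finitely many strata contribute, and because each stratum is a separated scheme of finite type over $\O$ on which $\mathcal{Z}$ is defined by a single global section, to which $\textbf{P}_c$ applies verbatim. A further routine but essential check is that every tubular-neighborhood complement occurring, over $\mathcal{X}$ and over the various $\mathcal{Y}_k$, is taut over $\Spa(K,\O)$, so that $Rf_!$ and all of the above spectral sequences and exact sequences are legitimate; this follows from Example \ref{Example:taut subsets} and \cite[Lemma 5.1.4]{Huber96}.
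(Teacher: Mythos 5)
Your proposal is correct, and its overall skeleton matches the paper's: excision long exact sequences plus a uniform cohomological-dimension bound (your $\rho$) to get from the three isomorphism statements to a vanishing statement for tubular-neighborhood complements, then cohomological descent (Corollary \ref{Corollary:cohomological descent spectral sequence compact support}) and the \v{C}ech/Mayer--Vietoris spectral sequence \cite[Remark 5.5.12 iii)]{Huber96} to pass from arbitrary finitely presented $\mathcal{Z}\hookrightarrow\mathcal{X}$ to the case of a single global defining function. Where you genuinely diverge is in the organization of the second reduction. The paper introduces the intermediate statement $\textbf{P}'_c(i)$ (the same vanishing for arbitrary finitely presented closed immersions) and argues by induction on $i$, doing two separate descent steps: first pass to the disjoint union of irreducible components of a flat model (to make $\mathcal{X}$ integral, so that the blow-up is surjective), and only then blow up along $\mathcal{Z}$; the induction hypothesis $\textbf{P}'_c(i_0-1)$ handles the higher simplicial strata $\mathcal{Y}_k$ in both descents. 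You instead use the single hypercover $\mathcal{Y}=\Bl_{\mathcal{Z}}\mathcal{X}\sqcup\mathcal{Z}$, which is already proper and surjective over any $\mathcal{X}$ (no integrality or flattening needed), and observe directly that on each component of $\mathcal{Y}_k$ the pullback of $\mathcal{Z}$ is either locally principal (pullback of the exceptional Cartier divisor on an all-blow-up factor) or the whole component (whenever a $\mathcal{Z}$-factor occurs, making the complement empty). Combined with the global bound $\rho$, only finitely many simplicial degrees and finitely many affine intersections contribute, so you may invoke $\textbf{P}_c$ at a large enough index and take a minimum of thresholds --- no induction is required. This buys a cleaner one-pass argument; the only imprecision is that the Mayer--Vietoris spectral sequence shifts total degree by up to the size of the affine cover, so one actually needs $\textbf{P}_c(\rho+N-1)$ for a cover of size $N$ rather than literally $\textbf{P}_c(\rho)$ --- harmless here, since all indices are available by hypothesis, and you flag the bookkeeping yourself. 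The $\epsilon$-d\'evissage you describe to reconcile the two occurrences of $\epsilon_0$ in the $T$-statement is the same excision argument the paper uses in its Step 1.
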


\begin{proof}
\textit{Step 1.}
To prove Theorem \ref{Theorem:tubular neighborhood compact support},
it suffices to prove the following statement
$\textbf{P}'_c(i)$ for every $i$.
\begin{enumerate}
    \item[$\textbf{P}'_c(i)$:]
    Let $\mathcal{X}$ be a separated scheme of finite type over $\O$ and $\mathcal{Z} \hookrightarrow \mathcal{X}$ a closed immersion of finite presentation.
Then there exists an element
$\epsilon_0 \in \vert K^{\times} \vert$,
such that for every
$\epsilon \in \vert K^{\times} \vert$
with
$\epsilon \leq \epsilon_0$ and every positive integer $n$ invertible in $\O$,
we have
\[
H^j_c(S(\mathcal{Z}, \epsilon)\backslash d(\widehat{\mathcal{Z}}), \Z/n\Z)=0
\quad
\text{and}
\quad
H^j_c(T(\mathcal{Z}, \epsilon_0)\backslash T(\mathcal{Z}, \epsilon), \Z/n\Z)=0
\]
for every $j \leq i$.
\end{enumerate}

Indeed,
let $\mathcal{X}$ be a separated scheme of finite type over $\O$ and $\mathcal{Z} \hookrightarrow \mathcal{X}$ a closed immersion of finite presentation.
By applying \cite[Remark 5.5.11 iv)]{Huber96} to the following diagram
\[
S(\mathcal{Z}, \epsilon)\backslash d(\widehat{\mathcal{Z}}) \hookrightarrow
S(\mathcal{Z}, \epsilon)
\hookleftarrow d(\widehat{\mathcal{Z}}),
\]
we have the following long exact sequence:
\[
\begin{split}
\cdots \to H^i_c(S(\mathcal{Z}, \epsilon)\backslash d(\widehat{\mathcal{Z}}), \Z/n\Z) \to & H^i_c(S(\mathcal{Z}, \epsilon), \Z/n\Z)  \\
\to H^i_c(d(\widehat{\mathcal{Z}}), \Z/n\Z) \to & H^{i+1}_c(S(\mathcal{Z}, \epsilon)\backslash d(\widehat{\mathcal{Z}}), \Z/n\Z) \to \cdots.
\end{split}
\]
We note that
$
S(\mathcal{Z}, \epsilon)\backslash d(\widehat{\mathcal{Z}}) = (d(\widehat{\mathcal{X}})\backslash d(\widehat{\mathcal{Z}}))\backslash Q(\mathcal{Z}, \epsilon).
$
Hence we have a similar spectral sequence for
the diagram
\[
Q(\mathcal{Z}, \epsilon)
\hookrightarrow
d(\widehat{\mathcal{X}})\backslash d(\widehat{\mathcal{Z}})
\hookleftarrow S(\mathcal{Z}, \epsilon)\backslash d(\widehat{\mathcal{Z}}).
\]
Moreover,
for elements $\epsilon, \epsilon' \in \vert K^\times \vert$ with
$\epsilon \leq \epsilon'$,
we have a similar spectral sequence for
the diagram
\[
T(\mathcal{Z}, \epsilon) \hookrightarrow
T(\mathcal{Z}, \epsilon')
\hookleftarrow T(\mathcal{Z}, \epsilon')\backslash T(\mathcal{Z}, \epsilon).
\]
By \cite[Proposition 5.5.8]{Huber96},
there exists an integer $N$,
which is independent of $n$ and $\epsilon, \epsilon' \in \vert K^\times \vert$,
such that we have
\[
H^{i}_c(S(\mathcal{Z}, \epsilon) \backslash d(\widehat{\mathcal{Z}}), \Z/n\Z)=0
\quad \text{and} \quad
H^i_c(T(\mathcal{Z}, \epsilon')\backslash T(\mathcal{Z}, \epsilon), \Z/n\Z)=0
\]
for every $i \geq N$.
Our claim follows from these results.

\textit{Step 2.}
We suppose that $\textbf{P}_c(i)$ holds for every $i$.
We will prove $\textbf{P}'_c(i)$ by induction on $i$.
The assertion holds trivially for $i=-1$.
We assume that $\textbf{P}'_c(i_0-1)$ holds.
First, we claim that, to prove
$\textbf{P}'_c(i_0)$,
we may assume that
$\mathcal{X}$ is integral.

We may assume that $\mathcal{X}$ is flat over $\O$.
Then every irreducible component of $\mathcal{X}$ dominates $\Spec \O$,
and hence $\mathcal{X}$ has finitely many irreducible components.
Let $\mathcal{X}'$ be the disjoint union of the irreducible components of $\mathcal{X}$.
Then $\mathcal{X}' \to \mathcal{X}$ is proper and surjective.
By $\textbf{P}'_c(i_0-1)$ and Corollary \ref{Corollary:cohomological descent spectral sequence compact support}, it suffices to prove
$\textbf{P}'_c(i_0)$ for $\mathcal{X}'$ and $\mathcal{Z} \times_{\mathcal{X}} \mathcal{X}'$.
By considering each component of $\mathcal{X}'$ separately, our claim follows.

\textit{Step 3.}
We assume that $\mathcal{X}$ is integral.
We may assume further that $\mathcal{Z}$ is not equal to
$\mathcal{X}$.
Let $\mathcal{Y} \to \mathcal{X}$ be the blow-up of
$\mathcal{X}$ along $\mathcal{Z}$, which is proper and surjective.
By $\textbf{P}'_c(i_0-1)$ and Corollary \ref{Corollary:cohomological descent spectral sequence compact support},
it suffices to prove $\textbf{P}'_c(i_0)$ for $\mathcal{Y}$ and
$\mathcal{Z} \times_{\mathcal{X}} \mathcal{Y}$.
Consequently, to prove $\textbf{P}'_c(i_0)$, we may assume further that
$\mathcal{Z} \hookrightarrow \mathcal{X}$ is locally defined by one function.

Finally,
let
$\mathcal{X}=\bigcup_{\alpha \in I} \mathcal{U}_\alpha$
be a finite affine covering
such that
$\mathcal{Z} \cap \mathcal{U}_\alpha \hookrightarrow \mathcal{U}_\alpha$
is defined by one global section in $\O_{\mathcal{X}}(\mathcal{U}_\alpha)$ for every $\alpha \in I$.
We have the following spectral sequence by \cite[Remark 5.5.12 iii)]{Huber96}:
\[
E^{i, j}_{1}=\bigoplus_{J \subset I, \, \sharp J = -i+1} H^j_c(S(\mathcal{Z}_J, \epsilon) \backslash d(\widehat{\mathcal{Z}}_J), \Z/n\Z) \Rightarrow H^{i+j}_c(S(\mathcal{Z}, \epsilon) \backslash d(\widehat{\mathcal{Z}}), \Z/n\Z).
\]
Here we write $\mathcal{U}_J:= \cap_{\alpha \in J}\mathcal{U}_\alpha$ and
$\mathcal{Z}_J := \mathcal{Z} \times_{\mathcal{X}} \mathcal{U}_J \hookrightarrow \mathcal{U}_J$.
We have a similar spectral sequence for
$T(\mathcal{Z}, \epsilon')\backslash T(\mathcal{Z}, \epsilon)$.
Since $\textbf{P}_c(i)$ holds for every $i$ by hypothesis,
it follows that $\textbf{P}'_c(i_0)$ holds for
$\mathcal{X}$ and $\mathcal{Z} \hookrightarrow \mathcal{X}$.
\end{proof}

\begin{lem}\label{Lemma:reduction to V(f) direct image}
To prove Theorem \ref{Theorem:tubular neighborhood direct image},
it suffices to prove the following statement
$\textbf{P}(i)$ for every integer $i$.
\begin{enumerate}
    \item[$\textbf{P}(i)$:]
    Let $\mathcal{X}$ be a separated scheme of finite type over $\O$ and $\mathcal{Z} \hookrightarrow \mathcal{X}$ a closed immersion defined by one global section $f \in \O_{\mathcal{X}}(\mathcal{X})$.
We consider the following distinguished triangles:
\[
R\Gamma(T(\mathcal{Z}, \epsilon), \Z/n\Z) \to R\Gamma(S(\mathcal{Z}, \epsilon)^{\circ}, \Z/n\Z) \to \mathcal{K}_1(\epsilon, n) \to,
\]
\[
R\Gamma(S(\mathcal{Z}, \epsilon), \Z/n\Z) \to R\Gamma(d(\widehat{\mathcal{Z}}), \Z/n\Z) \to \mathcal{K}_2(\epsilon, n) \to,
\]
\[
R\Gamma(S(\mathcal{Z}, \epsilon), \Z/n\Z) \to R\Gamma(S(\mathcal{Z}, \epsilon)^{\circ}, \Z/n\Z) \to \mathcal{K}_3(\epsilon, n) \to.
\]
Then there exists an element
$\epsilon_0 \in \vert K^{\times} \vert$,
such that for every
$\epsilon \in \vert K^{\times} \vert$
with
$\epsilon \leq \epsilon_0$ and every positive integer $n$ invertible in $\O$,
we have 
$
\tau_{\leq i}\mathcal{K}_m(\epsilon, n)=0
$
for every $m \in \{ 1, 2, 3 \}$.
\end{enumerate}
\end{lem}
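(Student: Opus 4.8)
\emph{Approach.} The plan is to imitate, step by step, the proof of Lemma~\ref{Lemma:reduction to V(f) case compact support}, replacing \'etale cohomology with compact support by ordinary \'etale cohomology throughout: Corollary~\ref{Corollary:cohomological descent direct image} plays the role of Corollary~\ref{Corollary:cohomological descent spectral sequence compact support}, the \v{C}ech spectral sequence of an open covering plays the role of the one for compact supports, and a uniform bound on the cohomological dimension of analytic adic spaces (Huber, \cite[Corollary 2.8.3]{Huber96}) plays the role of \cite[Proposition 5.5.8]{Huber96}.

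\emph{Step 1: reduction to a finite-presentation version.} Let $\textbf{P}'(i)$ denote the statement obtained from $\textbf{P}(i)$ by allowing $\mathcal{Z}\hookrightarrow\mathcal{X}$ to be an arbitrary closed immersion of finite presentation. I would first show that "$\textbf{P}'(i)$ for every $i$" implies Theorem~\ref{Theorem:tubular neighborhood direct image}. The pseudo-adic spaces $T(\mathcal{Z},\epsilon)$, $S(\mathcal{Z},\epsilon)$, $S(\mathcal{Z},\epsilon)^{\circ}$ and $d(\widehat{\mathcal{Z}})$ are separated and taut over $\Spa(K,\O)$ (Example~\ref{Example:taut subsets}), so their \'etale cohomology with $\Z/n\Z$-coefficients ($n\in\O^{\times}$) vanishes in degrees $>N$ for an integer $N$ independent of $n$ and of $\epsilon$; hence each cone $\mathcal{K}_m(\epsilon,n)$ has cohomology concentrated in degrees $\le N+1$, and $\textbf{P}'(N+1)$ produces a single $\epsilon_0$ (working for all $n$) with $\mathcal{K}_m(\epsilon,n)=0$ for all $m$ and all $\epsilon\le\epsilon_0$. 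The vanishing of $\mathcal{K}_1,\mathcal{K}_2,\mathcal{K}_3$ says exactly that the restriction maps $T(\mathcal{Z},\epsilon)\to S(\mathcal{Z},\epsilon)^{\circ}$, $S(\mathcal{Z},\epsilon)\to d(\widehat{\mathcal{Z}})$ and $S(\mathcal{Z},\epsilon)\to S(\mathcal{Z},\epsilon)^{\circ}$ are isomorphisms on all $H^{i}$; applying the two-out-of-three principle along the chain $d(\widehat{\mathcal{Z}})\subseteq S(\mathcal{Z},\epsilon)^{\circ}\subseteq S(\mathcal{Z},\epsilon)\subseteq T(\mathcal{Z},\epsilon)$ then yields all the isomorphisms asserted in Theorem~\ref{Theorem:tubular neighborhood direct image}. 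Since that theorem already assumes $\mathcal{Z}\hookrightarrow\mathcal{X}$ of finite presentation, no further reduction is needed at this point (cf.\ Remark~\ref{Remark:finite presentation}).

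\emph{Step 2: d\'evissage from $\textbf{P}'$ to $\textbf{P}$.} It then remains to deduce "$\textbf{P}'(i)$ for all $i$" from "$\textbf{P}(i)$ for all $i$", which I would do by induction on $i$; the case $i=-1$ is immediate once $\epsilon_0$ is shrunk so that the three restriction maps are injective on $H^{0}$ (a connectedness argument in the constructible topology, as in Lemma~\ref{Lemma:tubular neighborhood small enough}), so that $\mathcal{K}_m\in D^{\ge 0}$. Assuming $\textbf{P}'(i_0-1)$ for all pairs, fix $(\mathcal{X},\mathcal{Z})$. We may assume $\mathcal{X}$ is $\O$-flat (this does not change $d(\widehat{\mathcal{X}})$), so $\mathcal{X}$ has finitely many irreducible components, all dominating $\Spec\O$; with $\mathcal{X}'$ their disjoint union (reduced), $d(\widehat{\mathcal{X}'})\to d(\widehat{\mathcal{X}})$ is proper surjective (\cite[Lemma 3.5]{Mieda06}), and Corollary~\ref{Corollary:cohomological descent direct image} — whose hypotheses for $m\ge1$ come from $\textbf{P}'(i_0-1)$ applied to the $(m{+}1)$-fold fibre products, the closed subschemes being pulled back — reduces $\textbf{P}'(i_0)$ for $(\mathcal{X},\mathcal{Z})$ to $\textbf{P}'(i_0)$ for $(\mathcal{X}',\mathcal{Z}\times_{\mathcal{X}}\mathcal{X}')$, hence (treating components separately) to the case $\mathcal{X}$ integral. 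If then $\mathcal{Z}\ne\mathcal{X}$, the blow-up $\mathcal{Y}\to\mathcal{X}$ of $\mathcal{X}$ along $\mathcal{Z}$ is proper surjective with $\mathcal{Z}\times_{\mathcal{X}}\mathcal{Y}\hookrightarrow\mathcal{Y}$ locally defined by one function, and the same application of Corollary~\ref{Corollary:cohomological descent direct image} reduces to this case. Finally, choosing a finite affine cover $\{\mathcal{U}_{\alpha}\}$ of $\mathcal{X}$ on which $\mathcal{Z}$ is cut out by a single global section, the $\{d(\widehat{\mathcal{U}_{\alpha}})\}$ form a finite open cover of $d(\widehat{\mathcal{X}})$ compatible with $T(\mathcal{Z},\epsilon)$, $S(\mathcal{Z},\epsilon)$ and $d(\widehat{\mathcal{Z}})$ by Proposition~\ref{Proposition:Tubular neighborhoods}(3), and with $S(\mathcal{Z},\epsilon)^{\circ}$ because the interior of a subset commutes with restriction to an open subset; the \v{C}ech spectral sequence of this cover, applied to the two terms of each cone $\mathcal{K}_m$, deduces $\tau_{\le i_0}\mathcal{K}_m(\epsilon,n)=0$ from $\textbf{P}(i_0)$ applied to the pairs $(\mathcal{U}_J,\mathcal{Z}\cap\mathcal{U}_J)$ for the finite intersections $\mathcal{U}_J$.

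\emph{Main obstacle.} The one place where the argument genuinely departs from the compact-support case is the interior $S(\mathcal{Z},\epsilon)^{\circ}$ in the cohomological-descent steps: the proper surjective maps $d(\widehat{\mathcal{Y}})\to d(\widehat{\mathcal{X}})$ used there are not open, so the preimage of $S(\mathcal{Z},\epsilon)^{\circ}$ need not equal the interior of the preimage of $S(\mathcal{Z},\epsilon)$, and Corollary~\ref{Corollary:cohomological descent direct image} does not apply verbatim. I would resolve this exactly as in the proof of Lemma~\ref{Lemma:calculation of cohomology}: identify $S(\mathcal{Z},\epsilon)^{\circ}$ with the increasing union $\bigcup_{\delta\in|K^{\times}|,\ \delta<\epsilon}T(\mathcal{Z},\delta)$ of quasi-compact open tubular neighborhoods (cf.\ \cite[Lemma 1.3]{Huber98a}), which \emph{is} compatible with all the base-change maps in question by Proposition~\ref{Proposition:Tubular neighborhoods}(3), since preimages commute with unions; then, using the finiteness of $H^{i}(T(\mathcal{Z},\delta),\Z/n\Z)$ (\cite[Proposition 6.1.1]{Huber96}) together with \cite[Lemma 3.9.2 i)]{Huber96}, write $R\Gamma(S(\mathcal{Z},\epsilon)^{\circ},\Z/n\Z)\cong R\varprojlim_{\delta}R\Gamma(T(\mathcal{Z},\delta),\Z/n\Z)$, so that the descent and \v{C}ech arguments apply level-wise in $\delta$. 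With this reformulation the entire reduction is formal; the genuinely new input — the smoothness of $d(f)$ over an annulus and the uniform compatibility of the sliced nearby cycles with base change — enters only afterwards, through Lemma~\ref{Lemma:key lemma, tubular neighborhood}, which is proved in Section~\ref{Subsection:Proof of the key case}.
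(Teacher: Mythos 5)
Your proposal follows the same route as the paper's: reproduce the d\'evissage of the compact-support case, replacing the proper-descent tool \cite[Remark 5.5.12]{Huber96} by Corollary~\ref{Corollary:cohomological descent direct image}, the \v{C}ech sequence for $j_!$ by the ordinary \v{C}ech-to-cohomology sequence, and the uniform bound \cite[Proposition 5.5.8]{Huber96} by \cite[Corollary 2.8.3]{Huber96} supplemented with the identity $S(\mathcal{Z},\epsilon)^{\circ}=\bigcup_{m}T(\mathcal{Z},t^{1/m}\epsilon)$. That matches the paper exactly. Two small remarks. First, your base case is harder than it needs to be: the cones $\mathcal{K}_m(\epsilon,n)$ live in $D^{\geq-1}$ because the two $R\Gamma$'s are in $D^{\geq 0}$, so $\tau_{\leq -2}\mathcal{K}_m=0$ holds for free and the induction should start at $i=-2$ (as in the analogous induction in Lemma~\ref{Lemma:reduction to generic smooth case}); starting at $i=-1$ and trying to arrange $H^{0}$-injectivity by shrinking $\epsilon_0$ is both unnecessary and not clearly justified by Lemma~\ref{Lemma:tubular neighborhood small enough}. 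Second, your resolution of the ``interior'' obstacle, while correct, is more than what is needed: once you note that $S(\mathcal{Z},\epsilon)^{\circ}$ is the increasing union of the $T(\mathcal{Z},t^{1/m}\epsilon)$ and these are compatible with pullbacks by Proposition~\ref{Proposition:Tubular neighborhoods}(3), you already have $d(\beta_m)^{-1}(S(\mathcal{Z},\epsilon)^{\circ})=S(\mathcal{Z}_{\mathcal{Y}_m},\epsilon)^{\circ}$ on the nose, so Corollary~\ref{Corollary:cohomological descent direct image} applies directly to the three inclusions $S^{\circ}\hookrightarrow T$, $d(\widehat{\mathcal{Z}})\hookrightarrow S$, $S^{\circ}\hookrightarrow S$ with the inductive hypothesis on the pulled-back pairs, and there is no need to realize $R\Gamma(S^{\circ},-)$ as an $R\varprojlim$ and argue ``level-wise in $\delta$'' (which would in any case require controlling cones of the form $R\Gamma(T(\epsilon))\to R\Gamma(T(\delta))$, not the ones $\textbf{P}(i)$ governs).
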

\begin{proof}
Let $\mathcal{X}$ be a separated scheme of finite type over $\O$ and $\mathcal{Z} \hookrightarrow \mathcal{X}$ a closed immersion of finite presentation.
By \cite[Corollary 2.8.3]{Huber96},
there exists an integer $N$,
which is independent of $\epsilon$ and $n$,
such that
cohomology groups
$
H^i(T(\mathcal{Z}, \epsilon), \Z/n\Z),
$
$
H^i(S(\mathcal{Z}, \epsilon), \Z/n\Z),
$
and
$H^i(d(\widehat{\mathcal{Z}}), \epsilon), \Z/n\Z)$
vanish for every $i \geq N$.
Let $t \in \vert K^\times \vert$ be an element with
$t <1$.
Then we have
\[
S(\mathcal{Z}, \epsilon)^{\circ} = \bigcup_{m \in \Z_{>0}} T(\mathcal{Z}, t^{1/m}\epsilon);
\]
see \cite[Lemma 1.3]{Huber98a}.
Therefore, by \cite[Lemma 3.9.2 i)]{Huber96},
the same holds for the cohomology group
$H^i(S(\mathcal{Z}, \epsilon)^{\circ}, \Z/n\Z)$.

We can prove the assertion by the same argument as in the proof of Lemma \ref{Lemma:reduction to V(f) case compact support}
by using the results remarked above instead of
\cite[Proposition 5.5.8]{Huber96} and
by using
Corollary \ref{Corollary:cohomological descent direct image}
instead of
Corollary \ref{Corollary:cohomological descent spectral sequence compact support}.
\end{proof}

We will now prove the desired statement:

\begin{lem}\label{Lemma:reduction to generic smooth case}
To prove Theorem \ref{Theorem:tubular neighborhood compact support} and Theorem \ref{Theorem:tubular neighborhood direct image},
it suffices to prove Lemma \ref{Lemma:key lemma, tubular neighborhood}.
\end{lem}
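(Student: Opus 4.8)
The plan is to first invoke Lemma \ref{Lemma:reduction to V(f) case compact support} and Lemma \ref{Lemma:reduction to V(f) direct image}, which already reduce Theorem \ref{Theorem:tubular neighborhood compact support} and Theorem \ref{Theorem:tubular neighborhood direct image} to the statements $\textbf{P}_c(i)$ and $\textbf{P}(i)$ for all integers $i$, and then to prove these by induction on $i$ (the case $i=-1$ being empty). In the inductive step I fix a separated scheme $\mathcal{X}$ of finite type over $\O$ with a closed subscheme $\mathcal{Z}=V(f)$, $f\in\O_{\mathcal{X}}(\mathcal{X})$, and assume $\textbf{P}_c(i-1)$ and $\textbf{P}(i-1)$ for every such pair. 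The decisive point is that as soon as $d(f)\colon d(\widehat{\mathcal{X}})\to\B(1)$ is smooth over $\B(\epsilon_1)\backslash\{0\}$ for some $\epsilon_1\in\vert K^{\times}\vert$ with $\epsilon_1\le 1$, Lemma \ref{Lemma:key lemma, tubular neighborhood} applies and its assertions (1)--(4) contain, verbatim, the vanishings asserted by $\textbf{P}_c(i)$ and $\textbf{P}(i)$ (in fact for all $i$ at once); so the whole task is to reduce the general case to this one.

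For the reduction I would run the cohomological descent of Steps 2--3 of the proof of Lemma \ref{Lemma:reduction to V(f) case compact support}. If $\mathcal{X}'\to\mathcal{X}$ is proper surjective, the terms $\mathcal{X}'_m$ of $\cosq_0(\mathcal{X}'/\mathcal{X})$ are again separated of finite type over $\O$, each carrying the closed subscheme cut out by the pullback of $f$; Corollary \ref{Corollary:cohomological descent spectral sequence compact support} (for $\textbf{P}_c$), Corollary \ref{Corollary:cohomological descent direct image} (for $\textbf{P}$) and Proposition \ref{Proposition:Tubular neighborhoods}~(3) then deduce $\textbf{P}_c(i)$, $\textbf{P}(i)$ for $\mathcal{X}$ from the same statements for $\mathcal{X}'$ together with $\textbf{P}_c(i-1)$, $\textbf{P}(i-1)$ for the $\mathcal{X}'_m$ with $m\ge 1$ (only finitely many of which intervene in each cohomological degree, so these are covered by the inductive hypothesis). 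Arranging $\mathcal{X}$ flat over $\O$ (which we may do without changing $d(\widehat{\mathcal{X}})$ or $d(\widehat{\mathcal{Z}})$, cf.\ Remark \ref{Remark:finite presentation}) and applying this to the disjoint union of the irreducible components of $\mathcal{X}$, I reduce to $\mathcal{X}$ integral. Three degenerate cases are then settled directly: if $\mathcal{X}$ does not dominate $\Spec\O$ then $d(\widehat{\mathcal{X}})=\emptyset$; if $f=0$ then $\mathcal{Z}=\mathcal{X}$ and every pseudo-adic space occurring in the statements equals $d(\widehat{\mathcal{X}})$, so the statements are trivial; and if $f\vert_{\mathcal{X}_K}$ is a nonzero constant $a\in K^{\times}$ then $d(\widehat{\mathcal{Z}})=\emptyset$ and, for any $\epsilon_0\in\vert K^{\times}\vert$ with $\epsilon_0<\vert a\vert$, the tubes $S(\mathcal{Z},\epsilon)$ and $T(\mathcal{Z},\epsilon)$ are empty for all $\epsilon\le\epsilon_0$. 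Hence I may assume $\mathcal{X}$ integral and flat over $\O$ with $f\vert_{\mathcal{X}_K}\colon\mathcal{X}_K\to\A^1_K=\Spec K[T]$ dominant.

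Next I would apply de Jong's alteration theorem \cite[Theorem 4.1]{deJong96} to $\mathcal{X}_K$: there is an alteration $\mathcal{X}'_K\to\mathcal{X}_K$ with $\mathcal{X}'_K$ regular, hence --- $K$ being algebraically closed --- smooth over $K$; spreading out and normalizing produces an integral normal $\mathcal{X}'$, separated of finite type over $\O$, with generic fibre $\mathcal{X}'_K$ and a proper surjective morphism $\mathcal{X}'\to\mathcal{X}$, and I write $f'\in\O_{\mathcal{X}'}(\mathcal{X}')$ for the pullback of $f$, so $\mathcal{Z}\times_{\mathcal{X}}\mathcal{X}'=V(f')$. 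If the induced morphism $f'\colon\mathcal{X}'_K\to\A^1_K$ is separable --- automatic when $\mathrm{char}\,K=0$, as $f'$ is dominant --- then, $\mathcal{X}'_K$ being smooth over $K$, generic smoothness yields a dense open $V\subset\A^1_K$ with $f'^{-1}(V)\to V$ smooth; since $0$ is removed and $\A^1_K\backslash V$ is finite I may choose $\epsilon_1\in\vert K^{\times}\vert$ so small that $\B(\epsilon_1)\backslash\{0\}\subset V$ inside $(\A^1_K)^{\ad}$, and then, $d(\widehat{\mathcal{X}'})\hookrightarrow(\mathcal{X}'_K)^{\ad}$ being an open immersion, $d(f')$ is smooth over $\B(\epsilon_1)\backslash\{0\}$; so Lemma \ref{Lemma:key lemma, tubular neighborhood} applies to $(\mathcal{X}',V(f'))$ and the descent above concludes the proof. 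If $\mathrm{char}\,K=p>0$ and $f'$ is inseparable, then $df'=0$ in $\Omega^1_{\mathcal{X}'_K/K}$, so $f'$ is a $p$-th power in $K(\mathcal{X}'_K)$ and, $\mathcal{X}'$ being normal, $(f')^{1/p}\in\O_{\mathcal{X}'}(\mathcal{X}')$. Replacing $f'$ by $(f')^{1/p}$ changes nothing essential: the closed immersion $V((f')^{1/p})\hookrightarrow V(f')$ is defined by a nilpotent ideal, so the two subschemes have the same \'etale cohomology after passing to Raynaud generic fibres, and the tubes of $V(f')$ of radius $\epsilon$ are the tubes of $V((f')^{1/p})$ of radius $\epsilon^{1/p}$ (using that $\vert K^{\times}\vert$ is divisible). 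Iterating --- the process terminates because $f'$ is not constant and $\bigcap_{k\ge 0}(K(\mathcal{X}'_K))^{p^{k}}=K$ --- I reach a separable morphism to $\A^1_K$ and conclude as before.

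The hard part is exactly this last step: manufacturing a model on which $d(f)$ is smooth over a \emph{punctured disc} --- not merely a model whose Raynaud generic fibre is smooth over $\Spa(K,\O)$ --- and, in positive characteristic, removing the inseparability of $f$ without disturbing the tubular neighbourhoods or the \'etale cohomology involved. Everything else is the dévissage by cohomological descent and induction on the cohomological degree that already underlies Lemma \ref{Lemma:reduction to V(f) case compact support} and Lemma \ref{Lemma:reduction to V(f) direct image}.
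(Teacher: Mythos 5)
Your reduction to $\textbf{P}_c(i)$ and $\textbf{P}(i)$, the induction on $i$ with cohomological descent via Corollaries \ref{Corollary:cohomological descent spectral sequence compact support} and \ref{Corollary:cohomological descent direct image}, and the characteristic-zero case all match the paper. In characteristic $p>0$, however, your route is genuinely different. The paper applies de Jong's theorem once to the reduced subscheme of $\mathcal{X} \times_{\Spec \O[T]} \Spec K(T^{1/p^{\infty}})$, producing an alteration $Y \to \mathcal{X} \times_{\Spec \O[T]} \Spec K(T^{1/p^N})$ with $Y$ integral and smooth over $K(T^{1/p^N})$, spreads out over $\O[T^{1/p^N}]$, and then takes $f' = T^{1/p^N}$; the map $Y \to \Spec \O[T]$ defined by $T \mapsto f'$ is automatically generically smooth. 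You instead apply de Jong over $K$ to make the total space $\mathcal{X}'_K$ smooth, and then iterate $f' \mapsto (f')^{1/p}$ on the fixed scheme until the map to $\A^1_K$ becomes separable, using $\bigcap_k(K(\mathcal{X}'_K))^{p^k}=K$ for termination. Both achieve the same effect. Your version is arguably more elementary (de Jong is invoked only over $K$), while the paper's single alteration over the perfection is shorter and sidesteps the need to argue about roots of $f$.

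There is one point in your argument that needs repair. You claim that ``spreading out and normalizing produces an integral normal $\mathcal{X}'$, separated of finite type over $\O$,'' and you then use normality to place $(f')^{1/p}$ in $\O_{\mathcal{X}'}(\mathcal{X}')$. Finiteness of the normalization is not automatic here: $\O$ is a rank-$1$ valuation ring, generally non-Noetherian and not obviously a Nagata ring, so the standard finiteness theorems for normalization do not apply and you give no substitute. This is fixable without changing the shape of your argument: rather than normalizing, replace $\mathcal{X}'$ at each step by $\bigl(\Spec_{\mathcal{X}'}(\O_{\mathcal{X}'}[T]/(T^p - f'))\bigr)_{\mathrm{red}}$. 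This is finite over $\mathcal{X}'$, hence proper, integral (the fiber over the generic point is a local Artin ring, so the total space is irreducible), of finite type over $\O$, and carries $(f')^{1/p}$ as a global section; moreover its generic fiber equals $\mathcal{X}'_K$ because $\mathcal{X}'_K$, being smooth over $K$, is normal and already contains $(f')^{1/p}$. In particular the passage is an isomorphism on Raynaud generic fibers, so no descent is even needed for this replacement. With this modification your argument goes through; alternatively you could adopt the paper's single alteration over $K(T^{1/p^{\infty}})$, which avoids the issue entirely.
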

\begin{proof}
We suppose that
Lemma \ref{Lemma:key lemma, tubular neighborhood} holds.
By Lemma \ref{Lemma:reduction to V(f) case compact support} and Lemma \ref{Lemma:reduction to V(f) direct image},
it suffices to prove that
$\textbf{P}_c(i)$ and $\textbf{P}(i)$ hold for every $i$.
Let us show the assertions by induction on $i$.
The assertions $\textbf{P}_c(-1)$ and $\textbf{P}(-2)$ hold trivially.
Assume that $\textbf{P}_c(i-1)$ and $\textbf{P}(i-1)$ hold.
Let $\mathcal{X}$ be a separated scheme of finite type over $\O$ and
$\mathcal{Z} \hookrightarrow \mathcal{X}$
a closed immersion defined by one global section $f \in \O_{\mathcal{X}}(\mathcal{X})$.
We shall show
that $\textbf{P}_c(i)$
and
$\textbf{P}(i)$ hold for $\mathcal{X}$ and
$\mathcal{Z}$.
As in the proof of Lemma \ref{Lemma:reduction to V(f) case compact support}, we may assume that $\mathcal{X}$ is integral.

First, we prove the assertions in the case where $K$ is of characteristic zero.
By \cite[Theorem 4.1]{deJong96},
there is an integral alteration
$Y \to \mathcal{X}_K$
such that $Y$ is smooth over $K$.
By Nagata's compactification theorem,
there is a proper surjective
morphism
$\mathcal{Y} \to \mathcal{X}$
such that
$\mathcal{Y}_K \cong Y$ over
$\mathcal{X}_K$ and $\mathcal{Y}$ is integral.
By the induction hypothesis,
Corollary \ref{Corollary:cohomological descent spectral sequence compact support},
and Corollary \ref{Corollary:cohomological descent direct image},
it suffices to prove
$\textbf{P}_c(i)$
and
$\textbf{P}(i)$ for $\mathcal{Y}$ and
$\mathcal{Z} \times_{\mathcal{X}} \mathcal{Y}$.
(As we have already seen in the proof of Corollary
\ref{Corollary:cohomological descent spectral sequence compact support}, the morphism
$d(\widehat{\mathcal{Y}}) \to d(\widehat{\mathcal{X}})$
is proper and surjective.)
Therefore, we may assume that $\mathcal{X}_K$ is smooth over $K$.
Let
\[
f \colon \mathcal{X} \to \Spec \O[T]
\]
be the morphism defined by $T \mapsto f$.
Since $K$ is of characteristic zero,
there is an open dense subset $W \subset \Spec K[T]$
such that $f_K$ is smooth over $W$.
It follows from \cite[Proposition 1.9.6]{Huber96} that there exists
an open subset $V \subset \B(1)$ whose complement
consists of finitely many $K$-rational points of $\B(1)$
such that 
$d(f) \colon d(\widehat{\mathcal{X}}) \to \B(1)$ is smooth over $V$.
Thus
$\textbf{P}_c(i)$
and $\textbf{P}(i)$ hold for $\mathcal{X}$ and
$\mathcal{Z}$ since we suppose that
Lemma \ref{Lemma:key lemma, tubular neighborhood} holds.

Let us now suppose that $K$ is of characteristic $p >0$.
As above, let
$
f \colon \mathcal{X} \to \Spec \O[T]
$
be the morphism defined by $T \mapsto f$.
If $f$ is not dominant,
then $\textbf{P}_c(i)$
and
$\textbf{P}(i)$ hold trivially for
$\mathcal{X}$ and
$\mathcal{Z}$.
Thus we may assume that $f$ is dominant.
By applying \cite[Theorem 4.1]{deJong96} to the underlying reduced subscheme of
$
\mathcal{X} \times_{\Spec \O[T]} \Spec K(T^{1/p^{\infty}}),
$
where $K(T^{1/p^{\infty}})$ is the perfection of $K(T)$,
we find an alteration
\[
g_K \colon Y \to \mathcal{X} \times_{\Spec \O[T]} \Spec K(T^{1/p^{N}})
\]
such that $Y$ is integral and smooth over $K(T^{1/p^{N}})$
for some integer $N \geq 0$.
By Nagata's compactification theorem,
there is a proper surjective morphism
\[
g \colon \mathcal{Y} \to \mathcal{X} \times_{\Spec \O[T]} \Spec \O[T^{1/p^{N}}]
\]
whose base change to $\Spec K(T^{1/p^{N}})$ is isomorphic to $g_K$.
As above,
it suffices to
prove
$\textbf{P}_c(i)$
and
$\textbf{P}(i)$ for $\mathcal{Y}$ and
$\mathcal{Z} \times_{\mathcal{X}} \mathcal{Y}$.

Let $f'$ be the image of $T^{1/p^{N}} \in \O[T^{1/p^{N}}]$ in
$\O_\mathcal{Y}(\mathcal{Y})$
and
let $\mathcal{Z}' \hookrightarrow \mathcal{Y}$ be the closed subscheme defined by $f'$.
Then we have $(f')^{p^N}=f$, where the image of
$f$ in $\O_\mathcal{Y}(\mathcal{Y})$ is also denoted by $f$.
Hence the closed immersion
$
d(\widehat{\mathcal{Z}}') \hookrightarrow d((\mathcal{Z} \times_{\mathcal{X}} \mathcal{Y})^\wedge)
$
is a homeomorphism and
we have
\[
S(\mathcal{Z}', \epsilon)=S(\mathcal{Z} \times_{\mathcal{X}} \mathcal{Y}, \epsilon^{p^N})
\quad 
\text{and}
\quad
T(\mathcal{Z}', \epsilon)=T(\mathcal{Z} \times_{\mathcal{X}} \mathcal{Y}, \epsilon^{p^N})
\]
for every $\epsilon \in \vert K^\times \vert$.
Thus,
it suffices to prove that
$\textbf{P}_c(i)$
and
$\textbf{P}(i)$ hold for $\mathcal{Y}$ and
$\mathcal{Z}'$; see \cite[Proposition 2.3.7]{Huber96}.
By the construction,
the generic fiber of the morphism
$
\mathcal{Y} \to \Spec \O[T]
$
defined by $T \mapsto f'$ is smooth over $K(T)$.
Therefore,
as in the case of characteristic zero,
Lemma \ref{Lemma:key lemma, tubular neighborhood}
implies
$\textbf{P}_c(i)$
and
$\textbf{P}(i)$ for $\mathcal{Y}$ and
$\mathcal{Z}'$.

The proof of Lemma \ref{Lemma:reduction to generic smooth case} is complete.
\end{proof}

\subsection{Proof of the key case}\label{Subsection:Proof of the key case}

In this subsection, we prove Lemma \ref{Lemma:key lemma, tubular neighborhood} and finish the proofs of Theorem \ref{Theorem:tubular neighborhood compact support} and Theorem \ref{Theorem:tubular neighborhood direct image}.

\begin{proof}[\bf  Proof of Lemma \ref{Lemma:key lemma, tubular neighborhood}]
We may assume that $\mathcal{X}$ is flat over $\Spec \O$.
Then $\mathcal{X}$ is of finite presentation over $\Spec \O$ by \cite[Premi\`ere partie, Corollaire 3.4.7]{Raynaud-Gruson}.
By
Proposition \ref{Proposition:constant sheaves adapted}
and
Theorem \ref{Theorem:uniform local constancy},
there exists an element $\epsilon_0 \in \vert K^\times \vert$
with $\epsilon_0 \leq \epsilon_1$
such that,
for all
$a, b \in \vert K^{\times} \vert$ with $a < b \leq \epsilon_0$
and every positive integer $n$ invertible in $\O^\times$,
there exists a finite \'etale morphism
\[
h \colon \B(c, d) \to \B(a, b)
\]
such that $h$ is a composition of finite Galois \'etale morphisms
and
the pull-back
\[
h^{*}((R^id(f)_!\Z/n\Z)\vert_{\B(a, b)})
\]
is a constant sheaf associated with a finitely generated $\Z/n\Z$-module for every $i$.
We shall show that $\epsilon_0$ satisfies the desired properties.
Let $n$ be a positive integer invertible in $\O$
and $\epsilon \in \vert K^{\times} \vert$ an element 
with
$\epsilon \leq \epsilon_0$.

(1)
We have
\[
S(\mathcal{Z}, \epsilon)\backslash d(\widehat{\mathcal{Z}}) = d(f)^{-1}(\D(\epsilon) \backslash \{ 0 \}) \quad \text{and} \quad T(\mathcal{Z}, \epsilon_0)\backslash T(\mathcal{Z}, \epsilon) = d(f)^{-1}(\B(\epsilon_0) \backslash \B(\epsilon)).
\]
Keeping the base change theorem \cite[Theorem 5.4.6]{Huber96} for $Rd(f)_!$ in mind,
we have the following spectral sequences by \cite[Remark 5.5.12 i)]{Huber96}:
\[
E^{i, j}_{2}=H^i_c(\D(\epsilon) \backslash \{ 0 \}, (R^jd(f)_!\Z/n\Z)\vert_{\D(\epsilon) \backslash \{ 0 \}}) \Rightarrow H^{i+j}_c(S(\mathcal{Z}, \epsilon)\backslash d(\widehat{\mathcal{Z}}), \Z/n\Z),
\]
\[
E^{i, j}_{2}=H^i_c(\B(\epsilon_0) \backslash \B(\epsilon), (R^jd(f)_!\Z/n\Z)\vert_{\B(\epsilon_0) \backslash \B(\epsilon)}) \Rightarrow H^{i+j}_c(T(\mathcal{Z}, \epsilon_0)\backslash T(\mathcal{Z}, \epsilon), \Z/n\Z).
\]
Hence the assertion (1) follows from
Lemma \ref{Lemma:calculation of cohomology}.

(2)
We have
\[
T(\mathcal{Z}, \epsilon) = d(f)^{-1}(\B(\epsilon)) \quad \text{and} \quad S(\mathcal{Z}, \epsilon)^{\circ} = d(f)^{-1}(\D(\epsilon)^{\circ}).
\]
Be the Leray spectral spectral sequences,
it suffices to prove that
the restriction map
\[
H^i(\B(\epsilon), R^jd(f)_*\Z/n\Z) \to H^i(\D(\epsilon)^{\circ}, R^jd(f)_*\Z/n\Z)
\]
is an isomorphism for all $i, j$.
Let $\epsilon' \in \vert K^\times \vert$ be an element with $\epsilon' < \epsilon$.
Then
$
\{ \B(\epsilon', \epsilon), \D(\epsilon)^{\circ} \}
$
is an open covering of $\B(\epsilon)$.
By the \v{C}ech-to-cohomology spectral sequences,
it is enough to prove that the restriction map
\[
H^i(\B(\epsilon', \epsilon), R^jd(f)_*\Z/n\Z) \to H^i(\D(\epsilon)^{\circ} \cap \B(\epsilon', \epsilon), R^jd(f)_*\Z/n\Z)
\]
is an isomorphism for all $i, j$.

The inverse image
$d(f)^{-1}(\B(\epsilon', \epsilon))$
has finitely many connected components.
It is enough to show that,
for every connected component
$W \subset d(f)^{-1}(\B(\epsilon', \epsilon))$
and
the restriction
$
g \colon W \to \B(\epsilon', \epsilon)
$
of $d(f)$,
the restriction map
\[
H^i(\B(\epsilon', \epsilon), R^{j}g_*\Z/n\Z) \to H^i(\D(\epsilon)^{\circ} \cap \B(\epsilon', \epsilon), R^{j}g_*\Z/n\Z)
\]
is an isomorphism for all $i, j$.
The morphism $g$ is of pure dimension $N$ for some integer $N \geq 0$.
(See \cite[Section 1.8]{Huber96} for
the definition of the dimension of a morphism of adic spaces.)
Since
$g$
is smooth
and
$R^ig_!\Z/n\Z$
is a locally constant constructible sheaf of $\Z/n\Z$-modules,
Poincar\'e duality \cite[Corollary 7.5.5]{Huber96} implies that
\[
R^{j}g_*(\Z/n\Z(N))\cong (R^{2N-j}g_!\Z/n\Z)^\vee
\]
for every $j$,
where $(N)$ denotes the Tate twist and $()^\vee$ denotes the $\Z/n\Z$-dual.
(Here we use the fact that $\Z/n\Z$ is an injective $\Z/n\Z$-module.)
The right hand side satisfies the assumption of Lemma \ref{Lemma:calculation of cohomology} (1), and hence the assertion follows from the lemma.

(3)
We have
$S(\mathcal{Z}, \epsilon)=d(f)^{-1}(\D(\epsilon))$.
Let
\[
d(f)' \colon S(\mathcal{Z}, \epsilon) \to \D(\epsilon)
\]
be the base change of $d(f)$ to $\D(\epsilon)$.
We write
\[
\mathcal{F}_j:=R^jd(f)'_*\Z/n\Z.
\]
We claim that restriction map
$
H^i(\D(\epsilon), \mathcal{F}_j) \to  H^i(\{ 0 \}, \mathcal{F}_j\vert_{ \{ 0 \}})
$
is an isomorphism for all $i, j$.
Since we have by \cite[Example 2.6.2]{Huber96}
\[
H^i(\{ 0 \}, \mathcal{F}_j\vert_{ \{ 0 \}})= 
\begin{cases}
H^j(d(\widehat{\mathcal{Z}}), \Z/n\Z) \quad &(i=0) \\
0 \quad &(i \neq 0),
\end{cases}
\]
the claim and the Leray spectral spectral sequence for $d(f)'$
imply the assertion (3).

We prove the claim.
Since $\D(\epsilon)$ and $\{ 0 \}$ are proper over $\Spa(K, \O)$,
we have
\[
H^i(\D(\epsilon), \mathcal{F}_j)=H^i_c(\D(\epsilon), \mathcal{F}_j)
\quad
\text{and}
\quad
H^i(\{ 0 \}, \mathcal{F}_j\vert_{ \{ 0 \}})=H^i_c(\{ 0 \}, \mathcal{F}_j\vert_{ \{ 0 \}}),
\]
and hence
it suffices to prove that
$
H^i_c(\D(\epsilon)\backslash \{ 0 \}, \mathcal{F}_j\vert_{\D(\epsilon)\backslash \{ 0 \}}) = 0
$
for all $i, j$.
Moreover,
by \cite[Proposition 5.4.5 ii)]{Huber96},
it suffices to prove that,
for any $\epsilon' \in \vert K^\times \vert$ with $\epsilon' < \epsilon$,
we have
$
H^i_c(\D(\epsilon)\cap \B(\epsilon', \epsilon), \mathcal{F}_j\vert_{\D(\epsilon)\cap \B(\epsilon', \epsilon)}) = 0
$
for all $i, j$.

Let $\epsilon' \in \vert K^\times \vert$ be an element with $\epsilon' < \epsilon$.
Let $W$ and $g$ be as in the proof of (2).
Let
\[
g' \colon g^{-1}(\D(\epsilon)\cap \B(\epsilon', \epsilon)) \to \D(\epsilon)\cap \B(\epsilon', \epsilon)
\]
be the base change of $g$.
It suffices to prove that
$
H^i_c(\D(\epsilon)\cap \B(\epsilon', \epsilon), R^jg'_*\Z/n\Z)
= 0
$
for all $i, j$.
By the base change theorem \cite[Theorem 5.4.6]{Huber96} for $Rg_!$,
we have
\[
(R^{j}g_!\Z/n\Z)\vert_{\D(\epsilon)\cap \B(\epsilon', \epsilon)} \cong R^jg'_!\Z/n\Z.
\]
In particular the right hand side is a locally constant constructible sheaf of $\Z/n\Z$-modules.
As in the proof of (2),
Poincar\'e duality \cite[Corollary 7.5.5]{Huber96} for $g'$ then implies that
\[
R^{j}g'_*(\Z/n\Z(N))\cong (R^{2N-j}g'_!\Z/n\Z)^\vee,
\]
and the assertion follows from Lemma \ref{Lemma:calculation of cohomology} (1).

(4)
Similarly as above,
it suffices to prove that the restriction map
\[
H^i(\D(\epsilon), \mathcal{F}_j) \to H^i(\D(\epsilon)^{\circ}, \mathcal{F}_j)
\]
is an isomorphism for all $i, j$.
Let $\epsilon' \in \vert K^\times \vert$ be an element with $\epsilon' < \epsilon$.
As in the proof of (2),
it suffices to prove that the restriction map
\[
H^i(\D(\epsilon)\cap \B(\epsilon', \epsilon), \mathcal{F}_j) \to
H^i(\D(\epsilon)^{\circ}\cap \B(\epsilon', \epsilon), \mathcal{F}_j)
\]
is an isomorphism for all $i, j$.
By the proof of (3),
the sheaf
$\mathcal{F}_j\vert_{\D(\epsilon) \cap \B(\epsilon', \epsilon)}$
is a locally constant constructible sheaf of $\Z/n\Z$-modules.
Hence the assertion follows from the proof of \cite[Lemma 2.5]{Huber98b}.
(In \cite{Huber98b}, the characteristic of the base field is always assumed to be zero.
However \cite[Lemma 2.5]{Huber98b} holds in positive characteristic without changing the proof.)

The proof of Lemma \ref{Lemma:key lemma, tubular neighborhood} is complete.
\end{proof}

Theorem \ref{Theorem:tubular neighborhood compact support} and Theorem \ref{Theorem:tubular neighborhood direct image} now follow from Lemma \ref{Lemma:key lemma, tubular neighborhood} and Lemma \ref{Lemma:reduction to generic smooth case}.

\appendix

\section{Finite \'etale coverings of annuli}\label{Appendix:finite etale coverings of annuli}

In this appendix,
we prove Theorem \ref{Theorem:split into annuli} and Theorem \ref{Theorem:trivialization of tame sheaf}.
We retain the notation of Section \ref{Subsection:Tame sheaves on annuli}.
In particular,
we fix an algebraically closed complete non-archimedean field $K$ with ring of integers $\O$.
We will follow the methods given in Ramero's paper \cite{Ramero05}.

Following \cite{Ramero05}, we will use the following notation in this appendix.
Recall that for a morphism of finite type
$
\Spa(A, A^+) \to \Spa(K, \O),
$
where $(A, A^+)$ is a complete affinoid ring,
the ring $A^+$ coincides with the ring $A^{\circ}$ of power-bounded elements of $A$; see \cite[Lemma 4.4]{Huber94} and \cite[Section 1.2]{Huber96}.
We often omit $A^+$ and abbreviate $\Spa(A, A^+)$ to
$
\Spa(A).
$
If $A$ is reduced,
then $A^{\circ}$ is topologically of finite type over $\O$,
i.e.\
$
A^{\circ}\cong \O \langle T_1, \dotsc, T_n \rangle/I
$
for some ideal $I \subset \O \langle T_1, \dotsc, T_n \rangle$ by \cite[6.4.1, Corollary 4]{BGR}.
Let $\mathfrak{m} \subset \O$ be the maximal ideal and 
let $\kappa:=\O/\mathfrak{m}$ be the residue field.
The quotient
\[
A^{\sim}:= A^{\circ}/\mathfrak{m}A^{\circ}
\]
is a finitely generated algebra over $\kappa$.
We note that the ideal
$\mathfrak{m}A^{\circ}$
coincides with the set of topologically nilpotent elements of $A$.
In particular, the ring $A^{\sim}$ is reduced.

\subsection{Open annuli in the unit disc}\label{Subsection:Open annuli in the unit disc}

We recall some basic properties of
open annuli in the unit disc $\B(1)=\Spa(K \langle T \rangle)$.

Let $a, b \in \vert K^{\times} \vert$ be elements with
$a \leq b \leq 1$.
Recall that we defined
\begin{align*}
\B(a, b)&:= \{ x \in \B(1) \, \vert \, a \leq  \vert T(x) \vert \leq b \} \\
&:= \{ x \in \B(1) \, \vert \, \vert \varpi_a(x) \vert \leq  \vert T(x) \vert \leq \vert \varpi_b(x) \vert \},
\end{align*}
where $\varpi_a, \varpi_b \in K^\times$ are elements such that
$a=\vert \varpi_a \vert$ and $b=\vert \varpi_b \vert$.
We have
\[
\B(a, b) \cong \Spa (K\langle T, T_a, T_b \rangle/(T_aT-\varpi_a, T-\varpi_bT_b))
\]
as an adic space over $\B(1)$.
The adic space $\B(a, b)$ is isomorphic to $\B(a/b, 1)$ as an adic space over $\Spa(K)$.
We write
$
A(a, b):=\O_{\B(1)}(\B(a, b)).
$

We will focus on the following points of
the unit disc $\B(1)$.
Let
$r \in \vert K^{\times} \vert$
be an element with $r \leq 1$.

\begin{itemize}
    \item Let
\[
\eta(r)^{\flat} \colon K \langle T \rangle \to \vert K^{\times} \vert, \quad \sum_{i \geq 0}a_i T^i \mapsto \max_{i \geq 0}\{ \vert a_i \vert r^i \}
\]
be the Gauss norm of radius $r$ centered at $0$.
The corresponding point $\eta(r)^{\flat} \in \B(1)$ is denoted by the same letter.
    \item Let $\langle \delta \rangle$ be an infinite cyclic group with generator $\delta$.
We equip
$\vert K^{\times} \vert \times \langle \delta \rangle$ with a total order
such that
\[
(s, \delta^m) < (t, \delta^n) \Longleftrightarrow s < t, \quad \text{or} \quad s=t \quad \text{and} \quad m>n.
\]
So we have
$(1, \delta)<(1, 1)$
and
$(s, 1) < (1, \delta)$ for every $s \in \vert K^{\times} \vert$ with $s < 1$.
We identify $\delta$ with $(1, \delta)$ and $r$ with $(r, 1)$.
The valuation
\[
\eta(r) \colon K \langle T \rangle \to \vert K^{\times} \vert \times \langle \delta \rangle, \quad \sum_{i \geq 0}a_i T^i \mapsto \max_{i \geq 0}\{ \vert a_i \vert r^i\delta^i \}
\]
gives a point $\eta(r) \in  \B(1)$.
The point $\eta(r)$ is a specialization of $\eta(r)^{\flat}$,
i.e.\ we have
\[
\eta(r) \in \overline{\{ \eta(r)^{\flat} \}}.
\]
    \item Similarly, if $r < 1$,
the valuation
\[
\eta(r)' \colon K \langle T \rangle  \to \vert K^{\times} \vert \times \langle \delta \rangle, \quad \sum_{i \geq 0}a_i T^i \mapsto \max_{i \geq 0}\{ \vert a_i \vert r^i\delta^{-i} \}
\]
gives a point $\eta(r)' \in  \B(1)$.
The point $\eta(r)'$ is a specialization of $\eta(r)^{\flat}$.
\end{itemize}

We can use the points $\eta(r)$ and $\eta(r)'$ to describe the closure of an annulus $\B(a, b)$.

\begin{ex}\label{Example:complement closure}
Let $a, b \in \vert K^{\times} \vert$ be elements with
$a \leq b \leq 1$.
\begin{enumerate}
    \item For an element $r \in \vert K^{\times} \vert$
with $a \leq r \leq b$,
we have
$\eta(r)^{\flat} \in \B(a, b)$.
    If $a < r \leq b$,
    we have
$\eta(r) \in \B(a, b)$.
Similarly,
if $a \leq r < b$,
we have
$\eta(r)' \in \B(a, b)$.
    \item We assume that $a \leq b < 1$.
    Let $\B(a, b)^c$
be the closure of $\B(a, b)$ in $\B(1)$.
Then we have
$
\B(a, b)^c \backslash \B(a, b)= \{ \eta(a), \eta(b)' \}.
$
In particular, the complement $\B(a, b)^c \backslash \B(a, b)$ consists of two points.
    \item We have some kind of converse to (2).
We define
$
\D(1):=\{ x \in \B(1) \, \vert \, \vert T(x) \vert < 1 \},
$
which is a closed subset of $\B(1)$.
Let $X \subset \B(1)$ be a connected affinoid open subset contained in $\D(1)$.
Let $X^c$ be the closure of $X$ in $\B(1)$.
If the complement
$X^c \backslash X$ consists of two points,
then there exists an isomorphism
\[
X \cong \B(a, 1)
\]
of adic spaces over $\Spa(K)$ for some element $a \in \vert K^{\times} \vert$ with
$a \leq 1$.
This can be proved by using \cite[9.7.2, Theorem 2]{BGR}.
\end{enumerate}
\end{ex}

We recall the following example from \cite{Ramero05}, which is useful to study finite \'etale coverings of $\B(a, b)$.

\begin{ex}[{\cite[Example 2.1.12]{Ramero05}}]\label{Example:morphism defined by Ramero}
We assume that $a < b$.
Let
\[
\Psi \colon \B(a, b)=\Spa(A(a, b)) \to \B(1)=\Spa(K \langle S \rangle)
\]
be the morphism over $\Spa(K)$ defined by the following homomorphism
\[
\psi \colon \O \langle S \rangle \to \O\langle T_a, T_b \rangle/(T_aT_b-\varpi_a/\varpi_b) \cong A(a, b)^{\circ}, \quad S \mapsto T_a+T_b.
\]
The homomorphism $\psi$ makes $A(a, b)^{\circ}$ into a free
$\O \langle S \rangle$-module of rank $2$.
\end{ex}

\begin{rem}\label{Remark:finite etale morphism}
In the rest of this section,
we shall study finite \'etale coverings of
$\B(a, b)$.
We recall the following fact from \cite[Example 1.6.6 ii)]{Huber96}, which we will use freely:
Let $X$ be an affinoid adic space of finite type over $\Spa(K, \O)$.
Let $Y \to X$ be a finite \'etale morphism of adic spaces.
Then $Y$ is affinoid and the induced morphism
$
\Spec \O_Y(Y) \to \Spec \O_X(X)
$
of schemes
is finite and \'etale.
This construction gives an equivalence of categories between the category of adic spaces which are finite and \'etale over $X$ and the category of schemes which are finite and \'etale over $\Spec \O_X(X)$.
\end{rem}

In the rest of this subsection,
we give two lemmas about the connected components of a finite \'etale covering of $\B(a, b)$.

\begin{lem}\label{Lemma:the number of connected components}
We assume that $a < b$.
Let $f \colon X \to \B(a, b)$
be a finite \'etale morphism of adic spaces.
For every $t \in \vert K^{\times} \vert$ with $a/b < t^2 <1$,
there exists
an element
$s_0 \in \vert K^{\times} \vert$ with
$t< s_0 \leq 1$
such that
every connected component of
$f^{-1}(\B(a/s_0, s_0 b))$
remains connected after restricting to $\B(a/s, s b)$
for every $s \in \vert K^{\times} \vert$ with $t < s \leq s_0$.
\end{lem}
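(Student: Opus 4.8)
The plan is to track, as $s$ ranges over the elements of $\vert K^{\times}\vert$ with $t<s\le 1$, the number $n(s)$ of connected components of $f^{-1}(\B(a/s,sb))$, and to show that this number stabilizes as $s\to t^{+}$. First I would record the elementary geometry involved. Since $t^{2}>a/b$, for every such $s$ we have $a/s<sb$, so $\B(a/s,sb)$ is a non-empty connected annulus; moreover $s\le 1$ gives $\B(a/s,sb)\subseteq\B(a,b)$, and for $t<s\le s'\le 1$ one has the inclusions $\B(a/t,tb)\subseteq\B(a/s,sb)\subseteq\B(a/s',s'b)$. By Remark~\ref{Remark:finite etale morphism} each $f^{-1}(\B(a/s,sb))$ is an affinoid adic space, hence has finitely many connected components, each of which is open and closed; so $n(s)$ is a well-defined finite integer, and for any component $W$ the restriction $f\colon W\to\B(a/s,sb)$ is again finite \'etale.

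Next I would establish two properties of the function $s\mapsto n(s)$. It is bounded, since $n(s)\le\deg f<\infty$, where $\deg f$ denotes the degree of $f$ over the connected space $\B(a,b)$. And it is non-increasing in $s$: fix $t<s\le s'\le 1$ and let $W$ be a connected component of $f^{-1}(\B(a/s',s'b))$. The restriction $f\colon W\to\B(a/s',s'b)$ is finite \'etale of constant degree over the connected base, and this degree is $\ge 1$ because $W\neq\varnothing$; hence $f\colon W\to\B(a/s',s'b)$ is surjective, so $W\cap f^{-1}(\B(a/s,sb))\neq\varnothing$. Since the sets $W\cap f^{-1}(\B(a/s,sb))$, for $W$ running over the components of $f^{-1}(\B(a/s',s'b))$, are pairwise disjoint, open and closed in $f^{-1}(\B(a/s,sb))$ and cover it, each is a non-empty union of connected components, which gives $n(s)\ge n(s')$. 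Being integer-valued, bounded and non-increasing in $s$, the function $n$ attains its supremum $N$ at some $s_{0}\in\vert K^{\times}\vert$ with $t<s_{0}\le 1$, and then $n(s)=N$ for every $s\in\vert K^{\times}\vert$ with $t<s\le s_{0}$.

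To finish, I would fix such an $s$ and list the connected components $W_{1},\dots,W_{N}$ of $f^{-1}(\B(a/s_{0},s_{0}b))$. By the surjectivity argument above, each $W_{j}\cap f^{-1}(\B(a/s,sb))$ is a non-empty open and closed subset of $f^{-1}(\B(a/s,sb))$; these $N$ sets are pairwise disjoint and cover $f^{-1}(\B(a/s,sb))$. Since a connected subset of a space is contained in a single member of any partition into open and closed subsets, each of the $N$ connected components of $f^{-1}(\B(a/s,sb))$ lies in exactly one $W_{j}\cap f^{-1}(\B(a/s,sb))$; counting ($N$ components, $N$ non-empty pieces) forces each $W_{j}\cap f^{-1}(\B(a/s,sb))$ to be a single connected component. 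Thus each $W_{j}$ remains connected after restriction to $\B(a/s,sb)$, which is the assertion. I expect the main point to watch is the surjectivity of $f$ on each connected component of the relevant preimage — used both for non-emptiness after restriction and for the monotonicity of $n$ — the rest being bookkeeping with open and closed subsets and with the inclusions among the annuli.
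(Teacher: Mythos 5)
Your proof is correct and follows precisely the strategy the paper uses: observe that $n(s)$, the number of connected components of $f^{-1}(\B(a/s,sb))$, is non-increasing in $s$ and bounded above by the degree of $f$, and deduce the claim from these two facts. The paper states these two properties and asserts "the assertion follows"; your write-up simply fills in the surjectivity argument behind monotonicity and the counting argument at the end, which the paper leaves implicit.
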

\begin{proof}
The number of the connected components of
$f^{-1}(\B(a/s, s b))$
increases with decreasing $s$
and
it is bounded above by the degree of $f$ (i.e.\ the rank of $\O_X(X)$ as an $A(a, b)$-module)
for every $s \in \vert K^{\times} \vert$ with $a/b < s^2 \leq 1$.
The assertion follows from these properties.
\end{proof}

\begin{lem}\label{Lemma:the number of prime ideals}
We assume that $a < b$.
Let $f \colon X \to \B(a, b)$
be a finite \'etale morphism of adic spaces.
We write
$B:=\O_X(X)$
and consider the composition
\[
\O \langle S \rangle \overset{\psi}{\to} A(a, b)^{\circ} \to B^\circ,
\]
where $\psi$ is the homomorphism defined in Example \ref{Example:morphism defined by Ramero} and the second homomorphism is the one induced by $f$.
Let
$
\{ \mathfrak{q}_1, \dotsc, \mathfrak{q}_n  \} \subset \Spec B^{\circ}
$
be the set of the prime ideals of $B^{\circ}$ lying above the maximal ideal
$
\mathfrak{m}\O \langle S \rangle+S \O \langle S \rangle \subset \O \langle S \rangle.
$
Then,
for every $t \in \vert K^{\times} \vert$ with $a/b < t^2 <1$,
the adic space $f^{-1}(\B(a/t, t b))$ has at least $n$ connected components.
\end{lem}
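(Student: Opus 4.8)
The plan is to identify $f^{-1}$ of a sub-annulus of $\B(a,b)$ with $(\Psi f)^{-1}$ of a disc in $\B(1)$, and then to exploit the compatibility of the specialization maps with $\Psi f$. First I would record two elementary facts about the homomorphism $\psi$ of Example~\ref{Example:morphism defined by Ramero}: since $S=T_a+T_b$ while $|T_a(x)\,T_b(x)|=a/b$ for $x\in\B(a,b)$, a short case analysis (splitting according to whether $|T_a(x)|$ and $|T_b(x)|$ are equal, and using $t^2>a/b$) shows that for $x\in\B(a,b)$ one has $|S(x)|<1$ precisely when $a<|T(x)|<b$, and $|S(x)|\le t$ precisely when $a/t\le|T(x)|\le tb$. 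Writing $\B(1)=\Spa(K\langle S\rangle)$ for the target of $\Psi$, this gives
\[
f^{-1}(\B(a/t,tb))=(\Psi f)^{-1}\bigl(\{x:|S(x)|\le t\}\bigr)\ \subseteq\ (\Psi f)^{-1}\bigl(\{x:|S(x)|<1\}\bigr)=f^{-1}\bigl(\{x\in\B(1):a<|T(x)|<b\}\bigr),
\]
and as $t\to 1$ the left-hand side exhausts the right-hand side; all the subsets occurring are affinoid open, the fibres being affinoid because $f$ is finite.

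Second, I would identify $f^{-1}(\{x\in\B(1):a<|T(x)|<b\})$ with a disjoint union of $n$ tubes. The open subset $\{x\in\B(1):|S(x)|<1\}$ is exactly the fibre $\lambda^{-1}(\mathfrak p)$ of the specialization map $\lambda\colon\B(1)\to\Spf\O\langle S\rangle$ over the closed point $\mathfrak p=\mathfrak m\O\langle S\rangle+S\O\langle S\rangle$. Let $\phi\colon\Spf B^\circ\to\Spf\O\langle S\rangle$ be the adic morphism induced by $\O\langle S\rangle\xrightarrow{\psi}A(a,b)^\circ\to B^\circ$, so that $d(\phi)=\Psi f$. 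Since $\psi$ makes $A(a,b)^\circ$ free of rank $2$ over $\O\langle S\rangle$ and $B^\circ$ is finite over $A(a,b)^\circ$, the morphism $\phi$ is finite, so $\mathfrak q_1,\dots,\mathfrak q_n$ are closed points of $\Spf B^\circ$ and $\phi^{-1}(\mathfrak p)=\{\mathfrak q_1,\dots,\mathfrak q_n\}$ is a finite discrete closed subset. Using that the specialization maps $\lambda$ of $\Spf\O\langle S\rangle$ and $\lambda_X$ of $\Spf B^\circ$ are compatible with $\Psi f=d(\phi)$, I obtain
\[
f^{-1}\bigl(\{x\in\B(1):a<|T(x)|<b\}\bigr)=(\Psi f)^{-1}\bigl(\lambda^{-1}(\mathfrak p)\bigr)=\lambda_X^{-1}\bigl(\phi^{-1}(\mathfrak p)\bigr)=\coprod_{i=1}^{n}\lambda_X^{-1}(\mathfrak q_i),
\]
a disjoint union of $n$ open subsets, each non-empty because the tube of a closed point of $\Spf B^\circ$ is non-empty ($K$ being algebraically closed). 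In particular this space has at least $n$ connected components.

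Third, I would transport this lower bound to $\B(a/t,tb)$ by a monotonicity argument. For $\sqrt{a/b}<t'\le t<1$ put $U_t:=f^{-1}(\B(a/t,tb))$; then $U_{t'}\subseteq U_t$, each $U_t$ is affinoid, and $f|_{U_t}\colon U_t\to\B(a/t,tb)$ is finite étale onto a connected space, so every connected component of $U_t$ maps onto $\B(a/t,tb)$ and hence meets $U_{t'}$. Therefore $\pi_0(U_{t'})\to\pi_0(U_t)$ is surjective, so $\sharp\,\pi_0(U_t)$ is non-increasing in $t$; being bounded by $\deg f$ it stabilizes to some value $L$ as $t\to1$, and for $t$ close to $1$ the set $U_t$ meets every connected component of $\bigcup_{t<1}U_t=f^{-1}(\{x\in\B(1):a<|T(x)|<b\})$, so $\sharp\,\pi_0\bigl(f^{-1}(\{x\in\B(1):a<|T(x)|<b\})\bigr)\le L$. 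Combining this with the previous step, for every $t\in\vert K^{\times}\vert$ with $a/b<t^2<1$,
\[
\sharp\,\pi_0\bigl(f^{-1}(\B(a/t,tb))\bigr)\ \ge\ L\ \ge\ \sharp\,\pi_0\bigl(f^{-1}(\{x\in\B(1):a<|T(x)|<b\})\bigr)\ \ge\ n,
\]
which is the assertion.

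The point where I expect the most care is the second step: one has to check that $\{x:|S(x)|<1\}$ is precisely the tube of $\mathfrak p$, that the specialization maps are compatible with $\Psi f=d(\phi)$ so that $\lambda_X^{-1}(\phi^{-1}(\mathfrak p))=\coprod_i\lambda_X^{-1}(\mathfrak q_i)$, that tubes of closed points are non-empty open subsets, and that $B^\circ$ is finite over $\O\langle S\rangle$. All of these follow from the properties of the functor $d(-)$ recalled in Section~\ref{Analytic adic spaces associated with formal schemes} (in particular the commutative square relating $\lambda$ to $d$ of a morphism of formal schemes) together with the standard facts about $A^\circ$ for reduced affinoid $A$ recalled at the beginning of this appendix; the remaining steps are elementary.
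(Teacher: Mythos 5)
Your proof takes a genuinely different route from the paper's (tubes and specialization maps rather than the Henselian decomposition of $B^\circ\otimes_{\O\langle S\rangle}\O[[S]]$), and Steps 1 and 2 are essentially sound, but Step 3 rests on a false identity: $\bigcup_{t<1}U_t$ is \emph{not} equal to $f^{-1}(\{x\in\B(1):a<|T(x)|<b\})=(\Psi f)^{-1}(\{|S|<1\})$. Via $\Psi$ this would amount to $\bigcup_{t<1}\{|S|\le t\}=\{|S|<1\}$ in $\B(1)=\Spa(K\langle S\rangle)$, which fails because of rank-$2$ points: the point $\eta(1)$ of Section \ref{Subsection:Open annuli in the unit disc} (taken in the $S$-coordinate) has $|S(\eta(1))|=\delta$, which satisfies $\delta<1$ but $\delta>t$ for every $t\in|K^\times|$ with $t<1$, so it belongs to $\lambda^{-1}(\mathfrak{p})=\{|S|<1\}$ but to no $\B(t)$. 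In fact $\bigcup_{t<1}\B(t)$ is the \emph{interior} of $\{|S|<1\}$ and the inclusion is strict. As a result your chain $\sharp\pi_0(U_t)\ge L\ge \sharp\pi_0(\bigcup_{t<1}U_t)\ge n$ does not close: Step 2 bounds $\sharp\pi_0$ of the strictly larger set $\lambda_X^{-1}(\phi^{-1}(\mathfrak{p}))$, not of $\bigcup_{t<1}U_t$, and you give no argument that each tube $\lambda_X^{-1}(\mathfrak{q}_i)$ actually meets $\bigcup_{t<1}U_t$ (nonemptiness of the tube is not enough).

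The gap is repairable in the spirit of your approach: one needs that each $\lambda_X^{-1}(\mathfrak{q}_i)$ contains a classical (rank-$1$, $K$-valued) point, which lies in some $U_t$; since the sets $U_t\cap\lambda_X^{-1}(\mathfrak{q}_i)$ are open and closed in $U_t$, this gives $\sharp\pi_0(U_t)\ge n$ for $t$ close to $1$, and then for all $t$ by your monotonicity step. The needed surjectivity of specialization onto closed points of the special fiber, for the $\O$-flat formal scheme $\Spf B^\circ$ over an algebraically closed $K$, is a standard fact but must be invoked. The paper avoids the issue entirely by an algebraic argument: $\O[[S]]$ is Henselian local and $B^\circ$ is finite free over $\O\langle S\rangle$, so $B^\circ\otimes_{\O\langle S\rangle}\O[[S]]\cong R_1\times\cdots\times R_n$ with each $R_i$ local; and $\O\langle S\rangle\to\O_{\B(t)}(\B(t))^\circ$ factors through $\O[[S]]$ because $S$ is topologically nilpotent there, so tensoring yields $g^{-1}(\B(t))\cong\coprod_i\Spa\bigl(R_i\otimes_{\O[[S]]}\O_{\B(t)}(\B(t))\bigr)$ with each factor manifestly nonzero. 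This gives the same decomposition directly on the affinoid $g^{-1}(\B(t))$, with no need to pass through the tube or to discuss higher-rank points at all.
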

\begin{proof}
This lemma is proved in the proof of \cite[Theorem 2.4.3]{Ramero05}.
We recall the arguments for the reader's convenience.

We define $g$ as the composition
\[
g \colon X \overset{f}{\to} \B(a, b) \overset{\Psi}{\to} \B(1)=\Spa (K \langle S \rangle).
\]
Let $t \in  \vert K^\times \vert$ be an element with $a/b < t^2 <1$.
We define
$
\B(t):= \{ x \in \B(1) \, \vert \, \vert S(x) \vert \leq t \}.
$
Since
$\Psi^{-1}(\B(t))= \B(a/t, tb)$,
it is enough to show that
$g^{-1}(\B(t))$
has at least $n$ connected components.
The ring
$\O[[S]]$
is a Henselian local ring.
Since $B^{\circ}$ is a free
$\O \langle S \rangle$-module of finite rank by \cite[Proposition 2.3.5]{Ramero05},
we have a decomposition
\[
B^{\circ} \otimes_{\O \langle S \rangle} \O[[S]] \cong R_1 \times \cdots \times R_n,
\]
where $R_1, \dotsc, R_n$ are local rings, which are free $\O[[S]]$-modules of finite rank.
Since the natural homomorphism
$
\O \langle S \rangle=\O_{\B(1)}(\B(1))^{\circ} \to \O_{\B(t)}(\B(t))^{\circ}
$
factors through $\O \langle S \rangle \to \O[[S]]$,
we have
\[
g^{-1}(\B(t)) \cong \Spa(B_1) \coprod \cdots \coprod \Spa(B_n),
\]
where $B_i:=R_i \otimes_{\O[[S]]} \O_{\B(t)}(\B(t))$.
This proves our claim since $\Spa(B_i)$ is non-empty for every $1 \leq i \leq n$.
\end{proof}

\subsection{Discriminant functions and finite \'etale coverings of open annuli}\label{Subsection:Discriminant functions and finite etale coverings of open annuli}

Let $a, b \in \vert K^{\times} \vert$
be elements
with
$a < b \leq 1$.
Let
$
f \colon X \to \B(a, b)
$
be a finite \'etale morphism of adic spaces.
Let us briefly recall the definition of the \textit{discriminant function}
\[
\delta_f \colon [-\log b, -\log a] \to \R_{\geq 0}
\]
associated with $f$ following \cite{Ramero05}.

Let $\O^+_X$ be the subsheaf of $\O_X$
defined by
\[
\O^+_X(U)=\{ g \in \O_X(U) \, \vert \, \vert g(x) \vert \leq 1 \  \text{for every} \ x \in U \}
\]
for every open subset $U \subset X$.
For an element
$r \in \vert K^{\times} \vert$
with $a \leq r \leq b$,
let
\[
\mathscr{A}(r)^{\flat}:=(f_* \O^+_X)_{\eta(r)^{\flat}} 
\]
be the stalk of $f_* \O^+_X$ at the point $\eta(r)^{\flat}$.
The maximal ideal of the stalk
$\O_{\B(a, b), \eta(r)^{\flat}}$
at $\eta(r)^{\flat}$
is zero,
in other words, we have
$
\O_{\B(a, b), \eta(r)^{\flat}}=k(\eta(r)^{\flat}).
$
Hence
there is a natural homomorphism
$
k(\eta(r)^{\flat})^+ \to \mathscr{A}(r)^{\flat}.
$
By applying \cite[Proposition 2.3.5]{Ramero05} to
the restriction $f^{-1}(\B(r, r)) \to \B(r, r)$ of $f$,
we see that $\mathscr{A}(r)^{\flat}$ is a free
$k(\eta(r)^{\flat})^+$-module of finite rank.
Then we can define the valuation
\[
v_{\eta(r)^{\flat}}(\mathfrak{d}^{\flat}_f(r)) \in \R_{> 0}
\]
of the discriminant
$\mathfrak{d}^{\flat}_f(r) \in k(\eta(r)^{\flat})^+$
of $\mathscr{A}(r)^{\flat}$ over
$k(\eta(r)^{\flat})^+$ (in the sense of \cite[Section 2.1]{Ramero05})
and we define
\[
\delta_f \colon [-\log b, -\log a] \cap -\log \vert K^\times \vert \to \R_{\geq 0}, \quad -\log r \mapsto - \log( v_{\eta(r)^{\flat}}(\mathfrak{d}^{\flat}_f(r))) \in \R_{\geq 0}.
\]
See \cite[2.3.12]{Ramero05} for details.

\begin{thm}[Ramero {\cite[Theorem 2.3.25]{Ramero05}}]\label{Theorem:discriminant function}
The function $\delta_f$
extends uniquely to a continuous, piecewise linear, and convex function
\[
\delta_f \colon [-\log b, -\log a]\to \R_{\geq 0}.
\]
Moreover, the slopes of $\delta_f$ are integers.
\end{thm}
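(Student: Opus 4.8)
This is a theorem of Ramero \cite[Theorem 2.3.25]{Ramero05}, and the plan is to follow his argument, using the tools already set up in this appendix. Since the discriminant of a finite product of algebras is the product of the discriminants, and the stalk of $f_*\O^+_X$ at $\eta(r)^\flat$ is the product of the corresponding stalks for the connected components of $X$, the function $\delta_f$ is a finite sum of the discriminant functions attached to the restrictions of $f$ to the connected components of $X$; as a finite sum of continuous, piecewise linear, convex functions with integer slopes is again of this form, I may assume $X$ connected. Write $B:=\O_X(X)$, a connected finite \'etale $A(a,b)$-algebra. The key is to produce a concrete model computing $\delta_f(-\log r)$: via the folding morphism $\psi\colon \O\langle S\rangle \to A(a,b)^\circ$ of Example \ref{Example:morphism defined by Ramero}, which makes $A(a,b)^\circ$ — and hence, by \cite[Proposition 2.3.5]{Ramero05}, also $B^\circ$ — a finite free $\O\langle S\rangle$-module, one transfers the computation of $\mathfrak{d}^\flat_f(r)$ to one over $\O\langle S\rangle$. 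By transitivity of the different/discriminant along $\O\langle S\rangle \to A(a,b)^\circ \to B^\circ$ (the discriminant of the first map being the explicit quadratic in $S$ coming from $\psi$), this expresses $\delta_f(-\log r)$, up to an explicitly controlled correction, in terms of the Gauss valuation $v_{\eta(r)^\flat}$ of the discriminant $\Delta\in\O\langle S\rangle$ of $B^\circ/\O\langle S\rangle$.

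The first two conclusions — continuity, piecewise linearity, and integrality of the slopes — would then follow from a Newton-polygon analysis. After a Weierstrass preparation one writes $B^\circ$, locally on the interval, as $\O\langle S\rangle[\Theta]/(P)$ with $P$ monic, so that $\Delta$ is, up to units, $\mathrm{disc}(P)$, an honest analytic function; for any such function $h$, the map $\rho=-\log r\mapsto v_{\eta(r)^\flat}(h)$ is piecewise linear with integer slopes and has only finitely many breakpoints on each closed subinterval (its graph is a boundary of a Newton polygon), and the ``folding'' reparametrization $r\mapsto\Psi(\eta(r)^\flat)$ introduced by $\psi$ is itself piecewise linear with integer slopes. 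The correction term — coming from the failure of $B^\circ\otimes k(\eta(r)^\flat)^+$ to be normal, i.e.\ from the conductor at $\eta(r)^\flat$ — is a length of the cokernel of a map of finite free modules over the model, hence again continuous, piecewise linear and integer-sloped in $\rho$. Lemma \ref{Lemma:the number of connected components}, Lemma \ref{Lemma:the number of prime ideals} and Example \ref{Example:complement closure} enter here to control, over sufficiently thin sub-annuli, how the connected components of the covering behave, so that the local computations can be carried out and, if convenient, reduced to a single connected covering of a thin annulus.

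The real obstacle is convexity. The naive Newton-polygon function $v_{\eta(r)^\flat}(\Delta)$ is \emph{concave}, not convex, so convexity of $\delta_f$ is not formal: it encodes genuine structure of the ramification of $X\to\B(a,b)$ over the Gauss points. The plan is to prove it as Ramero does, by showing that the one-sided slopes of $\delta_f$ are monotone — equivalently, that the ``conductor measured along the radius'' can only increase as one moves towards a worse part of the annulus. I would expect to deduce this from a Hasse--Arf type integrality for the ramification of the complete valued fields $k(\eta(r)^\flat)$, so that $\delta_f$ is a sum of conductor contributions of separable residue extensions, each individually convex, together with a continuity and monotonicity statement for these contributions as $r$ varies, established by reducing — via the decomposition into thin annuli above — to the case of a single connected covering of a thin annulus, where the discriminant function is affine and computable, and then checking that the affine pieces glue with non-decreasing slopes. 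This monodromy-theoretic input (the convexity of irregularity as a function of the radius, in the language of $p$-adic differential equations) is where the weight of the argument lies; I would treat it exactly as in \cite{Ramero05}.
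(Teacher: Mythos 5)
The paper itself does not prove this theorem; its ``proof'' is a one-line citation to \cite[Theorem 2.3.25]{Ramero05}, so there is no argument in the text to compare yours against. What you have written is therefore a speculative reconstruction of Ramero's argument rather than an alternative to the paper's.

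The skeleton of your reconstruction is plausible: reduction to $X$ connected via additivity of discriminants over connected components, the folding morphism $\Psi$ from Example \ref{Example:morphism defined by Ramero} to transport the computation to $\O\langle S\rangle$, and Weierstrass preparation plus Newton-polygon bookkeeping for piecewise linearity and integrality of slopes. You also correctly flag the pitfall: for a fixed analytic element $h$, the function $\rho \mapsto -\log v_{\eta(r)^{\flat}}(h)$ is \emph{concave} in $\rho = -\log r$, so convexity of $\delta_f$ must come entirely from the normalization/conductor correction between $B^\circ \otimes_{A(a,b)^\circ} k(\eta(r)^{\flat})^+$ and $\mathscr{A}(r)^{\flat}$; that observation is the genuinely useful part of your sketch. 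But the convexity step is never actually established. The assertions that $\delta_f$ is a sum of conductor contributions ``each individually convex'' and that ``the affine pieces glue with non-decreasing slopes'' are not derived from anything: the latter is a restatement of convexity, not a reduction of it, and neither Hasse--Arf integrality nor the thin-annulus decomposition yields the monotonicity of the one-sided slopes in $\rho$. Your closing ``I would treat it exactly as in \cite{Ramero05}'' concedes this. Since the whole weight of the theorem, and the property the appendix subsequently exploits, lies in precisely that convexity, this is a genuine gap; closing it would require actually reproducing Ramero's analysis of the different and conductor, which the tools assembled in this appendix (Lemmas \ref{Lemma:the number of connected components} and \ref{Lemma:the number of prime ideals}, Example \ref{Example:complement closure}) do not by themselves provide.
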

\begin{proof}
See \cite[Theorem 2.3.25]{Ramero05}.
\end{proof}

Many basic properties of the discriminant function $\delta_f$ were studied in detail in \cite{Ramero05}.
Here, we are interested in the case where $\delta_f$ is linear.

\begin{prop}[Ramero \cite{Ramero05}]\label{Proposition:reduction ordinary double point}
Let
$
f \colon X=\Spa(B, B^{\circ}) \to \B(a, b)
$
be a finite \'etale morphism of adic spaces
with a complete affinoid ring $(B, B^{\circ})$.
Define $g$ as the composition
\[
g \colon X \overset{f}{\to} \B(a, b) \overset{\Psi}{\to} \B(1)=\Spa (K \langle S \rangle),
\]
where $\Psi$ is the morphism defined in Example \ref{Example:morphism defined by Ramero}.
The map $g$ induces a homomorphism
$
\O \langle S \rangle \to B^\circ.
$
Let us suppose that the following two conditions hold:
\begin{itemize}
    \item  There is only one prime ideal $\mathfrak{q} \subset B^{\circ}$
lying above the maximal ideal
$
\mathfrak{m}\O \langle S \rangle+S \O \langle S \rangle \subset \O \langle S \rangle.
$
    \item The discriminant function $\delta_f$ is linear.
\end{itemize}
Then the inverse image
$g^{-1}(\eta(1))$
consists of two points, or equivalently,
the inverse images
$f^{-1}(\eta(a)')$ and $f^{-1}(\eta(b))$ both consist of one point.
Moreover, the closed point
\[
x \in \Spec B^{\sim}=\Spec B^{\circ}/\mathfrak{m}B^\circ
\]
corresponding to the prime ideal $\mathfrak{q}$ is an ordinary double point.
\end{prop}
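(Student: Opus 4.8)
The plan is to deduce the proposition from the structure theory of finite \'etale covers of annuli, using that $\delta_f$ is affine to pin down the shape of the cover near the node, and using the hypothesis on $\mathfrak{q}$ to eliminate extra components.

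\emph{Reduction to a statement about $g$.} A direct computation with the relations $S=T_a+T_b$ and $T_aT_b=\varpi_a/\varpi_b$ defining $\psi$ (Example \ref{Example:morphism defined by Ramero}) shows $|S(\eta(a)')|=|S(\eta(b))|=\delta$, and that $\eta(a)',\eta(b)$ are the only specializations in $\B(a,b)$ of the two points $\eta(a)^\flat,\eta(b)^\flat$ lying over $\eta(1)^\flat$; hence $\Psi^{-1}(\eta(1))=\{\eta(a)',\eta(b)\}$, in the same vein as the identity $\Psi^{-1}(\B(t))=\B(a/t,tb)$ already used in the proof of Lemma \ref{Lemma:the number of prime ideals}. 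We may assume $X\neq\emptyset$; since $f$ is finite \'etale and $\B(a,b)$ is connected, $f$ is then surjective, so $g^{-1}(\eta(1))=f^{-1}(\eta(a)')\sqcup f^{-1}(\eta(b))$ with both pieces non-empty. This makes the two formulations of the first assertion equivalent, and reduces it to showing $\#\,g^{-1}(\eta(1))=2$.

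\emph{From an affine discriminant function to an annulus.} Write $X=\coprod_i X_i$ for the connected components and $f_i\colon X_i\to\B(a,b)$ for the restrictions. The discriminant of a product of \'etale algebras is the product of the discriminants, so $\delta_f=\sum_i\delta_{f_i}$; by Theorem \ref{Theorem:discriminant function} each $\delta_{f_i}$ is non-negative, continuous, convex and piecewise linear, and a sum of such functions can be affine only if every summand is affine. Hence each $f_i$ has an affine discriminant function, and at this point I would invoke the analysis of \cite{Ramero05}: a connected finite \'etale cover of $\B(a,b)$ whose discriminant function is affine is isomorphic over $\Spa(K)$ to an annulus $\B(c_i,d_i)$; a finite morphism of annuli matches their two boundary components, carrying the tips $\eta(c_i)',\eta(d_i)$ bijectively onto $\eta(a)',\eta(b)$ (up to a finite extension of the value group coming from a Kummer-type factor, which does not change the number of points); and consequently its special fiber $B_i^{\sim}\cong\kappa[\bar T_{c_i},\bar T_{d_i}]/(\bar T_{c_i}\bar T_{d_i})$ is two lines meeting at a single ordinary double point, lying over the node $\bar T_a=\bar T_b=0$ of $A(a,b)^{\sim}$, i.e.\ over the closed point of $\Spec\O\langle S\rangle^{\sim}=\A^1_\kappa$.

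\emph{Using uniqueness of $\mathfrak{q}$ and reading off the conclusions.} Each component $X_i$ thus contributes at least one prime of $B^\circ$ over $\mathfrak{m}\O\langle S\rangle+S\O\langle S\rangle$, namely the node of $B_i^{\sim}$; since by hypothesis there is exactly one such prime $\mathfrak{q}$, there is exactly one component, so $X\cong\B(c,d)$ and $B^{\sim}\cong\kappa[\bar T_c,\bar T_d]/(\bar T_c\bar T_d)$. The closed point $x$ corresponding to $\mathfrak{q}$ is then the node of this curve, an ordinary double point, which is the last assertion. For the count: every point of $f^{-1}(\eta(a)')$ or $f^{-1}(\eta(b))$ reduces to $\mathfrak{q}$, hence to the node of $B^{\sim}$; among the rank-$2$ points of $\B(c,d)$ reducing to that node only $\eta(c)'$ and $\eta(d)$ have a generization reducing to a branch-generic point of $B^{\sim}$ rather than to the node itself, and since $\eta(a)',\eta(b)$ do have such generizations ($\eta(a)^\flat,\eta(b)^\flat$, the generic points of the two branches of $A(a,b)^{\sim}$) while $f$ carries branches to branches, one gets $f^{-1}(\eta(a)')=\{\eta(c)'\}$ and $f^{-1}(\eta(b))=\{\eta(d)\}$ after relabeling; thus $\#\,g^{-1}(\eta(1))=2$.

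\emph{The main obstacle} is the input from \cite{Ramero05} used in the second paragraph: extracting, from affineness of $\delta_f$ alone, that a connected component is literally an annulus whose nodal reduction is matched to the node of $\B(a,b)$. This rests on the relation between $\delta_f$ and the formal (semistable) geometry of the cover --- in particular on identifying the slopes of $\delta_f$ at the endpoints $-\log b$ and $-\log a$ with ramification invariants of $f$ along the two branches through the node, and on the fact that a genuine interior kink of $\delta_f$ would force extra components or nodes in the reduction. One must also be careful in residue characteristic $p$, where a finite \'etale cover of an annulus need not be of Kummer type (cf.\ Theorem \ref{Theorem:split into annuli}); affineness of $\delta_f$ is exactly what rules out the wild interior behaviour that would otherwise produce positive genus. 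A minor additional point, used in the component count, is that a finite morphism of annuli cannot collapse a node of the special fibre onto a smooth point, so the node of $B_i^{\sim}$ genuinely lies over the node of $A(a,b)^{\sim}$.
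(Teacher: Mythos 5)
Your proposal inverts the logical order of the paper and is, as it stands, circular. The step you flag as ``the main obstacle'' — ``a connected finite \'etale cover of $\B(a,b)$ whose discriminant function is affine is isomorphic over $\Spa(K)$ to an annulus'' — is essentially the content of Proposition \ref{Proposition:split into annuli}, and that proposition is proved in the paper \emph{using} Proposition \ref{Proposition:reduction ordinary double point} as an input (together with Lemma \ref{Lemma:the number of prime ideals} and the Bosch--L\"utkebohmert result \cite[Proposition 2.3]{BL85}). It is also not a statement you will find proved in \cite{Ramero05} in the form you invoke: Ramero only \emph{claims} the ordinary-double-point reduction in \cite[Remark 2.4.8]{Ramero05} without proof, which is precisely the gap this proposition is meant to fill. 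So you cannot appeal to the annulus picture to derive the proposition.

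There is a second, related soft spot: in your ``matching boundary components'' argument you assume that the finite \'etale map $X_i\to\B(a,b)$ carries $\eta(c_i)',\eta(d_i)$ bijectively to $\eta(a)',\eta(b)$ ``up to a Kummer-type factor.'' In positive residue characteristic a finite \'etale cover of an annulus need not be Kummer even locally near the boundary (this is exactly why Theorem \ref{Theorem:split into annuli} only promises a Galois \'etale $g$ in general), so the point count at $\eta(a)'$ and $\eta(b)$ is not a formal consequence of the annulus description; in the paper it is Claim \ref{Claim:inverse image consists one element}, whose proof again cycles back through Proposition \ref{Proposition:reduction ordinary double point}.

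What the paper actually does avoids both issues. For the first assertion ($\# g^{-1}(\eta(1))=2$, equivalently that $f^{-1}(\eta(a)')$ and $f^{-1}(\eta(b))$ are singletons), it cites a concrete intermediate result inside Ramero's proof, \cite[(2.4.4) in the proof of Theorem 2.4.3]{Ramero05}, which works directly from the affine discriminant function and the uniqueness of $\mathfrak{q}$, with no appeal to an annulus structure and no characteristic-zero or Galois hypothesis. For the second assertion, it introduces the stalk $\mathscr{B}(1)^{+}=(g_*\O_X^+)_{\eta(1)}$ and the chain $B^\circ\otimes_{A^\circ}k(1)^+\hookrightarrow\mathscr{B}(1)^+\hookrightarrow B^\circ\otimes_{A^\circ}k(1^\flat)^+$, proves using \cite[Proposition 1.3.2 (iii)]{Ramero05} that $\mathscr{B}(1)^+/\mathfrak{m}\mathscr{B}(1)^+$ is integrally closed in the total ring of fractions, and hence is the normalization of the local ring $R=\O_{\Spec B^\sim,x}$; then (2.4.4) again gives that this normalization has exactly two maximal ideals and length one over $R$, which is the definition of an ordinary double point. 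If you want to repair your argument, the right move is to replace the appeal to the annulus structure by this direct normalization computation (or by citing (2.4.4) as the paper does); your preliminary reductions — the identification $\Psi^{-1}(\eta(1))=\{\eta(a)',\eta(b)\}$ and the convexity argument that each component has affine discriminant — are fine and can stay.
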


\begin{proof}
The first assertion is a consequence of \cite[(2.4.4) in the proof of Theorem 2.4.3]{Ramero05}.
The assumption that the characteristic of the base field is zero in \textit{loc.\ cit.} is not needed here.
Moreover, the morphism $f$ need not be Galois.

The second assertion is claimed in \cite[Remark 2.4.8]{Ramero05} (at least when $K$ is of characteristic zero) without proof.
Indeed, the hard parts of the proof were already done in \cite{Ramero05}.
We shall explain how to use the results in \textit{loc.\ cit.} to deduce the second assertion.

Before giving the proof of the second assertion,
let us prepare some notation.
We write $A:= K \langle S \rangle$.
For the points $\eta(1), \eta(1)^\flat \in \B(1)$,
we write
\[
(k(1), k(1)^+):=(k(\eta(1)), k(\eta(1))^+) \quad \text{and} \quad (k(1^\flat), k(1^\flat)^+):=(k(\eta(1)^\flat), k(\eta(1)^\flat)^+).
\]
Note that
$
\O_{\B(1), \eta(1)}=k(1)
$
and
$
\O_{\B(1), \eta(1)^{\flat}}=k(1^{\flat}).
$
We have a natural inclusion
\[
k(1)^+ \to k(1^{\flat})^+.
\]
The residue field
$k(1^{\flat})^+/\mathfrak{m}k(1^{\flat})^+$
of $k(1^{\flat})^+$
is naturally isomorphic to the field of fractions
$\kappa(S)$
of
$\kappa[S]$.
The image of $k(1)^+$ in $\kappa(S)$ is the localization
$\kappa[S]_{(S)}$ of
$\kappa[S]$ at the maximal ideal $(S) \subset \kappa[S]$.
More precisely, we have the following commutative diagram:
\[
\xymatrix{ A^\sim=A^\circ/\mathfrak{m}A^\circ \ar[r]_-{}  \ar[d]^-{\cong}& k(1)^+/\mathfrak{m}k(1)^+ \ar[r]_-{} \ar[d]^-{\cong} &  k(1^{\flat})^+/\mathfrak{m}k(1^{\flat})^+ \ar[d]^-{\cong}   \\
\kappa[S] \ar[r]_-{} & \kappa[S]_{(S)} \ar[r]_-{} & \kappa(S).
}
\]
Let
\[
\mathscr{B}(1)^{+}:=(g_* \O^+_X)_{\eta(1)} 
\]
be the stalk of $g_* \O^+_X$ at the point $\eta(1) \in \B(1)$.
We have a map
\[
i \colon B^{\circ}\otimes_{A^{\circ}}k(1)^+ \to \mathscr{B}(1)^{+}.
\]
The target and the source of $i$ are both free $k(1)^+$-modules of finite rank by \cite[Proposition 2.3.5 and Lemma 2.2.17]{Ramero05}, and clearly $i$ becomes an isomorphism after tensoring with $k(1)$.
In particular, the map $i$ is injective.
By \cite[Proposition 2.3.5]{Ramero05} again,
it follows that $i$ becomes an isomorphism after tensoring with $k(1^{\flat})^+$.
Consequently, we have inclusions
\[
B^{\circ}\otimes_{A^{\circ}}k(1)^+ \hookrightarrow
\mathscr{B}(1)^{+} \hookrightarrow B^{\circ}\otimes_{A^{\circ}} k(1^{\flat})^+,
\]
\[
B^{\circ}\otimes_{A^{\circ}}(k(1)^+/\mathfrak{m}k(1)^+) \hookrightarrow
\mathscr{B}(1)^{+}/\mathfrak{m}\mathscr{B}(1)^{+} \hookrightarrow B^{\circ}\otimes_{A^{\circ}} (k(1^{\flat})^+/\mathfrak{m}k(1^{\flat})^+).
\]

We shall prove the second assertion.
First, we prove that the ring 
$\mathscr{B}(1)^{+}/\mathfrak{m}\mathscr{B}(1)^{+}$
is integrally closed in
$B^{\circ}\otimes_{A^{\circ}} (k(1^{\flat})^+/\mathfrak{m}k(1^{\flat})^+)$.
Let
\[
(k(1)^{\wedge}, k(1)^{\wedge+}) \quad \text{and} \quad (k(1^\flat)^{\wedge}, k(1^\flat)^{\wedge+})
\]
be the completions with respect to the valuation topologies.
By \cite[Lemma 1.1.10 iii)]{Huber96}, we have
$
k(1)^{\wedge} \overset{\cong}{\to} k(1^\flat)^{\wedge}.
$
We have the following commutative diagram:
\[
\xymatrix{  \mathscr{B}(1)^{+} \otimes_{k(1)^+} k(1)^{\wedge+} \ar[r]_-{} \ar[d]^-{} &  B^{\circ}\otimes_{A^{\circ}} k(1^{\flat})^{\wedge+} \ar[d]^-{}   \\
\mathscr{B}(1)^{+} \otimes_{k(1)^+} k(1)^{\wedge} \ar[r]^-{\cong} & \mathscr{B}(1)^{+} \otimes_{k(1)^+} k(1^\flat)^{\wedge},
}
\]
where the vertical maps and the top horizontal map are injective.
By using \cite[Proposition 1.3.2 (iii)]{Ramero05},
we see that
$\mathscr{B}(1)^{+} \otimes_{k(1)^+} k(1)^{\wedge+}$
is integrally closed in
$\mathscr{B}(1)^{+} \otimes_{k(1)^+} k(1)^{\wedge}$,
and hence integrally closed in
$B^{\circ}\otimes_{A^{\circ}} k(1^{\flat})^{\wedge+}$.
This implies that
$\mathscr{B}(1)^{+}/\mathfrak{m}\mathscr{B}(1)^{+}$ is integrally closed in $B^{\circ}\otimes_{A^{\circ}} (k(1^{\flat})^+/\mathfrak{m}k(1^{\flat})^+)$.

By the assumptions, the ring
\[
R:=B^{\circ}\otimes_{A^{\circ}}(k(1)^+/\mathfrak{m}k(1)^+) \cong B^{\sim} \otimes_{\kappa[S]} \kappa[S]_{(S)}
\]
is the local ring 
of $\Spec B^{\sim}$ at the closed point $x \in \Spec B^{\sim}$.
Moreover, the ring
\[
B^{\circ}\otimes_{A^{\circ}} (k(1^{\flat})^+/\mathfrak{m}k(1^{\flat})^+) \cong B^{\sim} \otimes_{\kappa[S]} \kappa(S)
\]
is the total ring of fractions of $R$.
Hence the ring
$\mathscr{B}(1)^{+}/\mathfrak{m}\mathscr{B}(1)^{+}$ is the normalization of $R$.
By using \cite[(2.4.4) in the proof of Theorem 2.4.3]{Ramero05},
one can show that
there are exactly two maximal ideals of $\mathscr{B}(1)^{+}/\mathfrak{m}\mathscr{B}(1)^{+}$
and the length of
$\mathscr{B}(1)^{+}/\mathfrak{m}\mathscr{B}(1)^{+}$
as an $R$-module is one.
In other words, the closed point $x$ is an ordinary double point.

The proof of Proposition \ref{Proposition:reduction ordinary double point} is complete.
\end{proof}

We deduce the following result from Proposition \ref{Proposition:reduction ordinary double point},
which is used in the proof of Theorem \ref{Theorem:split into annuli} (in the case where $K$ is of positive characteristic).

\begin{prop}\label{Proposition:split into annuli}
Let
$
f \colon X \to \B(a, b)
$
be a finite \'etale morphism of adic spaces.
We assume that the discriminant function $\delta_f$ is linear.
Let $t \in \vert K^{\times} \vert$ be an element with $a/b < t^2 <1$.
Then
we have
\[
f^{-1}(\B(a/t, t b)) \cong \coprod^{n}_{i=1} \B(c_i, 1)
\]
for some elements $c_i \in \vert K^{\times} \vert$
with
$c_i < 1$ ($1 \leq i \leq n$).
\end{prop}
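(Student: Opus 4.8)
The plan is to follow the method of \cite[Theorem 2.4.3]{Ramero05}: use the discriminant function to control the connected components of the restricted cover, and then read off the annulus structure from the reduction.

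\emph{First, reduce to $X$ connected.} Since $f$ is finite \'etale, $X$ is the disjoint union of its connected components $X_j$; writing $f_j\colon X_j\to\B(a,b)$ for the induced finite \'etale morphisms, we have $f^{-1}(\B(a/t,tb))=\coprod_j f_j^{-1}(\B(a/t,tb))$, so it suffices to treat each $X_j$. The hypothesis passes to the $X_j$: the sheaf $f_*\O^{+}_X$ is the (finite) direct sum of the $(f_j)_*\O^{+}_{X_j}$, and since the discriminant of a product of finite free algebras over a base ring is the product of the discriminants, $\delta_f=\sum_j\delta_{f_j}$. Each $\delta_{f_j}$ is convex by Theorem~\ref{Theorem:discriminant function}, and a family of convex functions on an interval whose sum is affine consists of affine functions; hence each $\delta_{f_j}$ is linear. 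Thus we may and do assume $X$ is connected.

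\emph{Next, decompose the restricted cover.} Put $g:=\Psi\circ f\colon X\to\B(1)=\Spa(K\langle S\rangle)$ with $\Psi$ as in Example~\ref{Example:morphism defined by Ramero}, and write $X=\Spa(B,B^{\circ})$. As in the proof of Lemma~\ref{Lemma:the number of prime ideals}, $B^{\circ}$ is finite free over $\O\langle S\rangle$, the $S$-adic completion $\O[[S]]$ is Henselian local, $B^{\circ}\otimes_{\O\langle S\rangle}\O[[S]]\cong\prod_{i=1}^{n}R_i$ with each $R_i$ local, and
\[
f^{-1}(\B(a/t,tb))=g^{-1}(\B(t))\cong\coprod_{i=1}^{n}\Spa(B_i),\qquad B_i:=R_i\otimes_{\O[[S]]}\O_{\B(t)}(\B(t)),
\]
where $\B(t)$ is the disc of radius $t$ in the $S$-coordinate and $\mathfrak{q}_1,\dots,\mathfrak{q}_n$ are the primes of $B^{\circ}$ over $\mathfrak{m}_0:=\mathfrak{m}\O\langle S\rangle+S\O\langle S\rangle$. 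Each $\Spa(B_i)$ is non-empty, and connected because $R_i$ is local (so $B_i$ has no non-trivial idempotents); hence the $\Spa(B_i)$ are precisely the connected components of $f^{-1}(\B(a/t,tb))$. It remains to prove $\Spa(B_i)\cong\B(c_i,1)$ for some $c_i\in|K^{\times}|$ with $c_i<1$.

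\emph{Then, identify each piece with an annulus.} After enlarging $\B(a,b)$ slightly --- replace it by $\B(a/s_0,s_0b)$ for a suitable $s_0\in(t,1)$ produced by Lemma~\ref{Lemma:the number of connected components} (applied with an auxiliary radius strictly between $t$ and $1$, chosen small enough that $\B(a/t,tb)$ is still a sub-annulus of positive width of the new annulus), and then passing to a single connected component of the cover of this larger annulus --- we reduce to the case $n=1$: $X$ is connected, $\delta_f$ is linear, and there is a unique prime $\mathfrak{q}$ of $B^{\circ}$ over $\mathfrak{m}_0$; we must show $f^{-1}(\B(a/t,tb))\cong\B(c,1)$. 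Now Proposition~\ref{Proposition:reduction ordinary double point} applies: $f^{-1}(\eta(a)')$ and $f^{-1}(\eta(b))$ are single points, and the closed point of $\Spec B^{\sim}$ determined by $\mathfrak{q}$ is an ordinary double point. So $X$ --- equivalently its open subspace $f^{-1}(\B(a/t,tb))$, which has the same two ``ends'' --- is a connected smooth affinoid curve over $K$ whose closure inside $\D(1)$, for a suitable coordinate, adds exactly the two boundary points lying over $\eta(a)$ and $\eta(b)'$. By Example~\ref{Example:complement closure}(3) it is therefore isomorphic over $\Spa(K)$ to $\B(c,1)$, and $c<1$ because the target annulus has positive width. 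Undoing this reduction yields $f^{-1}(\B(a/t,tb))\cong\coprod_i\B(c_i,1)$.

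\emph{The hard part} is the last paragraph: converting linearity of $\delta_f$ into the annulus structure of the pieces. Two points need care. First, Proposition~\ref{Proposition:reduction ordinary double point} is stated for a single prime over $\mathfrak{m}_0$, so one genuinely needs the preliminary shrinking/component step (equivalently, a faithfully flat descent to the Henselizations $R_i$) to localize it at each $\mathfrak{q}_i$, and one must keep track of radii so that $\B(a/t,tb)$ remains a genuine sub-annulus throughout. Second, each piece is a priori an abstract affinoid $K$-space, and to invoke Example~\ref{Example:complement closure}(3) one must realize it as an open subset of $\B(1)$ contained in $\D(1)$ with two boundary points; this is done --- following \cite{Ramero05} --- using the ordinary double point structure of $\Spec B^{\sim}$ to construct an appropriate coordinate function, and it is this passage between the rigid-analytic cover and its reduction that carries the real content. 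Everything in the first two paragraphs is formal.
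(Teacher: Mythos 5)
Your first two paragraphs match the paper's opening: the convexity argument (a sum of convex functions equal to an affine function forces each summand to be affine) is exactly how the paper reduces $\delta_f$ linear to $\delta_{f_j}$ linear, and the Henselian decomposition $B^{\circ}\otimes_{\O\langle S\rangle}\O[[S]]\cong\prod R_i$ is lifted straight from Lemma~\ref{Lemma:the number of prime ideals}. One logical point: the paper invokes Lemma~\ref{Lemma:the number of connected components} \emph{before} the connectedness reduction, so that ``$X$ connected'' and ``$X$ stays connected on all sub-annuli $\B(a/s,sb)$, $t<s\le 1$'' hold simultaneously; combined with Lemma~\ref{Lemma:the number of prime ideals} this immediately forces $n=1$. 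Your version postpones the shrinking and tries to push it through the $R_i$-decomposition; that can be made to work, but it is worth noting the paper's order is cleaner and you should be careful that after shrinking to $\B(a/s_0,s_0 b)$ the discriminant function remains linear (it does, being a restriction), which you do not say.

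The third paragraph contains the real gap, which you correctly flag as ``the hard part'' but then do not fill, and the way you gesture at it is not quite right. Two specific problems. First, Proposition~\ref{Proposition:reduction ordinary double point} gives you that $f^{-1}(\eta(a)')$ and $f^{-1}(\eta(b))$ are singletons; the points you actually need to control are $f^{-1}(\eta(a/t))$ and $f^{-1}(\eta(tb)')$, the boundary of $\B(a/t,tb)^c$, and this does \emph{not} follow from a single application of Proposition~\ref{Proposition:reduction ordinary double point}. The paper's Claim~\ref{Claim:inverse image consists one element} supplies this by running Proposition~\ref{Proposition:reduction ordinary double point} over the one-parameter family of sub-annuli $\B(a/s,sb)$, using the resulting singletons to deduce connectedness of $f^{-1}(\B(s_1 b, s_2 b))$, and then applying Proposition~\ref{Proposition:reduction ordinary double point} once more to $f^{-1}(\B(tb,b))\to\B(tb,b)$; your phrase ``which has the same two `ends'~'' skips exactly this induction. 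Second, the step of realizing the connected piece as an open subset of $\B(1)$ does not come from ``using the ordinary double point structure to construct a coordinate'' in \cite{Ramero05}; the paper instead cites the proof of \cite[Proposition 2.3]{BL85}, which identifies the interior $\lambda^{-1}(x)^{\circ}$ of the formal fiber of the ordinary double point with $\D(d,1)^{\circ}$, and then uses the inclusion $f^{-1}(\B(a/t,tb))\subset g^{-1}(\D(1)^{\circ})=\lambda^{-1}(x)^{\circ}$ to transport everything into $\B(1)$. That Bosch--L\"utkebohmert input, plus the two-ends Claim, are the content you are missing; Example~\ref{Example:complement closure}(3) can only be invoked once both are in place.
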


\begin{proof}
By Lemma \ref{Lemma:the number of connected components},
without loss of generality,
we may assume that every connected component
of $X$ remains connected after restricting to $\B(a/s, s b)$
for every $s \in \vert K^{\times} \vert$ with $t < s \leq 1$.
Let $X_1, \dotsc, X_m$ be the connected components of $X$
and let $f_i \colon X_i \to \B(a, b)$ be the restriction of $f$.
By Theorem \ref{Theorem:discriminant function},
each discriminant function $\delta_{f_i}$ associated with $f_i$
is a continuous, piecewise linear, and convex function.
Since
$
\delta_f= \sum^m_{i=1} \delta_{f_i},
$
it follows that $\delta_{f_i}$ is linear for every $i$.
Thus we may further assume that $X$ is connected.

Let $(B, B^\circ)$ be a complete affinoid ring such that
$X=\Spa(B, B^\circ)$.
Define $g$ as the composition
\[
g \colon X \overset{f}{\to} \B(a, b) \overset{\Psi}{\to} \B(1)=\Spa (K \langle S \rangle).
\]
By Lemma \ref{Lemma:the number of prime ideals},
there is only one prime ideal $\mathfrak{q} \subset B^{\circ}$
lying above the maximal ideal
$
\mathfrak{m}\O \langle S \rangle+S \O \langle S \rangle \subset \O \langle S \rangle.
$
Let
$
x \in \Spec B^{\sim}
$
be the closed point corresponding to the prime ideal $\mathfrak{q}$,
which is an ordinary double point by Proposition \ref{Proposition:reduction ordinary double point}.
Let
\[
\lambda \colon X=d(\Spf(B^{\circ})) \to \Spf(B^{\circ})
\]
be the specialization map associated with the formal scheme $\Spf(B^{\circ})$; see Section \ref{Subsection:Etale cohomology with compact support of adic spaces and nearby cycles}.
By the proof of \cite[Proposition 2.3]{BL85},
the interior $\lambda^{-1}(x)^{\circ}$ of the inverse image
$\lambda^{-1}(x)$
in $X$
is isomorphic to
the interior
$\D(d, 1)^{\circ}$
of
$\D(d, 1)$
in $\B(1)$
for some element $d \in \vert K^\times \vert$ with $d < 1$
as an adic space over $\Spa(K)$,
where
\[
\D(d, 1):=\{ x \in \B(1)=\Spa(K\langle T \rangle) \, \vert \, d < \vert T(x) \vert < 1 \} \subset \B(1).
\]
We fix such an isomorphism.
For every $s \in \vert K^{\times} \vert$ with $t \leq s < 1$,
we have
\[
f^{-1}(\B(a/s, sb))=g^{-1}(\B(s)) \subset g^{-1}(\D(1)^{\circ}) = \lambda^{-1}(x)^{\circ} \cong \D(d, 1)^{\circ} \subset \B(1).
\]
Thus we may consider $f^{-1}(\B(a/s, sb))$ as
a connected affinoid open subset of $\B(1)$.

We write $X_t:=f^{-1}(\B(a/t, tb))$.
Let $X^c_t$ be the closure of $X_t$ in $\B(1)$, which is contained in $g^{-1}(\D(1)^{\circ})$.
In view of Example \ref{Example:complement closure} (3),
to prove the assertion,
it suffices to prove that
the complement
$
X^{c}_t \backslash X_t
$
consists of exactly two points.
The map
$f$ induces a map
\[
f' \colon X^{c}_t \backslash X_t \to \B(a/t, tb)^{c} \backslash \B(a/t, tb)
=\{ \eta(a/t), \eta(tb)' \}.
\]
We prove that $f'$ is bijective.
Since $f$ is surjective and specializing by \cite[Lemma 1.4.5 ii)]{Huber96},
it follows that the map $f'$ is surjective.
To show that the map $f'$ is injective,
it suffices to prove the following claim:

\begin{claim}\label{Claim:inverse image consists one element}
The inverse images
$f^{-1}(\eta(a/t))$ and $f^{-1}(\eta(t b)')$
both consist of one point.
\end{claim}
\begin{proof}
Recall that we assume that $X$ remains connected after restricting to $\B(a/s, s b)$
for every $s \in \vert K^{\times} \vert$ with $t < s \leq 1$.
Thus,
by Lemma \ref{Lemma:the number of prime ideals} and Proposition \ref{Proposition:reduction ordinary double point},
the inverse images
$f^{-1}(\eta(a/s)')$ and $f^{-1}(\eta(s b))$ both consist of one point
for every $s \in \vert K^{\times} \vert$ with $t < s \leq 1$.
This fact implies that
\[
f^{-1}(\B(s_1b, s_2b))
\]
is connected for every $s_1, s_2 \in \vert K^{\times} \vert$ with
$t \leq s_1 < s_2 \leq 1$.
(Indeed, if it is not connected,
then there exist at least two points mapped to $\eta(s_2 b)$.)
By applying Lemma \ref{Lemma:the number of prime ideals} and Proposition \ref{Proposition:reduction ordinary double point} to
\[
f^{-1}(\B(tb, b)) \to \B(tb, b),
\]
we see that $f^{-1}(\eta(tb)')$ consists of one point.
The same arguments show that $f^{-1}(\eta(a/t))$ consists of one point.
\end{proof}

The proof of Proposition \ref{Proposition:split into annuli} is complete.
\end{proof}

\begin{rem}\label{Remark:LS}
Here we prove
Proposition \ref{Proposition:reduction ordinary double point} and Proposition \ref{Proposition:split into annuli}
in the context of adic spaces as in \cite{Ramero05}.
It is probably possible to prove these results by using the methods of \cite{Lutkebohmert93, LS05}.
\end{rem}

We now give a proof of Theorem \ref{Theorem:split into annuli}.

\begin{proof}[\bf  Proof of Theorem \ref{Theorem:split into annuli}]
Let $f \colon X \to \B(1)^{*}$ be a finite \'etale morphism.
Clearly, the discriminant functions on open annuli constructed in Theorem \ref{Theorem:discriminant function} can be glued to
a continuous, piecewise linear, and convex function
\[
\delta_f \colon [0, \infty) \to \R_{\geq 0}.
\]
Moreover, the slopes of $\delta_f$ are integers.
By \cite[Lemma 2.1.10]{Ramero05},
the function $\delta_f$ is bounded above by some positive real number (depending only on the degree of $f$).
It follows that there exists an element $\epsilon_0 \in \vert K^\times \vert$ with $\epsilon_0 \leq 1$ such that
the restriction of $\delta_f$
to $[-\log \epsilon_0, \infty)$ is constant.
Let $t \in \vert K^\times \vert$ be an element with $t < 1$.
We put $\epsilon := t\epsilon_0$.
Then,
for elements $a, b \in \vert K^{\times} \vert$
with
$a < b \leq \epsilon$,
we have
\[
f^{-1}(\B(a, b)) \cong \coprod^{n}_{i=1} \B(c_i, d_i)
\]
for some elements $c_i, d_i \in \vert K^{\times} \vert$
with
$c_i < d_i \leq 1$ ($1 \leq i \leq n$)
by Proposition \ref{Proposition:split into annuli}.
If $K$ is of characteristic zero,
after replacing $\epsilon$ by a smaller one,
we can easily show that the restriction
$
\B(c_i, d_i) \to \B(a, b)
$
of $f$ is a Kummer covering by using \cite[Claim 2.4.5]{Ramero05}.
(See also the proofs of \cite[Theorem 2.2]{Lutkebohmert93} and \cite[Theorem 2.4.3]{Ramero05}.)
\end{proof}

\subsection{Galois coverings and discriminant functions}\label{Subsection:Galois coverings and discriminant functions}

Let $a, b \in \vert K^{\times} \vert$
be elements
with
$a < b \leq 1$.
Let
\[
f \colon X=\Spa(B, B^\circ) \to \B(a, b)
\]
be 
a finite \'etale morphism of adic spaces with a complete affinoid ring $(B, B^\circ)$.
Let
\[
G:= \Aut(X/\B(a, b))^{\circ} \cong \Aut(B/A(a, b))
\]
be the opposite of the group of $\B(a, b)$-automorphisms on $X$, or equivalently, the group of $A(a, b)$-automorphisms of $B$.
We assume that $f$ is Galois, i.e.\ $A(a, b)$ coincides with the ring $B^G$ of $G$-invariants.
(This is equivalent to saying that the finite \'etale morphism
$\Spec B \to \Spec A(a, b)$ of schemes is Galois; see Remark \ref{Remark:finite etale morphism}.)
In this case, we call $G$ the Galois group of $f$.

We assume that $X$ is connected.
Let $r \in \vert K^{\times} \vert$
be an element
with $a < r \leq b$ and
let $x \in f^{-1}(\eta(r))$ be an element.
Let
\[
\Stab_x:=\{ g \in G \, \vert \, g(x)=x \}
\]
be the stabilizer of $x$.
Let $k(x)^{\wedge h +}$
(resp.\ $k(r)^{\wedge h+}$)
be the Henselization of the completion of the valuation ring
$k(x)^{+}$
(resp.\ $k(\eta(r))^{+}$).
Let $k(x)^{\wedge h}$ and $k(r)^{\wedge h}$ be the fields of fractions of
$k(x)^{\wedge h +}$ and $k(r)^{\wedge h+}$, respectively.
Then by \cite[5.5]{Huber01} the extension of fields
\[
k(r)^{\wedge h} \to k(x)^{\wedge h}
\]
is finite and Galois, and we have a natural isomorphism
\[
\Stab_x \overset{\cong}{\to} \Gal(k(x)^{\wedge h}/k(r)^{\wedge h}).
\]
In \cite{Huber01},
Huber defined higher ramification subgroups and
the Swan character
of
the Galois group
$\Gal(k(x)^{\wedge h}/k(r)^{\wedge h})$.
In \cite{Ramero05},
Ramero investigated the relation between the discriminant functions and the Swan characters.
We are interested in the case where
all higher ramification subgroups and
the Swan character
of
$\Gal(k(x)^{\wedge h}/k(r)^{\wedge h})$ are trivial.
All we need is the following lemma:

\begin{lem}[{\cite[Lemma 3.3.10]{Ramero05}}]\label{Lemma:Swan character}
Let
$
f \colon X \to \B(a, b)
$
be 
a finite Galois \'etale morphism such that $X$ is connected.
We assume that
$
\sharp  \Stab_x
$
is invertible in $\O$ for every $r \in \vert K^{\times} \vert$
with $a < r \leq b$ and every $x \in f^{-1}(\eta(r))$.
Then
the discriminant function
\[
\delta_f \colon [-\log b, -\log a]\to \R_{\geq 0}
\]
associated with $f$ is constant.
\end{lem}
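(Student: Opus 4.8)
The plan is to deduce the statement from the structural result on $\delta_f$ in Theorem \ref{Theorem:discriminant function} together with the comparison between the slopes of the discriminant function and the Swan characters attached to the rank-$2$ points, which is the ramification-theoretic core of \cite{Ramero05} and rests in turn on Huber's ramification theory \cite{Huber01}. By Theorem \ref{Theorem:discriminant function}, $\delta_f$ is continuous, piecewise linear, convex, and has integer slopes on $[-\log b, -\log a]$; hence it suffices to prove that its one-sided derivatives vanish at every point of the interior $(-\log b, -\log a)$, since a continuous piecewise linear function on a closed interval all of whose one-sided derivatives at interior points are zero has slope $0$ on each linear piece and is therefore constant.

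First I would recall the local picture at a point $\eta(r)^{\flat}$ with $a < r < b$. Since $K$ is algebraically closed, $\vert K^{\times}\vert$ is divisible, so the value group of $k(\eta(r)^{\flat})$ equals $\vert K^{\times}\vert$; consequently no finite extension of $k(\eta(r)^{\flat})$ enlarges the value group, and the ramification such an extension can carry is "residual'', detected only after passing to the two rank-$2$ specializations $\eta(r)$ and $\eta(r)'$ of $\eta(r)^{\flat}$. For a point $x \in f^{-1}(\eta(r))$ (and similarly over $\eta(r)'$), the extension $k(\eta(r))^{\wedge h} \to k(x)^{\wedge h}$ is finite Galois with group canonically isomorphic to $\Stab_x$ (by \cite[5.5]{Huber01}, using that $f$ is Galois and $X$ is connected), and this is a genuine complete Henselian discretely valued situation to which the classical higher ramification filtration and the Swan character apply.

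The key input, which is the heart of \cite{Ramero05}, is that the one-sided derivatives of $\delta_f$ at $-\log r$ are expressed through the Swan characters (equivalently, the wild parts of the different exponents) of the extensions $k(\eta(r))^{\wedge h} \to k(x)^{\wedge h}$ over the rank-$2$ points lying above $\eta(r)^{\flat}$; the purely tame part of the ramification enters $\delta_f$ only through a locally constant contribution and so does not affect the slope. Granting this, the hypothesis that $\sharp\,\Stab_x$ is invertible in $\O$ for every $r \in \vert K^{\times}\vert$ with $a < r \le b$ and every $x \in f^{-1}(\eta(r))$ forces each such extension to be tamely ramified, whence all of its higher ramification subgroups, and its Swan character, are trivial. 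Therefore every one-sided derivative of $\delta_f$ on the interior is zero, and by the reduction above $\delta_f$ is constant.

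The main obstacle is precisely this last input: making rigorous the identification of the slopes of the discriminant function with sums of Swan characters at the rank-$2$ points, together with the fact that tame ramification contributes nothing to those slopes. This is where Huber's higher ramification theory for points of rank $2$ on adic spaces \cite{Huber01} and Ramero's analysis of the interplay between the discriminant function, the different, and the Swan character in \cite{Ramero05} are indispensable. By contrast, the convexity and piecewise-linearity reduction, and the passage from "all slopes zero'' to "constant'', are routine.
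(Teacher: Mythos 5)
Your proposal matches the paper's proof: both observe that the invertibility hypothesis forces the Swan characters $\mathrm{Sw}^{\natural}_x$ attached to the rank-$2$ points to vanish, and then invoke Ramero's identity (the second equality of \cite[Lemma 3.3.10]{Ramero05}) that expresses the variation of $\delta_f$ in terms of these Swan characters. The paper states this more tersely and leaves the elementary passage from ``all slopes zero'' to ``constant'' implicit, but the strategy and the key cited result are the same.
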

\begin{proof}
This follows from the second equality of \cite[Lemma 3.3.10]{Ramero05}.
Indeed, under the assumption,
we have
$\mathrm{Sw}^{\natural}_x=0$
for the Swan character
$\mathrm{Sw}^{\natural}_x$
attached to $x \in f^{-1}(\eta(r))$ defined in \cite[Section 3.3]{Ramero05}.
\end{proof}

Finally, we prove Theorem \ref{Theorem:trivialization of tame sheaf}.

\begin{proof}[\bf  Proof of Theorem \ref{Theorem:trivialization of tame sheaf}]
In fact, we will show that
if
a locally constant \'etale sheaf $\mathcal{F}$ with finite stalks on
$\B(a, b)$
is tame at $\eta(r) \in \B(a, b)$
for every $r \in \vert K^{\times} \vert$
with $a < r \leq b$,
then,
for every $t \in \vert K^\times \vert$ with $a/b < t^2 < 1$,
there exists an integer $m$ invertible in $\O$
such that
the restriction $\mathcal{F} \vert_{\B(a/t, tb)}$
is trivialized by a Kummer covering $\varphi_m$.

There is a finite Galois \'etale morphism
$f \colon X \to \B(a, b)$
such that $X$ is connected and $f^*\mathcal{F}$ is a constant sheaf.
Let $G$ be the Galois group of $f$.
By replacing $X$ by a quotient of it by a subgroup of $G$
(this makes sense by Remark \ref{Remark:finite etale morphism}),
we may assume that the induced homomorphism
\[
\rho \colon G \to \Aut(\Gamma(X, f^*\mathcal{F}))
\]
is injective.
Let $t \in \vert K^\times \vert$ be an element with $a/b < t^2 < 1$.
By Lemma \ref{Lemma:the number of connected components},
we may assume that $X$ remains connected after restricting to $\B(a/s, s b)$
for every $s \in \vert K^{\times} \vert$ with $t < s \leq 1$.

We claim that
$
\sharp \Stab_x
$
is invertible in $\O$
for every $r \in \vert K^{\times} \vert$
with $a < r \leq b$ and every $x \in f^{-1}(\eta(r))$.
Let $L(r)$ be a separable closure of $k(x)^{\wedge h}$.
It induces a geometric point
$\overline{x} \to X$
with support $x$.
Let $\overline{r} \to \B(a, b)$ denote the composition $\overline{x} \to X \to \B(a, b)$.
Since $f^*\mathcal{F}$ is a constant sheaf, we have the following identifications
\[
\Gamma(X, f^*\mathcal{F}) \cong (f^{*}\mathcal{F})_{\overline{x}} \cong \mathcal{F}_{\overline{r}}.
\]
Recall that we have
$
\Stab_x \cong \Gal(k(x)^{\wedge h}/k(r)^{\wedge h}).
$
Via these identifications,
the action of $\Stab_x \subset G$ on $\Gamma(X, f^*\mathcal{F})$
is compatible
with the action of
$\Gal(L(r)/k(r)^{\wedge h})$ on $\mathcal{F}_{\overline{r}}$.
Since $\mathcal{F}$ is tame at $\eta(r)$
and $\rho$ is injective,
it follows that
$
\sharp \Gal(k(x)^{\wedge h}/k(r)^{\wedge h})
$
is invertible in $\O$.
This proves our claim.

By
Lemma \ref{Lemma:Swan character},
it follows that the discriminant function $\delta_f$ is constant.
By Lemma \ref{Lemma:the number of prime ideals} and Proposition \ref{Proposition:reduction ordinary double point},
there is exactly one point $x$ in $f^{-1}(\eta(b))$.
Therefore the Galois group $G$ is isomorphic to
$
\Stab_x \cong \Gal(k(x)^{\wedge h}/k(b)^{\wedge h}).
$

Theorem \ref{Theorem:trivialization of tame sheaf}
now follows from \cite[Theorem 2.11]{Lutkebohmert93}.
Alternatively, we can argue as follows.
By \cite[Proposition 2.5, Corollary 2.7, and Corollary 5.4]{Huber01},
we see that $G$ is a cyclic group.
Let us write $G \cong \Z/m\Z$.
We consider $f \colon X \to \B(a, b)$ as a $\Z/m\Z$-torsor.
As in the proof of \cite[Theorem 2.4.3]{Ramero05},
since the Picard group of $\B(a, b)$ is trivial,
the Kummer sequence gives an isomorphism
\[
A(a, b)^{\times}/(A(a, b)^{\times})^m \cong H^{1}(\B(a, b), \Z/m\Z).
\]
Since $m$ is invertible in $\O$,
the left hand side is a cyclic group of order $m$ generated by the coordinate function $T \in A(a, b)^{\times}$.
It follows that every $\Z/m\Z$-torsor over $\B(a, b)$ is the disjoint union of Kummer coverings.
This completes the proof of Theorem \ref{Theorem:trivialization of tame sheaf}.
\end{proof}

\subsection*{Acknowledgements}
The author would like to thank his adviser Tetsushi Ito for his support and guidance, and for bringing the paper \cite{LS05} to the attention of the author.
The author would like to thank Teruhisa Koshikawa for
many helpful suggestions and useful conversations.
The author learned a lot from him.
The author would like to thank Yoichi Mieda for helpful comments.
The author also would like to thank Ryota Mikami for useful discussions.
The work of the author was supported by JSPS Research Fellowships for Young Scientists KAKENHI Grant Number 18J22191.

\end{document}